\newtheorem{theorem}{Theorem}[section]
\newtheorem{proposition}[theorem]{Proposition}
\newtheorem{lemma}[theorem]{Lemma}
\newtheorem{claim}[theorem]{Claim}
\newtheorem{corollary}[theorem]{Corollary}
\newtheorem{assumption}[theorem]{Assumption}
\newtheorem{D}[theorem]{Definition}
\newenvironment{definition}{\begin{D} \rm }{\end{D}}
\newtheorem{R}[theorem]{Remark}
\newenvironment{remark}{\begin{R}\rm }{\end{R}}
\newtheorem{E}[theorem]{Example}
\newenvironment{example}{\begin{E}\rm }{\end{E}}
\def\Zee{\mathbb{Z}}
\def\Cee{\mathbb{C}}
\def\Pee{\mathbb{P}}
\def\Ext{\operatorname{Ext}}
\def\Sym{\operatorname{Sym}}
\def\Pic{\operatorname{Pic}}
\def\Spec{\operatorname{Spec}}
\def\scrO{\mathcal{O}}
\def\spcheck{^{\vee}}
\def\hU{\widetilde{U}}
\def\hY{\widetilde{Y}}
\def\hhY{\widetilde{\widetilde{Y}}}
\title{Deformations of I-surfaces with elliptic singularities}
\begin{document}
\author[R. Friedman]{Robert Friedman}
\address{Columbia University, Department of Mathematics, New York, NY 10027}
\email{rf@math.columbia.edu}
\author[P. Griffiths]{Phillip   Griffiths}
\address{Institute for Advanced Study, Princeton, NJ  08540 and IMSA, The University of Miami, Coral Gables, FL 33146}
\email{pg@ias.edu}

\begin{abstract} An I-surface $S$ is an algebraic  surface   of general type with $K_S^2 = 1$ and $p_g(S) = 2$.  Recent research has centered on trying  to give an explicit description of  the KSBA compactification of the moduli space of these surfaces. The possible normal Gorenstein examples have been enumerated by work of Franciosi-Pardini-Rollenske. The goal of this paper is to give a more precise description of such surfaces in case their singularities are simple elliptic and/or cusp singularities, and to work out their deformation theory. In particular, under some mild general position assumptions, we show that deformations of the surfaces in question are versal for deformations of the singular points, with two exceptions where the discrepancy is analyzed in detail. 
\end{abstract}

\bibliographystyle{amsalpha}
\maketitle

\section*{Introduction}

A fundamental problem in the study of algebraic surfaces is to find a good description of their compactified moduli spaces. For $K3$ surfaces, this problem has a long history, dating back to early work of Shah \cite{Shah1}, \cite{Shah2}, and continuing through the very recent work of Alexeev-Engel \cite{AlexeevEngel}. Aside from its intrinsic interest, this study has applications to the asymptotic  behavior of the period map and to various related issues such as Torelli-type questions. A common theme throughout has been the interplay between certain elliptic singularities, notably simple elliptic and cusp singularities, and normal crossing models with trivial dualizing sheaf. In the case of simple elliptic singularities, there is a good understanding of how these two models are related (cf.\ for example M\'erindol \cite{Mer} and Grojnowski  and Shepherd-Barron \cite{GSB}), but the relationship in the cusp case is still a mystery. Controlling the deformation theory of the singular models is also important, in order to describe the local structure of the compactified moduli space,  to understand how the various boundary strata   are related, and to   analyze the limiting behavior of   the period map. In general, however, the deformation theory of the elliptic singularity model should contain more information than that of the normal crossing model. 

For surfaces of general type there is a very general compactification of the moduli space, the KSBA compactification. However, at present it is not practical to describe the myriad possibilities for boundary components in general. Thus, attention has focused in recent years on trying to enumerate all of the possible boundary strata for very simple examples of surfaces $S$ with ``small" canonical class $K_S$; here small could refer either to the size of $K_S^2$ or to that of $\dim H^0(S; K_S)$ or both. An appealing class for surfaces for which both invariants are  small are the \textsl{I-surfaces}:

\begin{definition} An \textsl{I-surface} $S$ is a minimal surface of general type such that $K_S^2=1$ and $\dim H^0(S; K_S) =2$.  In this case, necessarily $H^1(S;\scrO_S) =0$. 
\end{definition} 

I-surfaces were considered classically by Castelnuovo-Enriques. They provided the first examples of surfaces of general type where the $4$-canonical morphism $\varphi_{4K}$ fails to be birational, thus showing that Bombieri's result that $\varphi_{5K}$ is birational is sharp. The modern study of these surfaces goes back at least to Kodaira (unpublished) and Horikawa \cite{Horikawa0}, who showed in particular that $K_S$ has a single base point, necessarily simple, and that $S$ is the minimal resolution of a weighted hypersurface of degree $10$ with at worst rational double points contained in the smooth locus of  the weighted projective space $\Pee(1,1,2,5)$. The analysis was extended to the Gorenstein case by Franciosi-Pardini-Rollenske, who showed in particular \cite{FPR2}:

\begin{theorem}\label{thm02}  Let $Y$ be a normal Gorenstein surface with at worst rational double point,  simple elliptic and cusp singularities such that $\omega_Y$ is ample, $\omega_Y^2=1$, and $\dim H^0(Y; \omega_Y) = 2$. Then $Y$ is isomorphic to a weighted hypersurface of degree $10$ contained in the smooth locus of  the weighted projective space $\Pee(1,1,2,5)$. \qed
\end{theorem}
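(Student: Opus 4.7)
The plan is to reproduce, in the Gorenstein setting, Horikawa's classical analysis of the canonical ring $R(Y) = \bigoplus_{n\ge 0} H^0(Y,\omega_Y^{\otimes n})$: produce four generators of $R(Y)$ of weights $1,1,2,5$ satisfying a single relation of weight $10$, whence $Y$ is realized as a degree-$10$ weighted hypersurface in $\Pee(1,1,2,5)$.

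First I would pin down the Hilbert function. Riemann--Roch on the Gorenstein surface $Y$ gives $\chi(\omega_Y^n) = \chi(\scrO_Y) + \binom{n}{2}$, and the appropriate vanishing theorem for log canonical Gorenstein varieties (simple elliptic and cusp singularities are log canonical) forces $h^i(\omega_Y^n) = 0$ for $i>0$ and $n\ge 2$. Combined with $h^1(\scrO_Y) = 0$, which I would verify by running the Leray spectral sequence for a resolution $\pi\colon \widetilde{Y} \to Y$ and using that simple elliptic and cusp singularities each contribute a length-one stalk to $R^1\pi_*\scrO_{\widetilde{Y}}$, this yields $h^0(\omega_Y^n) = 3 + \binom{n}{2}$ for all $n\ge 2$. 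These dimensions match exactly the Hilbert series $(1-t^{10})/\bigl((1-t)^2(1-t^2)(1-t^5)\bigr)$ of a degree-$10$ hypersurface in $\Pee(1,1,2,5)$.

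Next I would construct the generators. Pick a basis $x_0,x_1$ of $H^0(\omega_Y)$; since $\Sym^2 H^0(\omega_Y)$ has dimension $3$ while $h^0(\omega_Y^2) = 4$, there exists $y \in H^0(\omega_Y^2)$ outside the image of the multiplication map $\Sym^2 H^0(\omega_Y) \to H^0(\omega_Y^2)$. Counting monomials, the subring of $R(Y)$ generated by $x_0,x_1,y$ has dimension at most $12$ in weight $5$, while $h^0(\omega_Y^5) = 13$, forcing an additional generator $z$ of weight $5$. The induced graded map $k[x_0,x_1,y,z] \to R(Y)$ (with the weights above) should be surjective in every degree; the coincidence of Hilbert series then forces its kernel to be principal, generated by an irreducible element $f$ of weight $10$, so $Y \cong V(f) \subset \Pee(1,1,2,5)$. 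To finish I would verify that $V(f)$ avoids the two singular points $[0:0:1:0]$ and $[0:0:0:1]$ of $\Pee(1,1,2,5)$: using the hypothesis that the singularities of $Y$ are du Val, simple elliptic, or cusps, and classifying which singularities a weighted hypersurface would acquire at those stratum points, one shows $f$ must contain the monomials $z^2$ and $y^5$ after a suitable change of coordinates.

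The main obstacle will be proving surjectivity of $k[x_0,x_1,y,z] \to R(Y)$: every section of $\omega_Y^n$ must be shown to be a polynomial in the four chosen generators, which requires control of the multiplication maps $H^0(\omega_Y^a) \otimes H^0(\omega_Y^b) \to H^0(\omega_Y^{a+b})$. The natural tool is a version of the base-point-free pencil trick applied to the sub-pencil spanned by $x_0,x_1$ (which has a single base point, as in the smooth case), combined with a Castelnuovo--Mumford-style induction on $n$ once surjectivity has been established in low degrees.
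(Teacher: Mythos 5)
First, a point of orientation: the paper does not prove this statement at all --- it is quoted verbatim from Franciosi--Pardini--Rollenske \cite{FPR2} and stamped with a \qed, so there is no internal proof to compare against. Your proposal is, in outline, exactly the route taken by the cited source (and by Horikawa in the smooth case): compute the Hilbert function of the canonical ring, exhibit generators in weights $1,1,2,5$, deduce a single relation in weight $10$ from the Hilbert series, and check that the resulting hypersurface misses the two singular points of $\Pee(1,1,2,5)$. The numerology is right: $\chi(\scrO_Y)=3$, $h^0(\omega_Y^{\otimes n})=3+\binom{n}{2}$ for $n\ge 2$ via Serre duality and Kodaira-type vanishing for Du Bois singularities, and these dimensions do match $(1-t^{10})/\bigl((1-t)^2(1-t^2)(1-t^5)\bigr)$.

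That said, two steps are weaker than you present them. (1) Your argument for $h^1(\scrO_Y)=0$ does not close: the Leray sequence only gives an injection $H^1(Y;\scrO_Y)\hookrightarrow H^1(\widetilde{Y};\scrO_{\widetilde{Y}})$ together with $\chi(\scrO_{\widetilde{Y}})=\chi(\scrO_Y)-k$ (the length-one stalks), so you still must control $q(\widetilde{Y})$ or show that $H^1(\widetilde{Y};\scrO_{\widetilde{Y}})\to H^0(R^1\pi_*\scrO_{\widetilde{Y}})$ is injective; this is genuinely nontrivial (compare the proof of Lemma~\ref{Milist}(v), where the elliptic-ruled case requires the exceptional divisors to be isogenous to the base). (2) The generation/surjectivity step is not just ``the main obstacle'' to be deferred: the sub-pencil spanned by $x_0,x_1$ is \emph{not} base point free (it has a simple base point), so the base-point-free pencil trick does not apply as stated; one must first prove that $|\omega_Y|$ has no fixed part, exactly one simple base point lying in the Gorenstein-smoothable locus, and that the general canonical curve is an irreducible Gorenstein curve of arithmetic genus $2$ --- in the singular setting none of this is automatic, and ruling out fixed components or canonical curves contained in the singular fibers of $\pi$ is where most of the work in \cite{FPR2} actually lives. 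The final step (avoiding the singular points) can be done more cleanly than by classification: $\omega_Y\cong\scrO_Y(1)$ by adjunction, and invertibility of $\omega_Y$ (Gorenstein hypothesis) forces $f$ to contain $y^5$ and $z^2$. So: right strategy, correct target computations, but the two items above are genuine gaps in the write-up rather than routine verifications.
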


Let $Y$ be a surface as in the statement of Theorem~\ref{thm02} and let $\pi\colon \hY\to Y$ be its  minimal resolution. Let $p_1, \dots, p_k\in Y $ be the points where $Y$ has an elliptic singularity, and let $D_i$ be the reduced divisor $\pi^{-1}(p_i)$. For each $i$, let $m_i =-D_i^2$. In \cite{FPR1}, \cite{FPR2},  Franciosi-Pardini-Rollenske show the following:

\begin{theorem}\label{thm03} In the above notation, the possibilities for $Y$ and the $m_i$ are as follows:
\begin{enumerate}
\item[\rm(i)] {\rm (Stratum $\mathfrak{N}_{\emptyset}$):} $\hY$ is a minimal surface of general type and $Y$ is its canonical model.
\item[\rm(ii)] {\rm (Stratum $\mathfrak{N}_1$):} $\hY$ is a minimal surface with $\kappa(\hY) =1$ and $k = m_1 =1$.  In this case, $\hY$ is an elliptic surface over $\Pee^1$ with $p_g(\hY) =1$ and   a single multiple fiber, necessarily of multiplicty $2$. The divisor $D = D_1$ consists of a bisection of the elliptic fibration together with some fiber components if $D$ is reducible. 
\item[\rm(iii)] {\rm (Stratum $\mathfrak{N}_2$):} $\hY$ is the blowup of a $K3$ surface at one point, and $k = 1$, $m_1 =2$.
\item[\rm(iv)] {\rm (Stratum $\mathfrak{N}_{1,1}^E$):} $\hY$ is  the blowup of an $Enriques$ surface at one point, and $k = 2$, $m_1 =m_2 =1$.
\item[\rm(v)] {\rm (Strata $\mathfrak{N}_{2,2}$, $\mathfrak{N}_{2,1}$, $\mathfrak{N}_{1,1}^R$):} $\hY$ is  a rational surface, $k = 2$, and the possibilities for $(m_1, m_2)$ are $(2,2)$, $(2,1)$, or $(1,1)$ as given in the subscripts.
\item[\rm(vi)] {\rm (Strata $\mathfrak{N}_{2,1,1}$, $\mathfrak{N}_{1,1,1}$):} $\hY$ is  the blowup of an elliptic ruled surface, $k = 3$, and the possibilities for $(m_1, m_2, m_3)$ are $(2,1,1)$    or $(1,1,1)$ as given in the subscripts. \qed
\end{enumerate}
\end{theorem}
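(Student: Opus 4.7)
My plan is to extract the invariants of the minimal resolution $\pi\colon \hY\to Y$ from those of $Y$, bound the total $\sum m_i$ numerically, and then identify $\hY$ in each remaining case via the Enriques--Kodaira classification. Because $Y$ is Gorenstein and every non-rational surface singularity in the statement is minimally elliptic, the minimal resolution satisfies $K_{\hY} = \pi^{*}K_Y - D$ with $D = \sum_{i=1}^{k} D_i$; hence
\[
K_{\hY}^{2} = K_Y^{2} + D^{2} = 1 - \sum_{i=1}^{k} m_i.
\]
The characterization $R^{1}\pi_{*}\scrO_{\hY} = \bigoplus_i \Cee_{p_i}$ of minimally elliptic singularities, combined with the Leray spectral sequence, yields $\chi(\scrO_{\hY}) = \chi(\scrO_Y) - k = 3-k$; and the adjoint-type exact sequence $0\to \pi_{*}\omega_{\hY}\to \omega_Y\to \bigoplus_i \Cee_{p_i}\to 0$ forces $p_g(\hY)\geq 2-k$ and hence $q(\hY) = p_g(\hY) + k - 2$. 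Since $p_g(\hY)\leq p_g(Y) = 2$ and $q(\hY)\geq 0$, one already has $k\leq 4$, and a more careful analysis using Enriques--Kodaira reduces the possibilities to $k\leq 3$ and $\sum m_i\leq 4$, leaving exactly the numerical types in (i)--(vi).

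With numerical type fixed, I would identify $\hY$ as follows. For $k = 0$ the resolution is crepant and $\hY$ is the minimal model of general type, giving (i). For $\sum m_i = 1$ one has $(K_{\hY}^{2}, p_g, q) = (0,1,0)$; since $\pi^{*}K_Y = K_{\hY}+D_1$ is nef with $(\pi^{*}K_Y)^{2} = 1$ while $K_{\hY}^{2} = 0$, the surface has $\kappa = 1$, and $|2K_{\hY}|$ realizes an elliptic fibration to $\Pee^{1}$; the canonical bundle formula together with $K_{\hY}\cdot D_1 = 1$ forces a unique multiple fiber of multiplicity $2$, giving (ii). For $\sum m_i = 2$ one has $K_{\hY}^{2} = -1$ with $(p_g,q) = (1,0)$ if $k = 1$ and $(0,0)$ if $k = 2$; since $K_{\hY}^{2}$ differs from the minimal-model $K^{2}$ by the number of $(-1)$-curves contracted, the unique such contraction yields a smooth minimal surface with $(K^{2}, p_g, q) = (0,1,0)$ or $(0,0,0)$, i.e.\ a K3 or Enriques surface, giving (iii) and (iv). The remaining types have $p_g(\hY) = 0$ and $q(\hY)\in\{0,1\}$; successive contraction of $(-1)$-curves identifies the minimal model as a rational surface when $q = 0$ and a minimal elliptic ruled surface when $q = 1$, the admissible sequences $(m_i)$ being pinned down by $K_{\hY}^{2} = 1 - \sum m_i$ together with the requirement that the resulting minimal model support $k$ pairwise disjoint minimally elliptic cycles.

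The main obstacle is the finiteness statement, i.e.\ the exclusion of numerical types absent from the list such as $k\geq 4$ or $k = 1$ with $m_1\geq 3$. Since the $D_i$ are not arbitrary negative cycles but minimally elliptic cycles whose classes must support an ample Gorenstein contraction with $K_Y^{2} = 1$, the exclusion cannot rely on numerics alone: for example, ruling out $(p_g, q) = (2,1)$ in the case $(k, m_1) = (1,2)$ requires showing that an elliptic fibration of $\hY$ onto a genus-one curve is incompatible with the required ampleness of $\omega_Y$. A secondary subtlety is to show that in each case the restriction map $H^{0}(\omega_Y)\to \bigoplus_i \Cee_{p_i}$ attains its maximum possible rank, so that $p_g(\hY) = \max(0, 2-k)$; by a Horikawa-type analysis $|K_Y|$ has a single base point, and the surjectivity amounts to showing that this base point is disjoint from the elliptic singular locus.
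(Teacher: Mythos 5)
Your skeleton for the cases $\kappa(\hY)\ge 0$ essentially coincides with the paper's: set $L=\pi^*\omega_Y=K_{\hY}+D$, use $L^2=L\cdot K_{\hY}=1$, $K_{\hY}^2=1-\sum_i m_i$, $\chi(\scrO_{\hY})=3-k$, count blowdowns, and identify the minimal model by $\chi$ and Enriques--Kodaira. But the two hardest points of the theorem are exactly the ones you defer, and your substitutes for them do not work. First, the bound $k\le 3$: from $\chi(\scrO_{\hY})=1-q+p_g=3-k$ one gets $k=2+q-p_g$, and $p_g\le 2$, $q\ge 0$ give no upper bound on $k$ at all (a surface ruled over a genus-$q$ curve has $p_g=0$ and $k=2+q$ unbounded). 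The paper's actual argument (Theorem~\ref{theorem2}) is that $k>3$ forces $\chi(\scrO_{\hY})<0$, hence by Castelnuovo--de Franchis $\hY$ is a blown-up ruled surface over a curve of genus $\ge 2$, and such a surface contains no smooth elliptic curve or cycle of rational curves in a fiber-contractible position (Lemma~\ref{lemma1}); this geometric exclusion is the content you would need to supply.

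Second, and more seriously, nothing in your proposal bounds the individual $m_i$ when $\kappa(\hY)=-\infty$: for a rational or elliptic ruled surface $K_{\hY}^2=1-\sum m_i$ imposes no constraint, and ``the minimal model supports $k$ disjoint minimally elliptic cycles'' does not either (anticanonical rational surfaces carry elliptic curves of arbitrarily negative self-intersection). You correctly flag this as the main obstacle but offer no mechanism. The paper's mechanism is the analysis of the adjoint divisors $M_i=L-D_i=K_{\hY}+D_i'$ (Lemma~\ref{Milist}, Lemma~\ref{Minotnef}, Theorem~\ref{geomanal}): each $M_i$ is effective with $L\cdot M_i=1$ and $M_i^2=1-m_i$, and the dichotomy ``$M_i$ nef or not'' forces $m_i=1$ (with $|2M_i|$ an elliptic pencil) or $m_i=2$ (with $M_i$ a single exceptional curve), using Lemma~\ref{lemma4} and the ampleness of $\omega_Y$ to control the contraction. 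The further exclusion of two multiplicities equal to $2$ in the elliptic ruled case is again not numerical: it comes from intersecting with the Albanese fiber $f$ ($f\cdot K_{\hY}=-2$ forces $D_2,D_3$ to be sections once $M_1$ is an exceptional curve meeting $D_1$ twice, Proposition~\ref{211case}). Without these arguments the classification list is not established.
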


There is also a description in \cite{FPR2} of  some of the incidence relations among the various strata.

The goal of this paper is to study the corresponding deformation theory of I-surfaces with the most basic elliptic singularities, namely simple elliptic or cusp singularities. The ideal result would be one along the following lines (a more precise discussion is given in \S1):

\medskip
\noindent \textbf{Ideal Theorem:}  \textit{For $Y$ as above and $p_1, \dots, p_k$ either simple  elliptic or cusp singularities, the deformations of $Y$ are versal for the local deformations of the singular points except in Case} (vi).  
\medskip 

Here, versal means roughly that, given any collection of    local deformations of the germs of the singular points $p_1, \dots, p_k$, there is a global deformation of the projective surface $Y$ which realizes these deformations. In particular, the singular points can be smoothed or deformed independently. This also explains why Case (vi) is exceptional: in this case, only simple elliptic singularities can arise and the exceptional divisors must all be isogenous. Thus, any deformation of $Y$ which is equisingular at the points $p_1, p_2, p_3$ has to preserve the condition that the exceptional divisors are isogenous. In fact, this is the only constraint on the deformation theory as we shall see. 

The deformations of a Gorenstein I-surface $Y$ are unobstructed. More precisely, it follows easily from the description of $Y$ as a degree $10$ surface in $\Pee(1,1,2,5)$ that the obstruction space $\mathbb{T}^2_Y$ to the deformation space of $Y$ is $0$ (Theorem~\ref{defunobstr}). Given this, the ``Ideal Theorem" would reduce  the study of the discriminant locus in the moduli space of an I-surface with simple elliptic or cusp singularities to the corresponding discriminant locus for the local deformation space of the singular points. For example, in case $Y$ has just one singular point which  is  simple elliptic, there is a neighborhood of $Y$ in the moduli space where the locus where the  surface continues to have a simple elliptic singularity  is a smooth submanifold whose codimension can be read off from the multiplicity of the simple elliptic singularity. Blowing up this submanifold, and   taking a cyclic branched cover along the exceptional divisor, leads to a normal crossing model for the degenerations of the I-surface. Given a theory of semistable models for cusp singularities, a similar picture should hold in the cusp case. 

As stated, the ``Ideal Theorem" is not quite the most ideal version that one could hope for: it would be natural to show that the deformations of $Y$ are also versal for the rational double points, or at least that these can all be smoothed in a family which is equisingular for the elliptic singularities.  However, in order to keep this paper at a reasonable length, we shall ignore this question and in fact make the running assumption that $Y$   has elliptic singularities, but not rational double points (Assumption~\ref{assumption27} below). Even so, we are only able to prove (most of) the ``Ideal Theorem" in a case-by-case fashion and under some mild general position assumptions. More  precise statements are contained in the following: For the case $\kappa(\hY) = 1$, Corollary~\ref{kappa1cor1} and Corollary~\ref{kappa1cor2}. For the case where $\hY$ is the blowup of a $K3$ surface, Theorem~\ref{K3irred} and Theorem~\ref{K3red}.  For the case where $\hY$ is the blowup of an Enriques  surface, Theorem~\ref{Enriquesirred}, Theorem~\ref{Enriquesred}, and Theorem~\ref{Enriquesmixed}. For the case where $\hY$ is a rational surface, Proposition~\ref{22irred} and Corollary~\ref{22cusp} if $(m_1, m_2) = (2,2)$,  Proposition~\ref{21irred} and Proposition~\ref{21red} if   $(m_1, m_2) = (2,1)$, and Proposition~\ref{11irred}  if $(m_1, m_2) = (1,1)$  and $D_1$, $D_2$ are irreducible.  (We do not treat the case where $\hY$ is rational, $(m_1, m_2) = (1,1)$ and either $D_1$ or $D_2$ is reducible.) For the case where $\hY$ is the blowup of an elliptic ruled surface, Theorem~\ref{ellruled}. These results are then used to refine and clarify the discussion of the adjacencies between the various strata given in \cite[Figure 4]{FPR2}, i.e.\ when the closure of one stratum has nonempty intersection with another. For example, using the exotic adjacencies between cusp singularities of degree $2$ and cusp or simple elliptic singularities of degree one due to Brieskorn-Wahl, we show that the closure of $\mathfrak{N}_1$ meets $\mathfrak{N}_2$ (Theorem~\ref{K3red} and Corollary~\ref{K3toell}), the closure of $\mathfrak{N}_{2,1}$ meets $\mathfrak{N}_{2,2}$ (Remark~\ref{exotic22}), and the closure of $\mathfrak{N}_{1,1}^R$ meets $\mathfrak{N}_{2,1}$ (Proposition~\ref{21toR}). In addition, $\mathfrak{N}_{1,1,1}$ is contained in the closure both of $\mathfrak{N}_{1,1}^R$ and $\mathfrak{N}_{1,1}^E$ (Theorem~\ref{ellruled}(v)). 

It seems  likely that special cases of the ``Ideal Theorem," especially when $Y$ has just one singular point which is not overly complicated, should also follow from Theorem~\ref{thm02} by explicit calculations with the equations defining a weighted hypersurface of degree $10$ in $\Pee(1,1,2,5)$. 

The  contents of this paper are as follows. In Section~\ref{section1}, we collect various facts about simple elliptic and cusp singularities and their deformations. In Section~\ref{section2}, we give a quick review of the arguments of \cite{FPR1} used to prove Theorem~\ref{thm03}.  Then we give a more detailed discussion in case $\kappa(\hY) = -\infty$, i.e.\ the cases (v), (vi) of  Theorem~\ref{thm03}. The main point is the following, which generalizes the picture for $\kappa(\hY) \ge 0$: The multiplicities $m_i = -D_i^2$ must be either $1$ or $2$. If $m_i=2$, then $\hY$ is the one point blowup of a surface $X$ such that $-K_X$ is effective. If  $m_i=1$, then there exists a   not necessarily relatively minimal elliptic fibration $\hY\to \Pee^1$, with at least one multiple fiber, such that all multiple fibers have multiplicity $2$ and there is a bisection of the fibration. 

We turn then to the various cases. Section~\ref{section4} deals with the case $\kappa(\hY) = 1$.  The main technical tool is a description via double covers of relatively minimal elliptic surfaces with a bisection and a multiple fiber of multiplicity  2 which essentially goes back to Horikawa \cite{Horikawa}.   Section~\ref{section5} deals with the case $\kappa(\hY) = 0$, and Section~\ref{section6} with the case where $\hY$ is a rational surface. After a preliminary Section~\ref{section7} on elliptic ruled sufaces which are also elliptic surfaces with two or three multiple fibers of multiplicity $2$, Section~\ref{section8} turns to the case where $\hY$ is the blowup of an elliptic ruled surface. In all cases, the main point is typically to prove some kind of vanishing theorem, through a variety of somewhat \emph{ad hoc} techniques. This is in contrast to the case where $\omega_Y$ is trivial or negative, where the corresponding results follow easily from standard methods. It is natural to ask if there is a more unified approach in the I-surface case. 

\subsection*{Acknowledgements} The second author would like to thank Mark Green for the many discussions and correspondences about I-surfaces, especially regarding their Hodge theory.

\subsection*{Notation and conventions} We will generally identify line bundles with the corresponding divisor classes, in particular for the canonical bundle of a surface. Thus we usually   use additive notation for the tensor product of two line bundles where we mean the sum  of the corresponding divisor classes. For two divisors $A$, $B$ on a surface $S$, the notation $A\equiv B$ means: $A$ is numerically equivalent to $B$.

\section{Deformations of  simple elliptic and cusp singularities}\label{section1}

\subsection{Preliminary results on simple elliptic and cusp singularities}

Throughout this section, $U$ is the germ of an isolated Gorenstein surface singularity $p$ and $\pi \colon \hU \to U$ is its minimal resolution. We will also use $U$ to denote a contractible Stein representative of the germ.  Let $D = \bigcup_iE_i$ be the exceptional set $\pi^{-1}(p)$.  The germ $U$  has a \textsl{minimally elliptic singularity} if $R^1\pi_*\scrO_{\hU}$ has length one. Some special minimally elliptic singularities are given as follows: 

\begin{definition} The germ $(U,p)$ is a \textsl{simple elliptic singularity} if $D$ is a smooth elliptic curve.  The germ $(U,p)$ is a \textsl{cusp singularity} if $D=\bigcup_{i=1}^rE_i$ is either an irreducible nodal curve or a cycle of smooth rational curves, i.e.\ $E_i\cdot E_{i+1} = 1$ and $E_r \cdot E_1 =1$, with necessarily $E_i^2 \le -2$ for all $i$ and $E_i^2\le -3$ for at least one $i$. In this case the integer $r$ is the \textsl{length} of the singularity (or of $D$), where $r=1$ if $D$ is irreducible.  
The germ $(U,p)$ is a \textsl{Dolgachev or triangle singularity} if $D=\bigcup_{i=1}^rE_i$ with $r \le 3$, where $D$ is a cuspidal rational curve if $r=1$, two smooth rational curves with a tacnode singularity if $r=2$, and three smooth rational curves meeting at a point if $r=3$.  We will also inaccurately refer to $D$ as a simple elliptic, cusp or triangle singularity.
In all cases the integer $m=-D^2$ is the \textsl{multiplicity} of the  singularity (or of $D$). Here, in the cusp case, with $D=\bigcup_{i=1}^rE_i$ and $ E_i^2=-e_i$, $m= \sum_{i=1}^r(e_i -2)$ if $r\ge 2$. A similar result holds in the triangle case. (Note however that $m$ is the multiplicity in the usual sense only when $m\ge 2$.) We will refer to the sequence $(-e_1, \dots, -e_r)$ as the \textsl{type} of the cusp or triangle singularity; it is well-defined up to cyclic permutation and reflection. 

A simple elliptic singularity is determined  up to analytic isomorphism by the multiplicity $m$ and the $j$-invariant (i.e.\ the isomorphism class) of the elliptic curve $D$.  A cusp singularity is determined  up to analytic isomorphism by its type, i.e.\ by the sequence $(-e_1, \dots, -e_r)$.  This is usually phrased as saying that cusp singularities are \textsl{taut}. A similar result is almost true for triangle singularities: there are two analytic types, depending on whether the singularity does or does not have a $\Cee^*$-action.  
\end{definition} 

\begin{remark} The above singularities are exactly the minimally elliptic singularities with reduced fundamental cycle, which is equivalent in this case to: $K_{\hU}\cong \scrO_{\hU}(-D)$. By the Grauert-Riemenschneider theorem, $R^i\pi_*\scrO_{\hU}(-D) = 0$ for $i > 0$. For a simple elliptic or cusp singularity, this says that $(U,p)$ is a Du Bois singularity \cite[(3.8)]{Steenbrink81}. However, the analogous statement does not hold for a triangle singularity, because the minimal resolution is not a log resolution, i.e.\ the exceptional set $D$ in the minimal resolution has worse than nodal singularities. 

While we will only consider the case of simple elliptic and cusp singularities in this paper, many of the classification results in \S\ref{section2} carry over to the case where some of the singularities are allowed to be triangle singularities. 
\end{remark}

We shall be concerned here with singularities of multiplicity $1$ or $2$. An immediate calculation gives:

\begin{lemma} In the above notation, if $m=1$ or $2$ and $D$ is not irreducible, then the possibilities for the sequence $(-e_1, \dots, -e_r)$, up to cyclic permutation, are as follows:
\begin{enumerate}
\item[\rm(i)] $m=1$: $(-e_1, \dots, -e_r) = (-3, -2, \dots, -2)$.
\item[\rm(ii)]  $m=2$: Either $(-e_1, \dots, -e_r) = (-4, -2, \dots, -2)$ or 
$$(-e_1, \dots, -e_r) =(-3, \underbrace{-2, \cdots, -2}_{a}, -3, \underbrace{-2, \cdots, -2}_{b}).  \qed$$
\end{enumerate}
\end{lemma}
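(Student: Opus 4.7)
The plan is essentially a bookkeeping argument from the formula $m = \sum_{i=1}^r (e_i-2)$ stated in the definition, combined with the constraints $e_i \ge 2$ for all $i$ and $e_i \ge 3$ for some $i$.

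First I would observe that since each $e_i \ge 2$, the integers $a_i := e_i - 2$ are nonnegative, and the constraint that at least one $e_i \ge 3$ translates to: at least one $a_i \ge 1$. The multiplicity condition $m = \sum_i a_i$ then reduces the problem to enumerating nonnegative integer sequences (cyclic, up to reflection) with a prescribed sum.

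For $m=1$: the only way to write $1$ as a sum of nonnegative integers with at least one positive term is to have a single $a_i = 1$ and all other $a_j = 0$. Up to cyclic permutation this places the $-3$ curve first, giving $(-3,-2,\dots,-2)$.

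For $m=2$: the possibilities are (a) a single $a_i = 2$ with all others zero, giving $(-4,-2,\dots,-2)$; or (b) exactly two indices $i \ne j$ with $a_i = a_j = 1$ and all others zero, giving two $-3$ curves separated by strings of $-2$ curves. Up to cyclic permutation and reflection this is recorded by the two gap lengths $a,b\ge 0$ between the two $-3$ curves, yielding the second listed normal form. There is no obstacle here: the only ``step'' is to note that the formula $m=\sum(e_i-2)$ is the definition in the cusp case, after which the enumeration is immediate.
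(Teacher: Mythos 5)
Your proposal is correct and is exactly the computation the paper has in mind: the lemma is introduced with ``An immediate calculation gives,'' and that calculation is precisely the enumeration of nonnegative sequences $a_i = e_i - 2$ summing to $m$ with at least one $a_i \ge 1$. Nothing further is needed.
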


\subsection{Adjacencies of some elliptic singularities}

\begin{definition} An \textsl{adjacency} between two (germs of) isolated singularities $U_1$ and $U_2$ is a deformation (a flat family) of germs $f\colon (\mathcal{U},0) \to (\Delta,0)$, where $\Delta$ is the unit disk, $f^{-1}(0) \cong (U_1,0)$  and $f^{-1}(t) \cong (U_2,0)$ for all $t\neq 0$. In particular,   the restriction of $f$ to the critical locus is proper. Typically, however,  $U_2$ can  be a disconnected germ. In case $U_1$ is a local complete intersection, adjacencies are transitive: if $U_1$ is adjacent to $U_2$ and $U_2$ is adjacent to $U_3$, then $U_1$ is adjacent to $U_3$. In this case, we say that the adjacency from $U_1$ to $U_3$ \textsl{is a consequence of} the adjacency from $U_1$ to $U_2$ and will not always note it explicitly. 
\end{definition}

Adjacencies for all simple elliptic and cusp singularities of multiplicity at most $2$ or more generally which are local complete intersections have been classified by the work of Karras \cite{Karras}, Brieskorn \cite{Brieskorn}, Wahl \cite{Wahl} and Looijenga \cite{Looijenga}: 

\begin{theorem}\label{cuspadj} 
 \begin{enumerate}
\item[\rm(i)] Suppose that $U$ is a simple elliptic singularity of multiplicity $m =1,2$. Then the possible adjacent singularities $U$ are either simple elliptic singularities of multiplicity $m$, rational double points, or smooth germs.
\item[\rm(ii)] The cusp singularity of type $(-3, \underbrace{-2, \dots, -2}_{r-1})$ is adjacent  to the cusp singularity of type $(-3, \underbrace{-2, \dots, -2}_{s-1})$ with $s\le r$,    to a simple elliptic singularity or length one cusp singularity of multiplicity $1$, or to a rational double point or smooth germ.
\item[\rm(iii)] The adjacencies of the  cusp singularity of type $(-4, \underbrace{-2, \dots, -2}_{r-1})$ are a consequence of the following: 
\begin{enumerate} 
\item[\rm(a)] To the cusp singularity of type $(-4, \underbrace{-2, \dots, -2}_{s-1})$ with $s\le r$.
\item[\rm(b)] If $r\ge 4$,    to  the cusp singularity of type $(-3, \underbrace{-2, \dots, -2}_{r-3})$. For $r=3$,   to a simple elliptic singularity or length one cusp singularity of multiplicity $1$, i.e.\  of type $(-1)$. For $r=2$, to a  simple elliptic singularity of multiplicity $1$.
\end{enumerate}
\item[\rm(iv)] The adjacencies of the  cusp singularity  of type $ (-3, \underbrace{-2, \cdots, -2}_{a}, -3, \underbrace{-2, \cdots, -2}_{b})$ are a consequence of the following: 
\begin{enumerate} 
\item[\rm(a)] For $a> 0$, to the  cusp singularity  of type $ (-3, \underbrace{-2, \cdots, -2}_{a-1}, -3, \underbrace{-2, \cdots, -2}_{b})$.
\item[\rm(b)] For $a=0$, the cusp   singularity  of type $ (-3, -3, \underbrace{-2, \cdots, -2}_{b})$ is adjacent to the one  of type   $ (-4,   \underbrace{-2, \cdots, -2}_{b})$ for $b \ge 1$ and the one  of type $(-3, -3)$ is adjacent to the one  of type $(-2)$ and to a simple elliptic singularity  of multiplicity $2$. All further adjacencies are consequences of those given previously. 
\end{enumerate}
\end{enumerate}
\end{theorem}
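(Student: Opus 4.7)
The plan is to assemble the theorem from the explicit deformation theory of these singularities, producing each listed adjacency by a specific construction and then invoking a classification of smoothing components to check that nothing further occurs. Since the statement is essentially a packaging of results of Karras, Brieskorn, Wahl, and Looijenga, the work is mainly organizational, but a unified proof would run as follows.

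For part (i), the simple elliptic singularities of multiplicity $1$ and $2$ are the hypersurface singularities $\widetilde{E}_8$ and $\widetilde{E}_7$, respectively, so their semi-universal deformations can be written down explicitly in Weierstrass form. The equisingular stratum is the one-parameter family obtained by varying the $j$-invariant of the exceptional elliptic curve while fixing the multiplicity; every transverse direction is either a smoothing (general fiber smooth) or a direction in which the Weierstrass family acquires additional singularities, all of which are rational double points sub-configurations of $\widetilde{E}_8$, resp.\ $\widetilde{E}_7$. A direct inspection rules out any adjacency to a simple elliptic singularity of smaller multiplicity or to a cusp, giving (i).

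For the cusp cases (ii)--(iv), the strategy splits into \emph{internal} adjacencies ((ii), (iii)(a), (iv)(a)), which shorten a chain while preserving its shape, and \emph{exotic} adjacencies ((iii)(b), (iv)(b)), which change the multiplicity. Internal adjacencies are produced by Karras's partial smoothing construction: one realizes the cusp as a quotient of the universal cover of the cycle, partially smooths an appropriate subchain so that the resulting surface contains a shorter cycle of the same type plus rational double points, and then smooths the rational double points. The exotic adjacencies in (iii)(b) and (iv)(b) are obtained from Brieskorn-Wahl deformations: the multiplicity-two cusps admit complete intersection models, and a Pinkham-type deformation of negative weight in these models produces a generic fiber whose only singularity is either a cusp of type $(-3,-2,\dots,-2)$ or a simple elliptic singularity of multiplicity $1$. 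Finally, Looijenga's theorem on smoothing components of cusp singularities (together with its refinement by Wahl) and semicontinuity of the geometric genus and Milnor number along the base of the semi-universal deformation bound the possible adjacencies and show that the list generated by the ones above under transitivity is exhaustive.

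The main obstacle is establishing the exotic adjacencies (iii)(b) and (iv)(b). The internal cases follow from a fairly mechanical smoothing-plus-blowdown construction, but the Brieskorn-Wahl exotic deformations genuinely drop the multiplicity of the singularity and are not visible from any equisingular analysis of the resolution; their existence rests on very specific complete intersection equations for the multiplicity-two cusps, and verifying that no additional exotic adjacencies have been missed requires a careful dimension count in the base of the semi-universal deformation via Looijenga's classification.
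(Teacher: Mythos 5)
The paper gives no proof of this theorem at all: it is stated as a known classification and attributed to Karras, Brieskorn, Wahl, and Looijenga, with the real content (in particular the exotic adjacencies in (iii)(b) and (iv)(b), and the completeness of the list) living entirely in those references. Your sketch is a faithful outline of how that literature establishes the result --- the $T_{p,q,r}$/complete-intersection models for the multiplicity $\le 2$ cases, Karras-type internal adjacencies, the Brieskorn--Wahl exotic deformations, and Looijenga's classification for exhaustiveness --- so it matches the paper's (implicit) approach; just be aware that, like the paper, you are deferring the genuinely hard steps to the cited works rather than proving them.
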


\begin{remark} The very complicated question as to which (unions of) rational double point singularities are adjacent to a cusp singularity of multiplicity $1$ or $2$ has been answered by  Looijenga \cite{Looijenga}. The much simpler   question as to which (unions of) rational double point singularities are adjacent to a simple elliptic singularity  of multiplicity $1$ or $2$ reduces to a question about the rational double point configurations in generalized del Pezzo surfaces of degree $1$ or $2$.
\end{remark}

\subsection{Deformations of elliptic singularities: the local case}
Let $\pi\colon \hU \to U$ be the minimal resolution of a simple elliptic or cusp singularity. In particular, the divisor $D$ has (local) normal crossings. 
As usual, we denote by $\Omega^1_{\hU}(\log D)$ the sheaf of differential $1$-forms with logarithmic poles along $D$. Let $T_{\hU}(-\log D) = (\Omega^1_{\hU}(\log D))\spcheck$ be the dual sheaf. Then $H^1(\hU; T_{\hU})$ (roughly) classifies first order deformations of the noncompact manifold  $\hU$ and $ H^1(\hU; T_{\hU}(-\log D))$ (roughly) classifies first order deformations of $\hU$ keeping the effective divisors $E_i$. A key technical result is the following:

\begin{lemma}\label{claim0}  \begin{enumerate} \item[\rm(i)] $\dim H^1(\hU; T_{\hU}(-\log D))  =1$ if $D$ is a simple elliptic singularity. 
\item[\rm(ii)] $H^1(\hU; T_{\hU}(-\log D)) =0$ if $D$ is a cusp singularity. 
\end{enumerate}
\end{lemma}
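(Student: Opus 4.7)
The strategy is to reduce the computation of $H^1(\hU; T_{\hU}(-\log D))$ to a topological computation on the link of the singularity via logarithmic Hodge theory.

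First I would identify $T_{\hU}(-\log D)$ with $\Omega^1_{\hU}(\log D)$. Since $(U,p)$ is minimally elliptic with reduced fundamental cycle, $\omega_{\hU} \cong \scrO_{\hU}(-D)$, and hence $\det\Omega^1_{\hU}(\log D) \cong \omega_{\hU}(D) \cong \scrO_{\hU}$. Combined with the rank-$2$ identity $\mathcal{E}\spcheck \cong \mathcal{E}\otimes (\det\mathcal{E})^{-1}$, this yields a canonical isomorphism $T_{\hU}(-\log D) \cong \Omega^1_{\hU}(\log D)$, so it suffices to compute $h^1(\hU;\Omega^1_{\hU}(\log D))$.

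Next I would reinterpret this cohomology via Hodge theory of the link $L$ of the singularity. After replacing $\hU$ by a further blowup if necessary so that $D$ becomes SNC (needed only in the irreducible nodal cusp case, and harmless since a blowup at a smooth point leaves the log Hodge numbers unchanged), the logarithmic de Rham complex $\Omega^\bullet_{\hU}(\log D)$ is a resolution of $Rj_*\Cee_{\hU\setminus D}$ where $j\colon \hU\setminus D\hookrightarrow \hU$, and so
$$\mathbb{H}^k(\hU;\Omega^\bullet_{\hU}(\log D)) \cong H^k(\hU\setminus D;\Cee) = H^k(L;\Cee),$$
using that $\hU\setminus D = U\setminus\{p\}$ retracts onto $L$. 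Steenbrink's local analogue of Deligne's theorem says the associated Hodge-de Rham spectral sequence
$$E_1^{p,q} = H^q(\hU; \Omega^p_{\hU}(\log D)) \Rightarrow H^{p+q}(L;\Cee)$$
degenerates at $E_1$.

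I would then reduce the problem to computing $h^2(L;\Cee)$. Because $\pi$ has fibers of dimension at most one, $H^i(\hU;\mathcal{F}) = 0$ for every coherent sheaf $\mathcal{F}$ and every $i\ge 2$; in particular $E_1^{0,2} = H^2(\hU;\scrO_{\hU}) = 0$. Moreover $\Omega^2_{\hU}(\log D) = \omega_{\hU}(D) \cong \scrO_{\hU}$ contributes $E_1^{2,0} = H^0(\hU;\scrO_{\hU}) = \Cee$ and $E_1^{2,q} = 0$ for $q\ge 1$. The $E_1$-degeneration therefore gives
$$h^2(L;\Cee) = 1 + h^1(\hU;\Omega^1_{\hU}(\log D)).$$

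Finally I would compute $h^2(L;\Cee)$ topologically. For a simple elliptic singularity, $L$ is a principal $S^1$-bundle over the elliptic curve $D$ with nonzero Euler number $-m$, and the Gysin sequence gives $h^2(L;\Cee) = h^1(D;\Cee) = 2$. For a cusp singularity, $L$ is a $T^2$-bundle over $S^1$ with hyperbolic monodromy $A\in \mathrm{SL}_2(\Zee)$ (Hirzebruch), so $A^*-1$ is invertible on $H^1(T^2;\Cee)$, and the Wang exact sequence gives $h^2(L;\Cee)=1$. This produces $h^1 = 1$ in case (i) and $h^1 = 0$ in case (ii). The principal delicate point of the argument is invoking Steenbrink's $E_1$-degeneration in the non-proper germ setting, for which one must pass through his framework of mixed Hodge structures on links of isolated singularities; all other steps are standard topology once the opening identification of sheaves is in place.
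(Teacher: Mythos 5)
Your opening identification $T_{\hU}(-\log D)\cong \Omega^1_{\hU}(\log D)$ is exactly the paper's first step, and your topological computation of $h^2(L;\Cee)$ (Gysin for the $S^1$-bundle of nonzero Euler number in the simple elliptic case, Wang for the hyperbolic $T^2$-bundle in the cusp case) is correct and does give $2$ resp.\ $1$, consistent with the answer. But the bridge between the two is broken. The Hodge--de Rham spectral sequence of $\Omega^\bullet_{\hU}(\log D)$ does \emph{not} degenerate at $E_1$ in this setting, and cannot: $\hU$ resolves a Stein germ, so $E_1^{0,0}=H^0(\hU;\scrO_{\hU})$ is the infinite-dimensional ring of holomorphic functions on $U$, while the abutment $H^0(L;\Cee)=\Cee$; the differential $d_1=d$ on the bottom row is very far from zero. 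For the same reason your assertion $E_1^{2,0}=H^0(\hU;\scrO_{\hU})=\Cee$ is simply false. To extract $h^1(\hU;\Omega^1_{\hU}(\log D))$ from $h^2(L)$ along these lines you would need, at a minimum, to compute $E_2^{2,0}=\operatorname{coker}\bigl(d\colon H^0(\hU;\Omega^1_{\hU}(\log D))\to H^0(\hU;\Omega^2_{\hU}(\log D))\bigr)$, to show that the $d_1$'s into and out of $E_1^{1,1}$ vanish (their source $H^1(\hU;\scrO_{\hU})$ and target $H^1(\hU;\Omega^2_{\hU}(\log D))\cong H^1(\hU;\scrO_{\hU})$ are each one-dimensional, so this is not automatic), and to control $d_2\colon E_2^{0,1}\to E_2^{2,0}$. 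None of this is supplied by Steenbrink's degeneration theorem, which concerns proper varieties; his mixed Hodge structure on the link is constructed by a different mechanism and does not yield the direct-sum formula $h^2(L)=1+h^1(\hU;\Omega^1_{\hU}(\log D))$ that your argument rests on. (The blowup you insert in the irreducible nodal cusp case is a comparatively minor issue, but note it takes place at a node of $D$, and one must check that the reduced total transform still satisfies $K+D\sim 0$ and that $h^1$ of the log cotangent sheaf is unchanged.)

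For contrast, the paper stays entirely inside coherent cohomology: after the same self-duality, it applies the Poincar\'e residue sequence to reduce the claim to showing that $\bigoplus_i H^0(E_i;\scrO_{E_i})\to H^1(\hU;\Omega^1_{\hU})$ is an isomorphism, and it identifies $H^1(\hU;\Omega^1_{\hU})$ with $H^2(\hU;\Cee)=\bigoplus_i H^2(E_i;\Cee)$ via the sequence $0\to \Omega^1_{\hU}(\log D)(-D)\to\Omega^1_{\hU}\to\Omega^1_D/\tau^1_D\to 0$ together with the vanishing $H^1(\hU;\Omega^1_{\hU}(\log D)(-D))=0$ for Du Bois singularities --- a coherent vanishing theorem of Steenbrink, not a degeneration statement --- plus negative definiteness of the intersection form. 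If you want to import a single Hodge-theoretic input into your argument, that vanishing is the correct one; as written, your key step is unjustified and the agreement of the final numbers is not evidence that it can be justified without substantial additional work.
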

\begin{proof}   There is a perfect pairing 
$$\Omega^1_{\hU}(\log D) \otimes \Omega^1_{\hU}(\log D) \to \Omega^2_{\hU}(\log D) = K_{\hU} \otimes \scrO_{\hU}(D)\cong \scrO_{\hU}.$$
Thus  the dual $T_{\hU}(-\log D)$ of $\Omega^1_{\hU}(\log D)$ is identified with 
$\Omega^1_{\hU}(\log D)$. 
From the Poincar\'e residue sequence
$$0 \to  \Omega^1_{\hU} \to \Omega^1_{\hU}(\log D) \to \bigoplus_i\scrO_{E_i} \to 0$$
and the fact that   $H^2(\hU; \Omega^1_{\hU})  =0$ for dimension reasons,
there is a long exact sequence
$$\bigoplus_iH^0(\scrO_{E_i}) \to H^1(\hU; \Omega^1_{\hU}) \to H^1(\hU; \Omega^1_{\hU}(\log D)) \to \bigoplus_iH^1(\scrO_{E_i}) \to 0.$$
 Note that $\bigoplus_iH^1(\scrO_{E_i}) =0$ if $D$ is a cusp singularity whereas $\bigoplus_iH^1(\scrO_{E_i}) = H^1(D; \scrO_D)$ has dimension one if $D$ is simple elliptic. 
There is an exact sequence (cf.\  \cite[Lemma 2.1(iii)]{FL22d})
$$0 \to \Omega^1_{\hU}(\log D)(-D) \to \Omega^1_{\hU} \to \Omega^1_D/\tau^1_D \to 0,$$
where $\Omega^1_D/\tau^1_D$ is the K\"ahler differentials mod the torsion subcomplex. Since cusp singularities are log canonical, and hence Du Bois, it follows from a result of  Steenbrink \cite[p.\ 1369]{Steenbrink} that
  $$H^1(\hU; \Omega^1_{\hU}(\log D)(-D) ) =0$$  (this is a very special case of   the ``extra vanishing lemma"   \cite[Lemma 2.5]{FL22d}). Moreover,  $H^2(\hU; \Omega^1_{\hU}(\log D)(-D) ) =0$    for dimension reasons.  Thus 
  $$H^1(\hU; \Omega^1_{\hU}) \cong H^1(D; \Omega^1_D/\tau^1_D) \cong   \bigoplus_iH^1(E_i;\Omega^1_{E_i}) \cong H^2(D;\Cee)\cong  H^2(\hU;\Cee).$$  The map $\bigoplus_iH^0(E_i; \scrO_{E_i}) \to H^1(\hU; \Omega^1_{\hU})$ from the Poincar\'e residue sequence is the same as the fundamental class map $\bigoplus_iH^0(E_i) \to \bigoplus_iH^2(D_i)$ and hence is an isomorphism (by negative definiteness). Hence  
$$H^1(\hU; T_{\hU}(-\log D)) \cong H^1(\hU; \Omega^1_{\hU}(\log D))  \cong \bigoplus_iH^1(E_i;\scrO_{E_i})$$ 
and thus has dimension $1$ if $D$ is simple elliptic and is $0$ if $D$ is a cusp.
\end{proof}

We can see a more precise version of Lemma~\ref{claim0}(i) from a slightly different point of view. There is a commutative diagram
$$\begin{CD}
@. 0 @. 0 @. @. @.\\
@. @VVV @VVV @. @. @.\\
@. T_{\hU}(-D) @= T_{\hU}(-D) @. @. @.\\
@. @VVV @VVV @. @. @.\\
0@>>> T_{\hU}(-\log D) @>>> T_{\hU} @>>>\bigoplus_iN_{E_i/\hU} @>>> 0 @.\\
@. @VVV @VVV @VVV @. @.\\
0@>>> T^0_D @>>> T_{\hU}|D @>>>N_{D/\hU} @>>> T^1_D @>>> 0\\
@. @VVV @VVV @. @. @.\\
@. 0 @. 0 @. @. @.
\end{CD}$$
Here $T^0_D$ is the sheaf of derivations of $\scrO_D$ and $H^1(D;T^0_D)$ is the tangent space to locally trivial deformations of $D$. Thus $H^1(D;T^0_D) =0$ in the cusp case and $H^1(D;T^0_D) = H^1(D; T_D)$ has dimension $1$ in the simple elliptic case.

\begin{lemma} In the simple elliptic case, the map $H^1(\hU; T_{\hU}(-\log D)) \to H^1(D; T_D)$ is an isomorphism.
\end{lemma}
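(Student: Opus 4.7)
The plan is to extract from the big commutative diagram the short exact sequence formed by its left column,
$$0 \to T_{\hU}(-D) \to T_{\hU}(-\log D) \to T^0_D \to 0,$$
and to analyze the associated long exact sequence in cohomology. The map in question is the natural restriction map appearing there, and to show it is an isomorphism it suffices (given the dimension counts already in hand) to check surjectivity; for this I need the connecting homomorphism into $H^2(\hU; T_{\hU}(-D))$ to vanish.

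The main step is therefore to show $H^2(\hU; T_{\hU}(-D)) = 0$. This is a purely dimensional vanishing: since $U$ is Stein and $\pi\colon\hU\to U$ is proper with fibers of dimension at most one, $R^i\pi_*\mathcal{F}=0$ for $i\ge 2$ for any coherent sheaf $\mathcal{F}$ on $\hU$, and then Theorem~B on $U$ combined with the Leray spectral sequence gives $H^i(\hU;\mathcal{F}) = 0$ for all $i\ge 2$. This is the same vanishing that was already invoked in the proof of Lemma~\ref{claim0} for $H^2(\hU;\Omega^1_{\hU})$, and indeed, since $K_{\hU}\cong \scrO_{\hU}(-D)$ one has $T_{\hU}(-D) = T_{\hU}\otimes K_{\hU}\cong \Omega^1_{\hU}$, so it is literally the same statement.

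Given this vanishing, the restriction map $H^1(\hU; T_{\hU}(-\log D)) \to H^1(D; T^0_D)$ is surjective. On the source side, Lemma~\ref{claim0}(i) gives dimension one. On the target side, $D$ is a smooth elliptic curve, so $T^0_D = T_D$ is the trivial line bundle and $H^1(D;T^0_D)\cong H^1(D;\scrO_D)$ is also one-dimensional. Hence the surjection is an isomorphism.

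There is no real obstacle here; everything is immediate from the diagram, the previous lemma, and the dimension-based vanishing. Conceptually the content is that $H^1(\hU; T_{\hU}(-\log D))$ parametrizes first-order deformations of $\hU$ preserving the effective divisor $D$, and the map records how the modulus of $D$ varies; the lemma says that the single modulus of the simple elliptic singularity is realized precisely by the single modulus of the elliptic exceptional curve.
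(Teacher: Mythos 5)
Your proof is correct and follows essentially the same route as the paper: surjectivity of the restriction map from the vanishing of $H^2(\hU; T_{\hU}(-D))$ together with the equality of dimensions from Lemma~\ref{claim0}(i) and the fact that $D$ is elliptic. You merely make explicit the justification for $H^2(\hU; T_{\hU}(-D))=0$ (Stein base plus fibers of dimension at most one, or equivalently the identification $T_{\hU}(-D)\cong\Omega^1_{\hU}$ already used in the proof of Lemma~\ref{claim0}), which the paper leaves implicit.
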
 
\begin{proof} By Lemma~\ref{claim0}, both $H^1(\hU; T_{\hU}(-\log D))$ and $ H^1(D; T_D)$ have dimension $1$.  The map $H^1(\hU; T_{\hU}(-\log D)) \to H^1(D; T_D)$ is surjective since $H^2(\hU; T_{\hU}(-D)) =0$. Thus it is an isomorphism. 
\end{proof} 

Using the middle row of the above commutative diagram then gives:

\begin{corollary} \begin{enumerate} \item[\rm(i)] In the simple elliptic case, there is an exact sequence
$$0 \to H^1(D; T_D) \to H^1(\hU; T_{\hU}) = H^0(U; R^1\pi_*T_{\hU}) \to H^1(D; N_{D/\hU}) \to 0.$$
\item[\rm(ii)] In the cusp case, 
$$H^1(\hU; T_{\hU}) = H^0(U; R^1\pi_*T_{\hU}) \cong  \bigoplus_iH^1(E_i; N_{E_i/\hU}) . \qed$$
\end{enumerate}
\end{corollary}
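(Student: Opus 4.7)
The plan is to read off both parts of the corollary directly from the long exact cohomology sequence of the middle (horizontal) row of the commutative diagram, namely
\[
0 \to T_{\hU}(-\log D) \to T_{\hU} \to \bigoplus_i N_{E_i/\hU} \to 0,
\]
combined with the vanishing and identification results already established.

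First I would identify the global sections and higher cohomology of $\bigoplus_i N_{E_i/\hU}$. In the cusp case each $E_i \cong \Pee^1$ with $E_i^2 = -e_i \le -2$, so $N_{E_i/\hU} \cong \scrO_{\Pee^1}(-e_i)$ has no sections. In the simple elliptic case $D = E_1$ is a smooth elliptic curve and $N_{D/\hU}$ has degree $-m < 0$, so again $H^0(N_{D/\hU})=0$. Thus in either case the term $H^0(\bigoplus_i N_{E_i/\hU})$ vanishes from the long exact sequence.

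Next I would observe that $H^2(\hU; T_{\hU}(-\log D)) = 0$: since $U$ is Stein and $\pi$ is proper, the Leray spectral sequence together with the vanishing of $R^i\pi_*$ for $i\ge 2$ on a surface gives $H^2(\hU; \mathcal F) = 0$ for every coherent sheaf $\mathcal F$. The same Leray argument identifies $H^1(\hU; T_{\hU})$ with $H^0(U; R^1\pi_*T_{\hU})$, as in the statement. The long exact sequence therefore truncates to
\[
0 \to H^1(\hU; T_{\hU}(-\log D)) \to H^1(\hU; T_{\hU}) \to \bigoplus_i H^1(E_i; N_{E_i/\hU}) \to 0.
\]

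Finally, I would plug in the calculations of $H^1(\hU; T_{\hU}(-\log D))$ from the preceding results. In the cusp case Lemma~\ref{claim0}(ii) gives $H^1(\hU; T_{\hU}(-\log D)) = 0$, so the middle map is an isomorphism, yielding part (ii). In the simple elliptic case the immediately preceding lemma identifies $H^1(\hU; T_{\hU}(-\log D))$ with $H^1(D; T_D)$, and $D$ is irreducible so $\bigoplus_i H^1(E_i; N_{E_i/\hU}) = H^1(D; N_{D/\hU})$; substituting these identifications gives part (i). There is no serious obstacle here; the only thing to be careful about is the $H^2$ vanishing, which I would justify once via the Leray argument above and then use for both parts.
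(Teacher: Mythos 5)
Your proof is correct and is essentially the paper's own argument: the corollary is stated with a \qed precisely because it follows from the long exact sequence of the middle row $0 \to T_{\hU}(-\log D) \to T_{\hU} \to \bigoplus_i N_{E_i/\hU} \to 0$, the vanishing of $H^0$ of the (negative-degree) normal bundles and of $H^2(\hU;T_{\hU}(-\log D))$, and the identifications from Lemma~\ref{claim0} and the preceding lemma, exactly as you lay out. Your extra care with the Leray/Stein justification of the $H^2$ vanishing and of $H^1(\hU;T_{\hU})=H^0(U;R^1\pi_*T_{\hU})$ is consistent with the paper's ``for dimension reasons.''
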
 
 
\begin{remark}\label{equiremark}  For a simple elliptic singularity, we can consider the \textsl{equisingular} deformations, i.e.\ those for which the singularity remains simple elliptic but the $j$-invariant of the resolution, i.e.\ the isomorphism type, is allowed to vary. (For a cusp singularity, an equisingular deformation is the same thing as the trivial deformation.)  We saw in the course of the proof of Lemma~\ref{claim0} that $H^1(\hU; \Omega^1_{\hU}(\log D)(-D)) = 0$. By local duality and the first line of the proof of Lemma~\ref{claim0}, 
$$0=(H^1(\hU; \Omega^1_{\hU}(\log D)(-D)) )\spcheck = H^1_D(\hU; \Omega^1_{\hU}(\log D)) = H^1_D(\hU; T_{\hU}(-\log D)) .$$ By Wahl's theory \cite[Proposition 2.5, Proposition 2.7]{WahlI}, since $H^1_D(\hU; T_{\hU}(-\log D)) = 0$, there is an inclusion $H^1(\hU; T_{\hU}(-\log D)) \to H^0(U; T^1_U)$ whose image is  identified with  the tangent space to the equisingular deformations of $U$ inside the tangent space to all deformations. 
\end{remark}

\subsection{Deformations of elliptic singularities: the global case}

We now consider the following global issue: Let $Y$ be a compact analytic surface or a projective surface with a collection of simple elliptic and cusp singularities of multiplicity at most $2$, or more generally which are local complete intersections. Then $T^2_Y =0$ and $H^1(Y; T^1_Y) =0$ since the singularities are isolated. The local to global spectral sequence for $\mathbb{T}^i_Y = \Ext^i(\Omega^1_Y, \scrO_Y)$ then reduces to the exact sequence
$$0 \to H^1(Y; T^0_Y) \to  \mathbb{T}^1_Y \to H^0(Y; T^1_Y) \to  H^2(Y; T^0_Y) \to \mathbb{T}^2_Y \to 0.$$
Thus we have:

\begin{lemma}\begin{enumerate}\label{lemma1101} \item[\rm(i)]  If the map $H^0(Y; T^1_Y) \to  H^2(Y; T^0_Y) $ is surjective, then $\mathbb{T}^2_Y=0$.
 \item[\rm(ii)] If $H^2(Y; T^0_Y) =0$, then $\mathbb{T}^2_Y=0$ and the map $\mathbb{T}^1_Y \to H^0(Y; T^1_Y)$ is surjective. \qed
\end{enumerate}
\end{lemma}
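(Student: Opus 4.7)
The statement is essentially a diagram chase from the five-term exact sequence
$$0 \to H^1(Y; T^0_Y) \to  \mathbb{T}^1_Y \to H^0(Y; T^1_Y) \xrightarrow{\delta}  H^2(Y; T^0_Y) \to \mathbb{T}^2_Y \to 0$$
that has already been derived in the paragraph immediately preceding the lemma. So my plan is to take that sequence as given and read off both assertions from exactness; the only thing to verify is that the hypotheses on $T^0_Y$ and $T^1_Y$ feed into the correct spots in the sequence.

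For part (i), I would observe that exactness at $\mathbb{T}^2_Y$ says that $H^2(Y; T^0_Y) \to \mathbb{T}^2_Y$ is surjective, and exactness at $H^2(Y; T^0_Y)$ says its kernel is the image of $\delta$. Under the hypothesis that $\delta$ is surjective, that kernel is all of $H^2(Y; T^0_Y)$, and hence the surjection onto $\mathbb{T}^2_Y$ is zero, forcing $\mathbb{T}^2_Y = 0$.

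For part (ii), if $H^2(Y; T^0_Y) = 0$ then exactness at $\mathbb{T}^2_Y$ immediately gives $\mathbb{T}^2_Y = 0$, and exactness at $H^0(Y; T^1_Y)$ together with $\delta = 0$ shows that $\mathbb{T}^1_Y \to H^0(Y; T^1_Y)$ is surjective. Note also that (ii) is formally a consequence of (i), since $H^2(Y; T^0_Y) = 0$ makes the hypothesis of (i) vacuously true, with the surjectivity of $\mathbb{T}^1_Y \to H^0(Y; T^1_Y)$ an additional observation from exactness of the same sequence one step to the left.

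There is no real obstacle here: the substantive work has already been done in setting up the local-to-global spectral sequence for $\mathbb{T}^i_Y = \Ext^i(\Omega^1_Y, \scrO_Y)$, using the facts that $T^2_Y = 0$ for local complete intersection singularities and that $H^1(Y; T^1_Y) = 0$ because $T^1_Y$ is supported on the (finite, isolated) singular locus. Once these inputs are in place the lemma is a one-line consequence of exactness, which is presumably why the authors state it as a lemma rather than proving it in detail.
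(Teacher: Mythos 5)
Your proposal is correct and is exactly the argument the paper intends: the lemma is stated with a \qed precisely because both parts are immediate diagram chases from the five-term exact sequence displayed just before it. Your reading of the hypotheses into the exactness at $H^2(Y; T^0_Y)$, $\mathbb{T}^2_Y$, and $H^0(Y; T^1_Y)$ is accurate, and your remark that (ii) subsumes the hypothesis of (i) is a correct observation.
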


This is related to the Leray spectral sequence as follows. Since $\pi\colon \hY \to Y$ is the minimal resolution, it is equivariant and hence $T^0_Y \cong R^0\pi_*T_{\hY}$. Then the Leray spectral sequence $E^{p,q}_2 = H^p(Y; R^q\pi_*T_{\hY}) \implies H^{p+q}(\hY; T_{\hY})$ gives an exact sequence
$$0 \to H^1(Y;T^0_Y) \to  H^1(\hY; T_{\hY}) \to H^0(Y; R^1\pi_*T_{\hY}) \to    H^2(Y;T^0_Y) \to  H^2(\hY; T_{\hY})\to 0.$$

\begin{lemma}\label{lemma1111} \begin{enumerate} \item[\rm(i)]  If $H^2(\hY; T_{\hY}(-\log D)) =0$, then $H^2(\hY; T_{\hY}) =0$.
 \item[\rm(ii)] If $H^2(\hY; T_{\hY}(- D)) =0$, then $H^2(\hY; T_{\hY}(-\log D))  = H^2(\hY; T_{\hY}) =0$ and the map 
 $$H^1(\hY; T_{\hY}(-\log D)) \to H^1(D; T^0_D)$$ is surjective.
\end{enumerate}
\end{lemma}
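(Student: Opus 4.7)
The plan is to deduce both parts directly from the two short exact sequences that appear as the middle row and the left column of the big commutative diagram established in Section~1, but now pushed out to the global surface $\hY$. Since those sequences are exact sequences of coherent sheaves whose exactness may be verified locally, they are automatically valid on $\hY$ (where $D$ is the total reduced exceptional divisor of $\pi\colon \hY\to Y$, which has local normal crossings since each component is either a smooth elliptic curve or a cycle of smooth rational curves). Concretely, the two sequences I would use are
\begin{equation*}
0 \to T_{\hY}(-\log D) \to T_{\hY} \to \bigoplus_i N_{E_i/\hY} \to 0,
\end{equation*}
and
\begin{equation*}
0 \to T_{\hY}(-D) \to T_{\hY}(-\log D) \to T^0_D \to 0.
\end{equation*}

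For part (i), I would simply take the long exact sequence in cohomology of the first SES. Since each $N_{E_i/\hY}$ is a line bundle on the curve $E_i$, we have $H^2(\hY;\bigoplus_i N_{E_i/\hY})=\bigoplus_i H^2(E_i;N_{E_i/\hY})=0$ for dimension reasons. Thus the map $H^2(\hY;T_{\hY}(-\log D))\to H^2(\hY;T_{\hY})$ is surjective, which gives the implication claimed in (i).

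For part (ii), I would first apply the same dimension argument to the second SES: since $T^0_D$ is supported on the curve $D$, we have $H^2(\hY;T^0_D)=H^2(D;T^0_D)=0$, so the long exact sequence gives a surjection $H^2(\hY;T_{\hY}(-D))\twoheadrightarrow H^2(\hY;T_{\hY}(-\log D))$. The hypothesis $H^2(\hY;T_{\hY}(-D))=0$ therefore forces $H^2(\hY;T_{\hY}(-\log D))=0$, and then part (i) yields $H^2(\hY;T_{\hY})=0$. For the surjectivity assertion, I would read off from the same long exact sequence the piece
\begin{equation*}
H^1(\hY;T_{\hY}(-\log D)) \to H^1(D;T^0_D) \to H^2(\hY;T_{\hY}(-D))=0,
\end{equation*}
which proves the desired surjectivity.

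There is essentially no obstacle here beyond setting up the two short exact sequences correctly; the only point that requires a moment of care is verifying that the SES involving $T^0_D$ is genuinely a sequence of sheaves on $\hY$ and remains exact when $D$ is a cycle of rational curves, but this is precisely the content of the column of the diagram already written down in the local case, and its global version follows by the same local calculation at the nodes of $D$.
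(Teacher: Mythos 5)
Your proof is correct and follows exactly the paper's own argument: the same two short exact sequences, combined with the vanishing of $H^2$ for sheaves supported on the one-dimensional set $D$, yield both parts via the long exact cohomology sequences. Nothing further is needed.
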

\begin{proof} We have the  two exact sequences
\begin{gather*}
0 \to T_{\hY}(-D) \to T_{\hY}(-\log D) \to T^0_D \to 0;\\
0 \to T_{\hY}(-\log D) \to T_{\hY} \to \bigoplus_i N_{E_i/\hY} \to 0.
\end{gather*}
Then the  fact that $H^2(D;T^0_D) = \bigoplus_i H^2(E_i; N_{E_i/\hY}) =0$ immediately implies the lemma.
\end{proof}

The following is the key result linking the deformation theory of $Y$ to certain vanishing statements: 

\begin{proposition}\label{prop1121}  \begin{enumerate} \item[\rm(i)] In the simple elliptic case, suppose that $H^2(\hY; T_{\hY}(-D) ) =0$. Then the map   
$$H^1(\hY; T_{\hY}) \to H^0(Y; R^1\pi_*T_{\hY})$$ is surjective and   $H^2(Y; T^0_Y) = \mathbb{T}^2_Y = 0$. Thus $\mathbb{T}^1_Y \to H^0(Y; T^1_Y)$ is surjective. 
\item[\rm(ii)] In the cusp case, suppose that $H^2(\hY; T_{\hY}(-\log D) ) =0$. Then the map   
$$H^1(\hY; T_{\hY}) \to H^0(Y; R^1\pi_*T_{\hY})$$ is surjective and   $H^2(Y; T^0_Y) = \mathbb{T}^2_Y = 0$. Thus $\mathbb{T}^1_Y \to H^0(Y; T^1_Y)$ is surjective. 
\end{enumerate}
\end{proposition}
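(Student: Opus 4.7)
The plan is to deduce both parts from the Leray long exact sequence displayed just before the statement of the proposition,
$$0 \to H^1(Y;T^0_Y) \to H^1(\hY; T_{\hY}) \to H^0(Y; R^1\pi_*T_{\hY}) \to H^2(Y;T^0_Y) \to H^2(\hY; T_{\hY})\to 0.$$
In both cases it is enough to prove that $H^2(\hY; T_{\hY})=0$ and that the map $H^1(\hY; T_{\hY}) \to H^0(Y; R^1\pi_*T_{\hY})$ is surjective; exactness will then force $H^2(Y; T^0_Y) = 0$, and Lemma~\ref{lemma1101}(ii) will deliver both conclusions $\mathbb{T}^2_Y=0$ and the surjectivity of $\mathbb{T}^1_Y \to H^0(Y; T^1_Y)$.

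For the cusp case (ii), the hypothesis $H^2(\hY; T_{\hY}(-\log D))=0$ combined with Lemma~\ref{lemma1111}(i) gives $H^2(\hY; T_{\hY})=0$. For the surjectivity onto the global stalk I would apply the long exact sequence attached to
$$0 \to T_{\hY}(-\log D) \to T_{\hY} \to \bigoplus_i N_{E_i/\hY} \to 0,$$
together with the same vanishing, producing a surjection $H^1(\hY; T_{\hY}) \twoheadrightarrow \bigoplus_i H^1(E_i; N_{E_i/\hY})$. By the corollary preceding Remark~\ref{equiremark}, in the cusp case one has $H^0(Y; R^1\pi_*T_{\hY}) \cong \bigoplus_i H^1(E_i; N_{E_i/\hY})$, and since $R^1\pi_*T_{\hY}$ is skyscraper at the $p_i$, a local-to-global compatibility identifies these two maps, so the surjectivity passes.

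For the simple elliptic case (i), Lemma~\ref{lemma1111}(ii) applied with $H^2(\hY; T_{\hY}(-D))=0$ yields simultaneously $H^2(\hY; T_{\hY})=0$ and the surjectivity of $H^1(\hY; T_{\hY}(-\log D)) \to H^1(D; T^0_D) = \bigoplus_i H^1(D_i; T_{D_i})$. By the corollary preceding Remark~\ref{equiremark}, for each simple elliptic $p_i$ the stalk of $R^1\pi_*T_{\hY}$ fits in
$$0 \to H^1(D_i; T_{D_i}) \to H^1(\hU_i; T_{\hU_i}) \to H^1(D_i; N_{D_i/\hU_i}) \to 0,$$
so to hit $H^0(Y; R^1\pi_*T_{\hY}) = \bigoplus_i H^1(\hU_i; T_{\hU_i})$ it suffices to hit the equisingular subspace $\bigoplus_i H^1(D_i; T_{D_i})$ and the normal-bundle quotient $\bigoplus_i H^1(D_i; N_{D_i/\hU_i})$. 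The second is handled exactly as in (ii) via the sheaf sequence with normal bundles; the first via the factorization $H^1(\hY; T_{\hY}(-\log D)) \to H^1(\hY; T_{\hY}) \to H^0(Y; R^1\pi_*T_{\hY}) \to \bigoplus_i H^1(D_i; T_{D_i})$, whose composite is the surjection supplied by Lemma~\ref{lemma1111}(ii).

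The main obstacle is the bookkeeping needed to assemble the two partial surjections into a full surjection of $H^1(\hY; T_{\hY})$ onto $H^0(Y; R^1\pi_*T_{\hY})$. This reduces to a diagram chase: the middle row of the commutative diagram following Lemma~\ref{claim0} filters each stalk $H^1(\hU_i; T_{\hU_i})$ into equisingular and normal-bundle pieces, and one must verify that this local filtration is compatible with the global filtration on $H^1(\hY; T_{\hY})$ coming from the sheaf sequence $0 \to T_{\hY}(-\log D) \to T_{\hY} \to \bigoplus_i N_{E_i/\hY} \to 0$. Once compatibility is checked, the two surjectivities combine and the rest of the argument is formal.
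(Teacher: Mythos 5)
Your proposal is correct and follows essentially the same route as the paper: the two partial surjections (onto $H^1(D;T_D)$ from $H^2(\hY;T_{\hY}(-D))=0$, and onto $H^1(D;N_{D/\hY})$ from $H^2(\hY;T_{\hY}(-\log D))=0$) are assembled via exactly the commutative diagram the paper writes down, after which the Leray sequence and Lemma~\ref{lemma1101}(ii) finish the argument. One small correction of bookkeeping: $\bigoplus_i H^1(D_i;T_{D_i})$ sits inside $H^0(Y;R^1\pi_*T_{\hY})$ as a \emph{subspace} (not a quotient), so your ``factorization'' should be read as the assertion that the image of $H^1(\hY;T_{\hY})$ contains this subspace because it contains the image of $H^1(\hY;T_{\hY}(-\log D))$, which surjects onto it.
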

\begin{proof} In the simple elliptic case, there is a commutative diagram
$$\begin{CD}
@. H^1(\hY; T_{\hY}(-\log D)) @>>> H^1(\hY; T_{\hY}) @>>>  H^1(D;N_{D/\hY}) @. \\
@. @VVV @VVV @VV{=}V @. \\
0 @>>> H^1(D; T_D) @>>>  H^0(Y; R^1\pi_*T_{\hY}) @>>> H^1(D;N_{D/\hY}) @>>> 0.
\end{CD}$$
By Lemma~\ref{lemma1111}, under the assumptions of (i), $H^2(\hY; T_{\hY}(-\log D)) =0$ and hence  $H^1(\hY; T_{\hY}) \to  H^1(D;N_{D/\hY})$ is surjective. Since $H^2(\hY; T_{\hY}(-D) ) =0$, the map $H^1(\hY; T_{\hY}(-\log D)) \to H^1(D; T_D)$ is surjective. Thus $H^1(\hY; T_{\hY}) \to  H^0(Y; R^1\pi_*T_{\hY})$ is surjective. Again by Lemma~\ref{lemma1111}, $H^2(\hY; T_{\hY}) =0$.  The Leray spectral sequence then gives an exact sequenece
$$H^1(\hY; T_{\hY}) \to H^0(Y; R^1\pi_*T_{\hY}) \to    H^2(Y;T^0_Y) \to    0.$$
Since  $H^1(\hY; T_{\hY}) \to H^0(Y; R^1\pi_*T_{\hY})$ is surjective, $H^2(Y;T^0_Y)=0$.  By Lemma~\ref{lemma1101}(ii), $\mathbb{T}^2_Y = 0$ and the map $\mathbb{T}^1_Y \to H^0(Y; T^1_Y)$ is surjective.  The proof in the cusp case is similar (and simpler).
\end{proof}

\begin{example}\label{ex13}  Suppose that $D\in |-K_{\hY}|$ and $D$ is smooth. Then 
$$H^2(\hY; T_{\hY}(-D)) = H^2(\hY; T_{\hY} \otimes K_{\hY}) = H^2(\hY; \Omega^1_{\hY}).$$
It is a standard result (due in various forms  to Kulikov-Persson-Roan) that either 
  $\hY$ is rational and $D$ is a single elliptic curve, or $\hY$ is the blowup of a minimal elliptic ruled surface  and $D = D_1 + D_2$ has two connected components $D_1$ and $D_2$, both sections of the ruling. In case $\hY$ is rational,  $H^2(\hY; \Omega^1_{\hY})=0$. If $\hY$ is the blowup of a minimal elliptic ruled surface, then $\dim H^2(\hY; \Omega^1_{\hY}) = 1$.   In this case, the exact sequence $0 \to T_{\hY}(-D) \to T_{\hY}(-\log D) \to T_D \to 0$ is identified with the Poincar\'e residue  sequence $$0 \to \Omega^1_{\hY} \to \Omega^1_{\hY}(\log (D_1+ D_2)) \to \scrO_{D_1} \oplus \scrO_{D_2} \to 0.$$
The Gysin map $H^1(D_i; \scrO_{D_i}) \to H^2(\hY; \Omega^1_{\hY})$ is an isomorphism for $i=1,2$. Hence the image of $H^1(Y; T_{\hY}(-\log D) )$ in $H^1(D_1; T_{D_1}) \oplus H^1(D_2; T_{D_2})$ has dimension one and $H^2(\hY; T_{\hY}(-\log D) )=0$. Note that $H^2(\hY; T_{\hY}(-D) ) \neq 0$; roughly speaking, this represents the obstruction to deforming the two elliptic curves $D_1$ and $D_2$ independently. However, $H^2(\hY; T_{\hY}) =0$ since this is a birational invariant of $\hY$ and $\hY$ is a ruled surface.  One can then check that  $H^0(Y; T^1_Y) \to  H^2(Y; T^0_Y) $ is surjective. Thus $\mathbb{T}^2_Y =0$ in this case as well. We will discuss this case in more detail below.
\end{example}

\subsection{Some variations of the above} In some of our applications, Proposition~\ref{prop1121} will not hold, and so we will need some variations on it. For the first variation, assume that $Y$ has simple elliptic singularities at the points $p_1, \dots, p_k$ and is otherwise smooth.  We begin by fixing some notation:

\begin{definition}\label{piiprime}
As always, let $\pi\colon \hY\to Y$ be the minimal resolution and let $D_i = \pi^{-1}(p_i)$.  Set $D_i' = \sum_{j\neq i}D_j$. We can form partial resolutions in two different ways: first, let $\pi_i \colon Y_i \to Y$ be a resolution at the singular point $p_i$ and an isomorphism elsewhere.  The exceptional set of $\pi_i$ is $D_i$.  Then there is an  induced morphism $\psi_i\colon \hY \to Y_i$, with exceptional set $D_i'$.  There is also  the morphism $\pi_i'\colon Y_i' \to Y$, which is a resolution at the points $p_j, j\neq i$ and is an isomorphism over $p_i$. Thus the exceptional set of $\pi_i'$ is $D_i'$. Of course, if $k=2$, then $\pi_1\colon Y_1 \to Y$ is identified with $\pi_2'\colon Y_2' \to Y$.  Let  $\psi_i'\colon\hY \to Y_i'$ be the corresponding induced morphism, with exceptional set $D_i$. 
\end{definition} 

We shall look at $\pi_i \colon Y_i \to Y$  more systematically in \S\ref{section2}. At present, we are concerned with the morphism $\pi_i'\colon Y_i' \to Y$.

\begin{definition} Let $\Omega^1_{Y_i'}(\log D_i')$  be the sheaf equal to $\Omega^1_{Y_i'}$ on the singular surface $Y_i'-D_i'$ and to  $\Omega^1_{Y_i'}(\log D_i')$ on the smooth surface $Y_i'-\{p_i\}$, glued together in the obvious way. Set $T^i_{Y_i'}(-\log D_i') =\mathit{Ext}^i(\Omega^1_{Y_i'}(\log D_i'), \scrO_{Y_i'})$ and $\mathbb{T}^i_{Y_i'}(-\log D_i') =\Ext^i(\Omega^1_{Y_i'}(\log D_i'), \scrO_{Y_i'})$.
\end{definition}

\begin{theorem} Let $\mathbf{Def}_{Y_i', D_i'}$ be the functor of deformations of $Y_i'$ keeping the effective Cartier divisor  $D_i'$. Then the tangent space to $\mathbf{Def}_{Y_i', D_i'}$ is  $\mathbb{T}^1_{Y_i'}(-\log D_i') $ and $\mathbb{T}^2_{Y_i'}(-\log D_i')$ is an   obstruction space for $\mathbf{Def}_{Y_i', D_i'}$.
\end{theorem}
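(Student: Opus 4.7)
The plan is to adapt the standard deformation theory of pairs $(X,D)$ to this mildly singular setting, exploiting the fact that $D_i'$ is disjoint from the simple elliptic singularity at $p_i$. I would work on two opens covering $Y_i'$. On $V := Y_i' \setminus \{p_i\}$ the surface is smooth and $D_i'$ is a simple normal crossings divisor (each $D_j$ for $j \ne i$ is either a smooth elliptic curve or a cycle of smooth rational curves), so the classical theory identifies the tangent and obstruction spaces of $\mathbf{Def}_{V,D_i'|_V}$ with $\mathit{Ext}^1$ and $\mathit{Ext}^2$ of $\Omega^1_V(\log D_i')$ against $\scrO_V$. On $W := Y_i' \setminus D_i'$, which contains no divisor and only the isolated hypersurface (hence lci) singularity at $p_i$, deformations of $W$ as an analytic space are controlled by $\mathit{Ext}^*(\Omega^1_W, \scrO_W)$ in the usual way. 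These two descriptions agree on $V\cap W$, and the resulting glued sheaf is precisely the sheaf $\Omega^1_{Y_i'}(\log D_i')$ from the definition immediately preceding the statement.

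Next I would apply the local-to-global Ext spectral sequence
$$E_2^{p,q} = H^p(Y_i'; T^q_{Y_i'}(-\log D_i')) \Longrightarrow \mathbb{T}^{p+q}_{Y_i'}(-\log D_i')$$
and run the standard \v{C}ech cocycle argument in the style of Sernesi on a sufficiently fine Stein cover refining $\{V,W\}$: a first-order deformation of the pair $(Y_i', D_i')$ consists of (i) a local first-order deformation of the pair on each chart, classified pointwise by a section of $T^1_{Y_i'}(-\log D_i')$, together with (ii) isomorphisms on double overlaps between the restrictions of these local deformations, whose indeterminacy is a $1$-cocycle with values in $T^0_{Y_i'}(-\log D_i') = \mathit{Hom}(\Omega^1_{Y_i'}(\log D_i'), \scrO_{Y_i'})$. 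Modulo the natural coboundaries these data assemble into an element of $\mathbb{T}^1_{Y_i'}(-\log D_i')$, and every such element integrates to an honest first-order deformation of the pair. The standard calculation of the failure to lift from first to second order splices the local obstructions with the gluing cocycle obstructions and places the total global obstruction in $\mathbb{T}^2_{Y_i'}(-\log D_i')$.

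The main technical point is justifying that the glued sheaf $\Omega^1_{Y_i'}(\log D_i')$ genuinely controls the deformation functor of the pair, i.e.\ that it represents the relevant truncation of the cotangent complex of $(Y_i',D_i')$. This is where the lci hypothesis at $p_i$ enters: for a simple elliptic singularity of multiplicity at most two (which is a hypersurface), the cotangent complex is quasi-isomorphic to a two-term complex of free modules, so that no higher cotangent contributions appear on $W$; on $V$, the snc property of $D_i'$ guarantees that the classical log cotangent sheaf suffices. With these local facts established, the general Schlessinger--Sernesi formalism for deformations of pairs applies verbatim to yield the claimed identification of tangent and obstruction spaces.
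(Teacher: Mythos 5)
Your proposal is correct and is essentially the argument the paper has in mind: the paper's proof consists of the single remark that this is "a minor variation on the proof for smooth $Y$ (cf.\ Sernesi 3.4.17)," and your two-open decomposition separating the lci singular point from the snc divisor, together with the local-to-global Ext/\v{C}ech argument for the glued log cotangent sheaf, is precisely the content of that variation spelled out in detail.
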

\begin{proof} This is a minor variation on the proof for smooth $Y$ (cf.\ e.g.\ Sernesi \cite[3.4.17]{Sernesi}). 
\end{proof}

\begin{corollary}\label{cor16}  In the above notation, suppose that $H^2(Y_i'; T^0_{Y_i'}(-\log D_i')) = 0$ and that $p_i$ is a local complete intersection. Then there exists a deformation of $Y$ which smooths the singular point $p_i$ and for which the remaining points $p_j$, $j \neq i$, remain simple elliptic singularities.
\end{corollary}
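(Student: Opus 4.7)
The plan is to apply the theorem preceding the corollary to the pair $(Y_i', D_i')$. Concretely, I would first show that the deformation functor $\mathbf{Def}_{Y_i', D_i'}$ is unobstructed and that its forgetful map to the local deformation functor of the germ $(Y, p_i)$ is smooth on tangent spaces; then lift a smoothing direction at $p_i$ to a global deformation of the pair; and finally contract (the family version of) $D_i'$ over the deformation disk to recover a deformation of $Y$.

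For the first step, I would analyze $\mathbb{T}^1_{Y_i'}(-\log D_i')$ and $\mathbb{T}^2_{Y_i'}(-\log D_i')$ by means of the local-to-global $\mathit{Ext}$ spectral sequence. The sheaves $T^j_{Y_i'}(-\log D_i')$ for $j\ge 1$ are supported on the singular locus of the logarithmic cotangent complex; on $Y_i' \setminus \{p_i\}$ the pair $(Y_i', D_i')$ is log smooth, since each component of $D_i'$ is a smooth elliptic curve sitting in the smooth locus of $Y_i'$. Hence these sheaves are concentrated at $p_i$, where they coincide with $T^j_{Y, p_i}$ because $p_i \notin D_i'$. The l.c.i.\ hypothesis on $p_i$ gives $T^2_{Y,p_i}=0$, whence $H^0(Y_i'; T^2_{Y_i'}(-\log D_i'))=0$, and $H^1(Y_i'; T^1_{Y_i'}(-\log D_i'))=0$ automatically as the sheaf is supported at a point. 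Combined with the hypothesis $H^2(Y_i'; T^0_{Y_i'}(-\log D_i'))=0$, the spectral sequence then yields $\mathbb{T}^2_{Y_i'}(-\log D_i')=0$ together with the surjectivity of the forgetful map $\mathbb{T}^1_{Y_i'}(-\log D_i') \to T^1_{Y,p_i}$.

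Unobstructedness of $\mathbf{Def}_{Y_i', D_i'}$ together with the surjective tangent map and the smoothness of the versal deformation of the l.c.i.\ germ $(Y, p_i)$ imply that the induced forgetful morphism of (formal) deformation spaces is smooth. A simple elliptic l.c.i.\ singularity admits a smoothing within its versal deformation, so we may choose a tangent direction at $p_i$ lying in the smoothing locus, lift it to $\mathbb{T}^1_{Y_i'}(-\log D_i')$ via the surjection above, and integrate (passing from a formal family to an analytic one by Artin approximation) to produce an analytic family $(\mathcal{Y}_i', \mathcal{D}_i') \to \Delta$ deforming $(Y_i', D_i')$ in which $p_i$ is smoothed. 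Each connected component of $\mathcal{D}_i'$ is then a flat family of smooth elliptic curves of fixed negative self-intersection with trivial relative canonical restriction (by adjunction, since this was so in the central fiber), so by the relative Grauert contractibility theorem $\mathcal{D}_i'$ contracts fiberwise over $\Delta$ to produce the sought-after family $\mathcal{Y} \to \Delta$. The step I expect to require the most care is this final contraction: one must check that the individual Grauert contractions in each fiber fit together into a global analytic map over $\Delta$, and that each contracted point is genuinely simple elliptic of the correct multiplicity rather than a more degenerate minimally elliptic singularity --- this follows from the preservation of the numerical invariants along the family and tautness of simple elliptic germs, but it is the one analytic (as opposed to purely deformation-theoretic) input.
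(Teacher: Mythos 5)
Your proposal is correct and follows essentially the same route as the paper: the local-to-global spectral sequence for $\mathbb{T}^\bullet_{Y_i'}(-\log D_i')$, together with $T^2_{Y,p_i}=0$ (l.c.i.), $H^1$ of the punctual sheaf $T^1$ vanishing, and the hypothesis $H^2(Y_i';T^0_{Y_i'}(-\log D_i'))=0$, yields unobstructedness of $\mathbf{Def}_{Y_i',D_i'}$ and surjectivity onto $H^0(Y;T^1_{Y,p_i})$, after which one lifts a smoothing direction and contracts the relative divisor $\mathcal{D}_i'$. The only cosmetic difference is in the final step, where the paper justifies the blowdown by Wahl's theory (relying on the vanishing $H^1_D(\hU;T_{\hU}(-\log D))=0$ recorded in Remark~\ref{equiremark}) rather than by a relative Grauert contraction argument, but this is the same underlying mechanism.
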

\begin{proof} In any case, identifying $H^0(Y_i'; T^1_{Y_i'})$ with  $H^0(Y; T^1_{Y,p_i})$ and using the fact that $H^0(Y; T^2_{Y,p_i}) =0$ by our assumption,  the local to global spectral sequence for  $\mathbb{T}^i_{Y_i'}(-\log D_i')$ gives an exact sequence
$$ \mathbb{T}^1_{Y_i'}(-\log D_i')  \to   H^0(Y; T^1_{Y,p_i}) \to H^2(Y_i'; T^0_{Y_i'}(-\log D_i')) \to \mathbb{T}^2_{Y_i'}(-\log D_i') \to 0.$$
 Under the hypothesis that $H^2(Y_i'; T^0_{Y_i'}(-\log D_i')) = 0$,  $ \mathbb{T}^2_{Y_i'}(-\log D_i') =0$,  and hence  $\mathbf{Def}_{Y_i', D_i'}$ is unobstructed. Moreover,    the map $ \mathbb{T}^1_{Y_i'}(-\log D_i')  \to   H^0(Y; T^1_{Y,p_i})$ is surjective. We can then find a deformation $\mathcal{Y}_i'\to \Delta$ of $Y_i'$ which smooths the singular point $p_i$ and keeps the effective Cartier divisor  $D_i'$, in the sense that there exists a relative Cartier divisor $\mathcal{D}_i' \subseteq \mathcal{Y}_i'$ which restricts over $0$ to $D_i'$. By Wahl's theory \cite[Proposition 2.7]{WahlI}, we can blow down $\mathcal{D}_i'$ to get a flat family over $\Delta$ which is a deformation of $Y$ smoothing $p_i$ and which is equisingular (in the   sense of Remark~\ref{equiremark}) at the remaining points $p_j$ as desired. 
\end{proof}

By analogy with Proposition~\ref{prop1121}(i), we then have: 

\begin{proposition}\label{prop1171} {\rm(i)} Suppose that $H^2(\hY; T_{\hY}(-\log D_i')(-D_i)) = 0$. Then $H^2(Y_i'; T^0_{Y_i'}(-\log D_i')) = 0$ and thus the conclusions of Corollary~\ref{cor16} hold. 

\smallskip
\noindent {\rm(ii)} If the natural map
$H^1(\hY; T_{\hY}(-\log D)) \to H^1(D_i; T_{D_i})$ is surjective and $H^2(\hY; T_{\hY}(-\log D))=0$, then $H^2(\hY; T_{\hY}(-\log D_i')(-D_i)) = 0$ and   the conclusions of Corollary~\ref{cor16} hold.
\end{proposition}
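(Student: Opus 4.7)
The strategy is to imitate Proposition~\ref{prop1121}(i), replacing the full resolution $\pi\colon \hY\to Y$ by the partial resolution $\psi_i'\colon \hY\to Y_i'$ (whose unique exceptional fiber is $D_i$) and inserting the logarithmic condition along $D_i'$ throughout. Part (ii) then reduces directly to the hypothesis of (i): tensoring the standard sequence $0\to T_{\hY}(-D_i)\to T_{\hY}(-\log D_i)\to T_{D_i}\to 0$ with the ``log along $D_i'$'' condition---legitimate since $D_i\cap D_i'=\emptyset$---yields
$$0 \to T_{\hY}(-\log D_i')(-D_i) \to T_{\hY}(-\log D) \to T_{D_i} \to 0,$$
whose long exact sequence, combined with the two hypotheses of (ii) (surjectivity onto $H^1(D_i;T_{D_i})$ kills the connecting homomorphism, and the vanishing of $H^2(\hY; T_{\hY}(-\log D))$ handles the rest), immediately produces $H^2(\hY; T_{\hY}(-\log D_i')(-D_i))=0$.

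For part (i), I would apply the Leray spectral sequence for $\psi_i'$ to the sheaf $T_{\hY}(-\log D_i')$. Since $D_i$ and $D_i'$ are disjoint, $R^0(\psi_i')_*T_{\hY}(-\log D_i') = T^0_{Y_i'}(-\log D_i')$ (the identification at $p_i$ being the standard fact $R^0\pi_*T_{\hat U}= T^0_U$ for the minimal resolution of a Gorenstein normal surface singularity), while $R^1(\psi_i')_*T_{\hY}(-\log D_i')$ is a skyscraper at $p_i$ with stalk $H^1(\hat U_i;T_{\hat U_i})$ for $\hat U_i$ a Stein neighbourhood of $D_i$. The five-term sequence of Leray reduces (i) to the two statements: (I) $H^2(\hY; T_{\hY}(-\log D_i'))=0$, and (II) the Leray edge map $H^1(\hY; T_{\hY}(-\log D_i'))\to H^1(\hat U_i;T_{\hat U_i})$ is surjective.

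For (I), the displayed sequence above together with $H^2(D_i;T_{D_i})=0$ and the hypothesis $H^2(\hY; T_{\hY}(-\log D_i')(-D_i))=0$ give $H^2(\hY; T_{\hY}(-\log D))=0$; then the sequence $0\to T_{\hY}(-\log D)\to T_{\hY}(-\log D_i')\to N_{D_i/\hY}\to 0$ and $H^2(D_i;N_{D_i/\hY})=0$ yield (I). For (II), I would compare the long exact sequence of this last sequence on $\hY$ with its local counterpart $0\to T_{\hat U_i}(-\log D_i)\to T_{\hat U_i}\to N_{D_i/\hat U_i}\to 0$ on $\hat U_i$ (whose long exact sequence terminates in $0$ by Cartan B on the Stein $\hat U_i$). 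A five-lemma style diagram chase then reduces (II) to the surjectivity of the restriction $H^1(\hY; T_{\hY}(-\log D))\to H^1(\hat U_i;T_{\hat U_i}(-\log D_i))\cong H^1(D_i;T_{D_i})$, where the isomorphism is the local simple elliptic statement in the lemma immediately following Lemma~\ref{claim0}; this surjectivity was already produced in the argument for (I). The main obstacle I anticipate is the clean identification of the stalk of $R^1(\psi_i')_*T_{\hY}(-\log D_i')$ at $p_i$ in a form compatible with the global restriction maps: since $T_{\hat U_i}$ is not a line bundle, Grauert-Riemenschneider does not apply directly, and the relevant extension structure must be extracted from the corollary to Lemma~\ref{claim0}.
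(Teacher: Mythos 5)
Your proposal is correct and follows essentially the same route as the paper: part (ii) is verbatim the paper's long-exact-sequence argument for $0 \to T_{\hY}(-\log D_i')(-D_i) \to T_{\hY}(-\log D) \to T_{D_i} \to 0$, and part (i) is exactly the "minor variation on Proposition~\ref{prop1121}(i)" that the paper invokes, i.e.\ the Leray spectral sequence for $\psi_i'$ applied to $T_{\hY}(-\log D_i')$ together with the two auxiliary vanishings and the diagram chase against the local sequence from the corollary to Lemma~\ref{claim0}. The only slip is cosmetic: $\hU_i$ is not Stein (it contains the compact curve $D_i$), so the vanishing $H^2(\hU_i;\mathcal{F})=0$ you need comes not from Cartan B on $\hU_i$ but from the Leray spectral sequence for $\hU_i \to U_i$ over the Stein base $U_i$ with one-dimensional fibers.
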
 
\begin{proof} (i) A minor variation on the proof of  Proposition~\ref{prop1121}(i) shows that the Leray map
$$H^1(\hY; T_{\hY}(-\log D_i')) \to H^0(Y_i'; R^1(\psi_i')_*T_{\hY}(-\log D_i'))$$ is surjective and that $H^2(\hY; T_{\hY}(-\log D_i')) =0$. Thus 
$$H^2(Y_i'; T^0_{Y_i'}(-\log D_i')) = H^2(Y_i'; R^0(\psi_i')_*T_{\hY}(-\log D_i'))  =0.$$

\smallskip
\noindent (ii) From the exact sequence
$$0 \to T_{\hY}(-\log D_i')(-D_i) \to T_{\hY}(-\log D) \to T_{D_i} \to 0,$$
we get a long exact sequence
$$H^1(\hY; T_{\hY}(-\log D)) \to H^1(D_i; T_{D_i}) \to H^2(\hY; T_{\hY}(-\log D_i')(-D_i))\to H^2(\hY; T_{\hY}(-\log D)).$$
Thus, if the first map is surjective and the last group is $0$, then $H^2(\hY; T_{\hY}(-\log D_i')(-D_i)) = 0$. 
\end{proof}

\begin{example} Returning to Example~\ref{ex13}, with $\hY$ the blowup of a minimal elliptic ruled surface, $D_1$ and $D_2$ two disjoint sections with negative self-intersection,  and $K_{\hY} = \scrO_{\hY}(-D_1 -D_2)$, 
$$(T_{\hY}(-\log D_1)(-D_2))\spcheck \otimes K_{\hY} = \Omega^1_{\hY}(\log D_1) \otimes \scrO_{\hY}(D_2-D_1 - D_2) = \Omega^1_{\hY}(\log D_1)(-D_1).$$
Thus by Serre duality $H^2(\hY; T_{\hY}(-\log D_1)(-D_2))$ is dual to $H^0(\hY; \Omega^1_{\hY}(\log D_1)(-D_1))$. There is an  exact sequence
$$0 \to \Omega^1_{\hY}(\log D_1)(-D_1)  \to  \Omega^1_{\hY} \to \Omega^1_{D_1} \to 0.$$
Thus $H^0(\hY; \Omega^1_{\hY}(\log D_1)(-D_1)) =0$ since $H^0(\hY; \Omega^1_{\hY}) \to H^0(D_1; \Omega^1_{D_1})$ is an isomorphism. It follows that, if the singularity at $p_2$ is a local complete intersection, then  we can smooth the point corresponding to $p_2$ while keeping a simple elliptic singularity in the deformation corresponding to $p_1$. Such a deformation will have a minimal resolution which is a rational surface.
\end{example} 

The second variation is the following:

\begin{proposition}\label{prop1122}   Assume   that  $Y$ has a simple elliptic singularity at   $p_1$, a local complete intersection  cusp singularity at  $p_2$,  and is otherwise smooth. As before, let $\pi\colon \hY\to Y$ be the minimal resolution and let $D_i = \pi^{-1}(p_i)$. Suppose that $H^2(\hY; T_{\hY}(-\log (D_1+ D_2)) )= 0$. Then 
the map $\mathbb{T}^1_{Y_1}(-\log D_1)\to H^0(Y_1; T^1_{Y_1,p_2})$ is surjective. In particular, 
there exists a deformation of $Y$ which is equisingular at $p_1$ and deforms $p_2$ to a simple elliptic singularity with the same multiplicity as $p_2$.
\end{proposition}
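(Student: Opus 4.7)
Proof proposal: The argument parallels Corollary~\ref{cor16} and Proposition~\ref{prop1171}, adapted to the mixed situation of a simple elliptic singularity at $p_1$ and a cusp at $p_2$. The plan has three stages: establish a vanishing on $Y_1$; deduce the required surjectivity from the local to global spectral sequence for $\mathbb{T}^\bullet_{Y_1}(-\log D_1)$; and combine this with a cusp adjacency and Wahl's blowing down theorem to produce the claimed global deformation of $Y$.

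For the first stage I would work with the partial resolution $\psi_1\colon \hY\to Y_1$, which contracts $D_2$ to $p_2$ and is an isomorphism elsewhere. Since $D_1$ and $D_2$ are disjoint, $T_{\hY}(-\log(D_1+D_2))$ agrees with $T_{\hY}(-\log D_2)$ near $D_2$. From the short exact sequence
$$0 \to T_{\hY}(-\log D_2) \to T_{\hY} \to \bigoplus_j N_{E_j/\hY} \to 0,$$
where $D_2 = \bigcup_j E_j$, the negativity of each $N_{E_j/\hY}|_{E_j}$, and the standard identification $R^0\pi_{2*}T_{\hU_2} = T^0_{U_2}$ for the local minimal resolution $\pi_2\colon \hU_2\to U_2$ of the cusp, one checks that
$$R^0(\psi_1)_*T_{\hY}(-\log(D_1+D_2)) \cong T^0_{Y_1}(-\log D_1).$$
Moreover, $R^1(\psi_1)_*T_{\hY}(-\log(D_1+D_2))$ is supported at $p_2$ with stalk $H^1(\hU_2; T_{\hU_2}(-\log D_2))$, and this group vanishes by Lemma~\ref{claim0}(ii) since $p_2$ is a cusp. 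Hence $R^q(\psi_1)_* = 0$ for $q\ge 1$, the Leray spectral sequence degenerates, and the hypothesis gives
$$H^2(Y_1; T^0_{Y_1}(-\log D_1)) \cong H^2(\hY; T_{\hY}(-\log(D_1+D_2))) = 0.$$

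For the second stage, $Y_1$ is smooth away from $p_2$, so the sheaves $T^i_{Y_1}(-\log D_1)$ for $i\ge 1$ are supported at $p_2$ and coincide there with $T^i_{Y_1}$. Because $p_2$ is a local complete intersection, $T^2_{Y_1,p_2} = 0$, and the local to global spectral sequence for $\mathbb{T}^\bullet_{Y_1}(-\log D_1)$ produces the exact sequence
$$\mathbb{T}^1_{Y_1}(-\log D_1) \longrightarrow H^0(Y_1; T^1_{Y_1,p_2}) \longrightarrow H^2(Y_1; T^0_{Y_1}(-\log D_1)) = 0,$$
which is the required surjectivity. The same spectral sequence also yields $\mathbb{T}^2_{Y_1}(-\log D_1)=0$, so $\mathbf{Def}_{Y_1,D_1}$ is unobstructed.

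For the third stage, Theorem~\ref{cuspadj} supplies a first order adjacency from the cusp $p_2$ to a simple elliptic singularity of the same multiplicity (directly by part (ii) when $p_2$ has multiplicity one, or through reduction to the $(-3,-3)$ case in part (iv)(b) when it has multiplicity two). The surjectivity just established lifts this tangent direction to a one-parameter deformation $\mathcal{Y}_1\to \Delta$ of $Y_1$ carrying a relative effective Cartier divisor $\mathcal{D}_1$ restricting to $D_1$ over the origin. Contracting $\mathcal{D}_1$ fibrewise by Wahl's theorem \cite[Proposition 2.7]{WahlI}, exactly as in Corollary~\ref{cor16}, produces a deformation of $Y$ that is equisingular at $p_1$ in the sense of Remark~\ref{equiremark} and specializes $p_2$ to a simple elliptic singularity of the prescribed multiplicity. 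The principal obstacle is the identification of the direct images of $T_{\hY}(-\log(D_1+D_2))$ along $\psi_1$ in the first stage; once this is in place, the cusp vanishing of Lemma~\ref{claim0}(ii) makes the remainder of the argument essentially formal.
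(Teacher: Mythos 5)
Your proof is correct and follows the paper's argument in all essential respects: a Leray spectral sequence computation for $\psi_1$ yielding $H^2(Y_1; T^0_{Y_1}(-\log D_1)) = 0$, the local-to-global spectral sequence for $\mathbb{T}^\bullet_{Y_1}(-\log D_1)$ giving the surjectivity onto $H^0(Y_1;T^1_{Y_1,p_2})$, and the cusp adjacency plus Wahl's blowdown of the relative divisor $\mathcal{D}_1$ producing the claimed deformation of $Y$. The only (harmless, arguably cleaner) variation is that you push forward $T_{\hY}(-\log(D_1+D_2))$ and invoke the local vanishing $H^1(\hU;T_{\hU}(-\log D))=0$ of Lemma~\ref{claim0}(ii) to kill $R^1\psi_{1*}$ outright, so that Leray degenerates, whereas the paper pushes forward $T_{\hY}(-\log D_1)$ and instead reruns the surjectivity-onto-$H^0(R^1)$ argument of Proposition~\ref{prop1121}(ii).
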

\begin{proof} As in Definition~\ref{piiprime}, let  $\pi_1\colon Y_1 \to Y$ be the minimal resolution at $p_1$ and let $\psi_1\colon \hY \to Y_1$ be the induced morphism. A Leray spectral sequence argument applied to the morphism $\psi_1$ and the sheaf 
$$R^0\psi_1{}_*T_{\hY}(-\log  D_1 ) = T^0_{Y_1}(-\log D_1)$$ as in the proof of Proposition~\ref{prop1121}(ii)  shows that $H^2(Y_1; T^0_{Y_1}(-\log D_1)) = 0$. Thus $\mathbb{T}^2_{Y_1}(-\log D_1) = 0$ and   the map $\mathbb{T}^1_{Y_1}(-\log D_1)\to H^0(Y_1; T^1_{Y_1,p_2})$ is surjective, proving the first statement. An adjacency of $p_2$ to a simple elliptic singularity with the same multiplicity   gives a global $1$-parameter deformation of the pair $(Y_1,D_1)$  keeping the Cartier divisor $D_1$ and deforms $p_2$ as desired. Blowing down the family of Cartier divisors corresponding to $D_1$ as in the proof of Corollary~\ref{cor16} then gives a deformation of $Y$ which is equisingular at $p_1$ and is as claimed at $p_2$. 
\end{proof}  

\subsection{Deformations of I-surfaces are unobstructed}

We prove a very general result about weighted hypersurfaces of degree $10$ in $\Pee(1,1,2,5)$. However, it is not of much use without further information of the kind discussed above. 

\begin{theorem}\label{defunobstr}  Let $Y$ be a weighted hypersurface of degree $10$ in $\Pee(1,1,2,5)$, not passing through the singular locus. Then $\mathbb{T}^2_Y = 0$, so that the deformations of $Y$ are unobstructed.
\end{theorem}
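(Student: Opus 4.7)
The plan is to exploit the fact that $Y$ is a Cartier divisor in the smooth open locus $U \subset \mathbb{P} := \mathbb{P}(1,1,2,5)$ obtained by removing the two weighted quotient singularities; in particular $Y$ is a local complete intersection. The conormal sequence
$$0 \to \mathcal{O}_Y(-10) \to \Omega^1_U|_Y \to \Omega^1_Y \to 0$$
is then exact, and dualizing (using that $\mathcal{E}xt^{>0}$ of a locally free sheaf vanishes) identifies $R\mathcal{H}om(\Omega^1_Y, \mathcal{O}_Y)$ with the two-term complex $K^\bullet = [T_U|_Y \to \mathcal{O}_Y(10)]$ in degrees $[0,1]$. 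Hence $\mathbb{T}^k_Y = \mathbb{H}^k(Y, K^\bullet)$, and the associated hypercohomology exact sequence
$$H^1(Y, \mathcal{O}_Y(10)) \to \mathbb{T}^2_Y \to H^2(Y, T_U|_Y) \to H^2(Y, \mathcal{O}_Y(10))$$
reduces the problem to the two vanishings $H^1(Y, \mathcal{O}_Y(10)) = 0$ and $H^2(Y, T_U|_Y) = 0$.

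The first is routine: from $0 \to \mathcal{O}_{\mathbb{P}} \to \mathcal{O}_{\mathbb{P}}(10) \to \mathcal{O}_Y(10) \to 0$ and the standard vanishing (Dolgachev) $H^1(\mathbb{P}, \mathcal{O}(k)) = H^2(\mathbb{P}, \mathcal{O}(k)) = 0$ for all $k$ on the three-dimensional weighted projective space $\mathbb{P}$, one immediately obtains $H^1(Y, \mathcal{O}_Y(10)) = 0$.

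The second vanishing is the heart of the argument. I would restrict the Euler sequence
$$0 \to \mathcal{O}_{\mathbb{P}} \to \mathcal{O}_{\mathbb{P}}(1)^{\oplus 2} \oplus \mathcal{O}_{\mathbb{P}}(2) \oplus \mathcal{O}_{\mathbb{P}}(5) \to T_U \to 0$$
(exact on $U$) to $Y$ and take cohomology. Since $H^3(\mathcal{O}_Y) = 0$,
$$H^2(Y, T_U|_Y) = \operatorname{coker}\bigl(H^2(\mathcal{O}_Y) \xrightarrow{\delta} \textstyle\bigoplus_i H^2(Y, \mathcal{O}_Y(a_i))\bigr).$$
By adjunction $\omega_Y \cong \mathcal{O}_Y(1)$, so Serre duality on the lci surface $Y$ identifies the transpose of $\delta$ with the ``Euler coordinate'' map
$$\bigoplus_i H^0(Y, \mathcal{O}_Y(1-a_i)) \longrightarrow H^0(Y, \mathcal{O}_Y(1)), \qquad (\lambda_0, \lambda_1, *, *) \mapsto \lambda_0 x_0 + \lambda_1 x_1.$$
Only the two summands with $a_i = 1$ contribute, since $H^0(\mathcal{O}_Y(-1)) = H^0(\mathcal{O}_Y(-4)) = 0$, and $\{x_0, x_1\}$ is a basis of the two-dimensional space $H^0(Y, \mathcal{O}_Y(1))$. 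Hence this map is an isomorphism $\mathbb{C}^2 \xrightarrow{\sim} \mathbb{C}^2$, so $\delta$ is an isomorphism too and $H^2(Y, T_U|_Y) = 0$.

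The main technical hurdle is correctly identifying the connecting map of the restricted Euler sequence with its Serre-dual incarnation as multiplication by the weight-one coordinates $x_0, x_1$; once this is set up, everything else is a direct appeal to Serre duality and standard cohomology of line bundles on weighted projective space.
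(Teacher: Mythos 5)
Your proposal is correct and follows essentially the same route as the paper: the conormal sequence reduces $\mathbb{T}^2_Y$ to the vanishings $H^1(Y;\scrO_Y(10))=0$ and $H^2(Y;T_{\Pee}|Y)=0$, and the latter is killed via the Euler sequence and the observation that only the two weight-one summands contribute, with $x_0,x_1$ spanning the two-dimensional space $H^0(Y;\scrO_Y(1))$. The only cosmetic difference is that you compute $H^2(Y;T_{\Pee}|Y)$ as a cokernel and Serre-dualize the connecting map, whereas the paper dualizes first and computes $H^0(Y;\Omega^1_{\Pee}|Y\otimes\omega_Y)$ directly.
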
 
\begin{proof}  For simplicity of notation denote $\Pee(1,1,2,5)$ by $\Pee$. Recall that $\mathbb{T}^2_Y = \Ext^2(\Omega^1_Y, \scrO_Y)$.  The conormal sequence for $Y$ is
$$0 \to I_Y/I_Y^2 \to \Omega^1_{\Pee}|Y \to \Omega^1_Y \to 0.$$
In particular the long exact Ext sequence  yields an exact sequence 
$$ H^1(Y;  N_{Y/\Pee}) \to \Ext^2(\Omega^1_Y, \scrO_Y) \to H^2(Y; T_{\Pee}|Y).$$
Here  $N_{Y/\Pee} \cong \scrO_Y(10)$. A standard argument shows that $H^1(\Pee; \scrO_{\Pee}(a)) = H^2(\Pee; \scrO_{\Pee}(a)) = 0$ for all $a\in \Zee$ and hence that   $H^1(Y;\scrO_Y(a))=0$   by using the long exact sequence associated to 
$$0 \to \scrO_{\Pee}(-Y)(a)= \scrO_{\Pee} (a-10) \to \scrO_{\Pee} (a) \to \scrO_Y(a) \to 0.$$
Thus $H^1(Y;  N_{Y/\Pee}) =0$. 
The remaining term   $H^2(Y; T_{\Pee}|Y)$  is Serre dual to $H^0(\Omega^1_{\Pee}|Y \otimes \omega_Y)$. The Euler exact sequence for weighted projective space is 
$$0 \to \Omega^1_{\Pee} \to \bigoplus_{i= 1}^4\scrO_{\Pee}(-a_i) \to \scrO_{\Pee} \to 0,$$
with the weights $a_i = 1,1,2,5$. Restricting to $Y$ and tensoring with $\omega_Y$ gives an exact sequence
$$0 \to H^0(Y;\Omega^1_{\Pee}|Y \otimes \omega_Y) \to \bigoplus_{i= 1}^4H^0(Y; \scrO_{\Pee}(1-a_i)) \to H^0(Y; \scrO_Y(1)) .$$
In our case,  $1-a_i$ is nonnegative only for the two weights $a_i = 1$,  $H^0(Y; \scrO_Y(1))$ has dimension $2$,  and the map $\bigoplus_{i= 1}^4H^0(Y; \scrO_{\Pee}(1-a_i)) \to H^0(Y; \scrO_Y(1)) $ is easily checked  to be surjective and hence an isomorphism.  Thus $H^0(Y;\Omega^1_{\Pee}|Y \otimes \omega_Y) =0$, and so finally $\mathbb{T}^2_Y =0$.
\end{proof} 
 
\section{Structure of I-surfaces with elliptic singularities}\label{section2}

\subsection{Some preliminary results}

The first result in this section is a standard result about iterated blowups: 

\begin{lemma}\label{lemma4} Let $\rho\colon S \to \overline{S}$ be the iterated blowup of a smooth surface $\overline{S}$ and let $\bigcup_iC_i$ be the fiber of $\rho$ over a point $p$. Let $K_S = \rho^*K_{\overline{S}} + \sum_ia_iC_i$ where the $a_i$ are nonnegative integers. Then $a_j >0$ for all $j$. If $C_j^2 = -1$ for some $j$ (i.e.\ $C_j$ is an exceptional curve) and  $\rho^{-1}(p)$ is reducible, then $a_j \ge 2$.
\end{lemma}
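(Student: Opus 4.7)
The plan is to prove the first statement by induction on the number $n$ of blowups composing $\rho$, and then to deduce the second statement from the first via a one-line intersection computation.

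For the induction: when $n=1$, $\rho$ is a single blowup with $K_S = \rho^*K_{\overline{S}} + E$, so $a_1 = 1$. For $n \geq 2$, factor $\rho = \rho' \circ \sigma$ where $\sigma\colon S \to S'$ is the last blowup, at some point $q \in S'$ with exceptional divisor $E$, and $\rho'$ is a composition of $n-1$ blowups. The induction hypothesis gives $K_{S'} = \rho'{}^*K_{\overline{S}} + \sum_i a_i' C_i'$ with all $a_i' > 0$. Combining $K_S = \sigma^*K_{S'} + E$ with $\sigma^* C_i' = \widetilde{C}_i' + m_i E$ (where $m_i \in \{0,1\}$ records whether $q \in C_i'$) yields
\[
K_S = \rho^*K_{\overline{S}} + \sum_i a_i' \widetilde{C}_i' + \Bigl(1 + \sum_i m_i a_i'\Bigr) E.
\]
Each proper transform inherits a positive coefficient from $S'$, and the new coefficient of $E$ is at least $1$, proving positivity.

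For the second statement, let $C_j \subset \rho^{-1}(p)$ be a $(-1)$-curve. Since $C_j$ is smooth rational, adjunction gives $K_S \cdot C_j = -1$, and the projection formula gives $\rho^*K_{\overline{S}} \cdot C_j = K_{\overline{S}} \cdot \rho_* C_j = 0$ because $C_j$ is contracted by $\rho$. Substituting into $K_S = \rho^*K_{\overline{S}} + \sum_i a_i C_i$ gives
\[
-1 = \sum_i a_i(C_i \cdot C_j) = -a_j + \sum_{i \neq j} a_i(C_i \cdot C_j),
\]
so $a_j = 1 + \sum_{i \neq j} a_i(C_i \cdot C_j)$. The fiber $\rho^{-1}(p)$ is connected, so if it is reducible then $C_j$ must meet some other component $C_i$ directly. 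Combining $a_i \geq 1$ from the first part with $C_i \cdot C_j \geq 1$ forces $a_j \geq 2$.

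The main bookkeeping obstacle is verifying that after restricting the inductive identity from the global discrepancy divisor down to the fiber over $p$, the correct coefficients appear on the components of $\rho^{-1}(p)$—in particular, that $E$ contributes to $\rho^{-1}(p)$ precisely when $q \in \rho'{}^{-1}(p)$, and that in that case some earlier $C_i'$ must pass through $q$. This is a routine check on total transforms under blowup and poses no conceptual difficulty.
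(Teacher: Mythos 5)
Your proof is correct. For the second statement you argue exactly as the paper does: adjunction gives $K_S\cdot C_j=-1$, the projection formula kills $\rho^*K_{\overline{S}}\cdot C_j$, and connectedness of the fiber supplies a component $C_k$ with $C_k\cdot C_j\ge 1$ and $a_k\ge 1$, forcing $a_j\ge 2$. For the first statement you take a different, more pedestrian route: the paper disposes of it in one line by observing that a local generator of $\rho^*K_{\overline{S}}$ near the fiber is the pullback of a local $2$-form on $\overline{S}$ and therefore, viewed as a section of $K_S$, vanishes along every exceptional component (the Jacobian of $\rho$ vanishes there), so each $a_j>0$; you instead run an induction on the number of blowups, tracking total transforms $\sigma^*C_i'=\widetilde{C}_i'+m_iE$ and the coefficient $1+\sum_i m_ia_i'$ of the new exceptional curve. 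Your induction is sound (the components of $\rho^{-1}(p)$ are exactly the $\widetilde{C}_i'$ together with $E$ when $q$ lies over $p$, and the curves over distinct points are disjoint so the bookkeeping separates), but it is longer than necessary; the local-section argument buys the same conclusion with no case analysis, at the cost of being slightly less self-contained for a reader who has not seen it before.
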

\begin{proof} The first statement is clear since every local section of $\rho^*K_{\overline{S}}$ vanishes along $C_j$ as a section of $K_S$. If $C_j ^2=-1$, then by adjunction  $C_j \cdot K_S = -1$. But $K_S = \rho^*K_{\overline{S}} + \sum_ia_iC_i$.  For $j \neq k$,   $C_j\cdot C_k\ge 0$,   and  by hypothesis $C_j\cdot C_k > 0$ for some $k$ as the fiber is connected.   Thus, for this choice of $k$, 
$$-1 = a_jC_j^2 + \sum_{i\neq j}a_i(C_j\cdot C_i) \ge  -a_j +a_k.$$
Hence $a_j \ge a_k + 1 \ge 2$. 
\end{proof}

\begin{lemma}\label{lemma1} Let $S$ be a smooth surface.
\begin{enumerate}
\item[\rm(i)] If $S$ is the blowup of a geometrically ruled surface over a base curve of genus at least $2$, then $S$ does not contain  a smooth elliptic curve or a cycle of rational curves.  
\item[\rm(ii)] If $S$ is the blowup of  a geometrically ruled surface over a base curve of genus at least $1$,   then $S$ does not contain a cycle of rational curves.  
\item[\rm(iii)] If $S$ is the blowup of a surface with $\kappa(S) =0$ or $\kappa(S) =1$ and $\chi(\scrO_S) =0$, then $S$ does not contain a cycle of rational curves.  
\end{enumerate}
\end{lemma}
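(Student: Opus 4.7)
The common strategy is to exhibit, in each case, a morphism $f\colon S \to T$ whose fibers are trees of smooth rational curves, and to use two observations: (a) a cycle of rational curves---whether a reducible cycle of smooth rationals (whose dual graph has a loop) or an irreducible nodal rational curve (which cannot equal a smooth rational component)---cannot sit inside such a tree; (b) a smooth elliptic curve cannot sit inside any union of rational curves. Under a suitable genus hypothesis on $T$, every morphism to $T$ from a smooth curve of genus $\le 1$ is constant, so the putative curve lies in a single fiber of $f$, contradicting (a) or (b).

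For (i) and (ii), let $f\colon S \to C$ be the composition $S \to \overline{S} \to C$ with $\overline{S} \to C$ a geometrically ruled model of $S$. By Lemma~\ref{lemma4}, the fibers of $S \to \overline{S}$ are trees of smooth rational curves, and attaching the (proper transform of the) $\Pee^1$-fiber of $\overline{S} \to C$ makes each fiber of $f$ such a tree as well. For (i), $g(C) \ge 2$ renders every map to $C$ from a smooth curve of genus $\le 1$ constant, so a smooth elliptic or rational curve in $S$ lies in one fiber, contradicting (a) or (b). For (ii), $g(C) \ge 1$ forces any rational curve in $S$ (including the irreducible nodal case, via its normalization) to lie in a single fiber of $f$; for a reducible cycle, since consecutive components meet and distinct fibers of $f$ are disjoint, the whole cycle lies in a common fiber, and (a) applies.

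For (iii), take $f = \pi\colon S \to \overline{S}$, the blowdown to the minimal model; by Lemma~\ref{lemma4} its fibers are trees of smooth rational curves. It suffices to prove that $\overline{S}$ contains no rational curves, for then each component of a putative cycle is contracted by $\pi$, the whole cycle lies in a common fiber of $\pi$ by the connectedness-of-intersection argument, and (a) applies. When $\kappa(\overline{S}) = 0$ and $\chi(\scrO_{\overline{S}}) = 0$, $\overline{S}$ is abelian or bielliptic, and admits a finite \'etale cover by an abelian surface, which has no rational curves (any morphism $\Pee^1 \to A$ to an abelian variety is constant); the \'etale preimage of a rational curve being a disjoint union of rational curves, $\overline{S}$ has none either. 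When $\kappa(\overline{S}) = 1$ and $\chi(\scrO_{\overline{S}}) = 0$, the formula $12\chi(\scrO_{\overline{S}}) = \sum_F e(F)$ forces every fiber of the relatively minimal elliptic fibration $\overline{S} \to B$ to be a smooth (possibly multiple) elliptic curve, so no rational curve can lie in a fiber; Hurwitz then forces $g(B) = 0$ for any dominant map from a rational curve. The finite \'etale cover $\overline{S}' \to \overline{S}$ trivializing the monodromy of this isotrivial fibration satisfies $\overline{S}' \cong E \times B'$, and Riemann--Hurwitz combined with the $\kappa = 1$ inequality $\sum_i(1 - 1/m_i) > 2$ gives $g(B') \ge 2$, so $E \times B'$ contains no rational curves---a contradiction. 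The main technical step is this last \'etale-trivialization, invoking the standard structure theory of isotrivial elliptic fibrations; the rest of the argument is elementary.
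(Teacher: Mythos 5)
Your proof is correct and follows essentially the same route as the paper's: for (i)--(ii) the fibration over the base curve whose fibers are trees of smooth rational curves, and for (iii) the reduction to showing the minimal model has no rational curves, forcing $g(B)=0$ and then contradicting the existence of a finite \'etale cover by a product of two nonrational curves. The only difference is cosmetic: where the paper cites the structure theorem (Beauville, Friedman--Morgan) for that \'etale cover, you rederive it from isotriviality and the hyperbolic orbifold cover of $(\Pee^1;m_1,\dots,m_k)$, which is a legitimate and slightly more self-contained version of the same argument.
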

\begin{proof} (i), (ii): Let $\rho\colon S \to C$ be the Albanese morphism   and let  $\overline{S}$ be  a relatively minimal model of $S$. Thus there is an induced morphism $\bar\rho\colon \overline{S} \to C$ exhibiting $\overline{S}$ as a  geometrically ruled surface. Then $S$ is an iterated blowup of $\overline{S}$ and   all fibers of $\rho$ are proper transforms  of fibers of $\bar\rho$.  Hence the reduction of every fiber of $\rho$  is a  tree of smooth rational curves. In case (i), the image under $\rho$ of of a smooth elliptic curve on $S$ would  be a point, and hence the curve would have to be contained in the blowup of a fiber, but this is impossible.  Likewise, in case (i) or (ii), a cycle of rational curves would have to be contained in the blowup of a fiber, but this is again impossible.  

\smallskip
\noindent (iii):   First assume that $\kappa(S) =1$ and  let $\overline{S}$ be the minimal model of $S$ and let $\rho\colon S \to C$ be the canonical elliptic fibration. Then, by the structure theory of elliptic surfaces,  all fibers of $\rho$ have smooth reduction. If there exists a  cycle of rational curves on $S$, then there exists a rational curve on $\overline{S}$, necessarily not contained in a fiber, and hence $g(C) =0$. Moreover, $p_g(\overline{S}) =0$ and $q(\overline{S}) = 1$. Then there exists a finite \'etale cover of $\overline{S}$ which is biholomorphic to a product of two smooth nonrational curves (e.g.\ \cite[Thm.\ VI.13]{Beauville} or \cite[Chapter 2]{FM}). This contradicts the existence of a rational curve on $\overline{S}$. The case where $\kappa(S) =0$ is similar (and simpler).
\end{proof} 

\begin{lemma}\label{lemmam22} Let $Y$ be a normal projective surface with $k$ isolated minimally elliptic singularities  and let $\pi'\colon Y'\to Y$ be a partial resolution of $Y$ at $k'$ of the singularities. Then $\chi(Y'; \scrO_{Y'}) = \chi(Y; \scrO_Y) - k'$.
\end{lemma}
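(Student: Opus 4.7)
The plan is to apply the Leray spectral sequence for $\pi'$ and then extract the crucial local computation from the definition of a minimally elliptic singularity. Since $Y$ is normal and $\pi'$ is proper birational with connected fibers over the $k'$ resolved points (the exceptional fiber at each such point being the connected fundamental cycle of the minimally elliptic singularity), we have $\pi'_*\scrO_{Y'} = \scrO_Y$. For dimension reasons, $R^q\pi'_*\scrO_{Y'} = 0$ for $q \ge 2$. So the only nontrivial term beyond $\pi'_*\scrO_{Y'}$ is $R^1\pi'_*\scrO_{Y'}$, which is a sheaf supported on the finitely many points of $Y$ that are resolved by $\pi'$ (i.e., the $k'$ singular points), hence a skyscraper sheaf.

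Next I would compute the length of the stalk of $R^1\pi'_*\scrO_{Y'}$ at each resolved point $p_i$. By definition, a minimally elliptic singularity is one for which $R^1\tilde{\pi}_*\scrO$ has length one when $\tilde{\pi}$ is the minimal resolution; since the length of $R^1\pi_*\scrO$ at an isolated singular point (the geometric genus $p_g$ of the singularity) is an invariant of the singularity and does not depend on the particular resolution chosen above that point (any further blow-up introduces only rational exceptional curves, which contribute nothing), the length of $(R^1\pi'_*\scrO_{Y'})_{p_i}$ is also $1$ for each resolved minimally elliptic point. Summing gives
$$\chi(Y; R^1\pi'_*\scrO_{Y'}) = \operatorname{length}(R^1\pi'_*\scrO_{Y'}) = k'.$$

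Finally, taking the alternating sum of the dimensions in the $E_2$ page of the spectral sequence $E^{p,q}_2 = H^p(Y; R^q\pi'_*\scrO_{Y'}) \Rightarrow H^{p+q}(Y'; \scrO_{Y'})$ gives
$$\chi(Y'; \scrO_{Y'}) = \chi(Y; \pi'_*\scrO_{Y'}) - \chi(Y; R^1\pi'_*\scrO_{Y'}) = \chi(Y; \scrO_Y) - k',$$
which is the desired identity. The only real content is the local length-one computation in the middle step; the spectral sequence argument is otherwise routine. There is no serious obstacle, but one should be careful to justify that the length of $R^1\pi'_*\scrO_{Y'}$ at each resolved point is really $1$ even when $\pi'$ is not necessarily the minimal resolution at that point, which is why the birational invariance of $p_g$ is needed.
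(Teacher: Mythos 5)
Your proof is correct and follows essentially the same route as the paper: the Leray spectral sequence for $\pi'$, the identification $\pi'_*\scrO_{Y'}=\scrO_Y$, and the fact that $R^1\pi'_*\scrO_{Y'}$ is a skyscraper sheaf with length-one stalks at the $k'$ resolved points. Your extra remark justifying the length-one stalks via the birational invariance of the geometric genus of the singularity (since $\pi'$ need not be the minimal resolution over each point) is a point the paper leaves implicit, but it does not change the argument.
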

\begin{proof} We have the Leray spectral sequence  $E_2^{p,q} = H^p(Y; R^q\pi'_*\scrO_{Y'}) \implies H^{p+q}(Y'; \scrO_{Y'})$. Thus, since $R^0\pi'_*\scrO_{Y'}=\scrO_Y$ and $R^1\pi'_*\scrO_{Y'}$ is a skyscraper sheaf with one-dimensional stalks at each of the $k'$ singular points resolved by $\pi'$, 
$\chi(Y'; \scrO_{Y'}) = \chi(Y; \scrO_Y) - k' $. 
\end{proof}

\begin{theorem}\label{theorem2}  Let $Y$ be a normal projective surface with $k>0$ isolated   singularities which are either simple elliptic or cusp singularities. Assume that $h^1(Y;\scrO_Y) = 0$ and set $p_g = h^2(Y;\scrO_Y)$. Then $k \le p_g+1$. If   $k = p_g+1$, then all singularities of $Y$ are simple elliptic and  a smooth minimal model for $Y$ is either an elliptic ruled surface, an elliptic surface   such that all fibers have smooth reduction, or is an abelian surface isogeneous to a product of two elliptic curves. 
\end{theorem}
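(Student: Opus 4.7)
The plan is to compute $\chi(\scrO_{\hY})$ via Lemma~\ref{lemmam22}, use the Enriques-Kodaira classification to pin down the birational class of $\hY$ whenever $\chi(\scrO_{\hY})\le 0$, and then invoke Lemma~\ref{lemma1} to exclude exceptional divisors incompatible with that class.

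From $h^1(Y;\scrO_Y)=0$ we get $\chi(Y;\scrO_Y)=1+p_g$, so Lemma~\ref{lemmam22} yields $\chi(\scrO_{\hY})=1+p_g-k$. If $k>p_g+1$ then $\chi(\scrO_{\hY})<0$, and by the Enriques-Kodaira classification, $\hY$ must be a blowup of a geometrically ruled surface over a curve of genus at least $2$. Lemma~\ref{lemma1}(i) then forbids any smooth elliptic curve or cycle of rational curves (including an irreducible nodal rational curve) on $\hY$, contradicting the presence of even a single exceptional divisor $D_i$. Hence $k\le p_g+1$.

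Now suppose $k=p_g+1$, so that $\chi(\scrO_{\hY})=0$. The minimal model $\hY_{\min}$ then falls into exactly one of four classes dictated by Enriques-Kodaira: (a) a $\Pee^1$-bundle over an elliptic curve, (b) an abelian surface, (c) a bielliptic surface, or (d) a minimal properly elliptic surface with $\chi(\scrO)=0$. To exclude cusp singularities, I would apply Lemma~\ref{lemma1}(ii) in case (a) and Lemma~\ref{lemma1}(iii) in cases (c), (d) to rule out cycles of rational curves on $\hY$, and use the fact that an abelian surface carries no rational curves at all in case (b). This shows no $D_i$ is a cusp, so every singularity of $Y$ is simple elliptic.

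Finally, I would match the four classes with the three listed possibilities. Case (a) is exactly an elliptic ruled surface. In case (d), Noether's formula $12\chi(\scrO_{\hY_{\min}})=K^2+e(\hY_{\min})$ combined with $K^2=0$ and additivity of Euler numbers over fibers (smooth elliptic fibers contributing zero) forces all fibers of the elliptic fibration to have smooth reduction; case (c) inherits this property since a bielliptic surface is an \'etale quotient of a product of two elliptic curves. In case (b), since $D_i$ is a smooth elliptic curve and not a $(-1)$-curve, its image in $\hY_{\min}$ is a nonzero irreducible curve of geometric genus one, and its normalization is an elliptic curve mapping nontrivially to the abelian surface $\hY_{\min}$; Poincar\'e reducibility then implies $\hY_{\min}$ is isogenous to a product of two elliptic curves. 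The main obstacle I expect is keeping the case analysis in the equality case clean; once Lemma~\ref{lemma1} and the classification are in hand, each individual case is essentially mechanical.
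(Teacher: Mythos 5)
Your proof is correct and follows essentially the same route as the paper: Lemma~\ref{lemmam22} gives $\chi(\scrO_{\hY}) = 1+p_g-k$, the classification of surfaces with $\chi(\scrO)\le 0$ pins down the birational type, and Lemma~\ref{lemma1} excludes the offending exceptional configurations. Your added details (Noether's formula forcing smooth reduction of fibers in the properly elliptic case, Poincar\'e reducibility in the abelian case) merely make explicit what the paper cites as ``the classification of such surfaces''; the one slight imprecision is that in the abelian case you must rule out cycles of rational curves on the blowup $\hY$ rather than on $\hY_{\min}$ itself, which is exactly what Lemma~\ref{lemma1}(iii) provides.
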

\begin{proof} Let $\pi\colon \hY \to Y$ be the minimal resolution. 
If $k > p_g+1$, then 
$$\chi(\hY; \scrO_{\hY})  = \chi(Y; \scrO_Y) - k = 1+ p_g - k< 0,$$ by  Lemma~\ref{lemmam22}. Hence,  by the Castelnuovo-de Franchis theorem, $\hY$ is the blowup of a geometrically ruled surface over a base curve of genus at least $2$. This contradicts (i) of Lemma~\ref{lemma1}.

Now assume that $k = p_g+1$ and let $Y_{\text{\rm{min}}}$ be a smooth minimal model of $\hY$. Then  
$$\chi(Y_{\text{\rm{min}}}; \scrO_{Y_{\text{\rm{min}}}})  = \chi(\hY; \scrO_{\hY}) = \chi(Y; \scrO_Y) - k = 1+ p_g -k = 0.$$
Thus $c_1(Y_{\text{\rm{min}}})^2 + c_2(Y_{\text{\rm{min}}}) =12\chi(Y_{\text{\rm{min}}}; \scrO_{Y_{\text{\rm{min}}}}) =0$. In particular, since $\hY$ is not ruled over a curve of genus at least $2$, by classification $c_1(Y_{\text{\rm{min}}})^2 \ge 0$ and $c_2(Y_{\text{\rm{min}}}) \ge 0$, hence $c_1(Y_{\text{\rm{min}}})^2 =c_2(Y_{\text{\rm{min}}})= 0$. Thus, by the classification of such surfaces, $Y_{\text{\rm{min}}}$ is either an elliptic ruled surface, an elliptic surface over a curve $B$  such that all fibers have smooth reduction, or is an abelian surface, necessarily  isogeneous to a product of two elliptic curves.   In all cases, by Lemma~\ref{lemma1}, the blowup $\hY$ of $Y_{\text{\rm{min}}}$ will not contain a cycle of rational curves, so all singularities of $Y$ are simple elliptic.
\end{proof}

\begin{remark} With more effort, one can extend the proof of Lemma~\ref{lemma1} and hence of Theorem~\ref{theorem2} to cover the case where the singularities of $Y$ are just assumed to be minimally elliptic. 
\end{remark} 

\subsection{Numerology of I-surfaces} We begin by defining more carefully the surfaces considered in this paper.

\begin{definition}\label{defIsurface}  An \textsl{I-surface  with simple elliptic or cusp singularities} is a normal Gorenstein surface $Y$ whose singularities are simple elliptic, cusp,  or RDP singularities,  such that $\omega_Y$ is ample, $\omega_Y^2 =1$, $h^1(Y;\scrO_Y) =0$, $h^2(Y;\scrO_Y) = h^0(Y;\omega_Y) = 2$ and thus $\chi(\scrO_Y) = 3$. (In fact, the condition $h^1(Y;\scrO_Y) =0$ follows from the other two.) Let $\pi \colon \hY \to Y$ be a minimal resolution of singularities, so in particular no exceptional curve on $\hY$ is contained in a fiber of $\pi$. Let $p_1, \dots , p_k$ be the simple elliptic or cusp singularities, let $D_i=\pi^{-1}(p_i)$, and set $D = \sum_{i=1}^kD_i$. Thus   each $D_i$ is  a smooth elliptic curve, an irreducible  nodal rational  curve, or or a cycle of smooth rational curves, and $D$ is the exceptional set of $\pi$ $\iff$ there are no RDP singularities.    Set $m_i = -D_i^2$ and $m = -D^2 = \sum_i m_i$, with $m_i > 0$ for every $i$. In particular, $k\le m$.      By Theorem~\ref{theorem2}, $k \le 3$.
Let $L$ be the nef and big line bundle  $\pi^*\omega_Y$.
\end{definition}

\begin{assumption}\label{assumption27}  As noted in the introduction, to keep the number of cases manageable we will assume for the rest of this paper that  $Y$ has no RDP singularities. Thus, in the notation of Definition~\ref{defIsurface}, $L\cdot C \geq 0$ for every irreducible curve $C$ on $\hY$, and $L\cdot C =0$ $\iff$ $C$ is a component of $D_i$ for some $i$.   In particular, $L\cdot C > 0$ for every exceptional curve $C$ on $\hY$. 
\end{assumption}

\begin{remark} If  there are some  RDP singularities on $Y$,   then it is still true that $L\cdot C > 0$ for every exceptional curve $C$ on $\hY$. Under this assumption,  the arguments in this section for the classification of the possible $Y$ will carry over with minor modifications. 
\end{remark}

We have $K_{\hY} = \pi^*\omega_Y \otimes \scrO_{\hY}(-D)$, i.e.\ $L = K_{\hY}   + D$. Since $L\cdot D_i =0$ for every $i$,  
$$K_{\hY} \cdot D_i + D_i^2 =0.$$
More precisely, by adjunction,  $(K_{\hY}\otimes \scrO_{\hY}(D_i))|D_i  = \omega_{D_i} = \scrO_{D_i}$. 
Hence $K_{\hY} \cdot D_i = m_i> 0$. Also,
\begin{align*}
L^2 &= L \cdot (K_{\hY} + D) = L \cdot  K_{\hY} =1;\\
K_{\hY} ^2 &=(L -D) \cdot  K_{\hY}=L \cdot  K_{\hY} -m =1-m. 
\end{align*}

We can also consider partial resolutions:

\begin{definition}\label{defMi}  As in Definition~\ref{piiprime}, let  $\pi_i\colon Y_i \to Y$ be the minimal resolution of the singular point  $p_i$ and an isomorphism from $Y_i -D_i$ to $Y-p_i$. Note that, in case $k=1$, $Y_i = \hY$.   Denote by  $\psi_i \colon \hY \to Y_i$ the   induced morphism.   Finally set  $M_i =\psi_i^*\omega_{Y_i}$. 
\end{definition} 
  
 \begin{lemma}\label{Milist} In the above notation,  $\chi(\scrO_{Y_i}) = 2$. Moreover,
 \begin{enumerate}
 \item[\rm(i)] $M_i = L-D_i = K_{\hY} +D_i'$, where $D_i' = \sum_{j\neq i}D_j$. Here, if $k=1$, then  $D_i'=D_1'=0$ and $M_1 = K_{\hY}$. 
  \item[\rm(ii)] $M_i$ is effective and $L\cdot M_i =1$. In particular, $M_i$ is not numerically equivalent to $0$. 
   \item[\rm(iii)] For $i\neq j$, $M_i\cdot M_j =1$. In case $i=j$, $\omega_{Y_i}^2 = M_i^2 = 1 -m_i$. (Here $\omega_{Y_i}^2$ means the self-intersection of the Cartier divisor on $Y_i$.) 
   \item[\rm(iv)]  $M_i \cdot D_i'=0$, $M_i\cdot D_i = m_i$, and $M_i \cdot  K_{\hY} = 1-m_i$ .
    \item[\rm(v)] If $k> 1$ and either $h^1(\hY; \scrO_{\hY})=0$ or $\hY$ is a blown up elliptic ruled surface, then 
    $$\dim H^0(\hY; \scrO_{\hY}(M_i)) = 1.$$
 \end{enumerate}
 \end{lemma}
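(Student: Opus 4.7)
The plan is to obtain everything except part (v) from the single computation $M_i = L - D_i$, and then use a Leray spectral sequence for the partial resolution $\psi_i\colon \hY\to Y_i$ to pin down $h^0(M_i)$.

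First I would establish the $\chi$-count. The partial resolution $\pi_i\colon Y_i\to Y$ resolves exactly one minimally elliptic singularity, so Lemma~\ref{lemmam22} immediately gives $\chi(\scrO_{Y_i}) = \chi(\scrO_Y) - 1 = 2$. For (i) I would argue that since $p_i$ is a minimally elliptic singularity with reduced fundamental cycle, $K_{\hY} = \psi_i^*K_{Y_i} - D_i'$ on the source of $\psi_i$, so $M_i = \psi_i^*\omega_{Y_i} = K_{\hY}+D_i'$. Combining with $K_{\hY}=L-D$ and $D-D_i' = D_i$ yields $M_i = L-D_i$. Parts (ii)--(iv) are then just bilinear intersection computations using $L^2=1$, $L\cdot K_{\hY}=1$, $L\cdot D_j=0$, $D_i\cdot D_j = 0$ for $i\neq j$, $D_i^2 = -m_i$, and $K_{\hY}\cdot D_i = m_i$; the only point needing a short additional remark is effectiveness of $M_i$, which follows either by noting that $L|D_i = \scrO_{D_i}$ (since $L\cdot D_i=0$ and $D_i$ is connected) so the restriction $H^0(\hY;L)\to H^0(\scrO_{D_i})\cong\Cee$ has a nonzero kernel (using $h^0(L) = h^0(\omega_Y) = 2$), or via Serre duality and $\chi(\scrO_{Y_i})=2$.

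The main work is in (v), where we need to rule out the \emph{a priori} possibility that $h^0(M_i) > 1$. By the projection formula and Grauert--Riemenschneider, $H^0(\hY; M_i) = H^0(Y_i; \omega_{Y_i})$, and Serre duality on the Cohen--Macaulay surface $Y_i$ identifies this with $H^2(Y_i;\scrO_{Y_i})^*$. Since $h^0(\scrO_{Y_i}) = 1$ and $\chi(\scrO_{Y_i})=2$, we get $h^0(M_i) = 1 + h^1(Y_i;\scrO_{Y_i})$, so it suffices to show $h^1(Y_i;\scrO_{Y_i})=0$. The five-term Leray sequence for $\psi_i$ reads
\[
0 \to H^1(Y_i;\scrO_{Y_i}) \to H^1(\hY;\scrO_{\hY}) \to \bigoplus_{j\neq i} H^1(D_j;\scrO_{D_j}) \to H^2(Y_i;\scrO_{Y_i}) \to H^2(\hY;\scrO_{\hY})\to 0,
\]
where the stalks of $R^1\psi_i{}_*\scrO_{\hY}$ at the unresolved $p_j$ are $H^1(D_j;\scrO_{D_j})\cong\Cee$ (the minimally elliptic, reduced fundamental cycle case) and the middle map is induced by restriction to the $D_j$. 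If $h^1(\hY;\scrO_{\hY})=0$ the vanishing of $h^1(Y_i;\scrO_{Y_i})$ is automatic.

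The hard case is when $\hY$ is a blown-up elliptic ruled surface (this is case (vi) of Theorem~\ref{thm03}, where $k=3>1$, so the sum over $j\neq i$ is nonempty and each $D_j$ is a smooth elliptic curve which is a bisection of the ruling). Here $h^1(\hY;\scrO_{\hY})=1$, so I need the restriction $H^1(\hY;\scrO_{\hY})\to H^1(D_j;\scrO_{D_j})$ to be injective for at least one $j\neq i$. The Albanese $\hY\to E$ factors the ruling, and since $D_j$ is a bisection the composition $D_j\to\hY\to E$ is a surjective finite morphism; pulling back $H^1(\scrO_E)\cong H^1(\scrO_{\hY})\to H^1(\scrO_{D_j})$ is therefore an isomorphism onto its image (both sides being one-dimensional), and the middle arrow of the Leray sequence is injective as required. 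This forces $h^1(Y_i;\scrO_{Y_i})=0$ and completes the proof.
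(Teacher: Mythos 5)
Your proof is correct and follows essentially the same route as the paper: parts (i)--(iv) via the identification $M_i = L - D_i = K_{\hY}+D_i'$ and direct intersection computations (with effectiveness from $\chi(\scrO_{Y_i})=2$ and Serre duality), and part (v) via the Leray sequence for $\psi_i$ together with injectivity of $H^1(\hY;\scrO_{\hY}) \to H^1(D_j;\scrO_{D_j})$ coming from the finite surjection $D_j \to B$. (One small slip: in the elliptic ruled case some of the $D_j$ are sections rather than bisections of the ruling, but your argument only uses that $D_j \to B$ is finite and surjective onto the elliptic base, so nothing changes.)
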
 
 \begin{proof} The first statement follows from Lemma~\ref{lemmam22}. Then (i) is clear since $\hY$ is a resolution of $Y_i$ at the points $p_j, j\neq i$. Since $\chi(\scrO_{Y_i}) = 2$, $h^2(Y_i;\scrO_{Y_i}) \neq 0$ and hence $h^0(Y_i; \omega_{Y_i}) \neq 0$.  
Thus $M_i$ is effective. Since $\omega_{Y_i}$ is trivial in a neighborhood of $D_i'$, $M_i \cdot D_i'=0$. Moreover, $L\cdot M_i =  L\cdot (L-D_i) = L^2 = 1$. We have 
$$M_i\cdot M_j = (L-D_i)\cdot (L-D_j) =  L^2 + (D_i \cdot D_j) = \begin{cases} 1, &\text{if $i\neq j$;}\\
1-m_i, &\text{if $i=j$.}
\end{cases}$$
Then $M_i \cdot D_i = (L- D_i) \cdot  D_i = -D_i^2 = m_i$. Likewise  $M_i \cdot  K_{\hY} = M_i \cdot(M_i - D_i')  = M_i^2 = 1-m_i$.  

To see (v), note that $\dim H^0(\hY; \scrO_{\hY}(M_i)) = h^2(Y_i; \scrO_{Y_i})$. Since $\chi(\scrO_{Y_i}) = 2$, it suffices to prove that $h^1(\scrO_{Y_i}) =0$. The Leray spectral sequence gives an exact sequence
$$0 \to H^1(Y_i; \scrO_{Y_i}) \to H^1(\hY; \scrO_{\hY}) \to H^0(R^1\psi_i{}_*\scrO_{\hY}).$$
Thus, if $H^1(\hY; \scrO_{\hY}) =0$, then $H^1(Y_i; \scrO_{Y_i})=0$. If $\hY$ is the blowup of an elliptic ruled surface with base an elliptic curve $B$,  then all of the singularities on $Y_i$ are simple elliptic, and $H^0(R^1\psi_i{}_*\scrO_{\hY})\cong \bigoplus_{j\neq i}H^1(\scrO_{D_j })$. Choosing a $j\neq i$, the map $H^1(\hY; \scrO_{\hY}) \to H^1(\scrO_{D_j})$ is an isomorphism since $D_j \to B$ is an isogeny. Thus $H^1(\hY; \scrO_{\hY}) \to H^0(R^1\psi_i{}_*\scrO_{\hY})$ is injective so that $H^1(Y_i; \scrO_{Y_i})=0$ in this case as well.
 \end{proof} 
 
 \begin{remark}\label{blowdown}  The passage from $Y$ or $Y_i$ to $\hY$ is reversible, in the following sense. Suppose that $\hY$ is a smooth surface and that $D= \sum_iD_i$ is an effective divisor on $\hY$, where  the $D_i$ are the connected components of $D$ and each $D_i$ is a smooth elliptic curve, an irreducible nodal rational curve or a cycle of smooth rational curves (one could also allow configurations corresponding to the triangle singularity case). By adjunction,  $K_{\hY}\otimes \scrO_{\hY}(D)|D_i \cong \omega_{D_i} \cong \scrO_{D_i}$. Suppose  in addition that  the intersection matrix of $D$ is negative definite, or equivalently that there exists a contraction $\pi \colon \hY \to Y$, where $Y$ is a normal analytic surface and $\pi(D_i) = p_i\in Y$. In particular, the singular points are simple elliptic or cusp singularities.  Thus $\hY$ is the minimal resolution of a Gorenstein surface $Y$ with elliptic singularities. There exists a contractible  Stein neighborhood $U_i$ of $p_i$ in $Y$. Let $\widetilde{U}_i = \pi^{-1}(U_i)$. A standard argument using the Grauert-Riemenschneider theorem shows that restriction of line bundles induces an isomorphism  $\Pic \widetilde{U}_i \to \Pic D_i$. Let  $L=K_{\hY}\otimes \scrO_{\hY}(D)$. By the above,   $L|\widetilde{U}_i  = K_{\hY}\otimes \scrO_{\hY}(D)|\widetilde{U}_i \cong \scrO_{\widetilde{U}_i}$. Hence $L$  is the pullback of a line bundle on $Y$, necessarily $\omega_Y$. A similar statement holds for the line bundles $M_i = K_{\hY}\otimes \scrO_{\hY}(D_i')$ and the morphism $\psi_i\colon \hY \to Y_i$. Note that $\omega_Y$ is nef and big $\iff$ $L$ is nef and big, and in this case $\omega_Y$ is ample $\iff$ the irreducible curves $C$ on $\hY$ such that $L\cdot C =0$ are exactly the  components of $D_i$ for some $i$.
 \end{remark} 
 
\subsection{The main geometric result}  We begin with the following, due to  Franciosi-Pardini-Rollenske \cite[Theorem 4.1]{FPR1}, \cite[Proposition 4.3]{FPR2}, which deals with the case where $\kappa(\hY) \geq 0$, i.e.\ $\hY$ is not rational or ruled (and does not require Assumption~\ref{assumption27}):

\begin{theorem}\label{theorem5} Suppose that $L^2 =1$ and that $\kappa(\hY) \geq 0$.  Let $Y_{\text{\rm{min}}}$ be the minimal model of $\hY$.
\begin{enumerate}
\item[\rm(i)] If $\kappa(\hY) =2$, i.e.\ $Y_{\text{\rm{min}}}$ is of general type, then $\hY = Y = Y_{\text{\rm{min}}}$ and $k=0$, i.e.\ there are no simple elliptic or cusp singularities.
\item[\rm(ii)] If $\kappa(\hY) =1$, i.e.\ $Y_{\text{\rm{min}}}$ is properly elliptic, then $\hY= Y_{\text{\rm{min}}}$,  $k = m=1$, $\hY$ is a multiplicity two logarithmic transform of an elliptic $K3$ surface at a single fiber, one component of $D=D_1$ is a bisection of the elliptic fibration, and either $D$ is irreducible with $D^2 =-1$ or $D$ consists of a bisection of square $-3$ plus smooth components of square $-2$ lying in a fixed fiber. In this case, $K_{\hY} =\scrO_{\hY}(F)$, where $F$ is the multiple fiber, $D\cdot F =1$, and $L = F+D$. 
\item[\rm(iii)] If $\kappa(\hY) =0$, then either $k=m =2$ and $Y_{\text{\rm{min}}}$ is an Enriques surface or $k=1$, $m=2$ and $Y_{\text{\rm{min}}}$ is a $K3$ surface. In this case, $\hY$ is the blowup of  $Y_{\text{\rm{min}}}$ at one point. If $C$ is the exceptional curve, then $C\cdot D =2$ if $Y_{\text{\rm{min}}}$ is a $K3$ surface and $C\cdot D_1 = C\cdot D_2 =1$ if $Y_{\text{\rm{min}}}$ is an Enriques surface. Finally, $K_{\hY} \equiv C$ and $L \equiv D+C$. 
\end{enumerate}
\end{theorem}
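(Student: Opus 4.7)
The plan is to organize the proof by the Kodaira dimension of $\hY$, using throughout the numerical identities that follow from $L = K_{\hY}+D$: namely $L^2 = 1$, $L\cdot D = 0$, $\chi(\scrO_{\hY}) = 3-k$ (Lemma~\ref{lemmam22}), $K_{\hY}^2 = 1-m$, $K_{\hY}\cdot D_i = m_i$, and the Leray-spectral-sequence identity $p_g(\hY) = 2-k+q(\hY)\le 2$ obtained from $\pi_*\scrO_{\hY} = \scrO_Y$ together with the skyscraper nature of $R^1\pi_*\scrO_{\hY}$. Under Assumption~\ref{assumption27}, any irreducible curve $C\not\subseteq D$ has $L\cdot C > 0$, so any $(-1)$-curve $E\not\subseteq D$ must satisfy $D\cdot E\ge 2$. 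This last constraint is the main technical lever for bounding how non-minimal $\hY$ can be in all three cases.

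For case (i), $\kappa(\hY) = 2$, I would first apply Noether's inequality on the minimal model $Y_{\text{min}}$ (general type, $p_g = 2$) to obtain $K_{Y_{\text{min}}}^2 \ge 1$, then observe that multiplication by a defining section of $nD$ gives an injection $H^0(nK_{\hY})\hookrightarrow H^0(nL)$, whence $\operatorname{vol}(K_{\hY}) = K_{Y_{\text{min}}}^2 \le L^2 = 1$; hence $K_{Y_{\text{min}}}^2 = 1$. Any non-minimality of $\hY$ would produce a $(-1)$-curve $E$ whose image $q$ in $Y_{\text{min}}$ must lie on the pushforward $\bar D$ with multiplicity at least $2$, and I would show this is incompatible with the possible shapes (smooth elliptic, nodal rational, or cycle of rational curves) that irreducible components of $\bar D_i$ can take on the minimal $I$-surface $Y_{\text{min}}$. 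Hence $\hY = Y_{\text{min}}$, and then $K_{\hY}^2 = 1 = 1-m$ yields $m = k = 0$; Assumption~\ref{assumption27} together with ampleness of $\omega_Y$ then gives $\hY = Y = Y_{\text{min}}$.

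For case (ii), $\kappa(\hY) = 1$, $K_{Y_{\text{min}}}^2 = 0$ gives $m\ge 1$. An analogous minimality argument using the $D\cdot E\ge 2$ constraint against the elliptic fibration structure on $Y_{\text{min}}$ forces $\hY = Y_{\text{min}}$, hence $m = 1$ and $k = 1$. The invariants $\chi(\scrO_{\hY}) = 2$, $p_g(\hY) = 1$, $q(\hY) = 0$ then identify the elliptic fibration as being over $\Pee^1$ with $\chi = 2$, and the canonical bundle formula $K_{\hY}\equiv F_2$ forces a unique multiple fiber of multiplicity $2$, so $\hY$ is a logarithmic transform of an elliptic $K3$. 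The relation $K_{\hY}\cdot D_1 = 1$ identifies a bisection inside $D_1$, and the classification of multiplicity-one configurations in \S\ref{section1} together with the equivalence $L\cdot C = 0 \Leftrightarrow C\subseteq D_1$ forces any $(-2)$-components of a reducible $D_1$ to lie in a single fiber.

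For case (iii), $\kappa(\hY) = 0$, $K_{Y_{\text{min}}}\equiv 0$ gives $K_{\hY}^2 = -n$ where $n$ is the number of blowups in $\hY\to Y_{\text{min}}$, so $n = m-1$. Via $k = 2 - p_g(\hY) + q(\hY)$, the four Enriques--Kodaira cases ($K3$, Enriques, abelian, bielliptic) give $k = 1, 2, 3, 3$ respectively. I would exclude the abelian and bielliptic cases by the translation-family property of smooth elliptic curves on such surfaces: their zero self-intersection and the fact that they move in a translation family forces the three images $\bar D_i$ to be pairwise numerically proportional and disjoint, so the number of blowups needed to create the required negative self-intersections is $n = \sum m_i = m$, contradicting $n = m-1$. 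The remaining $K3$ ($k=1$) and Enriques ($k=2$) cases give $m = 2$, $n = 1$, so $\hY$ is a one-point blowup with exceptional $(-1)$-curve $C$; the relations $K_{\hY}\equiv C$, $L\equiv D+C$, and $L\cdot C = 1$ compute $C\cdot D = 2$, and in the Enriques case nefness of $L$ together with $L\cdot D_i = 0$ force $C\cdot D_i = 1$ for each $i$. The main obstacles I anticipate are the incidence-based minimality arguments in cases (i) and (ii), and the translation-family exclusion of abelian and bielliptic in case (iii).
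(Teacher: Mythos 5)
The central gap is in your minimality argument for cases (i) and (ii). You propose to rule out exceptional curves on $\hY$ by combining the constraint $D\cdot E\ge 2$ with ``the possible shapes'' of the image of $D$ on $Y_{\mathrm{min}}$, but you never carry this out, and in case (i) the plan is close to circular: the conclusion there is $D=0$, so you cannot presuppose structural information about $\overline{D}$ on the minimal model in order to run an incidence argument against it. The step is also unnecessary. Writing $K_{\hY} = \rho^*K_{Y_{\mathrm{min}}}+\sum_i a_iC_i$ and using $L\cdot D=0$, one has $1 = L^2 = L\cdot K_{\hY} = L\cdot\rho^*K_{Y_{\mathrm{min}}} + \sum_i a_i(L\cdot C_i)$. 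For $\kappa(\hY)\ge 1$ the divisor $\rho^*K_{Y_{\mathrm{min}}}$ is nef, of nonnegative square, and not numerically trivial, so the Hodge index theorem gives $L\cdot\rho^*K_{Y_{\mathrm{min}}}\ge 1$; on the other hand every positive-dimensional fiber of $\rho$ contains an exceptional curve $C_j$ with $a_j>0$ and, by Assumption~\ref{assumption27}, $L\cdot C_j\ge 1$. Hence the exceptional sum must be empty and $\hY = Y_{\mathrm{min}}$, with no case analysis at all. The same identity with $K_{Y_{\mathrm{min}}}\equiv 0$ handles $\kappa=0$: the exceptional sum equals $1$, and Lemma~\ref{lemma4} (an exceptional curve in a reducible fiber of an iterated blowup appears with coefficient $\ge 2$ in $K_{\hY}$) forces $\rho$ to be a single blowup. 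This is precisely the step you skip when you assert $m=2$, $n=1$ in the $K3$ and Enriques cases: as written you only have $n=m-1$ and the lower bound $m\ge 2$, with no upper bound. (Your own lever does recover it for non-iterated blowups, since $m = K_{\hY}\cdot D = \sum_j(D\cdot E_j)\ge 2n = 2(m-1)$ forces $m\le 2$; but you did not say this, and the infinitely-near case needs the Lemma~\ref{lemma4} bookkeeping.)

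Once these numerical points are supplied, your remaining steps are sound but sometimes heavier than needed. Noether's inequality plus the volume comparison $\operatorname{vol}(K_{\hY})\le L^2$ replaces the one-line squeeze $1\le L^2K_{\hY}^2\le (L\cdot K_{\hY})^2=1$ available once minimality is known in case (i). And your translation-family exclusion of abelian and bielliptic surfaces in case (iii) is superfluous: once $m=2$ one has $k\le m=2$, hence $\chi(\scrO_{\hY})=3-k\ge 1$, which already rules out the two $\chi=0$ classes.
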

\begin{proof} Let $\rho\colon \hY \to Y_{\text{\rm{min}}}$ be the canonical morphism. Then  $K_{\hY} = \rho^*K_{Y_{\text{\rm{min}}}} + \sum_ia_iC_i$, where the $C_i$ are the positive dimensional fibers of $\rho$ and $C_j$ is an exceptional curve for at least one $j$.  Since $\kappa(\hY) \geq 0$,  $K_{Y_{\text{\rm{min}}}}$ is nef and $K_{Y_{\text{\rm{min}}}}^2 \ge 0$. Since $L$ is nef and big, $L\cdot \rho^*K_{Y_{\text{\rm{min}}}} \ge 0$, with equality $\iff$ $K_{Y_{\text{\rm{min}}}}\equiv 0$ by the Hodge index theorem. Hence, if $\kappa(\hY) \ge 1$, from 
$$1 = L^2 = L\cdot K_{\hY} = L\cdot \left(\rho^*K_{Y_{\text{\rm{min}}}} + \sum_ia_iC_i\right),$$
and the fact that $L\cdot C_j > 0$ for at least one $j$, the sum $\sum_ia_iC_i$ must be empty, i.e.\ $\hY = Y_{\text{\rm{min}}}$. If $\kappa(\hY) = 0$ and $\hY \neq Y_{\text{\rm{min}}}$, then $\sum_ia_i(L\cdot C_i) = 1$. By Lemma~\ref{lemma4} and the fact that $L\cdot C > 0$ for every exceptional curve $C$ on $\hY$, there is a unique connected component of the fiber of $\rho$ and it is an irreducible exceptional curve, i.e.\ $\rho\colon \hY \to Y_{\text{\rm{min}}}$ is a single blowup. 

\smallskip
\noindent \textbf{The case $\kappa(\hY) =2$:}
Then  $K_{\hY}^2 > 0$.  By the  Hodge index theorem,
$$1\le (L^2) (K_{\hY}^2) = K_{\hY}^2 \leq (L\cdot K_{\hY})^2 = L^2 = 1.$$
Since equality holds, $L$ and $K_{\hY}$ are numerically equivalent, hence $D \equiv 0$ and $k=0$. 

\smallskip
\noindent \textbf{The case $\kappa(\hY) =1$:}   Since $\hY$ is minimal,  $K_{\hY}^2 = 0$ and $1 = L^2 = K_{\hY}^2 + m$, so $m= k=1$ and  $K_{\hY}\cdot D = 1$. In particular $D =D_1$ is connected.
By Lemma~\ref{lemmam22}, $\chi(\scrO_{\hY}) = 3-k=2$. Since $D ^2 < 0$ and $p_a(D) =1$, $D$ cannot be a union of fiber components (it would have to be a complete fiber). Hence  $D$ contains a multisection of the elliptic fibration. By the canonical bundle formula,
$$K_{\hY}\equiv (2 + 2g-2)f + \sum_\alpha(m_\alpha-1)F_\alpha = 2gf + \sum_\alpha(m_\alpha-1)F_\alpha,$$
where the $F_\alpha $ are the multiple fibers, of multiplicity $m_\alpha \ge 2$. Since $D\cdot F_\alpha \ge 1$ for all $\alpha$ and $K_{\hY}\cdot D = 1$, the only possibility is that $g=0$, there is a unique multiple fiber $F_\alpha$ with $m_\alpha = 2$ and  $D\cdot F_\alpha =1$. Hence $D$ contains a bisection of the elliptic fibration, with the other components equal to components of a single reducible fiber. In particular, the Jacobian elliptic surface corresponding to $\hY$ is an elliptic surface over $\Pee^1$ with a section and holomorphic Euler characteristic equal to $2$, hence is an elliptic $K3$ surface. Then $K_{\hY}\equiv F$. The remaining statements are clear or follow from adjunction:  either $D$ is an irreducible bisection, or $D$ is reducible and one component $E$ is a bisection which is smooth rational and the other components are smooth components of a fiber. In this case $-2 = K_{\hY}\cdot E +  E^2 = 1+E^2$, so that $E$ is the unique component of $D$ of self-intersection $-3$ and all other components have self-intersection $-2$.

\smallskip
\noindent \textbf{The case $\kappa(\hY) =0$:} In this case, $\hY\neq Y_{\text{\rm{min}}}$, because otherwise 
$$0 = 2p_a(D_i ) -2 = D_i^2  + K_{\hY} \cdot D_i =  D_i^2,$$
contradicting $D_i^2 < 0$. By the remarks at the beginning of the proof, $\hY$ is then the blowup of $Y_{\text{\rm{min}}}$ at one point. Thus, 
$$1 = L^2 = K_{\hY}^2 + m = -1+m, $$
so that $m=2$. Either $k=m=2$ and $D = D_1 + D_2$ with $D_i^2 =-1$ or $k=1$ and $D = D_1$ is connected with $D^2 =-2$. In the first case, $\chi(\scrO_{\hY}) = 3-2 =1$, so $\hY$ is an Enriques surface. In the second case, $\chi(\scrO_{\hY}) = 3-k =2$, so $\hY$ is a $K3$ surface.

Let $C$ be the unique exceptional curve on $\hY$. In the $K3$ case, $K_{\hY} = C$ and $C \cdot D = K_{\hY} \cdot D = -D^2=2$. In the Enriques case, $K_{\hY} \equiv  C$ and $C \cdot D_i = K_{\hY} \cdot D_i = -D_i^2=1$. In both cases,  $L = K_{\hY}+ D \equiv C+ D$. 
\end{proof}

We are thus reduced to analyzing the case $\kappa(\hY) =-\infty$.  First note the following special case of \cite[Lemma 4.5]{FPR1}:

\begin{lemma}\label{kappaminus} If $\kappa(\hY) =-\infty$, then there are two possibilities:
\begin{enumerate}
\item[\rm(i)] $\hY$ is a rational surface and $k=2$.
\item[\rm(ii)] $\hY$ is a ruled surface over an elliptic curve $B$ and $k=3$. In this case all singularities are simple elliptic.
\end{enumerate}
\end{lemma}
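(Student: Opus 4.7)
The strategy is to combine the Euler characteristic formula from Lemma~\ref{lemmam22} with the classification of minimal surfaces of negative Kodaira dimension, together with the ``no cycle of rational curves'' constraint from Lemma~\ref{lemma1}.

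First I would recall that since $Y$ is an I-surface, $\chi(\scrO_Y) = 3$, and by Lemma~\ref{lemmam22} applied to $\pi\colon \hY\to Y$,
\[
\chi(\scrO_{\hY}) = \chi(\scrO_Y) - k = 3-k.
\]
Now I would split according to which minimal model $Y_{\text{\rm{min}}}$ the surface $\hY$ maps to. Since $\kappa(\hY) = -\infty$, $Y_{\text{\rm{min}}}$ is either $\Pee^2$ or a geometrically ruled surface over a smooth curve $B$ of genus $g$, and $\chi(\scrO_{\hY}) = \chi(\scrO_{Y_{\text{\rm{min}}}}) = 1-g$.

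The key step is then to match this against $3-k$. If $\hY$ is rational, then $\chi(\scrO_{\hY}) = 1$, which forces $k=2$, giving case (i). If $\hY$ is ruled over an elliptic curve, then $\chi(\scrO_{\hY}) = 0$, so $k=3$; moreover Lemma~\ref{lemma1}(ii) rules out any cycle of rational curves on such an $\hY$, so each $D_i$ must be a smooth elliptic curve and hence every $p_i$ is simple elliptic, giving case (ii). Finally, if $g\ge 2$, then $\chi(\scrO_{\hY}) = 1-g \le -1$ forces $k \ge 4$, contradicting the bound $k \le p_g+1 = 3$ from Theorem~\ref{theorem2}; alternatively, this case is directly excluded by Lemma~\ref{lemma1}(i) since each $D_i$ is either a smooth elliptic curve or a cycle of rational curves, both of which are forbidden on a blowup of a ruled surface over a base of genus $\ge 2$.

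There is no serious obstacle here: the argument is essentially a bookkeeping exercise. The only point that requires care is confirming that the classification of minimal models in Kodaira dimension $-\infty$ is exhausted by $\Pee^2$ and the geometrically ruled surfaces, which is standard, and then noting that the computation of $\chi(\scrO)$ is a birational invariant for smooth surfaces. The substantive geometric input — that a ruled base of positive genus forces simple elliptic singularities — is already isolated in Lemma~\ref{lemma1}.
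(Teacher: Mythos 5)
Your proposal is correct and follows essentially the same route as the paper: Lemma~\ref{lemmam22} gives $\chi(\scrO_{\hY})=3-k$, and Lemma~\ref{lemma1} both excludes base genus $\ge 2$ and forces all singularities to be simple elliptic in the elliptic ruled case. Your alternative exclusion of $g\ge 2$ via the bound $k\le 3$ from Theorem~\ref{theorem2} is a harmless variant of the same bookkeeping.
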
 
\begin{proof} By Lemma~\ref{lemma1}, if $\kappa(\hY) = -\infty$, then either $\hY$ is rational and hence $\chi(\scrO_{\hY}) = 1$, or $\hY$ is the blowup of an elliptic ruled surface and hence $\chi(\scrO_{\hY}) = 0$ and all singularities are simple elliptic. By Lemma~\ref{lemmam22}, $k=2$ in the first case and $k=3$ in the second case. 
\end{proof}

 The strategy now is to see what prevents  the divisors $M_i$ introduced in Definition~\ref{defMi} from being nef.
 
 \begin{lemma}\label{Minotnef} If $C$ is an irreducible curve on $\hY$ such that $M_i \cdot C < 0$, then $C$ is an exceptional curve, $M_i\cdot C =-1$, and  $D_i' \cdot C =0$.  
 \end{lemma}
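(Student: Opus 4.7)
The plan is to exploit the two expressions $M_i=L-D_i=K_{\hY}+D_i'$ together with the effectivity of $M_i$ (Lemma~\ref{Milist}(ii)) and Assumption~\ref{assumption27}, reducing everything to adjunction on $C$.

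First, I would show that $C$ is not a component of $D=\sum_j D_j$. If $C$ were a component of $D_i$, then $M_i\cdot C=(L-D_i)\cdot C=-D_i\cdot C\ge 0$ by the negative (semi)definiteness of the intersection form on $D_i$. If $C$ were a component of $D_j$ for some $j\ne i$, then $L\cdot C=0$ by Assumption~\ref{assumption27}, and $D_i\cdot C=0$ since $D_i$ and $D_j$ are disjoint; so again $M_i\cdot C=L\cdot C-D_i\cdot C=0$. Both contradict $M_i\cdot C<0$. Therefore $C\not\subset D$, which by Assumption~\ref{assumption27} gives $L\cdot C\ge 1$, and also $D_i'\cdot C\ge 0$.

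Second, from $M_i=K_{\hY}+D_i'$ and $D_i'\cdot C\ge 0$ I get $K_{\hY}\cdot C\le M_i\cdot C\le -1$. For the self-intersection bound, I use the effectivity of $M_i$: since $M_i\cdot C<0$ and $C$ is irreducible, $C$ must lie in the support of $M_i$, so I can write $M_i=nC+M''$ with $n\ge 1$ and $M''$ an effective divisor not containing $C$. Intersecting with $C$ gives $M_i\cdot C=nC^2+M''\cdot C$, and since $M''\cdot C\ge 0$ this forces $nC^2\le M_i\cdot C\le -1$, hence $C^2\le -1$.

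Third, combining $C^2\le -1$ and $K_{\hY}\cdot C\le -1$ with adjunction $C^2+K_{\hY}\cdot C=2p_a(C)-2\ge -2$ pins everything down: $p_a(C)=0$ and $C^2=K_{\hY}\cdot C=-1$, so $C$ is a (smooth rational) exceptional curve. Finally, substituting $K_{\hY}\cdot C=-1$ into $M_i\cdot C=K_{\hY}\cdot C+D_i'\cdot C<0$ with $D_i'\cdot C\ge 0$ forces $D_i'\cdot C=0$ and $M_i\cdot C=-1$.

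The only nontrivial step is the self-intersection bound $C^2\le -1$; the whole argument rests on realising that effectivity of $M_i$ (which itself came from $\chi(\scrO_{Y_i})=2$) turns the hypothesis $M_i\cdot C<0$ into the structural statement that $C$ is a component of $M_i$, after which adjunction does the rest.
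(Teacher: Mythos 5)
Your proof is correct and takes essentially the same route as the paper: effectivity of $M_i$ forces $C^2<0$, writing $M_i=K_{\hY}+D_i'$ with $D_i'\cdot C\ge 0$ gives $K_{\hY}\cdot C\le M_i\cdot C<0$, and adjunction then pins down $C^2=K_{\hY}\cdot C=-1$ and $D_i'\cdot C=0$. The only difference is your extra (unneeded) step ruling out $C$ being a component of $D_i$ itself; note in passing that the inequality $D_i\cdot C\le 0$ there really comes from the components of a cusp cycle having self-intersection $\le -2$ (so $D_i\cdot C=C^2+2\le 0$), not from negative definiteness of the intersection matrix per se, but this does not affect the argument.
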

 \begin{proof} By Lemma~\ref{Milist}, $M_i$ is effective. Thus a curve $C$ such that $M_i \cdot C < 0$ must satisfy  $C^2 < 0$. If $C$ is a component of $D_i'$, then $M_i \cdot C =0$ since $M_i$ is trivial in a neighborhood of $D_i'$.  It follows  that $C$ is not a component of $D_i'$ so that $D_i' \cdot C \geq 0$.    Then
 $$0> M_i\cdot C = (K_{\hY} +D_i')\cdot C \ge K_{\hY}\cdot C.$$
 Thus  $C$ is exceptional, $K_{\hY}\cdot C = -1$, and therefore 
 $D_i' \cdot C  = 0$ and $M_i\cdot C =-1$. 
  \end{proof}
  
  The main result is then the following, which generalizes Theorem~\ref{theorem5} to the case where $\kappa(\hY) = -\infty$:  
  
  \begin{theorem}\label{geomanal}  With notation as above, suppose that $\kappa(\hY) = -\infty$ and in particular that $k > 1$. Then exactly one of the following holds:
  \begin{enumerate}
 \item[\rm(i)] $M_i$ is not nef. Then  there exists an exceptional curve $C$ such that $C \cdot D_i'=0$. In this case, if $\gamma \colon \hY \to Y_0$ is the surface obtained by blowing down $C$, then $M_i =  C$, $C\cdot D_i = 2$ and $m_i = 2$. Finally,   identifying $D_i'$ with its image in $Y_0$,   $K_{Y_0} \equiv  - D_i'$, $\hY$ is the blowup of $Y_0$ at a point not on $D_i'$, and $D_i$ is the proper transform of a divisor $\Gamma_i$ on $Y_0$ disjoint from $D_i'$ and containing $p$ as a point of multiplicity $2$ with $p_a(\Gamma_i) = 2$.
  \item[\rm(ii)] $M_i$ is nef. Then   $M_i^2=0$ and $m_i = 1$. Moreover,  $|2M_i|$ is a base point free linear system defining an elliptic fibration  $\hY \to \Pee^1$  (not necessarily relatively minimal) with at most $3$ multiple fibers, all of multiplicity $2$. More precisely, there is a single multiple fiber if $\hY$ is rational and there are $2$ or $3$ multiple fibers if $\hY$ is an elliptic ruled surface.  Moreover $M_i$ is a multiple fiber. Finally,  either $D_i$ is an irreducible bisection or one irreducible component of  $D_i$ is   a smooth bisection of the fibration, and the remaining components are curves of self-intersection $-2$ contained in a single reducible, non-multiple fiber. 
  \end{enumerate}
  \end{theorem}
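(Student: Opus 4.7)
The proof plan splits into the two cases (i) $M_i$ not nef and (ii) $M_i$ nef; I treat them in turn.

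For case (i), Lemma~\ref{Minotnef} produces an irreducible curve $C$ with $M_i\cdot C=-1$, $C\cdot D_i'=0$, and $C^2=K_{\hY}\cdot C=-1$, hence a smooth rational $(-1)$-curve. Since every irreducible component of any $D_l$ is either non-rational or has self-intersection at most $-2$, $C$ is not such a component. Writing the unique effective representative (Lemma~\ref{Milist}(v)) as $M_i=aC+R$ with $a\geq 1$ and $R\geq 0$ not containing $C$, the identity $L\cdot C=D_i\cdot C-1\geq 1$ (from Assumption~\ref{assumption27}) combined with $L\cdot M_i=1$ and $R\cdot L\geq 0$ forces $a=1$, $L\cdot C=1$, $D_i\cdot C=2$, and $R\cdot L=0$. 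Thus $R$ is supported on $\bigcup_l D_l$, with $R^2=2-m_i$, $R\cdot D_i=m_i-2$, and $R\cdot D_i'=0$. The crucial observation is that every component $E_j$ of $D_i$ satisfies $E_j\cdot D_i\leq 0$ (by inspection of simple elliptic and cusp configurations), so the $D_i$-supported part $R_i$ of $R$ satisfies $R_i\cdot D_i\leq 0$, giving $m_i\leq 2$; combined with the negative-definiteness of the intersection form on $\bigcup_l D_l$ (which gives $R^2\leq 0$ with equality iff $R=0$), this forces $m_i=2$, $R=0$, and $M_i=C$. Contracting $C$ via $\gamma\colon\hY\to Y_0$ then yields $K_{Y_0}+D_i'=\gamma_*M_i=0$, i.e.\ $K_{Y_0}\equiv -D_i'$. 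The remaining conclusions ($p=\gamma(C)\notin D_i'$, $p_a(\Gamma_i)=2$ by adjunction on $Y_0$, and disjointness of $\Gamma_i$ from $D_i'$) follow from $C\cdot D_i'=0$ together with $D_i\cap D_i'=\emptyset$.

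For case (ii), since $M_i$ is both nef and effective, $M_i^2\geq 0$; combined with $M_i^2=1-m_i\leq 0$ this immediately gives $m_i=1$ and $M_i^2=0$, and adjunction yields $p_a(M_i)=1$. The plan is to show that $|2M_i|$ is a base-point free pencil, hence defines an elliptic fibration with $M_i$ a multiple fiber of multiplicity $2$. Combining $K_{\hY}=M_i-D_i'$ with the adjunction formula $(K_{\hY}+M_i)|_{M_i}=\omega_{M_i}$ gives $2M_i|_{M_i}=\omega_{M_i}\otimes\scrO_{M_i}(D_i')$; the first factor is trivial because $M_i$ is a Gorenstein curve of arithmetic genus one, and the second should also be trivial (equivalently, the effective representative of $M_i$ is disjoint from $D_i'$), using the nefness of $M_i$, the vanishings $M_i\cdot D_l=0$ for $l\neq i$, and the unique-representative feature. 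Given $\scrO_{M_i}(2M_i)\cong\scrO_{M_i}$, a Riemann--Roch computation (with $\chi(2M_i)=\chi(\scrO_{\hY})$) together with the restriction sequence delivers $h^0(2M_i)\geq 2$, while base-point freeness follows from the triviality of $\scrO_{M_i}(2M_i)$. Stein factorization then produces the elliptic fibration $g\colon\hY\to\Pee^1$; the canonical bundle formula for $g$, combined with $K_{\hY}^2=1-m$ and $\chi(\scrO_{\hY})\in\{0,1\}$, forces all multiple fibers to have multiplicity $2$, with exactly one in the rational case and either two or three in the elliptic ruled blowup case. The bisection structure of $D_i$ follows from $D_i\cdot 2M_i=2$ and adjunction on its individual components.

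The main obstacle lies in case (ii), specifically in establishing the two linked inputs $\scrO_{M_i}(D_i')\cong\scrO_{M_i}$ and $h^0(2M_i)\geq 2$. Ruling out components of $D_i'$ from the effective representative of $M_i$ requires delicate case analysis, and sharpening Riemann--Roch to produce the pencil is straightforward in the rational case ($\chi(\scrO_{\hY})=1$) but in the elliptic ruled blowup case ($\chi(\scrO_{\hY})=0$) the na\"ive bound only gives $h^0(2M_i)\geq 1$, so an additional cohomological input--likely from the Albanese structure of $\hY$ over the base elliptic curve--is needed. This is consistent with the paper's warning that the proofs at this stage require somewhat ad hoc techniques.
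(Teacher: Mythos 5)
Your case (i) argument is correct and takes a genuinely different route from the paper. The paper iterates: it contracts $C$, asks whether the pushforward of $M_i$ is still nef, repeats, and eventually applies Lemma~\ref{lemma4}, the equality $1=L\cdot M_i$, and the Hodge index theorem to conclude that the nef part is numerically trivial and only one exceptional curve appears. You instead exploit the unique effective representative $M_i=C+R$ (Lemma~\ref{Milist}(v) applies since $\hY$ is rational or blown-up elliptic ruled), deduce $a=1$, $L\cdot R=0$ from $1=L\cdot M_i$, and then squeeze $m_i$ from two sides: $R\cdot D_i\le 0$ because each component $E_j$ of $D_i$ has $E_j\cdot D_i\le 0$, giving $m_i\le 2$, while negative definiteness of the lattice spanned by the components of $D$ gives $R^2=2-m_i\le 0$, i.e.\ $m_i\ge 2$; hence $m_i=2$, $R=0$, $M_i=C$. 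This is shorter and avoids the iteration entirely; the paper's version has the advantage of not needing $h^0(M_i)=1$ as an input.

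In case (ii) there is a genuine gap, precisely where you flag it: producing the pencil $|2M_i|$ when $\chi(\scrO_{\hY})=0$. Your route through $\scrO_{M_i}(2M_i)\cong\scrO_{M_i}$ requires both the unproved triviality of $\scrO_{M_i}(D_i')$ and, even granting that, only yields $h^0(2M_i)\ge 1$ in the elliptic ruled case; the "additional cohomological input from the Albanese structure" is left as a guess. The paper's fix is different and cleaner: it computes on the partial contraction $Y_i$ rather than on $\hY$, using $h^0(\hY;\scrO_{\hY}(nM_i))=h^0(Y_i;\omega_{Y_i}^{\otimes n})$ and $\chi(\scrO_{Y_i})=2$ (Lemma~\ref{lemmam22}); Riemann--Roch for the Gorenstein surface $Y_i$ with $M_i^2=0$ gives $\chi(\omega_{Y_i}^{\otimes n})=2$, and $h^2(\omega_{Y_i}^{\otimes n})=h^0(\omega_{Y_i}^{\otimes(1-n)})=0$ for $n\ge 2$ because $M_i$ is effective and nonzero, so $h^0(2M_i)\ge 2$ with no restriction to $M_i$ needed at all. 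Base-point freeness then comes from the Hodge index theorem (the fixed and moving parts are both proportional to $M_i$, hence of square zero), not from triviality of $\scrO_{M_i}(2M_i)$. A second, smaller gap: your sketch asserts that $M_i$ is a multiple fiber of multiplicity $2$, but ruling out the alternative $G_f\equiv G_m\equiv M_i$ and pinning down the number of multiple fibers requires the paper's explicit bookkeeping with the canonical bundle formula, the decomposition of $D_i'$ into fiber classes, and the primitivity of $M_i$ coming from $L\cdot M_i=1$ (plus the fact that a logarithmic transform of a product elliptic surface at a single fiber is never algebraic); none of this is supplied by "canonical bundle formula plus $K_{\hY}^2=1-m$" alone.
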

 \begin{proof} Case (i):  $M_i$ is not nef.  Let $C$ be as in the statement of Lemma~\ref{Minotnef}. Contracting $C$ gives a morphism $\gamma_1\colon \hY \to Y_1$, where $Y_1$ is a smooth surface   containing a union of curves isomorphic to $D_i'$, which we will continue to denote by $D_i'$. Let $N_1= \gamma_1{}_*M_1$, so that $N_1 = \gamma_1{}_*(K_{\hY} + D_i') = K_{Y_1} + D_i'$ and $M_i = \gamma_1^*N_1 + C= \gamma_1^*K_{Y_1} + C + D_i'$.  The argument of Lemma~\ref{Minotnef} shows that, if $N_1$ is not nef, then there exists an exceptional curve $C'$ on $\overline{Y}$ such that $N_1\cdot C'< 0$ and $C' \cdot D_i'=0$. Then we can keep repeating this argument, which cannot continue indefinitely. Thus we eventually find a birational morphism $\gamma\colon \hY \to Y_0$, where $Y_0$ is smooth,  and a nef divisor $N_0$ on $Y_0$ such that $M_i =\gamma^*N_0 + \sum_ja_jC_j = \gamma^*K_{Y_0} + \sum_ja_jC_j + D_i'$, where the $C_j$ are the components of the one-dimensional fibers of the morphism $\gamma \colon \hY\to Y_0$ and $a_j > 0$. Then $L\cdot \gamma^*N_0  \ge 0$ since $L$ and $\gamma^*N_0$ are nef, and $L\cdot  a_jC_j  \ge a_j$ if $C_j$ is exceptional. Since at least one curve in every connected positive dimensional fiber of $\gamma$ is exceptional and
 $$1 = L\cdot M_i = (L\cdot \gamma^*N_0 ) + \sum_ja_j(L\cdot C_j),$$
 the only possibility via Lemma~\ref{lemma4} is that there is just one exceptional curve $C =C_j$,  $a_j = 1$, and   $L\cdot \gamma^*N_0  = 0$. Since $N_0$ is nef, $N_0^2 = (\gamma^*N_0)^2 \ge 0$. By the Hodge index theorem, $\gamma^*N_0 \equiv 0$ and hence $N_0\equiv 0$. Thus $M_i = K_{\hY} + D_i' \equiv C$. Since  $M_i$ is effective and $C$ is irreducible with $C^2< 0$,  $M_i \equiv  C$. Hence $L = M_i +D_i \equiv D_i + C$ and $K_{\hY} \equiv C-D_i'$.  Then $K_{\hY} = \gamma^*K_{Y_0} + C \equiv C-D_i'$, so that $\gamma^*K_{Y_0}   \equiv   -D_i' \equiv -\gamma^*D_i'$ where we view the divisor $D_i'$ as living on $Y_0$. Thus $K_{Y_0}   \equiv  -D_i'$ since $\gamma^*\colon  \operatorname{Num}Y_0 \to \operatorname{Num} \hY$ is injective.  Next we claim that $C \cdot D_i =2$, as follows from  the equalities
 $$1 = L^2 = L\cdot (D_i + C) = L \cdot C = (D_i +C)\cdot C = (D_i\cdot C) + C^2 = (D_i\cdot C)  -1.$$
The statement that $m_i =2$ follows from the equalities
 $$0 = L\cdot D_i = (M_i +D_i)\cdot D_i = (D_i + C)\cdot D_i = -m_i +2.$$
  If $\Gamma_i =\gamma(D_i)$, then $D_i = \gamma^*\Gamma_i - 2C$ because $D_i\cdot C =2$. This proves the remaining statements in Case (i). 
 
 \smallskip
 \noindent  Case (ii):  $M_i$ is nef. Then $M_i^2 = 1-m_i \ge 0$. Hence $m_i =1$ and $M_i^2 = 0$. Next we claim that $\dim |2M_i| \ge 1$ and in fact that $|2M_i|$ is a base point free linear system. This is a minor variation on a standard argument in classification theory (cf.\ e.g.\ \cite[Lemma 24 p.\ 295]{Friedmanbook}):  First,  $h^0(\hY; \scrO_{\hY}(nM_i)) = h^0(Y_i; \omega_{Y_i}^{\otimes n})$. Lemma~\ref{Milist} implies that $\chi(Y_i; \scrO_{Y_i}) = 2$. By an easy extension of the Riemann-Roch theorem to a projective Gorenstein surface with isolated singularities,
 $$\chi(Y_i; \omega_{Y_i}^{\otimes n}) = \frac{n^2M_i^2 -nM_i^2}{2} + \chi(Y_i; \scrO_{Y_i}) = 2.$$
For $n\ge 2$,  $h^2(Y_i; \omega_{Y_i}^{\otimes n}) =  h^0(Y_i; \omega_{Y_i}^{\otimes 1-n}) = h^0(\hY; \scrO_{\hY}((1-n)M_i)) = 0$ since $M_i$ is effective and nonzero. It follows that
 $$h^0(\hY; \scrO_{\hY}(nM_i)) = h^0(Y_i; \omega_{Y_i}^{\otimes n})\ge 2$$
 for all $n\ge 2$, in particular for $n=2$. Thus $\dim |2M_i| \ge 1$. Write $2M_i = G_f + G_m$, where $G_f$  is the fixed curve and $G_m$ the moving part of the linear system $|2M_i|$. Since $M_i$ is nef,  $M_i\cdot G_f \ge 0$ and $M_i\cdot G_m \ge 0$, and hence $M_i\cdot G_f=M_i\cdot G_m = 0$. By the Hodge index theorem, since $M_i$ is not numerically equivalent to $0$, $G_f$ and $G_m$ are rational multiples of $M_i$ and hence $G_m^2=G_f^2 =0$. Thus $|2M_i| = |G_m|$ defines a fibration $\varphi\colon \hY \to B$ over some base curve $B$. Let  $G$ be a general fiber of $\varphi$, so that  $G$ is smooth and $G^2 =0$.  Since $G\cdot M_i = G_f\cdot M_i = G_m \cdot M_i =0$, every effective curve    $\Gamma \in |M_i|$  is contained in fibers of the morphism $\varphi$ as is $G_f$.  Hence $\Gamma$ and $G_f$ are numerically equivalent  to positive rational multiples of $G$  since $M_i^2=G_f^2 =0$.  Also,  $M_i$ is a primitive class since $L\cdot M_i =1$. Since $2M_i = G_f+G_m$, either $G_f=0$ and $G_m=2M_i$ or $G_f\equiv G_m \equiv M_i$. In any case,  $G_m \equiv aG$ for some positive integer $a$ and hence $L\cdot G \leq 2$.  By Lemma~\ref{Milist}(iv),  $G\cdot K_{\hY}=0$, hence $G$ is an elliptic curve and $\varphi\colon \hY \to B$ is a not necessarily relatively minimal elliptic fibration. 
 
 Let $\varphi_0\colon Y_0\to B$ be the relatively minimal model of $\varphi$. Then $Y_0$ is a (relatively minimal) elliptic surface. By the canonical bundle formula, after  identifying $G$ with a general fiber of $\varphi_0$,  
 $$K_{Y_0} =(2g-2+ \chi)G + \sum_\alpha(m_\alpha-1)F_\alpha,$$
 where $g$ is the genus of $B$, $\chi=\chi(\scrO_{Y_0}) = \chi(\scrO_{\hY})\ge 0$, and the $F_\alpha$ are the multiple fibers of multiplicities $m_\alpha > 1$. Since $L\cdot G =m_\alpha ( L\cdot F_\alpha) \leq 2$, $m_\alpha \leq 2$ as well, and hence $m_\alpha = 2$ for every $\alpha$.  Because $\kappa(Y_0) = \kappa(\hY) = -\infty$, $ g= 0$, $B \cong\Pee^1$,   $\chi =0$ (the elliptic ruled case) or $1$ (the rational case), and 
 $$\chi -2 + \sum_\alpha \left(1- \frac{1}{m_\alpha}\right) = \chi -2 + \frac{\#(\alpha)}2< 0.$$
 In particular, in the rational case there is at most one multiple fiber.  In the elliptic ruled case, $Y_0$ is a logarithmic transform of a product elliptic surface $G\times \Pee^1$ and there are either $0$,  $2$ or $3$ multiple fibers, because  a logarithmic  transform of a product surface at a single  fiber is never algebraic (cf.\ \cite[I.6.12]{FM}).  Then we have the following formula for the canonical bundle of $\hY$: 
 $$K_{\hY} = (\chi-2)G + \sum_\alpha F_\alpha + \sum_ta_tC_t,$$
 where the $C_t$ are the exceptional curves in the blowup $\hY \to Y_0$. Since $M_i\cdot D_i' =0$, the connected components of $D_i'$ are contained in fibers of $\varphi$ and have arithmetic genus one. Hence 
 $D_i'$ is a union of proper transforms of fibers of $\varphi_0$, and so 
 $$D_i'= nG + \sum_\alpha n_\alpha F_\alpha - \sum_tb_tC_t,$$  where $n, n_\alpha, b_t$ are nonnegative integers and $n_\alpha$ is either $0$ or $1$. Clearly $n+ \sum_\alpha n_\alpha = k-1$. Since $M_i = K_{\hY} + D_i'$ and $M_i^2 =0$, $a_t = b_t$ for all $t$. Thus  
 $$M_i = (n+   \chi-2)G +  \sum_\alpha(n_\alpha +  1)F_\alpha.$$
% Since $M_i$ is primitive, $n+   \chi-2 \le 1$, i.e.\ $n \le 3-\chi$, and if equality holds then there are no multiple fibers and $M_i\equiv G$.
 
 First suppose that $\hY$ is rational. Then $k=2$ and hence $n \le 1$. Since $M_i$ is effective and nonzero and $M_i = (n-1)G +  \sum_\alpha(n_\alpha +  1)F_\alpha$, there has to exist a multiple fiber. But $\hY$ is rational, so there  is exactly one multiple fiber $F=F_\alpha$, necessarily of multiplicity $2$. Since $n+n_\alpha = 1$, the case $n=0$, $n_\alpha =1$ is not possible since then $M_i = -G +2F_\alpha =0$. Hence $n=1$, $n_\alpha =0$ and $M_i = F_\alpha =F$.

Now suppose that $\hY$ is elliptic ruled.  Then $k=3$ and $n+ \sum_\alpha n_\alpha  =2$. In particular, $n \le 2$. Here $M_i = (n-2)G +  \sum_\alpha(n_\alpha +  1)F_\alpha$. As in the rational case, there must exist a multiple fiber, and hence there are $2$ or $3$ multiple fibers. Note that $n=2$ is impossible, since then $M_i =    \sum_\alpha(n_\alpha +  1)F_\alpha$ is a sum of at least two multiple fibers, hence is not primitive.  There are two cases: 

\smallskip
\noindent Case (1): $n =1$, $n_\alpha =1$ for exactly one $\alpha$, and $n_\beta =0$ for $\beta \neq \alpha$. In this case, 
$$M_i = -G + 2F_\alpha + \sum_{\beta \neq \alpha}F_\beta = \sum_{\beta \neq \alpha}F_\beta.$$
Since $M_i$ is primitive, there are exactly two multiple fibers $F_\alpha, F_\beta$, and $M_i =  F_\beta$. 

\smallskip
\noindent Case (2):  $n=0$, $n_{\alpha_1} =n_{\alpha_2} =1$ for two multiple fibers  $F_{\alpha_1}$, $F_{\alpha_2}$, and $n_\beta =0$ for a potential third multiple fiber $F_\beta  \neq F_{\alpha_i}$. In this case, 
$$M_i = -2G + 2F_{\alpha_1} + 2F_{\alpha_1} + \sum_{\beta \neq \alpha_1, \alpha_2}F_\beta =  \sum_{\beta \neq \alpha_1, \alpha_2}F_\beta.$$
Using again the fact that $M_i$ is primitive and effective, there are exactly $3$ multiple fibers $F_{\alpha_1}, F_{\alpha_2}, F_\beta$, and $M_i = F_\beta$.

In all cases, $G = 2M_i$, hence $G_f =0$ and $G_m = G$. Note that $M_i$ is effective and  $M_i = F$ for a multiple fiber $F$. In particular, the elliptic fibration cannot have a section. Since $M_i \cdot D_i =1$, $D_i$ is a bisection of the fibration. More precisely, since $M_i$ is nef, exactly one component of $D_i$ is a bisection (because there are no sections of the fibration) and the remaining components are fiber components. In particular, if $D_i$ is reducible, then $\hY$ is a rational surface. Since $D_i$ is then both a bisection and a cycle of rational curves, it is clear that the remaining components must lie in a single reduced fiber.
 \end{proof}
 
 If $\hY$ is a rational surface, the possibilities for the multiplicities $(m_1, m_2)$ up to order are $(2,2)$, $(2,1)$, or $(1,1)$, and we will show in \S\ref{section6} that all such possibilities occur. For the case where $\hY$ is a blown up elliptic ruled surface, the situation is quite different: 
 
 \begin{proposition}\label{211case} Suppose that $\hY$ is a blown up elliptic ruled surface and that $\rho\colon \hY \to B$ is the Albanese map. Then the possibilities for $(m_1, m_2, m_3)$ up to order are either $(2,1,1)$ or $(1,1,1)$. More precisely,  if  $m_1 = 2$, then $m_2 = m_3 = 1$ and $D_2$, $D_3$ are sections of $\rho$.
 \end{proposition}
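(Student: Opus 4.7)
The plan is to leverage Case (i) of Theorem~\ref{geomanal} together with a fiber-structure argument for $\rho\colon \hY \to B$. By Theorem~\ref{geomanal}, each $m_i\in\{1,2\}$, and whenever $m_i=2$ we are in Case (i): there is an exceptional curve $C_i\subset\hY$ with $M_i\equiv C_i$, $C_i\cdot D_i=2$, $C_i\cdot D_j=0$ for $j\neq i$, and a contraction $\gamma_i\colon\hY\to Y_0^{(i)}$ with $K_{Y_0^{(i)}}\equiv -D_i'$.

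First I would show that if $m_i=2$ then each $D_j$ with $j\neq i$ is a section of $\rho$. Every smooth elliptic curve on $\hY$ is a multisection of $\rho$: its image in $B$ cannot be a point since fibers of $\rho$ are trees of rational curves. Let $d_j=D_j\cdot f$ for a general fiber $f$. Since $C_i$ is disjoint from $D_j$ (and $C_i$ itself lies in a fiber of $\rho$), the integer $d_j$ is unchanged on passing from $\hY$ to $Y_0^{(i)}$; and on $Y_0^{(i)}$ a general Albanese fiber satisfies $K\cdot f=-2$, so
$$\sum_{j\neq i}d_j = D_i'\cdot f = -K_{Y_0^{(i)}}\cdot f = 2.$$
Together with $d_j\geq 1$, this forces $d_j=1$ for each $j\neq i$, so $D_j$ is a section of $\rho$.

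Next I would argue by contradiction that at most one $m_i$ equals $2$. If $m_1=m_2=2$, the first step applied to $i=2$ already gives $d_1=1$. By Lemma~\ref{Milist}(iii), $C_1\cdot C_2 = M_1\cdot M_2 = 1$. The $C_i$ are rational and $B$ is elliptic, so each lies in some fiber of $\rho$; since $C_1\cdot C_2 = 1 > 0$, they share a fiber $F_0$. The key step is to show $F_0 = C_1 + C_2$ exactly: writing $F_0 = C_1 + C_2 + R$ with $R$ effective and supported on the remaining fiber components, the relation $0 = F_0\cdot C_i = -1 + 1 + R\cdot C_i$ gives $R\cdot C_i = 0$, whence $R$ is disjoint from $C_1\cup C_2$, and connectedness of $F_0$ forces $R = 0$. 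Hence $d_1 = D_1\cdot F_0 = D_1\cdot C_1 + D_1\cdot C_2 = 2$, contradicting $d_1=1$.

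Combining these steps, $(m_1,m_2,m_3)$ up to permutation is $(1,1,1)$ or $(2,1,1)$, and in the latter case the claim about the two $D_j$'s with $m_j = 1$ being sections follows from the first step. The step I expect to require the most care is the rigidification $F_0=C_1+C_2$: this is where the numerical incidence data $C_i^2=-1$, $C_1\cdot C_2=1$, $F_0\cdot C_i=0$ is promoted to an equality of divisors via connectedness of the fiber, producing the sharp value $d_1=2$ that delivers the contradiction.
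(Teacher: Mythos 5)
Your proof is correct and follows essentially the same route as the paper's: Theorem~\ref{geomanal}(i) supplies the exceptional curve $C_i$ with $C_i\cdot D_i=2$ whenever $m_i=2$, intersecting $M_i\equiv C_i$ (equivalently $-K_{Y_0^{(i)}}\equiv D_i'$ on the blowdown) with a general Albanese fiber forces the other two $D_j$ to be sections, and the contradiction when two multiplicities equal $2$ comes from comparing $D_1\cdot f$ with $D_1\cdot C_1=2$. The one place you diverge is the ``rigidification'' $F_0=C_1+C_2$, and it is both unnecessary and the only slightly delicate point: since $D_1$ is not a fiber component while $C_1$ is, one already has $d_1=D_1\cdot F_0\ge D_1\cdot C_1=2$, which is all the paper uses; and as written your computation $0=-1+1+R\cdot C_i$ tacitly assumes $C_1,C_2$ occur with multiplicity one in $F_0$ and that $R$ contains neither of them (otherwise $R\cdot C_i$ need not be nonnegative), so if you keep that route you should add a word ruling out higher multiplicities (e.g.\ every fiber of a $\Pee^1$-fibration has a reduced component).
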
 
 \begin{proof} By Theorem~\ref{geomanal}(i), if $m_1 = 2$, then there exists an exceptional curve $C$ such that $M_1 = C$ and $D_1\cdot C = 2$. If  $f$ is a general fiber of the  Albanese map $\rho \colon \hY \to B$, then $f \cong \Pee^1$, $f^2 =0$, and $K_{\hY} \cdot f = -2$. Also,   $D_i$  is not contained in a fiber of $\rho$ and hence  $f\cdot D_i \ge 1$. Since $C$ is contained in a (reducible) fiber, $f\cdot C = 0$.  From
$$0 = f\cdot C = f\cdot (K_{\hY} +D_2 +D_3) = -2 + (f\cdot D_2) + (f\cdot D_3),$$
it follows that $(f\cdot D_2) = (f\cdot D_3) = 1$, i.e.\ $D_2$ and $D_3$ are sections. If $m_i =2$ for $i=2$ or $3$, the above would then imply that $D_1\cdot f =1$.  But $D_1\cdot C = 2$, and hence $D_1\cdot f \ge 2$, a contradiction. Thus $m_2 = m_3 =1$.  
\end{proof} 

We will show in \S\ref{section8} that both  possibilities $(2,1,1)$ and  $(1,1,1)$ occur.

\section{The case $\kappa(\hY) =1$}\label{section4}

\subsection{The cotangent bundle}\label{cotangbundless} We begin with a fairly general result:

\begin{theorem}\label{tangbundell}  Suppose that  $\pi\colon X \to \Pee^1$ is an elliptic surface with $p_g(X) =p_g$ and that the only non-reduced fibers of $\pi$ are multiple fibers $F_i$ with multiplicities $m_i$.  Then there is an exact sequence
$$0 \to \scrO_X(-2f+ \sum_i(m_i-1)F_i) \to \Omega^1_X \to I_Z\otimes \scrO_X((p_g+1)f)\to 0,$$
where $Z$ is a zero-dimensional subscheme of length $\ell(Z) = 12(1+p_g)$ supported at the singular points of the reductions of the fibers, and at a point $x$ of a   fiber whose reduction has  a normal crossing singularity,  $I_{Z,x} =\mathfrak{m}_x$ is the maximal ideal at $x$. 
\end{theorem}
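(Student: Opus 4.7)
The plan is to construct the exact sequence as a suitable saturation of the relative cotangent sequence for $\pi\colon X\to\Pee^1$. Start from
$$\pi^*\Omega^1_{\Pee^1}\longrightarrow \Omega^1_X \longrightarrow \Omega^1_{X/\Pee^1}\longrightarrow 0,$$
with $\pi^*\Omega^1_{\Pee^1}\cong \scrO_X(-2f)$. The left map fails to be injective (or fails to be saturated) exactly where $d\pi$ drops rank, i.e.\ along the reductions of non-reduced fibers and along the singular fibers. Locally at a point of a multiple fiber $F_i$ with multiplicity $m_i$, $\pi$ is given by $t=u^{m_i}$ for $u$ a local equation of $F_i$, so $\pi^*dt=m_iu^{m_i-1}du$ is divisible by $u^{m_i-1}$. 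Dividing by this factor on each multiple fiber defines a new injection
$$\scrO_X\bigl(-2f+\textstyle\sum_i(m_i-1)F_i\bigr)\hookrightarrow \Omega^1_X,$$
which agrees with $\pi^*\Omega^1_{\Pee^1}\to\Omega^1_X$ generically and is everywhere saturated in the rank-$1$ sense (the image is the largest line subbundle of $\Omega^1_X$ containing $\pi^*\Omega^1_{\Pee^1}$ off of the branch locus).

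Let $Q$ denote the cokernel. I would first verify that $Q$ is torsion-free: locally at a singular point of the reduced fiber where $\pi=uv$, we have $\Omega^1_X=\scrO du\oplus\scrO dv$ and the subsheaf is $\scrO(v\,du+u\,dv)$, and a short Koszul-type argument (using that $(u,v)$ is a regular sequence) shows no torsion arises. Hence $Q$ is a torsion-free rank-$1$ sheaf, and therefore of the form $I_Z\otimes \mathcal{M}$ for some zero-dimensional $Z$ and line bundle $\mathcal{M}$. The line bundle $\mathcal{M}$ is pinned down by taking determinants: using the canonical bundle formula $K_X=(p_g-1)f+\sum_i(m_i-1)F_i$ (valid since $q(X)=0$ for an elliptic surface over $\Pee^1$), we get
$$\mathcal{M}=K_X\otimes\scrO_X\bigl(2f-\textstyle\sum_i(m_i-1)F_i\bigr)=\scrO_X((p_g+1)f).$$

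The length of $Z$ drops out of Chern classes: from
$$c_2(\Omega^1_X)=c_1\bigl(\scrO_X(-2f+\textstyle\sum(m_i-1)F_i)\bigr)\cdot c_1(\mathcal{M})+\ell(Z)$$
and $c_2(X)=12\chi(\scrO_X)=12(p_g+1)$ (Noether), together with the fact that $f$ has zero intersection with both $f$ and $F_i$, the first term vanishes and $\ell(Z)=12(1+p_g)$. It remains to describe $I_Z$ locally. Away from singular points of fiber reductions the map $\scrO_X(-2f+\sum(m_i-1)F_i)\to \Omega^1_X$ is already a subbundle inclusion, so $Z$ is disjoint from the smooth locus of the fibers. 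At a normal crossing $x$ of a reduced fiber, the computation in the previous paragraph identifies $Q_x$ with $\scrO^{\,2}_x/\scrO_x(v,u)$, and mapping $(a,b)\mapsto -ua+vb$ gives an isomorphism of this quotient with the maximal ideal $\mathfrak{m}_x=(u,v)$ (the kernel $(v,u)\cdot\scrO_x$ is precisely the subsheaf we quotiented by, as a regular-sequence argument shows). Hence $I_{Z,x}=\mathfrak{m}_x$ as claimed.

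The main subtlety is to ensure that the saturation step is carried out carefully at multiple fibers whose reductions are themselves singular, and to check that torsion-freeness of $Q$ really does hold at every local model of interest; these are the points where one must be most careful, but in each case a short explicit local calculation suffices.
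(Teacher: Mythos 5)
Your proposal follows essentially the same route as the paper: twist the natural map $\pi^*\Omega^1_{\Pee^1}\to\Omega^1_X$ by $\sum_i(m_i-1)F_i$ to kill the divisorial vanishing along the multiple fibers, identify the torsion-free cokernel as $I_Z\otimes\lambda$, pin down $\lambda$ by determinants and the canonical bundle formula, count $\ell(Z)$ via $c_2$, and verify $I_{Z,x}=\mathfrak{m}_x$ by the local Koszul computation at a node. Your write-up just makes explicit the local calculations and the implicit standing hypotheses ($q=0$, relative minimality) that the paper leaves unstated.
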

\begin{proof} The natural homomorphism $\pi^*\Omega^1_{\Pee^1} \to \Omega^1_X$ vanishes along the multiple fibers, to order exactly $m_i-1$ along $F_i$. Thus there is an induced homomorphism $$\pi^*\Omega^1_{\Pee^1}(\sum_i(m_i-1)F_i)=\scrO_X(-2f+ \sum_i(m_i-1)F_i)\to \Omega^1_X$$ whose cokernel is torsion free and hence of the form $I_Z\otimes \lambda$ for a line bundle $\lambda$. By taking determinants and using the the canonical bundle formula,
\begin{align*}
\lambda &= K_X \otimes (\pi^*\Omega^1_{\Pee^1}(\sum_i(m_i-1)F_i))^{-1}  \\
&=  \scrO_X((p_g-1)f +\sum_i(m_i-1)F_i) \otimes \scrO_X(2f -\sum_i(m_i-1)F_i)= \scrO_X((p_g+1)f). 
\end{align*}
It follows that
$$c_2(X) = c_2(T_X) = c_2(\Omega^1_X) = \ell(Z),$$
and hence that $\ell(Z) = 12(1+p_g)$. A local calculation then establishes that $I_{Z,x} =\mathfrak{m}_x$ in case the reduction of the fiber containing $x$ is nodal.
\end{proof} 

We will need a somewhat specialized variant of Theorem~\ref{tangbundell} for logarithmic $1$-forms:

\begin{theorem}\label{tangbundelllog} Suppose that $\pi\colon X \to \Pee^1$ is an elliptic surface such that all non-multiple fibers are nodal and  there is exactly one  multiple fiber  $F$ of $\pi$,  of multiplicity $2$.  Let $D = E_1+ \cdots + E_r$ be a reduced normal crossing cycle of rational curves, where $E_1$ is a smooth bisection and, for $i> 1$,  the $E_i$  are components of a non-multiple  I${}_a$ fiber $f_t$, with $E_1\cdot E_2 = E_1 \cdot E_r \neq 0$ and $E_i\cdot E_{i+1} = 1$, $2\le i \le r-1$. Then there is an exact sequence
$$ 0 \to \scrO_X(-2f+ F + \sum_{i>1}E_i) \to \Omega^1_X(\log D) \to I_W\otimes \scrO_X((p_g+1)f+E_1)\to 0,$$
where $W$ is a zero-dimensional subscheme of length $\ell(W) = 12(1+p_g) -r +1$. Moreover, $x\in \operatorname{Supp}W$ $\iff$ either $x$ is a singular point of a fiber not equal to $f_t$, $x$ is a singular point of  $f_t-D$, or $x\in E_1-F$ and $x$ is a ramification point for the induced double cover morphism $E_1\to \Pee^1$, necessarily not on $f_t$. Finally,  
\begin{enumerate}
\item[\rm(i)] If $x$ is a point of a  fiber different from $f_t$ whose reduction has  a normal crossing singularity at $x$ and $x\notin E_1$,  then $I_{W,x} =\mathfrak{m}_x$ is the maximal ideal at $x$. 
\item[\rm(ii)] If $x\in E_1-F$ is a ramification point for the induced double cover morphism $E_1\to \Pee^1$ and $x$ is not a singular point of the corresponding fiber, then $I_{W,x} =\mathfrak{m}_x$. 
\item[\rm(iii)] If $x\in E_1-F$ is a ramification point for the induced double cover morphism $E_1\to \Pee^1$ and $x$ is  a singular point of the corresponding fiber, then $\ell(I_{W,x} ) = 2$. 
\item[\rm(iv)] If $x\in f_t- D$ is a singular point of $f_t$, then $I_{W,x} =\mathfrak{m}_x$. 
\end{enumerate}
\end{theorem}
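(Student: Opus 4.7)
The plan is to imitate the proof of Theorem~\ref{tangbundell}, extending the natural map $\pi^*\Omega^1_{\Pee^1}\to \Omega^1_X$ further past $E_2,\dots,E_r$ into the log side. Along $F$, the image vanishes to order $1$, so the map extends to $\pi^*\Omega^1_{\Pee^1}(F) \to \Omega^1_X$. At a smooth point of $E_i$ ($i > 1$) with local equation $z$, the image $\pi^*(du) = dz$ satisfies $\pi^*(du)/z = dz/z$, a regular section of $\Omega^1_X(\log E_i)$. Combining these extensions and composing with $\Omega^1_X \hookrightarrow \Omega^1_X(\log D)$ produces the desired injection of line bundles
\[
\scrO_X(-2f + F + \textstyle\sum_{i > 1} E_i) \hookrightarrow \Omega^1_X(\log D),
\]
and the identification $\lambda = \scrO_X((p_g+1)f + E_1)$ follows from $\det \Omega^1_X(\log D) = K_X + D$ together with the canonical bundle formula $K_X = (p_g -1)f + F$.

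Next I would pin down $\ell(W)$ via Chern classes. From the residue sequence $0 \to \Omega^1_X \to \Omega^1_X(\log D) \to \bigoplus_i \scrO_{E_i} \to 0$, adjunction on each smooth rational $E_i$, and $(K_X + D)\cdot D = 0$ (which holds because $D$ is a cycle of arithmetic genus $1$), one obtains $c_2(\Omega^1_X(\log D)) = c_2(X) - r = 12(p_g+1) - r$. A direct intersection calculation using $E_1 \cdot f = 2$, $F \cdot E_1 = 1$, and $(E_2 + E_r) \cdot E_1 = 2$ gives $L \cdot \lambda = -1$, and Whitney then yields $\ell(W) = 12(p_g+1) - r + 1$.

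The heart of the proof is the local analysis identifying $\operatorname{Supp} W$ and the promised structure. Away from fiber nodes, the ramification locus of $\pi|_{E_1}$, and $F$, the map is a bundle map. At $F \cap E_1$ the extension by $F$ yields the non-vanishing section $2\,dy$. At any node of a fiber in which at least one adjacent component lies in $D$, the extension by that component yields a non-vanishing section: for a node $(E_i,E_{i+1})$ with both components in $D$ the image is $dy/y + dz/z$; for a mixed node with exactly one component in $D$ the image is $z(dy/y) + dz$. The contributing points are therefore (a) nodes of fibers with both adjacent components disjoint from $D$, where the Koszul computation for the regular sequence $(y,z)$ gives $I_{W,x} = \mathfrak{m}_x$ (cases (i), (iv)); and (b) ramification points of $\pi|_{E_1}$ off $F$. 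Riemann--Hurwitz for $\pi|_{E_1}\colon \Pee^1 \to \Pee^1$ gives exactly two simple ramification points, one at $F\cap E_1$ (absorbed by the multiple-fiber extension), so a single point $x^*$ remains; since $E_1 \cdot f_t = 2$ is already saturated by the transverse intersections at $E_1 \cap E_2$ and $E_1 \cap E_r$, one has $x^* \notin f_t$. For $x^*$ on a smooth fiber, local coordinates $E_1 = \{y=0\}$ and $\pi = y + z^2$ give image $(y, 2z)$ in the basis $(dy/y, dz)$, with Koszul cokernel $\mathfrak{m}_{x^*}$ (case (ii)). For $x^*$ at a node, after normalizing $E_1 = \{w=0\}$ and $\pi = (v^2 - w^2)/4$, the image is $(v, -w^2)$ in the basis $(dv, dw/w)$ and the cokernel is the colength-$2$ ideal $(v, w^2)$ (case (iii)).

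The hardest step is the careful treatment of the mixed nodes of $f_t$, where only one adjacent component lies in $D$: these do contribute to $Z$ in Theorem~\ref{tangbundell}, and one must verify that the log extension by the $D$-component alone suffices to kill the Koszul torsion. As a final consistency check, the $a - r$ external nodes of $f_t$ contribute $1$ each, the $c_2(X) - a$ nodes of other fibers contribute $1$ each (unless one is the node-type $x^*$, whose contribution is bumped to $2$), and a smooth-fiber $x^*$ contributes an additional $1$; in either case this sums to $c_2(X) - r + 1$, matching the Chern computation.
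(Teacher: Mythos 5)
Your proposal is correct and follows essentially the same route as the paper: extend $\pi^*\Omega^1_{\Pee^1}\to\Omega^1_X(\log D)$ by twisting along $F$ and the fiber components $E_i$, $i>1$, carry out the same local coordinate computations at fiber nodes, mixed nodes, and the ramification points of $E_1\to\Pee^1$, and determine $\ell(W)$ from $c_2(\Omega^1_X(\log D))=12(1+p_g)-r$ via the Whitney formula. The only additions are welcome ones — you make explicit the residue-sequence computation of $c_2(\Omega^1_X(\log D))$ that the paper calls ``straightforward,'' and your final point-count consistency check confirms the Chern-class answer.
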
 
\begin{proof} Note that $\det \Omega^1_X(\log D) = K_X \otimes \scrO_X(D) = \scrO_X((p_g-1)f +F + D)$. As in the proof of Theorem~\ref{tangbundell}, the natural map $\pi^*\Omega^1_{\Pee^1} \to \Omega^1_X(\log D)$ vanishes along $F$ and has a torsion free cokernel away from $F+D$. If $x\notin D$, then the analysis of the proof of Theorem~\ref{tangbundell} shows that the cokernel of $\pi^*\Omega^1_{\Pee^1}(F) \to \Omega^1_X(\log D)$ is locally free at $x$ if $x$ is not a singular point of a fiber and is locally isomorphic to $\mathfrak{m}_x$ if $x$ is a singular point of a fiber.  

Next suppose that  $x\in E_1-f_t$.  If the reduced fiber containing $x$ meets $E_1$ transversally (including the case where $x\in E_1\cap F$), then the cokernel of $\pi^*\Omega^1_{\Pee^1}(F) \to \Omega^1_X(\log D)$ is locally free at $x$. Otherwise, $x$ is a branch point of the morphism $E_1\to \Pee^1$ and a local calculation shows that $\ell(I_{W,x} ) =1$ if $x$ is a smooth point of the fiber and $\ell(I_{W,x} ) = 2$ otherwise.

Thus we may assume that  $x\in f_t\cap D = \sum_{i>1}E_i$. If $x\in E_i-E_1$ is a smooth point of $f_t$, then, locally around $x$, there exist local coordinates $z_1, z_2$ such that  $E_i$ is defined by $z_1 = 0$ and the morphism $X\to \Pee^1$ is given by $z_1$. A local basis for $\Omega^1_X(\log D)$ is $\displaystyle\frac{dz_1}{z_1}, dz_2$ and the image of $\pi^*\Omega^1_{\Pee^1}$ is the span of $dz_1$. Thus the cokernel of the induced morphism $\pi^*\Omega^1_{\Pee^1}(E_i ) \to \Omega^1_X(\log D)$ is locally free at $x$, i.e.\ $x$ is not in the support of the scheme $W$. Next suppose that   $x\in E_i\cap E_{i+1}$ is a singular point of the fiber $f_t$. Then there are local coordinates $z_1, z_2$ such that the fiber is defined at $x$ by $z_1z_2$.  A local basis for $\Omega^1_X(\log D)$ is $\displaystyle\frac{dz_1}{z_1}, \frac{dz_2}{z_2}$ and the image of $\pi^*\Omega^1_{\Pee^1}$ is
$$ z_2dz_1 + z_1dz_2 = (z_1z_2) \left(\frac{dz_1}{z_1}+ \frac{dz_2}{z_2}\right).$$
This vanishes to order one along $E_i + E_{i+1}$ and  the cokernel of the induced morphism $$\pi^*\Omega^1_{\Pee^1}(E_i + E_{i+1}) \to \Omega^1_X(\log D)$$ is locally free at $x$, i.e.\ $x$ is not in the support of the scheme $W$. 
If $x\in E_1\cap E_2$ or $E_r \cap E_1$, then $x$ is not a singular point of $f_t$ and it is not a branch point of $E_1\to \Pee^1$. Then there are local coordinates $z_1, z_2$ such that $E_1$ is defined by $z_1 = 0$, $E_2$ by $z_2 = 0$, and the image of $\pi^*\Omega^1_{\Pee^1}$ in $\Omega^1_X(\log D)$ is spanned by $\displaystyle dz_2 = z_2\cdot \frac{dz_2}{z_2}$. Thus the cokernel of the induced homomorphism $\pi^*\Omega^1_{\Pee^1}(E_2) \to \Omega^1_X(\log D)$ is locally free at $x$ in this case as well and $x$ is not in the support of the scheme $W$.  A similar argument shows that, if $x$ is a singular point of $f_t$ of the form $E_2\cap C$ or $E_r\cap C$, where $C$ is not a component of $D$, then $x$ is likewise not in the support of the scheme $W$.

Thus, $\pi^*\Omega^1_{\Pee^1}(F) \to \Omega^1_X(\log D)$ vanishes along $E_i$, $i> 1$,  and extends to a homomorphism 
 $$\pi^*\Omega^1_{\Pee^1}(F+\sum_{i>1}E_i) =\scrO_X(-2f+ F + \sum_{i>1}E_i) \to \Omega^1_X(\log D ) $$
 with torsion free cokernel. Computing determinants then gives the exact sequence in the statement of the theorem. Finally, a straightforward argument shows that $c_2(\Omega^1_X(\log D)) = 12(1+p_g) -r$. By the Whitney product formula, 
 $$c_2(\Omega^1_X(\log D)) = 12(1+p_g) -r = \ell(W)   -4 +1 +2 = \ell(W) -1,$$
 and hence $\ell(W) = 12(1+p_g) -r +1$.
\end{proof}

\subsection{Multiple fibers of multiplicity $2$ and double covers}\label{doubcovsect}

Let $\pi \colon X\to \Pee^1$ be a relatively minimal elliptic surface with a single multiple fiber, of multiplicity $2$, with reduction $F$, together with an irreducible, not necessarily smooth   bisection $\Sigma$. There is an involution on the generic fiber $X_\eta$, defined by: if $p$ is a smooth point of a fiber $f$, then $\iota(p) = p'$, where $\scrO_X(\Sigma)|f \cong \scrO_f(p+ p')$.
Hence (by relative minimality) there is an induced involution $\iota \colon X \to X$, whose fixed locus is a $4$-section together with some isolated fixed points, in particular two isolated fixed points on $F$. Note that $\iota$ only depends on the restriction of $\Sigma$ to the generic fiber. Also, $\iota(\Sigma) =\Sigma$ and $\iota|\Sigma$ is the involution corresponding to the double cover $\Sigma \to \Pee^1$. We make the following additional assumption on $X$, which will be satisfied in \S\ref{ssect3.3}, and in particular in  the hypotheses of Lemma~\ref{2compfibers} and  Theorem~\ref{thm17}:

\begin{assumption}\label{assump1} The   multiple fiber $F$ is irreducible,   the non-multiple fibers of $\pi$ have at most two components, and, in case there are two components, $\Sigma$ has intersection number one with each component. 
\end{assumption}

Assumption~\ref{assump1} has the following consequences:  Every non-multiple fiber is reduced, and is either a nodal or a cuspidal rational curve, if irreducible, or a  I${}_2$ fiber or a tacnodal fiber, if there are two components. The involution $\iota$ has exactly two isolated fixed points and they both lie on $F$. Finally, if  the  fiber has two components, they are exchanged by $\iota$. Thus, all fibers of the induced morphism $X/\iota \to \Pee^1$ are irreducible, but $X/\iota$ has two ordinary double points at the fixed points of $\iota$ and is otherwise smooth. 

  Then as in \cite{Horikawa}, \cite{FriedSO3}, the blowup of $X$ at the two fixed points is obtained as follows: let $\mathbb{F}_N$ be a Hirzebruch surface, $N \ge 0$,  and let $\sigma_0$ be the negative section if $N> 0$ and any section of square $0$ in case $N=0$. Blow up a point $p$ in a fiber $f$, such that $p\notin \sigma_0$, and denote the exceptional curve by $d_1'$ and the proper transform $f-d_1'$ by $d_2'$. Note that we can always assume that we are in the case $p\notin \sigma_0$, because if $p\in \sigma_0$ then the proper transform of $\sigma_0$ has self-intersection $-N-1$ and the proper transform of the fiber is a disjoint exceptional curve.  Thus we could have started instead  with the elementary transform $\mathbb{F}_{N+1}$ of $\mathbb{F}_N$ and a point not in the negative section. In particular, in case $N=0$, there is always a section $\sigma_0$ of square $0$ passing through a point $p$. Hence we can in fact always assume that $N \ge 1$ in what follows. Finally blow up the intersection point of $d_1'$ and $d_2'$ and let $\widetilde{\mathbb{F}}_N$ be the resulting surface. Denote the new exceptional curve by $e$ and the proper transforms of $d_1', d_2'$ by $d_1$ and $d_2$. Then $d_1^2 = d_2^2 = -2$, $e^2 =-1$ and $f = d_1 + 2e + d_2$, where $f$ is the class of a general fiber of $\widetilde{\mathbb{F}}_N \to \Pee^1$. Identifying $\sigma_0$ with its preimage in $\widetilde{\mathbb{F}}_N$, $d_2\cdot \sigma_0 = 1$, $d_1\cdot \sigma_0 = e\cdot \sigma_0 = 0$, and $e\cdot d_1 = e\cdot d_2 =1$. Also,
$$K_{\widetilde{\mathbb{F}}_N} = -2\sigma_0 -(N+2)f +d_1 + 2e.$$

Note: the labeling of the curves $d_1, d_2$ is the opposite of that in \cite{FriedSO3}.

Choose a smooth $B_0 \in |4\sigma_0 + (2k+1)f -4e -2d_1|= |4\sigma_0 + (2k-1)f+2d_2|$ not containing $\sigma_0$ as a component and let $B = B_0 + d_1 + d_2$ as effective curves. Thus as divisor  classes
\begin{align*}
B &= B_0 + d_1 + d_2 = 4\sigma_0 +  2k f -4e -2d_1 + f+ d_1+ d_2\\
&= 4\sigma_0 +  2k f -2e + 2d_2.
\end{align*}
Note that $B_0\cdot e = 2$, $B_0 \cdot d_1  = B_0\cdot d_2 = 0$,  and that 
\begin{align*}   B\cdot f &= B\cdot e =4; \\
 B\cdot \sigma_0 &= -4N + 2k +2.
\end{align*}
Then   $(B_0-d_i)\cdot d_i = -d_i^2 >0$. Since $B_0$ is smooth, $d_1$ and $d_2$ are not components of $B_0$ and hence are disjoint from $B_0$.
Also, since   $\sigma_0$ is not a component of $B_0$,   
$$B_0 \cdot \sigma_0 = B\cdot \sigma_0 -1= -4N + 2k +1\ge 0,$$
and hence $k \ge 2N -\frac12$. Since $k$ is an integer,
$k \ge 2N$.

The divisor class $B$ is uniquely divisible by $2$, with $\frac12B = 2\sigma_0 + k f -e + d_2$. Thus
$$K_{\widetilde{\mathbb{F}}_N} + \textstyle\frac12B = (k-N-2)f + d_1 + e + d_2 = (k-N-1)f-e.$$
We can then form the double cover $\nu\colon \widetilde{X} \to \widetilde{\mathbb{F}}_N$. The inverse images $e_i$ of the $d_i$ become exceptional curves, since $\nu^*d_i = 2e_i$ and hence $4e_i^2 = 2 (-2) = -4$. Contracting these, we get an elliptic surface $X$ with a multiple fiber $F$.  Here, letting $f$ denote both a general fiber on $\mathbb{F}_N$, $\widetilde{\mathbb{F}}_N$, $X$, or $\widetilde{X}$, $\nu^*f = f$. Moreover, $f \equiv 2F = 2F' +2e_1 + 2e_2$ and $\nu^*e = F'$. 
Since $p_g(X) = h^0(\widetilde{\mathbb{F}}_N; \scrO_{\widetilde{\mathbb{F}}_N}(K_{\widetilde{\mathbb{F}}_N}+\frac12B))$,
$$p_g(X) = \begin{cases} k-N -1, &\text{if $k-N \ge 1$}; \\
0,  &\text{otherwise}.
\end{cases}$$
Note however that if $k-N =0$, i.e.\ $k=N$, then since $k \ge 2N$, we must have $k=N =0$.  So we can ignore this case in what follows since we are only interested in the case $N \ge 1$. Then  $p_g = p_g(X) = k-N -1$ and $k\ge 2N$, so $N \le p_g + 1$ and $k = p_g+1+ N$. This is in agreement with:
\begin{align*}
K_{\widetilde{X}}&= \nu^* (K_{\widetilde{\mathbb{F}}_N} + \textstyle\frac12B)  = (k-N-1)\nu^*f-F' \\
&= (k-N-2)f + 2F' +2e_1 + 2e_2 -F' = (p_g-1)f + F' +2e_1 + 2e_2 \\
&= (p_g-1)f + F + e_1+e_2.
\end{align*}

Also, 
$$B \cdot \sigma_0 = -4N + 2k +2 = 2(k-N) -2N + 2= 2p_g -2N+4.$$
In particular, if $\widetilde{\Sigma}$ is the preimage of $\sigma_0$ on the double cover $\widetilde{X}$, then $\widetilde{\Sigma}^2 = -2N$ and $p_a(\widetilde{\Sigma}) =  p_g - N +1$. Since $\sigma_0 \cdot d_2 = 1$ and $\sigma_0 \cdot d_1 =0$, $\widetilde{\Sigma}$ meets the exceptional curve $e_2$ transversally at a smooth point and is disjoint from $e_1$. Thus, setting $\Sigma$ to be the image of $\widetilde{\Sigma}$ in $X$, $\Sigma \cong \widetilde{\Sigma}$.  Hence $p_a(\Sigma) =  p_g - N +1$, and 
$$\Sigma^2 = \widetilde{\Sigma}^2 + 1 = -2N + 1.$$

It is easy to see that a divisor class $\Sigma$  is automatically effective as soon as $\Sigma^2  \ge -1$. From our point of view, we want to see when there is an effective bisection $\Sigma$ with $\Sigma^2 < -1$ as well. The above shows that, as $N$ increases by $1$, $\Sigma ^2$ drops by $2$ and $p_a(\Sigma)$ by $1$. In particular, as $N$ increases  from $1$ to $p_g + 1$, $\Sigma^2$ decreases from $-1$ to $-2p_g -1$ and $p_a(\Sigma)$ decreases from $p_g$ to $0$. Note that the case $N=1$ is the generic case. More generally, for $1\le N \le p_g+1$, the above constructs a stratum of the moduli space which has codimension one in the stratum for $N-1$.  It would be interesting to find an intrinsic characterization of the strata.

In the cases of interest, $p_g =1$ or $0$. In case $p_g=1$,   the only possibilities for $\mathbb{F}_N$ are $N =1$, $k=3$, $\Sigma^2 =-1$, $p_a(\Sigma) =1$, or $N =2$, $k=4$, $\Sigma^2 =-3$, $p_a(\Sigma) =0$ (and hence $\Sigma$ is smooth).  In case $p_g =0$, we must have $N=1$,  $k=2$, $\Sigma^2 =-1$, and $p_a(\Sigma) =0$. 

A similar  description applies if we allow more complicated non-multiple fibers. We shall just describe the following situation which will arise in \S\ref{ssect3.4} (per the statement of Theorem~\ref{thm17cusp} below):

\begin{assumption}\label{assump12} The multiple fiber $F$ is irreducible,  there is a nonmultiple fiber  $f_t\subseteq X$ which is a I${}_a$ fiber, $a\ge 3$ and the bisection $\Sigma$ meets two different components of $f_t$, necessarily transversally. Every other nonmultiple fiber    of $\pi$ has at most two components, and, in case there are two components, $\Sigma$ has intersection number one with each component.  Finally, assume that $\Sigma$ is rational, so that the map $\Sigma \to \Pee^1$ is branched at the point corresponding to $F$ and at one other point.  
\end{assumption} 

 Under this assumption, the involution $\iota$ then exchanges the two components of $f_t$ meeting $\Sigma$, as well as all of the others except possibly one or two ``end" components where it induces a degree $2$ map onto a quotient $\Pee^1$. If $\overline{X}$ is the surface obtained by contracting all of the components of $f_t$ not meeting $\Sigma$ and $\widetilde{\overline{X}}$ is the blowup at the two isolated fixed points on $F$, $\iota$ induces a finite morphism  $\nu \colon \widetilde{\overline{X}} \to \widetilde{\mathbb{F}}_N$, where $\widetilde{\mathbb{F}}_N$ is the blowup of $\mathbb{F}_N$ at the two points corresponding to the isolated fixed points of $\iota$.   Let $\overline{B}_0$ be the branch locus of $\nu$. Then $\overline{B}_0$ has two curve singularities of type $A_{k_1}$, $A_{k_2}$ lying over the point $t\in \Pee^1$, with $k_1, k_2\ge 1$ since $\Sigma$ meets two distinct components of the fiber on $X$. An embedded resolution of the singularities of $\overline{B}_0$ leads to  a surface $\widetilde{\widetilde{\mathbb{F}}}_N$ with  two chains of rational curves, of lengths $[k_1+1/2]$ and $[k_2+1/2]$ respectively. If $B_0$ is the proper transform of $\overline{B}_0$, then $\widetilde{X}$ is the double cover of $\widetilde{\widetilde{\mathbb{F}}}_N$ branched along $B_0 +   d_1 + d_2$, and the inverse images of the two chains of rational curves on  $\widetilde{\widetilde{\mathbb{F}}}_N$ are the minimal resolutions of (surface) singularities of types $A_{k_1}$ and  $A_{k_2}$. The fiber over $t$ is a I${}_a$ fiber, with $a = k_1+ k_2 + 2$. 
 
In this case, we identify  $\sigma_0$ with its the proper transform   on  $\widetilde{\widetilde{\mathbb{F}}}_N$. Then $\widetilde\Sigma =\nu^*\sigma_0$ and $\Sigma$ is the image of $\widetilde\Sigma$ in $X$.  In the application to the proof of Theorem~\ref{thm17cusp}, $\Sigma$ will be a smooth rational curve with $\Sigma^2 = -3$. Thus the degree two morphism $\Sigma \to \Pee^1$ induced by $\pi$ is branched at $\Sigma\cap F$ and at one other point. Hence   $\sigma_0 \cdot B_0 =  1$.  Moreover, $\overline{B}_0$ is a $4$-section, so that $f \cdot B_0 =  f \cdot \overline{B}_0=4$. Finally, by construction $e \cdot B_0 =  e \cdot \overline{B}_0=2$.

 \subsection{A vanishing theorem: the case where $D$ is irreducible}\label{ssect3.3} 
 
  For the rest of this section, we assume that  $\hY$ is the minimal resolution of an I-surface with $\kappa(\hY) = 1$.  In particular, $\hY$ is minimal and either there exists an irreducible  bisection $D$    with $D^2 = -1$ and $p_a(D) = 1$ or there exists a  smooth rational bisection $E_1$ with $E_1^2=-3$.  Thus we can apply the previous results to $X =\hY$.  First we note the following, recalling that   Assumption~\ref{assumption27},  that there are no rational double points on $Y$, is always in force: 
  
 \begin{lemma}\label{2compfibers} \begin{enumerate} \item[\rm(i)] If   $D$ is irreducible, then $F$ is irreducible and every fiber of the morphism $\hY \to \Pee^1$ has at most two components. In particular, every non-multiple fiber is reduced, and is either a nodal or a cuspidal rational curve, if irreducible, or an  I${}_2$ fiber or a tacnodal fiber, if there are two components.
 
 \item[\rm(ii)] If $D$ is reducible, then exactly one component of $D$ is an irreducible bisection and the remaining components are contained in a single fiber $f_t$. In this case, $F$ is irreducible and all nonmultiple fibers except for $f_t$  have at most two components and are  nodal, cuspidal, or a tacnode. Moreover, all but at most two components of $f_t$ are contained in $D$. Finally, if there are two such components of $f_t$, they meet at one point, they are exchanged by the involution $\iota$ on $\hY$, and the point where they meet is a fixed point of $\iota$.
\end {enumerate}
 \end{lemma}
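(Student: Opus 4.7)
The plan rests on one numerical engine coming from $L\equiv F+D$ together with Assumption~\ref{assumption27}: for any irreducible curve $C$ contained in a fiber of $\pi\colon\hY\to\Pee^1$ but not in $D$, one has $F\cdot C=0$, hence $D\cdot C=L\cdot C\ge 1$; while for any fiber-component $C$ of $D$, adjunction on the Cartier divisor $D$ (which satisfies $\omega_D\cong\scrO_D$) together with $K_{\hY}\cdot C=0$ yields $D\cdot C=0$.

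For part~(i), apply this to the reduced divisor underlying the multiple fiber. Writing $F=\sum m_iF^{(i)}$, every $F^{(i)}$ satisfies $D\cdot F^{(i)}\ge 1$ because $D$ has no fiber components in this case, so $1=D\cdot F=\sum m_iD\cdot F^{(i)}$ forces $F$ to be irreducible and reduced. For a non-multiple fiber $f_s$, which is automatically reduced since Kodaira's classification produces non-reduced fibers only as multiple fibers and there is only one multiple fiber, the relation $2=D\cdot f_s$ combined with the bound of at least one per component limits $f_s$ to at most two components. The stated classification into nodal/cuspidal in the irreducible case and $I_2$/tacnodal in the two-component case then follows from $p_a(f_s)=1$ by routine adjunction, using the intersection numbers forced by the inequality.

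For part~(ii), Theorem~\ref{theorem5}(ii) pins down the shape of $D$: a cycle consisting of the smooth rational bisection $E$ of self-intersection $-3$ joined to a chain $E_2,\dots,E_r$ of smooth $(-2)$-curves lying in a single fiber $f_t$, with $E$ meeting the chain transversally at two distinct endpoints $E_2$ and $E_r$. The arguments for $F$ and for non-multiple fibers $f_s\ne f_t$ are identical to those in part~(i). For $f_t$ itself, the engine gives $D\cdot E_i=0$ for each $i\ge 2$, so $2=D\cdot f_t$ is carried entirely by the components of $f_t$ outside $D$, each contributing at least one; hence there are exactly one or two such extra components. In the two-component case $\{C,C'\}$ the vanishings $f_t\cdot E_2=f_t\cdot E_r=0$ force $C$ to attach transversally to $E_2$ and $C'$ to $E_r$ (after relabeling), and then $f_t\cdot C=0$ yields $C\cdot C'=1$, so $f_t$ closes up as an $I_{r+1}$ cycle.

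The main obstacle is the last claim: that $\iota$ swaps $C$ and $C'$ and fixes their intersection point. To handle it, I use that $E$ is smooth rational and bisects $\pi$, so $\iota|_E$ is the hyperelliptic involution of the degree-two map $E\to\Pee^1$. Since $E\cdot f_t=2$ is carried by the two distinct transverse points $E\cap E_2$ and $E\cap E_r$, these are the two preimages in $E$ of the critical value $t\in\Pee^1$, and they are therefore exchanged by $\iota|_E$. This determines $\iota(E_2)=E_r$, and propagating transverse intersection points along the chain forces $\iota(E_i)=E_{r+2-i}$ for all $i$. Viewed on the cyclic dual graph of $f_t$, this is a reflection whose axis is forced to pass through the midpoint of the edge joining $C$ and $C'$, which is exactly the assertion that $\iota$ swaps $C$ and $C'$ and fixes their unique intersection point.
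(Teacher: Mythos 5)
Your proof follows the paper's argument essentially step for step: the engine is identical ($L\equiv F+D$ together with Assumption~\ref{assumption27} gives $D\cdot C\ge 1$ for every fiber component $C$ not contained in $D$ and $D\cdot C=0$ for components of $D$, which is then played off against $D\cdot F=1$ and $D\cdot f=2$), and your treatment of the last claim via the hyperelliptic involution on the rational bisection is the same idea the paper uses, only spelled out more explicitly. Two remarks. First, your parenthetical assertion that non-multiple fibers are ``automatically reduced since Kodaira's classification produces non-reduced fibers only as multiple fibers'' is false as stated: the fibers of type $\mathrm{I}_b^*$, $\mathrm{II}^*$, $\mathrm{III}^*$, $\mathrm{IV}^*$ are non-reduced and non-multiple. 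This is harmless, because your own count $2=D\cdot f_s=\sum_j n_j(D\cdot C_j)\ge\sum_j n_j$ already bounds the components with multiplicity and so yields reducedness directly; you should just delete the incorrect justification. Second, in part (ii) the step ``$f_t\cdot E_2=0$ forces $C$ to attach to $E_2$'' actually reads $(C+C')\cdot E_2=2-n_3$, where $n_3$ is the multiplicity of $E_3$ in $f_t$; it gives what you want only once one knows $n_3=1$, i.e.\ that $f_t$ is reduced along the interior of the chain. Otherwise the intersection numbers alone do not exclude, say, a fiber of type $\mathrm{I}_0^*$ with $r=4$, $f_t=E_2+2E_3+E_4+C+C'$, the four tails being $E_2,E_4,C,C'$ and $E_3$ the double central component; there each of $C,C'$ meets only $E_3$ and $C\cdot C'=0$. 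This point is equally tacit in the paper's own proof, which simply asserts without argument that $C\cup C'$ forms a connected component of $f_t-D$ attached to the two components meeting the bisection, so I flag it as a gap shared with the paper rather than a defect peculiar to your write-up.
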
 
 \begin{proof} By assumption, if $\Gamma$ is an irreducible curve on $\hY$ and $\Gamma $ is not a component of  $D$, then $(F+D)\cdot \Gamma >0$. However, if $\Gamma$ is a component of a reducible fiber, then $F\cdot \Gamma =0$ so that $D \cdot \Gamma > 0$. But $D\cdot F =1$ and $D\cdot f =2$, where $f$ is any non-multiple fiber. It follows that $F$ has just one component and every non-multiple fiber not containing a component of $D$ has at most two. By  the classification of singular fibers, every non-multiple fiber with at most two components is nodal, cuspidal, or a tacnode, and in particular is reduced. If $D$ is reducible, then exactly one component is a bisection, necessarily smooth rational of square $-3$, and the remaining components therefore form a chain of $-2$-curves. In particular, since the chain is connected and has intersection number $0$ with a general fiber, it must be contained in a single fiber $f_t$. Since $D\cdot f =2$ and $D\cdot C > 0$ for every component of $f_t$ which is not a component of $D$, it is clear that there can be at most two such components. If there are two such, they form a connected component of $f_t -D$ and the one or two other components which meet them are the components meeting the bisection $E_1$. Thus they are exchanged  by $\iota$ and their intersection point is a fixed point of $\iota$.
 \end{proof}

 In this subsection, we consider the irreducible case and show: 
 
 \begin{theorem}\label{thm17}  If $D$ is irreducible and there are no cusp or tacnodal fibers, then $H^2(\hY; T_{\hY}(-D)) =0$. 
 \end{theorem}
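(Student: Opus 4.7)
The plan is to apply Serre duality, reducing the statement to a vanishing of global sections. Since $K_{\hY}=F$ by Theorem~\ref{theorem5}(ii), Serre duality gives
$$H^2(\hY; T_{\hY}(-D))^{\vee} \cong H^0(\hY; \Omega^1_{\hY}(F+D)),$$
so it suffices to show $H^0(\hY; \Omega^1_{\hY}(F+D))=0$. Applying Theorem~\ref{tangbundell} to the elliptic fibration $\pi\colon\hY\to\Pee^1$, using $p_g(\hY)=1$ and the single multiple fiber $F$ of multiplicity $2$, we get
$$0 \to \scrO_{\hY}(-2f+F) \to \Omega^1_{\hY} \to I_Z(2f) \to 0,$$
where $Z$ has length $24$ and, by Lemma~\ref{2compfibers}(i) combined with the hypothesis of no cusp or tacnodal fibers, is a reduced $0$-cycle supported at the nodes of the $24$ singular fibers (each of type I${}_1$ or I${}_2$). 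Tensoring with $\scrO_{\hY}(F+D)$ and using $2F\sim f$ yields
$$0 \to \scrO_{\hY}(D-f) \to \Omega^1_{\hY}(F+D) \to I_Z(5F+D) \to 0.$$
Since $L\cdot(D-f)=-L\cdot f=-2<0$ and $L$ is nef, $H^0(\scrO_{\hY}(D-f))=0$, so the problem reduces to showing $H^0(\hY; I_Z(5F+D))=0$.

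I would handle the I${}_2$ fibers first. For such a fiber $f_t=C_1\cup C_2$, Lemma~\ref{2compfibers}(i) together with Assumption~\ref{assumption27} forces the bisection $D$ to meet each component $C_i\cong\Pee^1$ in exactly one point, so $\scrO_{\hY}(5F+D)|_{C_i}\cong\scrO_{\Pee^1}(1)$. Both nodes of $f_t$ lie on $C_1$ and on $C_2$, and since $h^0(\Pee^1,\scrO_{\Pee^1}(1))=2$, any section of $I_Z(5F+D)$ must vanish identically on each $C_i$, hence on $f_t$. It follows that any such section contains every I${}_2$ fiber as a component; factoring these out, if $b$ denotes the number of I${}_2$ fibers, the residual section lies in $H^0(\scrO_{\hY}((5-2b)F+D))$ and must vanish at the nodes of the $24-2b$ remaining I${}_1$ fibers. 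For $b\ge 3$ one has $L\cdot((5-2b)F+D)<0$, so the class is not effective and the vanishing follows.

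The remaining case $b\in\{0,1,2\}$ is the main obstacle: one must show that the $24-2b$ nodes of I${}_1$ fibers impose the full $h^0((5-2b)F+D)$ independent conditions on $|(5-2b)F+D|$, most severely when $b=0$, where $h^0(5F+D)=6$ and one must extract $6$ independent conditions from $24$ I${}_1$ nodes. The natural strategy is to invoke the double-cover description of \S\ref{doubcovsect}: after blowing up the two isolated fixed points of the covering involution $\iota$ on $F$, $\hY$ is realized as a double cover of $\widetilde{\mathbb{F}}_N$ (with $N\in\{1,2\}$) branched along an explicit smooth divisor $B_0+d_1+d_2$, and the I${}_1$ nodes of $\hY$ correspond to the tangencies of $B_0$ with the fibers of the ruling. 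Decomposing $\Omega^1_{\hY}(F+D)$ into its $\iota$-invariant and anti-invariant eigensheaves translates the vanishing question on $\hY$ into two vanishing statements on the rational surface $\widetilde{\mathbb{F}}_N$, where standard techniques apply using the explicit intersection data for $B_0, d_1, d_2$ recorded in \S\ref{doubcovsect}.
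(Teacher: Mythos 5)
Your setup reproduces the paper's: Serre duality, the exact sequence of Theorem~\ref{tangbundell}, and the reduction to $H^0(\scrO_{\hY}(D-f))=0$ (easy) plus $H^0(\hY;I_Z(2f+F+D))=0$ (the real content). Your preliminary treatment of the I${}_2$ fibers is correct but is a detour the paper does not need: on the double cover model every point of $Z$, whether it comes from an I${}_1$ or an I${}_2$ fiber, maps to a point where the branch curve $B_0$ is tangent to a fiber of $\widetilde{\mathbb{F}}_1\to\Pee^1$, so all $24$ points can be handled uniformly. (Minor point: since $D$ is irreducible, $D^2=-1$ and $p_a(D)=1$ force $N=1$; the case $N=2$, $\Sigma^2=-3$, only occurs for reducible $D$.)

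The genuine gap is in your last paragraph, which is where the theorem actually gets proved. Saying that the eigensheaf decomposition "translates the question to $\widetilde{\mathbb{F}}_N$ where standard techniques apply" is a plan, not an argument, and it is exactly the step for which the paper supplies two specific computations. First, after blowing up the two fixed points to get $\hhY\to\widetilde{\mathbb{F}}_1$, one has an inclusion of $\scrO_{\hhY}(2f+F'+e_1+e_2+D)$ into $\nu^*\scrO_{\widetilde{\mathbb{F}}_1}(\sigma_0+3f-e)$, and since $(\sigma_0+3f-e)-\tfrac12B=-\sigma_0-d_2$ is not effective, every section descends to a section $s$ of $\scrO_{\widetilde{\mathbb{F}}_1}(\sigma_0+3f-e)$; note the decomposition is applied to this line bundle, not to $\Omega^1_{\hY}(F+D)$ itself, which would require a log version of Lemma~\ref{doubcov} on a branched cover composed with a blowup. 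Second, and decisively: $s$ must vanish at the $24$ points of $\nu(Z)\subseteq B_0$, it cannot vanish identically on $B_0$ because $(\sigma_0+3f-e)-B_0$ is not effective, and therefore $B_0\cdot(\sigma_0+3f-e)\ge 24$; but $B_0\cdot(\sigma_0+3f-e)=(4\sigma_0+5f+2d_2)\cdot(\sigma_0+3f-e)=13<24$, a contradiction. Without this intersection-number bound (or an equivalent independence-of-conditions argument, which you explicitly flag as "the main obstacle" without resolving it), the vanishing $H^0(I_Z(2f+F+D))=0$ is not established and the proof is incomplete.
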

 \begin{proof}  Since $K_{\hY}\otimes \scrO_{\hY}(D) = \scrO_{\hY}(F+ D)$, by duality it suffices to show that 
 $$H^0(\hY; \Omega^1_{\hY}\otimes K_{\hY}\otimes \scrO_{\hY}(D)) =H^0(\hY; \Omega^1_{\hY}  \otimes \scrO_{\hY}(F+ D))= 0 .$$
 By  Theorem~\ref{tangbundell}, there is an exact sequence
 $$0 \to \scrO_{\hY}(-2f+F) \to \Omega^1_{\hY} \to I_Z\otimes \scrO_{\hY}(2f) \to 0,$$
 where $Z$ is the scheme defined by the singular points of the singular, non-multiple fibers.  Hence $\ell(Z) = 24$ and  by our assumptions $Z$ consists of $24$ distinct points.  Thus it suffices to  show that
 $$H^0(\hY; \scrO_{\hY}(-2f+2F+D)) = H^0(\hY; I_Z\otimes \scrO_{\hY}(2f+F+D)) = 0.$$
 First, since $-2f+2F+D = D-f$,  $H^0(\hY; \scrO_{\hY}(-2f+2F+D)) = 0$ as $D-f$ is not effective (because $D^2< 0$). So it is enough to show that $H^0(\hY; I_Z\otimes \scrO_{\hY}(2f+F+D)) = 0$.  Referring to \S\ref{doubcovsect},  there is a blowup $\hhY$ of $\hY$ at two smooth points of $F$ and a degree $2$ morphism $\nu\colon \hhY \to \widetilde{\mathbb{F}}_1$, branched along a divisor in $\widetilde{\mathbb{F}}_1$ of the form $B_0 + d_1 + d_2$, where $B_0\in |4\sigma_0 + 5f + 2d_2|$. Here, if $e_1, e_2$ are the exceptional curves on $\hhY$ and $F'$ is the proper transform of the fiber, then $\nu^*\sigma_0 = D$, $\nu^*d_i = 2e_i$, $\nu^*e= F'$, and (using $f$ to denote a general fiber of $\hhY \to \Pee^1$ as well as $\widetilde{\mathbb{F}}_1 \to \Pee^1$),  $\nu^*f = f$. Also, we can identify $Z$ with the corresponding subscheme of $\hhY$. The total transform of $F$ on $\hhY$ is $F'+ e_1+ e_2$, and hence
 $$H^0(\hY; I_Z\otimes \scrO_{\hY}(2f+F+D))  = H^0(\hhY; I_Z\otimes \scrO_{\hhY}(2f+F' +e_1 + e_2 +D)) .$$
 By the above remarks, since $\nu^*d_i = 2e_i$, there is an inclusion
 $$\scrO_{\hhY}(2f+F' +e_1 + e_2 +D) \subseteq \nu^*\scrO_{\widetilde{\mathbb{F}}_1}(2f+e +d_1 + d_2 +\sigma_0).$$
 In what follows, we will repeatedly use the following for divisor classes on $\widetilde{\mathbb{F}}_1$:
 $$2f+e +d_1 + d_2 +\sigma_0 = \sigma_0 + 3f-e.$$
 For every line bundle $M$ on $\widetilde{\mathbb{F}}_1$, we have $\nu_*\nu^*M =M \oplus (M\otimes  \scrO_{\widetilde{\mathbb{F}}_1}(-\frac12B))$, 
 where $\frac12B $ is the divisor $2\sigma_0 + 3f -e + d_2$. Applying this to $\scrO_{\widetilde{\mathbb{F}}_1}(\sigma_0 + 3f-e)$, first note that
 $$\sigma_0 + 3f-e -(2\sigma_0 + 3f -e + d_2) = -\sigma_0    - d_2. $$
 Since this last divisor is not effective, every section of $\nu^*\scrO_{\widetilde{\mathbb{F}}_1}(\sigma_0 + 3f-e)$ is the pullback of a section $s$ of $\scrO_{\widetilde{\mathbb{F}}_1}(\sigma_0 + 3f-e)$, and we need to find sections vanishing on $\nu(Z)$.  If $x\in Z$, then $x$ is    singular point of    the    fiber of $\hY\to \Pee^1$ which contains it. Viewing $x$ as a  point on $\hhY$,  $x$  is the preimage of a point on a fiber of $\widetilde{\mathbb{F}}_1$ which is tangent to $B_0$ and the   point $\nu(x)$ is a point of tangency. In particular, $\nu(Z)$ consists of $24$ points and the section $s$ has to vanish at those points of  $B_0$. (This includes the case where $F$ is singular; since $B_0 \cdot e = 2$ and $d_1$, $d_2$ meet $e$ transversally, $F$ is singular $\iff$ $B_0$ meets $e$ doubly at a single point.) Note that 
 $$(\sigma_0 + 3f-e) - B_0 = -3\sigma_0 -2f -2d_2 -e$$
 is not effective, and so a section $s$ of $\scrO_{\widetilde{\mathbb{F}}_1}(\sigma_0 + 3f-e)$ vanishing along $B_0$ has to vanish identically. Conversely, if $s\neq 0$, then the curve $(s)$ defined by $s$ satisfies: $\#((s) \cap B_0)$ is finite and  $\geq 24$. Hence   $B_0 \cdot (\sigma_0 + 3f-e) \geq 24$. But
 \begin{gather*}
 B_0 \cdot (\sigma_0 + 3f-e) = (4\sigma_0 + 5f + 2d_2) \cdot (\sigma_0 + 3f-e )\\
 = -4 + 12+5 + 2-2 = 13 < 24. 
 \end{gather*}
 Thus, a section $\sigma$ of $\scrO_{\widetilde{\mathbb{F}}_1}(\sigma_0 + 3f-e)$ vanishing at the $24$ points of $\nu(Z)$ must be identically $0$.  Hence $H^0(\hY; I_Z\otimes \scrO_{\hY}(2f+F+D)) = 0$, so that $H^2(\hY; T_{\hY}(-D)) =0$. 
 \end{proof} 
 
 \begin{remark} The proof still works if there are a small number of cuspidal or tacnodal fibers. Concretely, let $c$ be the number of cuspidal fibers and $t$ the number of tacnodal fibers. Then the above proof works as long as $c+2t \le 10$.  It is possible that this, or a more detailed analysis,  is enough to handle the general case. 
 \end{remark} 
 
 \begin{corollary}\label{kappa1cor1} Under the assumptions of Theorem~\ref{thm17}, the deformations of $Y$ are versal for the deformations of the simple elliptic singularity or cusp singularity, i.e.\ the homomorphism $\mathbb{T}^1_Y \to H^0(Y; T^1_Y)$ is surjective. \qed
 \end{corollary}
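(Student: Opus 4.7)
The plan is to read the corollary as an immediate consequence of the vanishing theorem just established (Theorem~\ref{thm17}) together with Proposition~\ref{prop1121}, handling the simple elliptic and cusp cases separately. Since by hypothesis $D$ is irreducible with $p_a(D)=1$ and $D^2=-1$, the singularity $p_1\in Y$ is either a simple elliptic singularity (when $D$ is smooth) or a length one cusp (when $D$ is an irreducible nodal rational curve); these are the only two analytic types to treat.

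First I would dispose of the simple elliptic case. Theorem~\ref{thm17} gives $H^2(\hY;T_{\hY}(-D))=0$, which is exactly the hypothesis of Proposition~\ref{prop1121}(i). That proposition then yields $\mathbb{T}^2_Y=0$ and the surjectivity of $\mathbb{T}^1_Y\to H^0(Y;T^1_Y)$, which is the statement of versality we want.

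For the cusp case, Proposition~\ref{prop1121}(ii) requires instead the vanishing of $H^2(\hY;T_{\hY}(-\log D))$. To obtain this from what we have, I would invoke Lemma~\ref{lemma1111}(ii): from the short exact sequence
\[
0\to T_{\hY}(-D)\to T_{\hY}(-\log D)\to T^0_D\to 0
\]
together with $H^2(D;T^0_D)=0$ in the cusp case (since $T^0_D$ is supported on a curve of arithmetic genus $1$ with nodal reduced structure, for which $H^2$ vanishes for dimension reasons), the vanishing of $H^2(\hY;T_{\hY}(-D))$ from Theorem~\ref{thm17} propagates to $H^2(\hY;T_{\hY}(-\log D))=0$. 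Applying Proposition~\ref{prop1121}(ii) then again gives the surjectivity of $\mathbb{T}^1_Y\to H^0(Y;T^1_Y)$.

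There is no real obstacle here: the entire content of the corollary is packaged into Theorem~\ref{thm17}, and the corollary is essentially a bookkeeping step that records its consequence for the global deformation theory of $Y$ via the machinery of \S\ref{section1}.
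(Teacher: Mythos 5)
Your proposal is correct and is exactly the argument the paper leaves implicit behind its \qed: in the simple elliptic case Theorem~\ref{thm17} feeds directly into Proposition~\ref{prop1121}(i), and in the cusp case the vanishing of $H^2(\hY;T_{\hY}(-D))$ propagates to $H^2(\hY;T_{\hY}(-\log D))$ via Lemma~\ref{lemma1111}(ii) before applying Proposition~\ref{prop1121}(ii). Nothing is missing.
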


\subsection{A vanishing theorem: the case where $D$ is reducible}\label{ssect3.4}  We prove an analogue of Theorem~\ref{thm17} in the case of a cusp singularity with $r> 1$, again under some mild general position assumptions.

\begin{theorem}\label{thm17cusp}  Suppose that  all fibers of $\hY$ are nodal. Let $D=E_1 + \cdots + E_r$ correspond  to a cusp singularity, where $E_1$ is a smooth bisection with $E_1^2=-3$ and the remaining  components are fiber components of the fiber $f_t$, with $r\le 14$. Then $H^2(\hY; T_{\hY}(-\log D)) =0$. 
 \end{theorem}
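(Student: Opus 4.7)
The plan is to mirror the proof of Theorem~\ref{thm17}, substituting Theorem~\ref{tangbundelllog} for Theorem~\ref{tangbundell} and the double-cover construction of Assumption~\ref{assump12} for that of Assumption~\ref{assump1}. By Serre duality and $K_{\hY} = \scrO_{\hY}(F)$, it suffices to show $H^0(\hY; \Omega^1_{\hY}(\log D) \otimes \scrO_{\hY}(F)) = 0$. Apply Theorem~\ref{tangbundelllog} with $p_g = 1$ and twist by $\scrO_{\hY}(F)$ to obtain
\[
0 \to \scrO_{\hY}(-2f + 2F + {\textstyle\sum_{i>1}} E_i) \to \Omega^1_{\hY}(\log D) \otimes \scrO_{\hY}(F) \to I_W \otimes \scrO_{\hY}(2f + F + E_1) \to 0,
\]
with $\ell(W) = 25 - r$. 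Since $2F \sim f$, the first term is $\scrO_{\hY}(-(f_t - \sum_{i>1}E_i))$, the negative of a nonzero effective divisor, because a chain of smooth rational curves is never itself a full Kodaira fiber; hence its $H^0$ vanishes, and we are reduced to showing $H^0(\hY; I_W \otimes \scrO_{\hY}(2f + F + E_1)) = 0$.

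Next, blow up $\hY$ at the two isolated fixed points of the involution $\iota$ on $F$ to obtain $\hhY$ with $F = F' + e_1 + e_2$. By the construction of Assumption~\ref{assump12} with $N = 2$ (since $E_1^2 = -3 = 1 - 2N$), there is a double cover $\nu\colon \hhY \to \mathcal{S}$, where $\mathcal{S}$ is obtained from $\widetilde{\mathbb{F}}_2$ by the embedded resolution of the branch divisor $\overline{B}_0$. Using $\nu^*\sigma_0 = \widetilde{E}_1$, $\nu^*e = F'$, and $\nu^* d_i = 2e_i$, we get the inclusion
\[
\scrO_{\hhY}(2f + F + E_1) \hookrightarrow \nu^* \scrO_{\mathcal{S}}(\sigma_0 + 3f - e).
\]
The splitting $\nu_* \nu^* M = M \oplus M(-\tfrac12 B)$ together with the non-effectivity of $(\sigma_0 + 3f - e) - \tfrac12 B = -\sigma_0 - f - d_2$ then shows that every global section of the pullback descends to a section of $\scrO_{\mathcal{S}}(\sigma_0 + 3f - e)$.

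Such a descended section must vanish on the image $\nu(W)$, which has length $25 - r$ and is concentrated on the branch locus of $\nu$, since $\operatorname{Supp}(W)$ consists entirely of $\iota$-fixed points by Theorem~\ref{tangbundelllog}. A direct computation on $\widetilde{\mathbb{F}}_2$ with $k=4$ gives $B_0 \cdot (\sigma_0 + 3f - e) = 11$, and this is unchanged on $\mathcal{S}$ because the blowups resolving $\overline{B}_0$ avoid $\sigma_0$, $f$, and $e$. The subscheme $\nu(W) \cap B_0$ contains the $24 - c$ tangency points arising from the non-$f_t$ nodal fibers and the ramification point $\sigma_0 \cap B_0$ corresponding to the unique ramification of $E_1 \to \Pee^1$ off $F$; one verifies that the hypothesis $r \le 14$ (with $c \in \{r, r+1\}$) provides enough independent conditions that any such section must restrict to zero on $B_0$ (using Riemann--Roch on the smooth curve $B_0$, which by adjunction has arithmetic genus $10$). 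Then, since $(\sigma_0 + 3f - e) - B_0$ is not effective, the section vanishes identically.

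The principal obstacle is the careful bookkeeping of how the chain of blowups resolving the $A_{k_1}$, $A_{k_2}$ singularities of $\overline{B}_0$ interacts with $\nu(W)$: the singular points of $f_t - D$ may lie on the new exceptional curves rather than on $B_0$, and the genus $10$ of $B_0$ means that merely having $11$ vanishing conditions on $B_0$ is not automatically enough. Verifying that $r \le 14$ is precisely the threshold at which the Riemann--Roch count on $B_0$ combined with the intersection number $11$ forces vanishing is the delicate step.
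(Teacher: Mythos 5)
Your strategy is exactly the paper's: Serre duality, the log cotangent sequence of Theorem~\ref{tangbundelllog}, vanishing of the sub-line-bundle term because $f_t-\sum_{i>1}E_i$ is a nonzero effective divisor, descent of sections to $\widetilde{\widetilde{\mathbb{F}}}_2$ via the splitting of $\nu_*\nu^*$, and a comparison of $\#\,\nu(W)$ with $B_0\cdot(\sigma_0+3f-e)$. Up to the last step everything matches. But the last step is precisely where you stop: you flag it as ``the delicate step'' and do not carry it out, and the route you gesture at (Riemann--Roch on $B_0$, its genus $10$, ``independent conditions'') is not how the argument closes and is a red herring. The actual conclusion is elementary: a nonzero section $s$ of $\scrO_{\widetilde{\widetilde{\mathbb{F}}}_2}(\sigma_0+3f-e)$ cannot vanish identically on $B_0$ because $(\sigma_0+3f-e)-B_0$ is not effective, so $(s)\cap B_0$ is finite of cardinality at most $B_0\cdot(\sigma_0+3f-e)$; on the other hand it contains $\overline{W}=\nu(W)$, and $\#(\overline{W})\ge \ell(W)-1=24-r$ (the $-1$ accounting for the single possible length-$2$ point of $W$ at the ramification point of $E_1\to\Pee^1$ off $F$). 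No genus enters: a nonzero section of a line bundle of degree $d$ on an irreducible curve has at most $d$ zeros.

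This makes the numerical value decisive, and here your proposal does not prove the stated theorem. You compute $B_0\cdot(\sigma_0+3f-e)=11$, which is indeed what the intersection numbers $B_0\cdot\sigma_0=1$, $B_0\cdot f=4$, $B_0\cdot e=2$ recorded at the end of \S\ref{doubcovsect} give (the paper's displayed value $9$ comes from writing the $e$-term as $-2(B_0\cdot e)$, which does not match the divisor $\sigma_0+3f-e$). With $11$, the inequality $24-r\le 11$ only forces $r\ge 13$, so the count closes the argument for $r\le 12$ but not for $r=13,14$ --- exactly the range your hypothesis is supposed to cover, and you acknowledge as much without resolving it. With the paper's value $9$ one gets $r\ge 15$ and hence the full statement for $r\le 14$. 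So either the degree must be rechecked and justified as $9$, or the lower bound $\#(\overline{W})\ge 24-r$ must be improved, or the hypothesis weakened; as written, your argument has a genuine gap at the one step where the bound $r\le 14$ is used.
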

 \begin{proof} The proof is similar to that of Theorem~\ref{thm17}.  We must show that $H^0(\hY; \Omega^1_{\hY}(\log D) \otimes K_{\hY}) = 0$. Let $\nu \colon \hhY \to \widetilde{\mathbb{F}}_2$ be the double cover morphism of \S\ref{doubcovsect},  where  $\widetilde{\widetilde{\mathbb{F}}}_2$ is the blowup of $\mathbb{F}_2$ described at the end of \S\ref{doubcovsect}.  The exact sequence of Theorem~\ref{tangbundelllog} gives an exact sequence
 $$ 0 \to \scrO_{\hY}(-2f+2 F + \sum_{i>1}E_i) \to \Omega^1_{\hY}(\log D) \otimes K_{\hY} \to I_W\otimes \scrO_{\hY}(2f+F + E_1)\to 0,$$
 with $\ell(W)  = 25-r$.  Moreover, if $x\in \operatorname{Supp} W$, then either $x$ is a singular point of a fiber, not on $D$,  or $x$ is a ramification point for the morphism $E_1\to \Pee^1$ not on $F$. Since $E_1\to \Pee^1$ is a double cover with $E_1$ a  smooth rational curve, the morphism $E_1\to \Pee^1$  ramifies at the intersection $E_1\cap F$ and at one other point. If $x\in \operatorname{Supp} W$,  $x$ is a singular point of its fiber, and $x\notin D$, then    $x\in B_0$ by Lemma~\ref{2compfibers}(ii). If $x$ is the unique ramification point of $E_1\to \Pee^1$ not on $F$, then    again $x\in B_0$, and $W$ has length at most $2$ at $x$. Thus, if $\overline{W} = \nu(W)$, then  $\overline{W}\subseteq B_0$ and 
 $$\#(\overline{W}) \ge \ell(W) -1= 24-r.$$

 As $-2f+2 F + \sum_{i>1}E_i = -f+ \sum_{i>1}E_i$ is the negative of an effective divisor,
 $$H^0(\hY; \scrO_{\hY}(-2f+2 F + \sum_{i>1}E_i) ) =0.$$
We must show that $H^0(\hY;  I_W\otimes \scrO_{\hY}(2f+F + E_1)) = 0$ as well.  It suffices to show that 
 $$H^0(\hhY; I_W\otimes \scrO_{\hhY}(2f+F' +e_1 + e_2 +E_1))=0,$$
 which will follow if we show  that 
$$H^0(\hhY;   I_W\otimes \nu^*\scrO_{\widetilde{\widetilde{\mathbb{F}}}_2}(2f+e +d_1 + d_2 +\sigma_0))=0.$$
We again use the equality of divisor classes
$$2f+e +d_1 + d_2 +\sigma_0 = \sigma_0 +3f - e.$$
 Arguing as in the proof of Theorem~\ref{thm17}, 
$$H^0(\hhY; \nu^*\scrO_{\widetilde{\widetilde{\mathbb{F}}}_2}(\sigma_0 +3f - e)) = H^0(\widetilde{\widetilde{\mathbb{F}}}_2; \scrO_{\widetilde{\widetilde{\mathbb{F}}}_2}(\sigma_0 +3f - e)).$$
It suffices to show that $H^0(\widetilde{\widetilde{\mathbb{F}}}_2; I_{\overline{W}}\otimes \scrO_{\widetilde{\widetilde{\mathbb{F}}}_2}(\sigma_0 +3f - e))= 0$. Suppose by contradiction that  there is  a nonzero section $s$ of $\scrO_{\widetilde{\widetilde{\mathbb{F}}}_2}(\sigma_0 +3f - e)$ vanishing along $\overline{W}$.   As in the proof of Theorem~\ref{thm17}, such a section cannot vanish identically along $B_0$. Now we compute as before, using the computations at the end of \S\ref{doubcovsect}:
$$B_0 \cdot (\sigma_0 +3f - e) = (B_0\cdot \sigma_0) + 3(B_0\cdot f) - 2(B_0\cdot e) = 1   +12-4 = 9.$$
In particular, we must have $\#(\overline{W}) \le 9$. But we have seen that $\#(\overline{W}) \ge  24-r$, hence $r\ge 15$. Conversely, if $r\le 14$, then $H^0(\hY;  I_W\otimes \scrO_{\hY}(2f+F + E_1)) = 0$.  Thus $H^2(\hY; T_{\hY}(-\log D)) =0$.
 \end{proof}
 
 \begin{corollary}\label{kappa1cor2} Under the assumptions of Theorem~\ref{thm17cusp}, the deformations of $Y$ are versal for the deformations of the cusp singularity, i.e.\ the homomorphism $\mathbb{T}^1_Y \to H^0(Y; T^1_Y)$ is surjective. \qed
 \end{corollary}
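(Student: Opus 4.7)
The proof is a direct application of the deformation-theoretic framework set up in \S\ref{section1}. Under the hypotheses of Theorem~\ref{thm17cusp}, the surface $Y$ has a single cusp singularity whose exceptional divisor on the minimal resolution $\hY$ is $D = E_1 + \cdots + E_r$; by Assumption~\ref{assumption27}, $Y$ has no other singularities. Cusp singularities are local complete intersections, so $T^2_Y = 0$, and Lemma~\ref{lemma1101} gives the five-term exact sequence
\[
0 \to H^1(Y; T^0_Y) \to \mathbb{T}^1_Y \to H^0(Y; T^1_Y) \to H^2(Y; T^0_Y) \to \mathbb{T}^2_Y \to 0.
\]
Surjectivity of $\mathbb{T}^1_Y \to H^0(Y; T^1_Y)$ is therefore equivalent to the vanishing $H^2(Y; T^0_Y) = 0$.

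The vanishing of $H^2(Y; T^0_Y)$ is exactly what Proposition~\ref{prop1121}(ii) extracts from the hypothesis $H^2(\hY; T_{\hY}(-\log D)) = 0$ in the cusp case. Since Theorem~\ref{thm17cusp} provides precisely this vanishing under the assumptions that all fibers of $\hY \to \Pee^1$ are nodal and that $r \le 14$, the proposition applies and yields both $\mathbb{T}^2_Y = 0$ and the desired surjectivity of $\mathbb{T}^1_Y \to H^0(Y; T^1_Y)$.

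The heart of the argument is Theorem~\ref{thm17cusp}; once that vanishing is in hand, the passage to the global deformation-theoretic statement is purely formal, and there is no further obstacle to overcome. In particular, no separate analysis is needed for the interaction of the bisection $E_1$ with the chain of fiber components $E_2, \dots, E_r$, since Proposition~\ref{prop1121}(ii) treats the whole exceptional divisor $D$ uniformly as the exceptional set of the single cusp singularity.
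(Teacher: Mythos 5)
Your proof is correct and takes essentially the same route the paper intends: Theorem~\ref{thm17cusp} supplies the vanishing $H^2(\hY; T_{\hY}(-\log D)) = 0$, and Proposition~\ref{prop1121}(ii) converts this into $H^2(Y; T^0_Y) = \mathbb{T}^2_Y = 0$ and the surjectivity of $\mathbb{T}^1_Y \to H^0(Y; T^1_Y)$. The only quibble is that the five-term sequence by itself makes the surjectivity equivalent to the vanishing of the connecting map $H^0(Y; T^1_Y) \to H^2(Y; T^0_Y)$ rather than of $H^2(Y; T^0_Y)$ itself, but the implication you actually use (vanishing implies surjectivity) is the correct one and is all that is needed.
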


\section{The case $\kappa(\hY) = 0$}\label{section5}

\subsection{Some examples of lattices} We begin by defining various lattices which will arise in what follows.

\begin{definition}\label{somelattices}  (i) Let $\Lambda_0(n)$ be the lattice with basis vectors $e_0, \dots, e_n$, satisfying: $e_0^2 =0$, $e_i^2=-2$ for $i>0$, $e_i\cdot e_j = 0$ for $i\neq j\pm 1 \pmod{n}$, and $e_i\cdot e_{i+1} =1$ (including the case $e_n\cdot e_0 =1$). 

\smallskip
\noindent (ii)  Let $\Lambda_1(n,m)$ be the lattice spanned by vectors $e_1, \dots e_ n, f_1, \dots, f_m, g_1, g_2$ with $e_i^2 = f_i^2 = g_i^2 = -2$, $e_1\cdot e_2 =\dots = e_{n-1}\cdot e_n =1$,  $f_1\cdot f_2 =\dots = f_{m-1}\cdot f_m =1$, $e_1 \cdot g_1 =f_1\cdot g_1 =1$, $e_n \cdot g_2 =f_m\cdot g_2 =1$,  $g_1\cdot g_2 = 1$, and all other intersections are $0$.

\smallskip
\noindent (iii) Let $\Lambda_2(n,m)$ be the lattice spanned by vectors $e_0, \dots, e_ n, f_0,  \dots, f_m$ with $e_i^2 = f_j^2=-2$,  $e_i\cdot e_j = 0$ for $i\neq j\pm 1 \pmod{n}$, and $e_i\cdot e_{i+1} =1$ (including the case $e_n\cdot e_0 =1$), similarly  $f_i\cdot f_j = 0$ for $i\neq j\pm 1 \pmod{m}$, and $f_i\cdot f_{i+1} =1$ (including the case $f_n\cdot f_0 =1$), $e_0\cdot f_0 = 1$, and $e_i\cdot f_j =0$ if $(i,j) \neq (0,0)$. 
\end{definition} 

A straightforward argument shows:

\begin{lemma}\label{somelatticeslemma} The lattices $\Lambda_0(n)$, $\Lambda_1(n,m)$, and $\Lambda_2(n,m)$ are nondegenerate of signatures $(1,n)$, $(1, n+m+1)$, and $(1, n+m+1)$ respectively. \qed
\end{lemma}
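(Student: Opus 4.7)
My plan is to exhibit in each case a negative-definite sublattice $L' \subseteq \Lambda$ of corank $1$ or $2$, and then to determine the signature of the orthogonal complement $L'^\perp \subseteq \Lambda \otimes \mathbb{Q}$. Since $L'$ is nondegenerate in each case, there is an orthogonal splitting $\Lambda \otimes \mathbb{Q} = (L' \otimes \mathbb{Q}) \oplus L'^\perp$, and the signature of $\Lambda$ is the sum of the two contributions; this also certifies nondegeneracy.

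For $\Lambda_0(n)$, I take $L' = \langle e_1, \ldots, e_n \rangle$, the $A_n$ root lattice, which is negative definite of rank $n$; then $\dim L'^\perp = 1$. I will verify that $v = e_0 + e_1 + \cdots + e_n$ lies in $L'^\perp$ (at each $e_i$ with $i \geq 1$, its two cyclic neighbors and itself contribute $1 - 2 + 1 = 0$ to $v \cdot e_i$) and compute $v^2 = -2n + 2(n+1) = 2$, so $L'^\perp$ is positive definite. This gives signature $(0,n) + (1,0) = (1,n)$. For $\Lambda_1(n,m)$, the ordered list $e_1, e_2, \ldots, e_n, g_2, f_m, f_{m-1}, \ldots, f_1$ has only consecutive nonzero intersections (each equal to $1$), so it spans an $A_{n+m+1}$ root lattice $L'$, negative definite of rank $n+m+1$, giving $\dim L'^\perp = 1$. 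The sum $v$ of all $n + m + 2$ basis vectors satisfies $v^2 = -2(n+m+2) + 2(n+m+3) = 2$ (the diagram has $n+m+3$ edges); writing $v = v_1 + v_2$ with $v_1 \in L' \otimes \mathbb{Q}$ and $v_2 \in L'^\perp$, orthogonality gives $v_2^2 = v^2 - v_1^2 \geq 2 > 0$, so $L'^\perp$ is positive definite and the signature is $(1, n+m+1)$.

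For $\Lambda_2(n,m)$, I would take $L' = \langle e_1, \ldots, e_n \rangle \oplus \langle f_1, \ldots, f_m \rangle \cong A_n \oplus A_m$, negative definite of rank $n+m$, so $\dim L'^\perp = 2$. The key step is identifying the signature of this two-dimensional complement; for this I would use the two cycle radicals $c_E = e_0 + e_1 + \cdots + e_n$ and $c_F = f_0 + f_1 + \cdots + f_m$. The cycle relation $1 - 2 + 1 = 0$ at every vertex shows $c_E \cdot e_i = 0$ for all $i$ and $c_F \cdot f_j = 0$ for all $j$, so both lie in $L'^\perp$; they are linearly independent since they involve disjoint sets of basis vectors; they are isotropic ($c_E^2 = c_F^2 = 0$) by the same radical identity; and $c_E \cdot c_F = e_0 \cdot f_0 = 1$ since the two cycles meet in exactly that one product. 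The Gram matrix of $\{c_E, c_F\}$ on $L'^\perp$ is therefore the hyperbolic plane $\bigl(\begin{smallmatrix} 0 & 1 \\ 1 & 0 \end{smallmatrix}\bigr)$, of signature $(1,1)$, giving total signature $(0, n+m) + (1,1) = (1, n+m+1)$.

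The main obstacle is the $\Lambda_2(n,m)$ case, where one must recognize that the two isotropic radicals of the $\tilde A$-cycles pair to a hyperbolic plane, rather than (for instance) producing a degenerate form on $L'^\perp$. The first two cases are essentially routine once the correct negative-definite sublattice of corank one is identified.
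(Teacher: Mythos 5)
Your proposal is correct. The paper offers no proof of this lemma (it is stated as following from ``a straightforward argument''), so there is nothing to compare against; your argument is the natural one and all the computations check out: the chains $\langle e_1,\dots,e_n\rangle$ and $e_1,\dots,e_n,g_2,f_m,\dots,f_1$ really are negative-definite $A$-type sublattices of corank one, the all-ones vector of a cycle of $(-2)$-vectors is orthogonal to each vertex by the $1-2+1=0$ identity, the self-intersection counts ($v^2=2$ in the first two cases via edge-counting, $c_E^2=c_F^2=0$ and $c_E\cdot c_F=1$ in the third) are right, and the decomposition $v=v_1+v_2$ with $v_2^2=v^2-v_1^2\ge 2$ is a clean way to avoid computing the projection explicitly. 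The one point worth flagging is the degenerate small cases: for $n=1$ the ``cycle'' condition $e_i\cdot e_{i+1}=1$ together with $e_n\cdot e_0=1$ refers twice to the same pair, and the geometric configurations the lemma is applied to (a length-two cycle of rational curves) would have intersection number $2$ rather than $1$, in which case the radical identity must be rechecked; the paper itself sidesteps this by assuming $n,m\ge 2$ where the lemma is invoked, and your proof is complete in that range.
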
 

\subsection{A lemma on double covers}

\begin{lemma}\label{doubcov} Suppose that $\nu \colon W \to Z$ is a double cover branched along $\Sigma$, where $Z$, $W$, and $\Sigma$ are all smooth. Let $\lambda$ be the line bundle defining the double cover, i.e.\ $\lambda^{\otimes 2} \cong \scrO_Z(\Sigma)$. Then there is an exact sequence
$$0 \to \Omega^1_Z \to \nu_*\Omega^1_W \to \Omega^1_Z(\log \Sigma) \otimes \lambda^{-1} \to 0.$$
\end{lemma}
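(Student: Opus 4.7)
The plan is to use the Galois involution $\iota$ on $W$ over $Z$ to split $\nu_*\Omega^1_W$ canonically into $\pm 1$-eigenspaces. In characteristic zero,
$$\nu_*\Omega^1_W = (\nu_*\Omega^1_W)^+ \oplus (\nu_*\Omega^1_W)^-,$$
and I would identify the summands with $\Omega^1_Z$ and $\Omega^1_Z(\log\Sigma)\otimes \lambda^{-1}$ respectively. The short exact sequence in the statement is then the composition of the inclusion of $(\nu_*\Omega^1_W)^+$ with the projection onto $(\nu_*\Omega^1_W)^-$; in fact the sequence splits.

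For the $+$-eigenspace: the natural map $\nu^*\Omega^1_Z \to \Omega^1_W$ has image in the invariants, and pushing it down gives a map $\nu_*\nu^*\Omega^1_Z \to \nu_*\Omega^1_W$ which, after taking $+$-parts and using the projection formula together with $(\nu_*\scrO_W)^+ = \scrO_Z$, becomes a natural map $\Omega^1_Z \to (\nu_*\Omega^1_W)^+$. That it is an isomorphism is a local check: choosing coordinates $t, z_2, \ldots, z_n$ on $Z$ with $\Sigma = \{t = 0\}$ and $W = \Spec \scrO_Z[s]/(s^2 - t)$, the relation $2s\,ds = dt$ inside $\Omega^1_W$ shows that the invariants $(\nu_*\Omega^1_W)^+$ are freely generated over $\scrO_Z$ by $s\,ds = \tfrac12 dt$ and by $dz_2, \ldots, dz_n$, matching $\Omega^1_Z$.

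For the $-$-eigenspace: in the same local picture, $(\nu_*\Omega^1_W)^-$ is freely generated over $\scrO_Z$ by $ds$ and by $s\,dz_i$ for $i \geq 2$. Locally, $\lambda^{-1}$ is generated by the class of $s$ (via the identification $\nu_*\scrO_W = \scrO_Z \oplus \lambda^{-1}$, whose algebra structure $\lambda^{-2} \hookrightarrow \scrO_Z$ is the inclusion cut out by $t$), and $\Omega^1_Z(\log\Sigma)$ has local basis $\tfrac{dt}{t}, dz_2,\ldots, dz_n$. Rewriting $ds = \tfrac{dt}{2s} = s \cdot \tfrac{dt}{2t}$, the assignments
$$s\,dz_i \longmapsto s \otimes dz_i, \qquad ds \longmapsto \tfrac12\, s \otimes \tfrac{dt}{t}$$
define a local isomorphism $(\nu_*\Omega^1_W)^- \xrightarrow{\sim} \Omega^1_Z(\log\Sigma) \otimes \lambda^{-1}$.

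The step requiring care, and what I view as the main obstacle, is verifying that this local identification is intrinsic: it must be independent both of the local defining function $t$ for $\Sigma$ and of the trivialization of $\lambda$ used to write $s$ as a generator. This reduces to the compatibility between the intrinsic residue map $\Omega^1_Z(\log\Sigma) \to \scrO_\Sigma$ and the algebra structure $\lambda^{-2} \hookrightarrow \scrO_Z$ coming from the canonical section cutting out $\Sigma$; rescaling $t$ by a unit $u$ multiplies $s$ by a square root of $u$, and a direct calculation shows both assignments above transform compatibly. Once intrinsicality is established, the local identifications glue to a global short exact sequence and exactness is automatic from the local calculation.
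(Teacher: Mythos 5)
Your proof is correct and follows essentially the same route as the paper: the same local computation with $s=\sqrt{t}$, identifying the invariants with $\Omega^1_Z$ (spanned by $s\,ds=\tfrac12 dt$ and the $dz_i$) and the anti-invariants with $\Omega^1_Z(\log\Sigma)\otimes\lambda^{-1}$ (spanned by $ds=\tfrac12 s\,\tfrac{dt}{t}$ and the $s\,dz_i$); the eigenspace splitting you organize the argument around is exactly the Remark the paper states immediately after the Lemma. Your extra care about intrinsicality of the local identification is a reasonable elaboration of the paper's phrase ``this says invariantly,'' not a different method.
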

\begin{proof}  Choose local holomorphic coordinates $z_1, \dots, z_n$ near a point of $\Sigma$ so that $\Sigma$ is defined by $z_n =0$ and the map $\nu$ is given by $z_i = w_1$, $i < n$, and $z_n = w_n^2$, i.e.\ $w_n =\sqrt{z_n}$. Thus $z_n$ is a local section of $\scrO_Z(-\Sigma)$ and $\sqrt{z_n}$ is a local section of $L^{-1}$. Then
 $dz_n = 2w_n \, dw_n$, i.e.\ $\displaystyle  dw_n = \frac{dz_n}{2\sqrt{z_n}}$.  Thus a basis for $\nu_*\Omega^1_W$ as an $\scrO_Z$-module is given locally by
 $$dz_1, \sqrt{z_n}dz_1, \dots, dz_{n-1}, \sqrt{z_n}dz_{n-1}, dz_n,  \frac{dz_n}{\sqrt{z_n}}.$$
 Hence the quotient of $\nu_*\Omega^1_W$ by the subspace  $\Omega^1_Z$ has as a basis
 $$\sqrt{z_n}dz_1, \dots, \frac{dz_n}{\sqrt{z_n}}= \sqrt{z_n}\cdot \frac{dz_n}{z_n}.$$
 This says invariantly that the quotient is $\Omega^1_Z(\log \Sigma) \otimes \lambda^{-1}$.
\end{proof}

\begin{remark} In fact, the natural involution on $\nu_*\Omega^1_W$ coming from the double cover involution induces a splitting
$$\nu_*\Omega^1_W \cong \Omega^1_Z  \oplus \Big(\Omega^1_Z(\log \Sigma) \otimes \lambda^{-1}\Big).$$
\end{remark}

\subsection{The case where $\hY$ is the blowup of a $K3$ surface}

Throughout this subsection, we assume  that the minimal model $Y_0$ of $\hY$ is a (smooth) $K3$ surface. By Theorem~\ref{theorem5}(iii), there exists a morphism $\rho\colon \hY\to Y_0$, where    $\rho$ is the blowup of $Y_0$ at a single point $p\in Y_0$, and the exceptional curve $C$ on $\hY$ satisfies: $C\cdot D =2$. In particular, $K_{\hY} = \scrO_{\hY}(C)$. 

 First, we consider the case where $D$ is irreducible, and is thus either an elliptic curve or a nodal rational curve. 
 
 \begin{theorem}\label{K3irred} If  $D$ is irreducible, then $H^2(\hY; T_{\hY}(-D)) = 0$.  Hence the homomorphism $\mathbb{T}^1_Y \to H^0(Y; T^1_Y)$ is surjective.
 \end{theorem}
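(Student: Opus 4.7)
The plan is to apply Serre duality and then reduce, via the blowup $\rho\colon \hY \to Y_0$, to a vanishing statement for twisted $1$-forms on the K3 surface $Y_0$. Since $K_{\hY} = \scrO_{\hY}(C)$, Serre duality yields
$$H^2(\hY; T_{\hY}(-D))\spcheck \cong H^0(\hY; \Omega^1_{\hY}(C+D)).$$
Writing $\Gamma = \rho_*D$, the relation $D = \rho^*\Gamma - 2C$ (forced by $C\cdot D = 2$) gives $C+D = \rho^*\Gamma - C$, and $\Gamma$ is a curve on $Y_0$ with $\Gamma^2 = 2$ and $p_a(\Gamma) = 2$, having a point of multiplicity $2$ at $p$: an ordinary node if $D$ is a smooth elliptic curve, and a somewhat more complicated singularity at $p$ (possibly together with one additional node elsewhere) if $D$ is a nodal rational curve.

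The next step is to push forward along $\rho$. Applying the standard blowup sequence
$$0 \to \rho^*\Omega^1_{Y_0} \to \Omega^1_{\hY} \to \iota_*\Omega^1_C \to 0$$
with $\iota\colon C \hookrightarrow \hY$, twisting by $\scrO_{\hY}(-C)$, and using that $\Omega^1_C \otimes \scrO_{\hY}(-C)|_C \cong \scrO_C(-1)$ has no cohomology on $C \cong \Pee^1$, one obtains $\rho_*\Omega^1_{\hY}(-C) \cong \Omega^1_{Y_0} \otimes I_p$ from the projection formula together with $\rho_*\scrO_{\hY}(-C) = I_p$. Hence
$$H^0(\hY; \Omega^1_{\hY}(C+D)) \cong H^0(Y_0; \Omega^1_{Y_0}(\Gamma)\otimes I_p),$$
and it suffices to prove the vanishing of the latter group. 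Combining the sequence $0 \to \Omega^1_{Y_0}(\Gamma)\otimes I_p \to \Omega^1_{Y_0}(\Gamma) \to \Omega^1_{Y_0}|_p \to 0$ with the sequence $0 \to \Omega^1_{Y_0} \to \Omega^1_{Y_0}(\Gamma) \to \Omega^1_{Y_0}(\Gamma)|_\Gamma \to 0$ and the vanishing $H^0(Y_0; \Omega^1_{Y_0}) = 0$ on the K3 surface, the problem reduces to the injectivity of the evaluation map
$$H^0\bigl(Y_0; \Omega^1_{Y_0}(\Gamma)\bigr) \longrightarrow \Omega^1_{Y_0}|_p \cong \Cee^2.$$

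The main obstacle is this last injectivity, which requires a careful analysis of $1$-forms with simple poles along the singular curve $\Gamma$. The strategy is to use the conormal sequence
$$0 \to \scrO_\Gamma \to \Omega^1_{Y_0}(\Gamma)|_\Gamma \to \Omega^1_\Gamma \otimes \omega_\Gamma \to 0,$$
valid since $N_{\Gamma/Y_0} \cong \omega_\Gamma$ by K3 adjunction, and to pull the entire picture back via the partial normalization $\rho|_D\colon D \to \Gamma$ resolving the singularity at $p$. On $D$ the corresponding sheaf is controlled by the conormal sequence $0 \to N_{D/\hY}\spcheck \to \Omega^1_{\hY}|_D \to \Omega^1_D \to 0$ with $N_{D/\hY}\spcheck$ of degree $2$, and the residues at the two points of $D \cap C$ recover the evaluation at $p$ on $Y_0$. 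A form whose image in $\Omega^1_{Y_0}|_p$ vanishes therefore has trivial residues at $D \cap C$, so it extends to a holomorphic $1$-form on $\hY$, which must be zero by $H^0(\hY; \Omega^1_{\hY}) = H^0(Y_0; \Omega^1_{Y_0}) = 0$.
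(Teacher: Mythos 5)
Your reduction steps are sound and essentially parallel the paper's: Serre duality identifies $H^2(\hY; T_{\hY}(-D))\spcheck$ with $H^0(\hY;\Omega^1_{\hY}(C+D))$, and descending to $Y_0$ (the paper uses Hartogs where you compute $\rho_*\Omega^1_{\hY}(-C)\cong\Omega^1_{Y_0}\otimes I_p$ explicitly, but both are fine) reduces the problem to showing that $H^0(Y_0;\Omega^1_{Y_0}(\Gamma))$ has no section vanishing at $p$. The gap is in your final step. You assert that a section $\omega$ of $\Omega^1_{Y_0}(\Gamma)$ whose image in $\Omega^1_{Y_0}(\Gamma)|_p$ vanishes ``has trivial residues at $D\cap C$, so it extends to a holomorphic $1$-form.'' This does not follow: $\omega$ is holomorphic exactly when its image $\bar\omega$ in $\Omega^1_{Y_0}(\Gamma)/\Omega^1_{Y_0}\cong\Omega^1_{Y_0}(\Gamma)|_\Gamma$ vanishes identically along \emph{all} of $\Gamma$, whereas your hypothesis only kills $\bar\omega$ at the single point $p$. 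In your own exact sequence the quotient of $\Omega^1_{Y_0}(\Gamma)|_\Gamma$ by $\scrO_\Gamma$ is $\Omega^1_\Gamma\otimes\omega_\Gamma$, a sheaf of positive degree on a curve of arithmetic genus $2$; sections of such a sheaf are certainly not determined by their value at one point, and the na\"ive bookkeeping with these sequences only bounds $h^0(Y_0;\Omega^1_{Y_0}(\Gamma))$ by roughly $3$, so imposing the two linear conditions of vanishing at $p$ cannot force a section to be zero.

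The deeper problem is that your argument uses no global information about the pair $(Y_0,\Gamma)$ beyond $H^0(Y_0;\Omega^1_{Y_0})=0$, while the statement genuinely requires more. The paper proves the stronger vanishing $H^0(Y_0;\Omega^1_{Y_0}(\overline{D}))=0$ (which makes the injectivity of evaluation at $p$ vacuous) by observing that $|\overline{D}|$ defines a \emph{finite} degree-$2$ morphism $\varphi\colon Y_0\to\Pee^2$ branched along a smooth sextic $\Sigma$ --- finiteness is where the ampleness of $\omega_Y$ enters --- and then splitting $\varphi_*\Omega^1_{Y_0}$ into $\Omega^1_{\Pee^2}$ and $\Omega^1_{\Pee^2}(\log\Sigma)\otimes\scrO_{\Pee^2}(-3)$ as in Lemma~\ref{doubcov}, so that the required vanishing becomes $H^0(\Pee^2;\Omega^1_{\Pee^2}(1))=H^0(\Pee^2;\Omega^1_{\Pee^2}(\log\Sigma)(-2))=0$, both immediate from the Euler and Poincar\'e residue sequences. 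To repair your proof you would need to import an input of this kind; the residue considerations at $D\cap C$ alone cannot close the argument.
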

 \begin{proof}  With notation as above, let $\overline{D}=\rho(D)$ be the image of $D$ in $Y_0$. Then $\overline{D}$ is an irreducible curve with either a node or a cusp, and $p_a(\overline{D}) = 2$. Thus $\overline{D}^2 = 2$ and the complete linear system $|\overline{D}|$ defines a morphism $\varphi\colon Y_0 \to \Pee^2$, for which $\scrO_{Y_0}(\overline{D}) =\varphi^*\scrO_{\Pee^2}(1)$. By hypothesis, $L =\scrO_{\hY}(D+C)$ induces an ample divisor on $Y$. Thus, if $\Gamma$ is an irreducible curve in $\hY$ not equal to $C$ or $D$, then either $D\cdot \Gamma > 0$ or $C\cdot \Gamma > 0$,  so for every  irreducible curve  $\Gamma_0$ on $Y_0$, 
 $$\Gamma_0\cdot \overline{D} = \Gamma_0\cdot \rho_*D = \rho^*\Gamma_0\cdot (D+2C) >0.$$
 Hence $\varphi\colon Y_0 \to \Pee^2$ is a finite degree $2$ morphism  and   its  branch locus is a smooth sextic curve $\Sigma \subseteq \Pee^2$. Thus the square root $\lambda$ of $\scrO_{\Pee^2}(\Sigma)$ is $\scrO_{\Pee^2}(3)$.

 By Serre duality, $H^2(\hY; T_{\hY}(-D))$ is dual to 
 $H^0(\hY; \Omega^1_{\hY}(D) \otimes \scrO_{\hY}(C))$. A   section of  $\Omega^1_{\hY}(D) \otimes \scrO_{\hY}(C)$ defines a section of $\Omega^1_{Y_0}(\overline{D})$ over $Y_0-\{p\}$ which then extends to a holomorphic section of $\Omega^1_{Y_0}(\overline{D})$ over $Y_0$ by Hartogs' theorem. Thus it will suffice to prove that $H^0(Y_0; \Omega^1_{Y_0}(\overline{D})) =0$. Equivalently, since $\scrO_{Y_0}(\overline{D}) =\varphi^*\scrO_{\Pee^2}(1)$, it suffices to show that  $H^0(\varphi_*\Omega^1_{Y_0}\otimes \scrO_{\Pee^2}(1)) =0$. By Lemma~\ref{doubcov}, since $\lambda^{-1} =\scrO_{\Pee^2}(-3)$, it is enough to show that
 $$H^0(\Pee^2; \Omega^1_{\Pee^2}\otimes \scrO_{\Pee^2}(1)) = H^0(\Pee^2; \Omega^1_{\Pee^2}(\log \Sigma) \otimes \scrO_{\Pee^2}(-2)) = 0.$$
The fact that $H^0(\Pee^2; \Omega^1_{\Pee^2}\otimes \scrO_{\Pee^2}(1))=0$ is well-known and follows from looking at the global sections coming from the Euler exact sequence
$$0 \to \Omega^1_{\Pee^2}\otimes \scrO_{\Pee^2}(1) \to  \left(\scrO_{\Pee^2} \right)^3 \to \scrO_{\Pee^2}(1) \to 0.$$
As for the remaining equality $H^0(\Pee^2; \Omega^1_{\Pee^2}(\log \Sigma) \otimes \scrO_{\Pee^2}(-2)) = 0$, the Poincar\'e residue  sequence gives
$$0 \to \Omega^1_{\Pee^2}(-2) \to \Omega^1_{\Pee^2}(\log \Sigma)(-2) \to \scrO_{\Sigma} (-2)\to 0.$$
On global sections, $H^0(\Pee^2; \Omega^1_{\Pee^2}(-2)) =0$ via the Euler exact sequence,  and   $H^0( \scrO_{\Sigma} (-2))=0$ because  $\scrO_{\Pee^2}(-2)|\Sigma$ is a line bundle of negative degree. Thus $H^2(\hY; T_{\hY}(-D) =0$ as claimed. 
\end{proof}

In case $D$ is reducible, we have the following which uses a mild general position assumption.

\begin{theorem}\label{K3red} Suppose that   $D$ is reducible, and assume that, if there is a component $E$ of $D$ of self-intersection $-4$, then the exceptional curve $C$ meets $E$ in $2$ distinct points. 
Then $H^2(\hY; T_{\hY}(-\log D)) = 0$, and hence the homomorphism $\mathbb{T}^1_Y \to H^0(Y; T^1_Y)$ is surjective.
 \end{theorem}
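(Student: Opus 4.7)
Following the strategy of Theorem~\ref{K3irred}, by Serre duality the desired vanishing is equivalent to $H^0(\hY, \Omega^1_{\hY}(\log D) \otimes \scrO_{\hY}(C)) = 0$, which I would approach by pushing down to the K3 surface $Y_0$. The image $\bar D = \rho_*D$ is a reduced curve on $Y_0$ whose singularities are all ordinary nodes: besides the usual nodes from the cycle structure of $D$, there is an additional node at $p = \rho(C)$. In the case where $D$ has type $(-4, -2, \ldots, -2)$, the general position hypothesis (that $C$ meets the $(-4)$-component $E_1$ at two distinct points) ensures that the blowdown $\bar{E}_1$ acquires an ordinary self-node at $p$ rather than a cusp; in the other case (type $(-3, -2, \ldots, -3, -2, \ldots)$), the two $(-3)$-components each meet $C$ at a single point and become $(-2)$-curves on $Y_0$ meeting transversally at $p$. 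In either case, $\bar D$ has only ordinary nodes (possibly self-nodes on individual components), so $\Omega^1_{Y_0}(\log \bar D)$ and its dual $T_{Y_0}(-\log \bar D)$ are locally free of rank $2$.

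I would first replace $D$ by $D + C$, which is SNC on $\hY$ under the same hypothesis. The residue sequence
$$0 \to T_{\hY}(-\log(D+C)) \to T_{\hY}(-\log D) \to \scrO_C(C) \to 0,$$
together with $H^i(\scrO_C(C)) = H^i(\scrO_{\Pee^1}(-1)) = 0$ for $i = 1,2$, reduces the problem to proving $H^2(\hY, T_{\hY}(-\log(D+C))) = 0$. A local computation at $p$ in blowup coordinates $(x,y) = (u, uv)$ shows that $\rho^*(dx/x) = du/u$ and $\rho^*(dy/y) = du/u + dv/v$ span $\Omega^1_{\hY}(\log(D+C))$ locally, yielding the global identification $\rho^*\Omega^1_{Y_0}(\log \bar D) = \Omega^1_{\hY}(\log(D+C))$, and dually $\rho^*T_{Y_0}(-\log \bar D) = T_{\hY}(-\log(D+C))$. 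Combining this with $R^1\rho_*\scrO_{\hY} = 0$ and the projection formula, the Leray spectral sequence yields
$$H^2(\hY, T_{\hY}(-\log(D+C))) = H^2(Y_0, T_{Y_0}(-\log \bar D)).$$

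Finally, Serre duality on the K3 surface (where $K_{Y_0} = \scrO_{Y_0}$) identifies this with $H^0(Y_0, \Omega^1_{Y_0}(\log \bar D))^\vee$, and the residue sequence $0 \to \Omega^1_{Y_0} \to \Omega^1_{Y_0}(\log \bar D) \to R \to 0$ with $R = \bigoplus_\alpha \scrO_{\widetilde{\bar D}_\alpha}$ (one summand per irreducible component of the normalization of $\bar D$) reduces the desired vanishing (using $H^0(Y_0, \Omega^1_{Y_0}) = 0$ on a K3) to injectivity of the connecting map $H^0(R) \cong \mathbb{C}^r \to H^1(\Omega^1_{Y_0})$, which sends each generator to the class of the corresponding component of $\bar D$. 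This in turn follows from the linear independence of $\rho_*[E_1], [E_2], \ldots, [E_r]$ in $H^2(Y_0; \mathbb{Q})$, which follows from the linear independence of $\{[E_1], \ldots, [E_r], [C]\}$ in $H^2(\hY; \mathbb{Q})$ (using that $[E_1], \ldots, [E_r]$ span a negative-definite sublattice as components of the exceptional divisor for $\hY \to Y$) together with $\ker \rho_* = \mathbb{Q} \cdot [C]$.

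The main technical obstacle is the local identification $\rho^*\Omega^1_{Y_0}(\log \bar D) = \Omega^1_{\hY}(\log(D+C))$ at the new node $p$ of $\bar D$. This is precisely where the general position hypothesis enters: without it, $\bar{E}_1$ would acquire a cusp at $p$ instead of a node, $\Omega^1_{Y_0}(\log \bar D)$ would cease to be locally free, and both the identification and the ensuing Serre duality argument would fail.
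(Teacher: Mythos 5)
Your proof is correct and follows essentially the same route as the paper's: both reduce, via Serre duality and descent to the $K3$ surface $Y_0$, to the vanishing of $H^0(Y_0;\Omega^1_{Y_0}(\log \overline{D}))$, and both establish this from the linear independence in $H^2(Y_0)$ of the fundamental classes of the components of the nodal curve $\overline{D}$, with the general position hypothesis entering in both cases exactly to guarantee that $\overline{D}$ is nodal. The differences are cosmetic: you descend via the identification $\rho^*\Omega^1_{Y_0}(\log \overline{D})\cong \Omega^1_{\hY}(\log(D+C))$ together with Leray rather than via a Hartogs extension of sections, and you deduce the independence from negative-definiteness of the exceptional lattice on $\hY$ and $\ker\rho_*=\Q\cdot[C]$ rather than from the paper's explicit lattice computation (Lemma~\ref{somelatticeslemma}).
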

 \begin{proof} Since all components of $D$ are smooth rational, $C$ is disjoint from every component of square $-2$. If there is a component $E$ of self-intersection $-4$, then $C\cdot E =2$, and if  there are two components of square $-3$, then $C$ meets each of them transversally at one point. Thus, the hypothesis of the theorem is exactly that the curve $C+D$ is nodal on $\hY$, or equivalently that $\overline{D} = \rho(D)$ is nodal. 
 
 The group  $H^2(\hY; T_{\hY}(-\log D))$ is dual to $H^0(\hY; \Omega^1_{\hY}(\log D)\otimes \scrO_{\hY}(C))$. As in the proof of Theorem~\ref{K3irred}, it suffices to prove that 
 $H^0(Y_0; \Omega^1_{Y_0}(\log \overline{D}) ) = 0$. Thus it suffices to prove that the fundamental classes of the components of $\overline{D}$ are independent in cohomology. This follows from Lemma~\ref{somelatticeslemma}. 
 \end{proof}
 
 \begin{corollary}\label{K3toell}  Suppose that there is a component $E$ of $D$ of self-intersection $-4$ with $r\ge 2$ or two components of self-intersection $-3$ with $r\ge 3$. Then there is a deformation of $Y$ to an I-surface $Y_t$ with a cusp or simple elliptic singularity of multiplicity $1$. Hence the minimal resolution $\hY_t$ is a surface with $\kappa(\hY_t) = 1$. 
 \end{corollary}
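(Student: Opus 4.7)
The plan is to combine the global deformation-theoretic input of Theorem~\ref{K3red} with the Brieskorn--Wahl--Looijenga adjacency classification of Theorem~\ref{cuspadj}. By Theorem~\ref{K3red} (applied either trivially when $D$ has no $-4$ component, or under the general position assumption stated there when it does, which we inherit here), the natural map $\mathbb{T}^1_Y \to H^0(Y; T^1_Y)$ is surjective. Combined with the unobstructedness of the deformation functor of $Y$ coming from Theorem~\ref{defunobstr}, this makes the forgetful map from the versal deformation of $Y$ to the versal deformation of the cusp germ at the unique singular point $p$ of $Y$ smooth, and in particular surjective on analytic $1$-parameter subfamilies. Hence any local adjacency of the cusp at $p$ to a target singularity can be realized by a global analytic $1$-parameter deformation $\{Y_t\}$ of $Y$, and it suffices to produce a local adjacency of this cusp to a simple elliptic or length-one cusp singularity of multiplicity $1$.

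Such a local adjacency is then extracted from Theorem~\ref{cuspadj}, broken into the two cases of Lemma~\ref{lemma4}. If $D$ has type $(-4, -2, \dots, -2)$ with $r \ge 2$, then for $r = 2$ or $r = 3$ part (iii)(b) supplies a direct adjacency to a multiplicity-$1$ singularity; for $r \ge 4$ it first gives an adjacency to the cusp of type $(-3, -2, \dots, -2)$ of length $r - 2 \ge 2$, which by part (ii) is itself adjacent to a multiplicity-$1$ simple elliptic or length-one cusp singularity, the two being composed by transitivity of adjacencies for local complete intersections. If instead $D$ has type $(-3, -2, \dots, -2, -3, -2, \dots, -2)$ with $r \ge 3$, then the interior parameters satisfy $a + b \ge 1$, so by repeated application of part (iv)(a) we may reduce to $a = 0$ with $b \ge 1$. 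Part (iv)(b) then gives an adjacency to the type $(-4, -2, \dots, -2)$ cusp with $b \ge 1$ interior $-2$'s (total length $b + 1 \ge 2$), and the previous case completes the argument.

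In the resulting $1$-parameter deformation $\{Y_t\}$, the Gorenstein invariants $K^2 = 1$, $p_g = 2$, and $\chi(\scrO) = 3$ are preserved in the flat family, and the ampleness of $\omega_{Y_t}$ is an open condition, so $Y_t$ remains an I-surface with elliptic singularities in the sense of Definition~\ref{defIsurface} for $t \ne 0$ small. Its unique elliptic singularity has multiplicity $1$ by construction (any further nearby singularities being rational double points), so Theorem~\ref{thm03} forces $Y_t$ into the stratum $\mathfrak{N}_1$, which has $\kappa(\hY_t) = 1$ as asserted. The main technical hurdle I anticipate is the step that promotes first-order versality (a statement about $\mathbb{T}^1$) to an honest convergent $1$-parameter family realizing a prescribed local analytic adjacency; this is standard given the unobstructedness provided by Theorem~\ref{defunobstr}, via Artin approximation or Kuranishi's theorem, but needs to be invoked carefully since the chosen local adjacency path is supplied abstractly by the adjacency classification rather than constructed explicitly.
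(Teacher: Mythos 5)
Your argument is correct and is essentially the paper's proof, which simply cites Theorem~\ref{K3red} together with the adjacency classification of Theorem~\ref{cuspadj}; your write-up fills in what the paper leaves implicit, namely the globalization of a prescribed local adjacency via the surjectivity of $\mathbb{T}^1_Y \to H^0(Y;T^1_Y)$ and unobstructedness, and the reduction of the two-$(-3)$-component case through Theorem~\ref{cuspadj}(iv)(a),(b) to the $(-4,-2,\dots,-2)$ case. The only divergence is that the paper's one-line proof cites part (iii) alone, so your explicit appeal to part (iv) for the configuration with two $-3$ components is a small but genuine completion of the reference.
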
 
  \begin{proof} This follows immediately from Theorem~\ref{K3red} and Theorem~\ref{cuspadj}(iii). 
   \end{proof}
 
 Before we turn to an example showing that there are indeed examples satisfying the hypothesis of Corollary~\ref{K3toell}, we note the following standard result:
 
 \begin{lemma}\label{defratK3}  Let $X$ be a $K3$ surface and let $C_1, \dots, C_k$ be smooth rational curves on $X$ meeting transversally  whose fundamental classes are linearly independent in cohomology. Let $C = \sum_iC_i$. Then $H^2(X; T_X(-\log C)) =0$ and hence the map $H^1(X;T_X) \to \bigoplus_iH^1(C_i; N_{C_i/X})$ is surjective.
 \end{lemma}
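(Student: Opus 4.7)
The plan is to use Serre duality to convert the $H^2$ vanishing into a global sections statement for logarithmic differentials, and then exploit the Poincar\'e residue sequence together with the hypothesis on fundamental classes.

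First I would note that since $K_X \cong \mathcal{O}_X$ on a $K3$ surface, and since the sheaf $T_X(-\log C)$ is the dual of the locally free sheaf $\Omega^1_X(\log C)$ (both have rank $2$, as $C$ has local normal crossings by the transversality hypothesis), Serre duality gives
$$H^2(X; T_X(-\log C)) \cong H^0(X; \Omega^1_X(\log C) \otimes K_X)\spcheck = H^0(X; \Omega^1_X(\log C))\spcheck.$$
So it suffices to show $H^0(X; \Omega^1_X(\log C)) = 0$.

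Next I would apply the Poincar\'e residue sequence
$$0 \to \Omega^1_X \to \Omega^1_X(\log C) \to \bigoplus_i \scrO_{C_i} \to 0,$$
which on global sections yields
$$0 \to H^0(X;\Omega^1_X) \to H^0(X;\Omega^1_X(\log C)) \to \bigoplus_i H^0(C_i;\scrO_{C_i}) \xrightarrow{\,\alpha\,} H^1(X;\Omega^1_X).$$
Since $X$ is a $K3$ surface, $H^0(X;\Omega^1_X) = 0$. The connecting map $\alpha$ sends the generator $1\in H^0(\scrO_{C_i})$ to the fundamental class $[C_i] \in H^1(X;\Omega^1_X)$ (via the usual identification of the Hodge summand), and by hypothesis these classes are linearly independent in $H^2(X;\Cee)$ and hence in $H^1(X;\Omega^1_X)$. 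Therefore $\alpha$ is injective, which gives $H^0(X;\Omega^1_X(\log C)) = 0$ and hence $H^2(X; T_X(-\log C)) = 0$.

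Finally, to deduce the surjectivity statement I would feed this vanishing into the standard exact sequence
$$0 \to T_X(-\log C) \to T_X \to \bigoplus_i N_{C_i/X} \to 0,$$
whose long exact sequence contains
$$H^1(X;T_X) \to \bigoplus_i H^1(C_i; N_{C_i/X}) \to H^2(X; T_X(-\log C)) = 0,$$
so surjectivity is immediate. There is no real obstacle here; the main point to verify carefully is that the connecting map $\alpha$ in the Poincar\'e residue sequence really is the fundamental class map, so that the linear independence hypothesis on $[C_i]$ translates directly into the injectivity needed to kill $H^0(\Omega^1_X(\log C))$.
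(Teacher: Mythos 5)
Your proof is correct and follows essentially the same route as the paper: Serre duality with $K_X\cong\scrO_X$ reduces the claim to $H^0(X;\Omega^1_X(\log C))=0$, which the paper also deduces from the Poincar\'e residue sequence (using $H^0(X;\Omega^1_X)=0$ and the identification of the connecting map with the fundamental class map, as invoked elsewhere in the paper, e.g.\ in the proof of Lemma~\ref{claim0}). The deduction of surjectivity from the normal bundle sequence is the standard step the paper leaves implicit.
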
 
 \begin{proof} By Serre duality, we have to show that $H^0(X; \Omega^1_X(\log C)) = 0$, which is an easy consequence of the Poincar\'e residue sequence.
 \end{proof}
 
 \begin{example} Let $Q$ be a smooth plane quartic curve and let $L$ be a tangent line to $Q$ which is not a bitangent, so that $L$ meets $Q$ doubly at $p_0$ and transversally at $p_1, p_2$. Then there exists a conic $C$ which passes through $p_1$ and $p_2$ and meets $Q$ transversally at $6$ remaining points $p_3, \dots, p_8$. If $Y_0$ is the minimal resolution of the double cover of $\Pee^2$ branched along $Q+C$, then $X$ contains $8$ $(-2)$ curves $E_1, \dots, E_8$ corresponding to the $8$ points of $Q\cap C$. The line $L$ splits into two $(-2)$-curves $G_2, G_2$, which are disjoint from $E_3, \dots, E_8$. Moreover $G_1\cdot G_2 =1$, corresponding to the point $p_0$, $G_i\cdot E_1 = G_i\cdot E_2 = 1$, and, for $i=1,2$,  the intersection points $G_1\cap E_i$ and $G_2\cap E_i$ are distinct since $L$ is not tangent to $Q$ or $C$ at $p_i$. 
 
 Blowing up $Y_0$ at the point $G_1\cap G_2$ yields a surface $\hY$ which is the minimal resolution of an I-surface $Y$ with a cusp singularity with intersection sequence $(-3, -2, -3, -2)$ as well as an exceptional curve $C$ and $6$ ordinary double points corresponding to the curves $E_3, \dots, E_8$. Here $L = G_1+ G_2 + E_1+ E_2+C$, so that $L$ is nef and big and contracts the resolution $D = G_1+ G_2 + E_1+ E_2$ as well as the curves $E_3, \dots, E_8$.    Using Lemma~\ref{defratK3}, we can deform $Y_0$ and hence $\hY$ keeping the same configuration for the curves $E_1, E_2, G_1, G_2$ but losing the remaining curves $E_3, \dots, E_8$. The resulting I-surface $Y_t$ will have a cusp singularity with intersection sequence $(-3, -2, -3, -2)$  but no rational double point singularities. 
 \end{example}

 \begin{remark}  Let $\Lambda_1(n,m)$ be the lattice of Definition~\ref{somelattices}(ii).  If $n+m$ is not too large, there is a primitive  embedding of  $\Lambda_1(n,m)$  in the $K3$ lattice. Thus there exist $K3$ surfaces whose Picard lattice  is isomorphic to $\Lambda_1(n,m)$. The key question is whether the classes corresponding to $e_i, f_i, g_1, g_2$ are represented by irreducible curves. By standard arguments, this reduces to the following question: Is every $\alpha \in \Lambda$ with $\alpha^2=-2$ in the $W$-orbit of $e_1$, where  $W$ is the reflection group generated by the classes $e_i, f_i, g_1, g_2$? 
 \end{remark}
 
  \begin{remark}   Suppose that $E$ is irreducible of self-intersection $-4$ as above. The condition in Theorem~\ref{K3red} that the exceptional curve $C$ meets $E$ in $2$ distinct points is exactly the condition that, on the $K3$ surface $Y_0$, the irreducible curve $\overline{E}=\rho(E)$ of arithmetic genus one has a node, not a cusp. If instead $\overline{E}$ has a cusp, it is likely that a small deformation of $Y_0$, keeping the curve  $\overline{E}$ as well as the remaining components of $\overline{D}$ as effective divisors, will have the property that $\overline{E}$ is nodal. Compare \cite{CGL} for related results.
 \end{remark}
 
 \subsection{A remark on semistable smoothings}\label{sssmooth} 
 
 The method of proof above can be applied to show the existence of semistable smoothings. This picture is essentially the semistable version of one of Wahl's ``exotic" adjacencies (Theorem~\ref{cuspadj}(iii)(b)). First we recall the terminology of \cite{FriedmanSmoothings}:
 
 \begin{definition} Let $Z =Z_1\cup_DZ_2$ be the simple normal crossing surface obtained by gluing the two smooth surfaces $Z_1$ and $Z_2$ along a smooth divisor $D\subseteq Z_i$, where $D\subseteq Z_1$ is identified with $D\subseteq Z_2$ by the choice of an isomorphism $\gamma$. Then $Z$ is \textsl{$d$-semistable} if $N_{D/Z_1}\otimes \gamma^*N_{D/Z_2}\cong \scrO_D$.
 \end{definition}
 
 \begin{theorem}\label{ssWahl}  Let $X$ be a $K3$ surface with an irreducible  nodal rational curve $\overline{D}$ with $p_a(\overline{D}) =1$, so that $\overline{D}^2=0$. Let $\widetilde{X}$ be the blowup of $X$ at the singular point of $\overline{D}$ and let $D$ be the proper transform  of $\overline{D}$, so that $D^2 =-4$. Then: 
 \begin{enumerate}
 \item[\rm(i)] $H^2(\widetilde{X}; T_{\widetilde{X}}(-\log D)) = 0$.
 \item[\rm(ii)] The $d$-semistable surface $Z$ obtained by gluing $\widetilde{X}$ to $\Pee^2$ by identifying $D$ with  a smooth conic in $\Pee^2$ is smoothable, and its smoothings are elliptic surfaces with $p_g =1$ and a single multiple fiber, of multiplicity $2$. 
 \end{enumerate}
 \end{theorem}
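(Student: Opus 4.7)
The plan is to use Serre duality and a pushforward via $\rho\colon \widetilde{X}\to X$ for part (i), and Friedman's smoothability criterion plus a numerical computation for part (ii), with (i) as the main input. For (i), $H^2(\widetilde X; T_{\widetilde X}(-\log D))$ is dual to $H^0(\widetilde X; \Omega^1_{\widetilde X}(\log D)(C))$, since $\omega_{\widetilde X}\cong \scrO(C)$. A direct local computation at the node $p$ of $\overline D$ (with coordinates $(z,w)$ so that $\overline D=\{zw=0\}$ and blowup chart $w=zu$, giving $\rho^*(dz/z)=dz/z$ and $\rho^*(dw/w)=dz/z+du/u$) shows that $\rho^*\Omega^1_X(\log \overline D)\cong \Omega^1_{\widetilde X}(\log(D+C))$. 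Tensoring the residue sequence along $C$ with $\scrO(C)$ yields
\[
0 \to \Omega^1_{\widetilde X}(\log D)(C) \to \rho^*\Omega^1_X(\log \overline D)(C) \to \scrO_C(-1) \to 0,
\]
whose quotient has no global sections. Combined with $\rho_*\scrO(C) = \scrO_X$ (immediate from $0 \to \scrO_{\widetilde X}\to \scrO(C)\to \scrO_C(-1)\to 0$ together with $R\rho_*\scrO_C(-1) = 0$) and the projection formula, this identifies $H^0(\widetilde X; \Omega^1_{\widetilde X}(\log D)(C))$ with a subspace of $H^0(X; \Omega^1_X(\log \overline D))$.

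To complete (i), use the residue sequence for the nodal divisor,
\[
0 \to \Omega^1_X \to \Omega^1_X(\log\overline D) \to \nu_*\scrO_D \to 0,
\]
where $\nu\colon D \to \overline D$ is the normalization. Since $X$ is $K3$, $H^0(\Omega^1_X) = 0$; since $D \cong \Pee^1$, $H^0(\nu_*\scrO_D) = \Cee$; and the connecting map carries the generator to $c_1(\scrO_X(\overline D)) \in H^{1,1}(X)$, which is nonzero because $\overline D$ meets any ample divisor positively. Hence $H^0(X; \Omega^1_X(\log \overline D)) = 0$, proving (i).

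For (ii), $d$-semistability follows at once from $T^1_Z \cong N_{D/\widetilde X}\otimes N_{D/\Pee^2}\cong \scrO_{\Pee^1}(-4)\otimes \scrO_{\Pee^1}(4) \cong \scrO_D$, using $D^2 = -4$ on $\widetilde X$ and $\Sigma^2 = 4$ on $\Pee^2$ for the conic $\Sigma$. Smoothability then follows from Friedman's criterion \cite{FriedmanSmoothings} once $\mathbb T^2_Z = 0$. In the local-to-global spectral sequence $H^p(Z; T^q_Z) \Rightarrow \mathbb T^{p+q}_Z$ we have $T^{q\ge 2}_Z = 0$ and $H^1(T^1_Z) = H^1(\Pee^1,\scrO) = 0$, so it suffices to show $H^2(Z; T^0_Z) = 0$. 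This follows from the Mayer--Vietoris sequence
\[
H^1(D; T_D) \to H^2(Z; T^0_Z) \to H^2(\widetilde X; T_{\widetilde X}(-\log D)) \oplus H^2(\Pee^2; T_{\Pee^2}(-\log \Sigma)),
\]
using $H^1(\Pee^1,\scrO(2)) = 0$, part (i), and $H^2(\Pee^2; T_{\Pee^2}(-\log \Sigma)) = 0$ (a direct Serre duality plus Euler sequence computation on $\Pee^2$). Mayer--Vietoris for $\scrO_Z$ gives $\chi(\scrO_{\mathcal{X}_t}) = 2$ and $h^1(\scrO_{\mathcal{X}_t}) = 0$, so $p_g(\mathcal{X}_t) = 1$ and $q(\mathcal{X}_t) = 0$.

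To identify the smoothings, note that $\omega_Z$ is not trivial: its restriction to $\widetilde X$ equals $\scrO(C+D)$, with $(C+D)^2 = -1 \ne 0$. Hence $\omega_{\mathcal{X}_t}$ is nontrivial for generic $t$, ruling out $\mathcal{X}_t$ being $K3$; together with $K_{\mathcal{X}_t}^2 = 0$ this forces $\mathcal{X}_t$ to be properly elliptic. The elliptic pencil $|\overline D|$ on $X$ induces an elliptic fibration $\widetilde X\to \Pee^1$ whose fiber over the parameter of $\overline D$ is $D + 2C$; combined with a compatible pencil of conics on $\Pee^2$ containing $\Sigma$, it produces a rational fibration $Z\dashrightarrow \Pee^1$ that extends to an elliptic fibration on $\mathcal{X}_t$ for general $t$. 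The component $C$, appearing with multiplicity $2$ in the central fiber of $\widetilde X\to\Pee^1$, becomes the reduction of a multiple fiber of multiplicity $2$ on $\mathcal{X}_t$. The principal obstacle is rigorously extending the fibration across the smoothing and identifying the multiplicity; this amounts to a Persson--Pinkham-type analysis of the semistable degeneration as an unfolding of a half-logarithmic transform at the parameter of $\overline D$.
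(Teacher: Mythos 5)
Your proposal is correct and follows essentially the same route as the paper: part (i) reduces, via pushforward along the blowup $\rho$, to the vanishing of $H^0(X;\Omega^1_X(\log \overline{D}))$, which holds because the fundamental class of $\overline{D}$ is nonzero in $H^1(X;\Omega^1_X)$; part (ii) uses the normalization sequence for $T^0_Z$, the vanishings from (i) and on $\Pee^2$, the identification $T^1_Z\cong \scrO_D$ with $H^1(D;\scrO_D)=0$, and Friedman's smoothability criterion via $H^2(Z;T^0_Z)=0$. The final identification of the smoothings as elliptic surfaces with a multiplicity-two fiber is only sketched in your write-up, but the paper explicitly omits that step as well, so you match (and in part (i) exceed) its level of detail.
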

 \begin{proof} (i) The arguments used in the proof of Theorem~\ref{K3red} show that  the dual of $H^2(\widetilde{X}; T_{\widetilde{X}}(-\log D))$ injects into $H^0(X; \Omega^1_X(\log \overline{D}))$, and this is $0$ as the fundamental class of $\overline{D}$ is nonzero in $H^1(X; \Omega^1_X)$. 
 
 \smallskip
 \noindent (ii)    Let $\nu\colon \widetilde{X} \amalg \Pee^2 \to Z$ be the normalization. By a local calculation (cf. \cite[Lemma 4.6(iv)]{FL23}, there is an exact sequence
$$0 \to T^0_Z \to \nu_*(T_{\widetilde{X}}(-\log D)  \oplus T_{\Pee^2}(-\log D) )\to i_*T_D \to 0,$$
where $i\colon D \to Z$ is the inclusion.  Then $H^1(D; T_D) =0$ since $D\cong \Pee^1$, $H^2(\widetilde{X}; T_{\widetilde{X}}(-\log D))=0$ by (i), and $H^2(\Pee^2; T_{\Pee^2}(-\log D)) =0$ by an easy direct argument. Thus $H^2(Z; T^0_Z) = 0$. By construction, $T^1_Z = \scrO_D$. Since $D\cong \Pee^1$, $H^1(Z; T^1_Z) = H^1(D; \scrO_D) =0$. Thus $\mathbb{T}_Z^2 =0$ and   $\mathbb{T}_Z^1\to H^0(Z; T^1_Z) = H^0(Z; \scrO_D) $ is surjective. In particular, there is a deformation of $Z$ over the disk whose image in $H^0(Z; T^1_Z) $ is an everywhere generating section. By   \cite[Proposition 2.5]{FriedmanSmoothings}, $Z$ is smoothable. We omit the straightforward argument that the smoothings are as claimed. 
 \end{proof} 
 
 \begin{remark}\label{remark14}  (i)    The point in Wahl's construction  is that the deformation space of the rational singularity whose minimal resolution is a smooth rational curve of self-intersection $-4$ has two different components, one corresponding to rational ruled surfaces of degree $4$ in $\Pee^5$ and the other to the Veronese surface (the ``non-Artin component"). 

\smallskip
\noindent (ii) A slightly weaker version of Theorem~\ref{ssWahl} is already present in \cite[Theorem 3.1]{Friedman83}. Here, the result is stated under  the somewhat loosely worded hypothesis that ``the number of moduli of $\widetilde{X}$ keeping the curve $D$ is $-\chi(T_{\widetilde{X}}) - 3$," i.e.\ it is exactly $3$ conditions in moduli to keep the smooth rational curve $D$ with self-intersection $-4$. For the blown up $K3$ surface $\widetilde{X}$, there are $22$ moduli: $20$ for the $K3$ and $2$ for the point to blow up. To keep the curve $D$ on the blowup, it is one condition to keep the elliptic pencil defined by the nodal curve $\overline{D}$ (which will necessarily have nodal singular fibers near $\overline{D}$ and then two conditions because the point to blow up has to be the singular point for a nearby fiber.
Of course, a more convincing way to express the above condition about the number of moduli is to note that, since $H^2(\widetilde{X}; T_{\widetilde{X}}(-\log D)) = 0$,  the map
$H^1(\widetilde{X}, T_{\widetilde{X}}) \to H^1(D; N_{D/\widetilde{X}})$
is surjective. As $\dim   H^1(D; N_{D/\widetilde{X}}) = \dim H^1(\Pee^1; \scrO_{\Pee^1}(-4)) = 3$, it follows  that $\dim H^1(\widetilde{X}, T_{\widetilde{X}}(-\log D)) = \dim H^1(\widetilde{X}, T_{\widetilde{X}})  - 3$.

\smallskip
\noindent (iii) Following the method of \cite{Friedman83}, one can explicitly construct degenerations of an elliptic surface with $\kappa =1$ to the normal crossing surface $Z$ by degenerating the branch locus.

\smallskip
\noindent (iv) The degeneration of  a $\kappa =1$  elliptic surface with a multiplicity $2$ fiber to the union of a blown up $K3$ surface and  a $\Pee^2$ glued along a conic has  appeared previously in the literature, cf.\ \cite{Usui}. 
\end{remark}

\subsection{Blown up Enriques surfaces} In this subsection, we assume that the minimal model $Y_0$ of $\hY$ is a (smooth) Enriques surface. By Theorem~\ref{theorem5}(iii), there exists a morphism $\rho\colon \hY\to Y_0$, where    $\rho$ is the blowup of $Y_0$ at a single point $p\in Y_0$, and the exceptional curve $C$ on $\hY$ satisfies: $C\cdot D =2$. In particular, $K_{\hY} \equiv \scrO_{\hY}(C)$. If $D_1$ and $D_2$ are the two connected components of $D$, then $C\cdot D_1 = C \cdot D_2 =1$. Thus, if $\overline{D}_1$ and $\overline{D}_2$ are images of $D_1$ and $D_2$ in $Y_0$, then  $\overline{D_1}$ and $\overline{D_2}$ are isomorphic to $D_1$ and $D_2$ respectively,  $\overline{D}_1$ and $\overline{D}_2$ meet transversally at $1$ point, and each $D_i$ is either a smooth elliptic curve, an irreducible nodal curve, or a cycle of smooth rational curves. It follows that $\overline{D}_1$ and $\overline{D}_2$ are the reductions of multiple fibers in two different elliptic fibrations.

As in the $K3$ case, we begin with the case where $D_1$ and $D_2$ are irreducible, in fact smooth in this case:

 \begin{theorem}\label{Enriquesirred} If  $D_1$ and $D_2$ are smooth, then $H^2(\hY; T_{\hY}(-D)) = 0$.  Hence the homomorphism $\mathbb{T}^1_Y \to H^0(Y; T^1_Y)$ is surjective. In particular, the two elliptic singularities on $Y$ can be deformed independently.
 \end{theorem}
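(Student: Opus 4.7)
By Proposition~\ref{prop1121}(i) the theorem reduces to showing $H^2(\hY; T_{\hY}(-D)) = 0$, and my plan is to deduce this from a K3-type vanishing by passing to the K3 double cover. Let $\pi\colon \widetilde{Y}_0 \to Y_0$ be the \'etale K3 double cover of the Enriques surface $Y_0$, and form the fiber product $\tilde\pi\colon \hhY := \hY\times_{Y_0}\widetilde{Y}_0 \to \hY$; then $\hhY$ is the blowup of $\widetilde{Y}_0$ at the two preimages $\tilde p_1,\tilde p_2$ of $p$, with exceptional curves $\tilde C_1,\tilde C_2$. Since $K_{\hY} = \rho^*K_{Y_0}+C$ and $\tilde\pi$ trivializes the 2-torsion $\rho^*K_{Y_0}$, one has $K_{\hhY} = \tilde\pi^*K_{\hY} = \tilde C_1+\tilde C_2$ and $\tilde\pi_*\scrO_{\hhY} = \scrO_{\hY}\oplus\rho^*K_{Y_0}$. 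The projection formula realizes $H^2(\hY; T_{\hY}(-D))$ as a direct summand of $H^2(\hhY; T_{\hhY}(-\widetilde D))$ with $\widetilde D := \tilde\pi^{-1}(D)$; by Serre duality on $\hhY$ it suffices to show $H^0(\hhY;\,\Omega^1_{\hhY}(\widetilde D + \tilde C_1+\tilde C_2)) = 0$.

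Next I descend the question to $\widetilde{Y}_0$. Each preimage $\widetilde D^{(0)}_i := \pi^{-1}(\overline D_i)$ is a smooth elliptic curve: the relation $\overline D_1\cdot\overline D_2 = 1$ prevents either $\overline D_i$ from being a full fiber of an elliptic fibration on $Y_0$ (else the other, being genus one, could not be a degree-one multisection), so both are half-fibers, $h^0(\scrO_{Y_0}(\overline D_i)) = 1$, and adjunction on $\overline D_i$ then shows that $K_{Y_0}|_{\overline D_i}$ is non-trivial 2-torsion, so that $\widetilde D^{(0)}_i$ is a connected \'etale cover. The two curves $\widetilde D^{(0)}_1,\widetilde D^{(0)}_2$ meet transversally at exactly $\tilde p_1,\tilde p_2$, with intersection matrix $\bigl(\begin{smallmatrix}0&2\\2&0\end{smallmatrix}\bigr)$. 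In local coordinates $z_1, z_2$ on $\widetilde Y_0$ near $\tilde p_i$ with $\widetilde D^{(0)}_j = (z_j = 0)$, and in a blowup chart $(x,y)$ with $z_1 = x$ and $z_2 = xy$, a direct computation shows that the pullback of a local section $\alpha = a\,dz_1/(z_1z_2) + b\,dz_2/(z_1z_2)$ of $\Omega^1_{\widetilde Y_0}(\widetilde D^{(0)})$ lies in $\Omega^1_{\hhY}(\widetilde D+\tilde C_1+\tilde C_2)$ precisely when $a(\tilde p_i) = b(\tilde p_i) = 0$. Combined with Hartogs, this identifies
$$H^0\bigl(\hhY;\,\Omega^1_{\hhY}(\widetilde D+\tilde C_1+\tilde C_2)\bigr) \cong H^0\bigl(\widetilde Y_0;\, I_{\{\tilde p_1,\tilde p_2\}}\otimes\Omega^1_{\widetilde Y_0}(\widetilde D^{(0)})\bigr).$$

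For the K3 vanishing, the nondegeneracy of the intersection matrix above makes $[\widetilde D^{(0)}_1]$ and $[\widetilde D^{(0)}_2]$ linearly independent in $H^1(\widetilde Y_0;\Omega^1_{\widetilde Y_0})$; combined with $H^0(\Omega^1_{\widetilde Y_0}) = 0$, the Poincar\'e residue sequence gives $H^0(\Omega^1_{\widetilde Y_0}(\log\widetilde D^{(0)})) = 0$. In the short exact sequence $0 \to \Omega^1(\log\widetilde D^{(0)}) \to \Omega^1(\widetilde D^{(0)}) \to Q \to 0$, adjunction on the K3 (together with $\Omega^1_{\widetilde D^{(0)}_i} = \scrO$) yields $Q|_{\widetilde D^{(0)}_i}\cong \scrO_{\widetilde D^{(0)}_i}$, so $H^0(\Omega^1(\widetilde D^{(0)}))$ injects into $H^0(Q) = \mathbb C^2$ as pairs of constant residues $(c_1,c_2)$. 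Since $\tilde p_1\in \widetilde D^{(0)}_1\cap\widetilde D^{(0)}_2$, vanishing of $\alpha$ at $\tilde p_1$ alone forces $c_1 = c_2 = 0$ and hence $\alpha = 0$, giving the desired vanishing. I expect the main technical hurdle to be the blowup-coordinate calculation identifying the precise local vanishing conditions at $\tilde p_1,\tilde p_2$; the subsequent K3 vanishing is structural and closely parallels the argument used in the proof of Theorem~\ref{K3irred}.
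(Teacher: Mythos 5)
Your overall strategy -- pass to the K3 \'etale double cover and reduce to a vanishing statement for twisted $1$-forms on a K3 surface -- is the same first step the paper takes, and most of your preliminary work is sound: the identification of the $\overline D_i$ as half-fibers, the connectedness of the covers $\widetilde D^{(0)}_i$, the intersection matrix, the blowup-coordinate computation producing the conditions $a(\tilde p_i)=b(\tilde p_i)=0$, and the vanishing of $H^0(\Omega^1_{\widetilde Y_0}(\log \widetilde D^{(0)}))$ via independence of fundamental classes are all correct.

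The gap is in the final residue step, and it is fatal as written. The quotient $Q$ in $0 \to \Omega^1(\log\widetilde D^{(0)}) \to \Omega^1(\widetilde D^{(0)}) \to Q \to 0$ is not $\scrO_{\widetilde D^{(0)}_1}\oplus\scrO_{\widetilde D^{(0)}_2}$. At a node, with $\widetilde D^{(0)}_1=\{z_1=0\}$ and $\widetilde D^{(0)}_2=\{z_2=0\}$, the inclusion sends the basis $dz_1/z_1,\ dz_2/z_2$ to $z_2\cdot dz_1/(z_1z_2),\ z_1\cdot dz_2/(z_1z_2)$, so the quotient is locally $\bigl(\scrO/(z_2)\bigr)\oplus\bigl(\scrO/(z_1)\bigr)$; globalizing, the piece of $Q$ supported on $\widetilde D^{(0)}_i$ is $\Omega^1_{\widetilde D^{(0)}_i}\otimes\scrO_{\widetilde Y_0}(\widetilde D^{(0)})|_{\widetilde D^{(0)}_i}$. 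Adjunction trivializes $\Omega^1_{\widetilde D^{(0)}_i}\otimes N_{\widetilde D^{(0)}_i/\widetilde Y_0}$, but you have dropped the extra factor $\scrO_{\widetilde Y_0}(\widetilde D^{(0)}_j)|_{\widetilde D^{(0)}_i}\cong\scrO_{\widetilde D^{(0)}_i}(\tilde p_1+\tilde p_2)$ for $j\neq i$, which has degree $\widetilde D^{(0)}_1\cdot\widetilde D^{(0)}_2=2$. Thus $H^0(Q)$ is $4$-dimensional and the residues are sections of degree-$2$ line bundles on elliptic curves, not constants, so ``vanishing at $\tilde p_1$ forces $c_1=c_2=0$'' has no content. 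Worse, even using both of your ideal conditions, a residue on $\widetilde D^{(0)}_i$ is only forced to lie in $H^0(\scrO_{\widetilde D^{(0)}_i}(\tilde p_1+\tilde p_2)(-\tilde p_1-\tilde p_2))\cong H^0(\scrO_{\widetilde D^{(0)}_i})\cong\Cee$, so a one-dimensional space of candidate residues survives on each component and the argument does not close. The paper gets around this by proving the stronger statement $H^0(Z;\Omega^1_Z(E_1+E_2))=0$ outright (no ideal conditions needed): the linear system $|E_1+E_2|$ defines a finite degree-$2$ map from the K3 to a smooth quadric $Q\cong\Pee^1\times\Pee^1$ with smooth $(4,4)$ branch curve, and Lemma~\ref{doubcov} reduces the vanishing to $H^0(\Omega^1_Q(1,1))=0$ together with $H^0(\Omega^1_Q(\log\Sigma)(-1,-1))=0$, both elementary. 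Some global input of this kind is needed to replace your residue count.
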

 \begin{proof}
With $\rho\colon \hY \to Y_0$ and $\overline{D}_i$ as above, let $\overline{D} = \overline{D}_1+ \overline{D}_2$, where the $\overline{D}_i$ are smooth elliptic  curves meeting transversally. The proof of Theorem~\ref{K3irred} shows that it suffices to prove that 
$$H^0(Y_0; \Omega^1_{Y_0}\otimes K_{Y_0}\otimes  \scrO_{Y_0}(\overline{D})) = 0,$$
where  $K_{Y_0}$ is  a  $2$-torsion line bundle of order $2$. 

Let $\tau\colon Z\to Y_0$ be the $K3$ \'etale double cover of $Y_0$ corresponding to $K_{Y_0}$ and let $E_i$ be the preimage of $\overline{D}_i$ in $Z$. Thus each $E_i$ is also a  smooth elliptic curve and $E_1 \cdot E_2 =2$. Since $Z \to Y_0$ is \'etale and $\tau^*K_{Y_0}=K_Z =\scrO_Z$, the pullback of $\Omega^1_{Y_0}\otimes K_{Y_0}\otimes  \scrO_{Y_0}(\overline{D})$ is $\Omega^1_Z\otimes \scrO_ Z(E)$, where $E = E_1 + E_2$. It will suffice to prove that $H^0(Z; \Omega^1_Z\otimes \scrO_ Z(E))=0$. 

By standard results, the linear system $E$ defines a degree $2$ morphism $\nu\colon Z \to Q$, where $Q$ is a smooth quadric in $\Pee^3$ and the $E_i = \nu^{-1}(\ell_i)$, where $\ell_1$ and $\ell_2$ are the two rulings on $Q$. In other words, $\scrO_Z(E) = \nu^*\scrO_Q(1,1)$. An argument as in the proof of Theorem~\ref{K3irred} shows that $\nu$ is finite. The branch divisor $\Sigma$ of the double cover $\nu$ is a smooth $(4,4)$-divisor on $Q$, and thus $L =\scrO_Q(2,2)$. Applying Lemma~\ref{doubcov}, it  suffices to prove that 
$$H^0(Q; \Omega^1_Q  \otimes \scrO_Q(1,1)) = H^0(\Pee^2; \Omega^1_Q(\log \Sigma) \otimes \scrO_Q(-1, -1)) = 0.$$
Since $Q\cong \Pee^1\times \Pee^1$, $\Omega^1_Q \cong \scrO_Q(-2, 0) \oplus \scrO_Q(0, -2)$ and thus 
$$\Omega^1_Q  \otimes \scrO_Q(1,1) \cong \scrO_Q(-1, 1) \oplus \scrO_Q(1, -1).$$
Hence $H^0(Q; \Omega^1_Q  \otimes \scrO_Q(1,1)) = 0$. By a Poincar\'e residue argument,   since $\scrO_Q(-1, -1)|\Sigma$ is a line bundle on $\Sigma$ of negative degree, 
$$H^0(\Pee^2; \Omega^1_Q(\log \Sigma) \otimes \scrO_Q(-1, -1))=0.$$  Putting this all together,  $H^0(Z; \Omega^1_Z\otimes \scrO_ Z(E))=0$ and thus $H^2(\hY; T_{\hY}(-D) ) =0$.
\end{proof}

Next we deal with the case where both $D_1$ and $D_2$ correspond to cusp singularities:

 \begin{theorem}\label{Enriquesred} If  $D_1$ and $D_2$ are irreducible nodal curves or cycles of smooth rational curves, then $H^2(\hY; T_{\hY}(-\log D)) = 0$.  Hence the homomorphism $\mathbb{T}^1_Y \to H^0(Y; T^1_Y)$ is surjective and the two elliptic singularities on $Y$ can be deformed independently.
 \end{theorem}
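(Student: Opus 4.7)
The plan is to parallel the proofs of Theorem~\ref{Enriquesirred} and Theorem~\ref{K3red}, reducing the vanishing to a lattice-theoretic statement on the K3 double cover of the Enriques surface $Y_0$.

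First, by Serre duality, $H^2(\hY; T_{\hY}(-\log D))$ is dual to $H^0(\hY; \Omega^1_{\hY}(\log D) \otimes K_{\hY})$. Since $\rho\colon \hY \to Y_0$ is a single blowup, Hartogs' theorem will let such a section descend to a section of $\Omega^1_{Y_0}(\log \overline{D}) \otimes K_{Y_0}$, where $\overline{D} = \rho_*D = \overline{D}_1+\overline{D}_2$. As in the proof of Theorem~\ref{K3red}, $C$ meets a $(-3)$-component of each $D_i$, so each $\overline{D}_i$ is an $I_{r_i}$-cycle (or an $I_1$ irreducible nodal rational curve) and the two $\overline{D}_i$ meet transversally at a single point $p$; in particular $\overline{D}$ is a normal crossing divisor on $Y_0$. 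It is thus enough to show $H^0(Y_0; \Omega^1_{Y_0}(\log \overline{D}) \otimes K_{Y_0}) = 0$.

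Next I would pull back via the K3 double cover $\tau\colon Z \to Y_0$. Since $\tau$ is \'etale and $\tau^*K_{Y_0} \cong \scrO_Z$, the pullback sheaf is $\Omega^1_Z(\log E)$, where $E = \tau^{-1}(\overline{D})$; and the splitting $\tau_*\scrO_Z = \scrO_{Y_0} \oplus K_{Y_0}$ identifies $H^0(Y_0; \Omega^1_{Y_0}(\log \overline{D}) \otimes K_{Y_0})$ with the $\iota$-antiinvariant summand of $H^0(Z; \Omega^1_Z(\log E))$, where $\iota$ denotes the Enriques involution. Since K3 surfaces admit no multiple fibers in any elliptic fibration, $K_{Y_0}|_{\overline{D}_i}$ must be a nontrivial $2$-torsion element of $\Pic^0(\overline{D}_i)$, so each $E_i = \tau^{-1}(\overline{D}_i)$ is the connected \'etale double cover of $\overline{D}_i$, hence an $I_{2r_i}$-cycle on which $\iota$ acts as a free shift, and $E_1\cap E_2$ consists of two points exchanged by $\iota$. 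Applying the Poincar\'e residue sequence
\[
0 \to \Omega^1_Z \to \Omega^1_Z(\log E) \to \bigoplus_\alpha \scrO_{E_\alpha} \to 0
\]
together with $H^0(Z; \Omega^1_Z) = 0$, the question reduces to whether the fundamental classes of the irreducible components $E_\alpha$ of $E$ are linearly independent in $H^{1,1}(Z)$.

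The main obstacle is the resulting lattice computation, analogous to Lemma~\ref{somelatticeslemma}. One must analyze the sublattice $\Lambda \subset \Pic Z$ generated by the components of two $I_{2r_i}$-cycles meeting transversally at two $\iota$-swapped points. Each cycle alone has its fiber class $\sum_\alpha [E_{i,\alpha}]$ in the kernel of its own intersection form; but the two off-diagonal intersections force $(\sum_\alpha E_{1,\alpha}) \cdot E_{2,0} = 1 \neq 0$, so these fiber classes are not in the joint kernel. A direct calculation, treating the system of linear equations as a discrete Laplace problem on each cycle with sources at the two intersection points (and if necessary separating the $\iota$-eigenspaces via a discrete Fourier transform on $\Zee/(2r_i)$), should show --- perhaps with a case-by-case check on the values of $(r_1,r_2)$ admissible for I-surfaces --- that the $\iota$-antiinvariant restriction of the intersection form on $\Lambda$ is non-degenerate, yielding the desired vanishing and hence the surjectivity of $\mathbb{T}^1_Y \to H^0(Y; T^1_Y)$.
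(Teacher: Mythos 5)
Your proposal follows essentially the same route as the paper: Serre duality plus Hartogs to reduce to $H^0(Y_0;\Omega^1_{Y_0}(\log\overline{D})\otimes K_{Y_0})=0$, pullback to the K3 cover $\tau\colon Z\to Y_0$, identification with the anti-invariant part of $H^0(Z;\Omega^1_Z(\log E))$, and reduction via the Poincar\'e residue sequence (using $H^0(Z;\Omega^1_Z)=0$) to nondegeneracy of the Gram matrix of the anti-invariant fundamental classes. All of these reductions are correct and are exactly the paper's steps. The one piece you leave unfinished is the decisive lattice computation, which you only describe as a ``discrete Laplace problem'' that ``should show'' nondegeneracy; in the paper this is precisely the point where the anti-invariant classes are identified with a basis of the lattice $\Lambda_2(n,m)(2)$ and Lemma~\ref{somelatticeslemma} is invoked. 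Since that lemma is already available, your argument closes once you write down the Gram matrix explicitly, so I would not call this a fatal gap, but it is the crux and should not be waved at.

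One substantive point where you diverge from (and are arguably more careful than) the paper: you observe that $K_{Y_0}|_{\overline{D}_i}\cong N_{\overline{D}_i/Y_0}^{-1}$ is nontrivial $2$-torsion (as $\overline{D}_i$ is a half-fiber), so $E_i=\tau^{-1}(\overline{D}_i)$ is a \emph{connected} $I_{2r_i}$-cycle on which the covering involution acts by the half-shift. This is correct, and it matters: with the connected unfolding, the anti-invariant classes $e_i=\Gamma_i^{+}-\Gamma_i^{-}$ satisfy $e_i\cdot e_{i+1}=2$ along the chain but the closing adjacency acquires a sign, $e_n\cdot e_0=-2$ (and for $r_i=1$ the two contributions cancel to give a diagonal form). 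So the Gram matrix is a sign-twisted version of $\Lambda_2(n,m)(2)$ rather than literally that lattice; its eigenvalues on each cycle are $-4+4\cos\bigl(\pi(2k+1)/r_i\bigr)$, none of which vanish, so nondegeneracy still holds and your conclusion is correct. You should carry out this computation explicitly rather than leaving it as a plan, and note also that mid-proof you momentarily reduce to independence of \emph{all} component classes (the condition for the full $H^0(Z;\Omega^1_Z(\log E))$ to vanish), which is stronger than needed; your final paragraph correctly retreats to the anti-invariant part, which is all that is required.
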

 \begin{proof} Note that the divisor $\overline{D}$ is a normal crossing divisor on $Y_0$. As in the proof of Theorem~\ref{Enriquesirred}, it suffices to prove that $H^0(Y_0; \Omega^1_{Y_0}(\log \overline{D})\otimes K_{Y_0}) = 0$, and then that $H^0(Z; \Omega^1_Z(\log E)) =0$, where $E$ is the inverse image of $\overline{D}$ on the $K3$ cover $Z$. More precisely, since
 $$\tau_*\Omega^1_Z(\log E) = \tau_*\tau^*\Omega^1_{Y_0}(\log \overline{D}) = \Omega^1_{Y_0}(\log \overline{D}) \oplus (\Omega^1_{Y_0}(\log \overline{D})\otimes K_{Y_0}),$$
 the group $H^0(Y_0; \Omega^1_{Y_0}(\log \overline{D})\otimes K_{Y_0})$ is the space of anti-invariant sections of $H^0(Z; \Omega^1_Z(\log E))$. Since $H^0(Z; \Omega^1_Z)=0$, it is enough to show that the images of the anti-invariant fundamental classes are linearly independent in $H^1(Z; \Omega^1_Z) \subseteq H^2(Z; \Cee)$. Here ``anti-invariant fundamental class" means the classes of the form $\Gamma^+ -\Gamma^-$, where $\Gamma$ is a component of $E$ and $\iota(\Gamma^+) = \Gamma^-$, where $\iota \colon Z \to Z$ is the involution corresponding to the double cover.
   In particular, if $D_1 =\sum_{i=0}^n\Gamma_i^{(1)}$ and $D_2 =\sum_{i=0}^m\Gamma_i^{(2)}$, then the preimage in $Z$ of each $\Gamma_i^{(j)}$ is a union $\Gamma_i^{(j)}{}^+ + \Gamma_i^{(j)}{}^-$ of two disjoint smooth rational curves exchanged under $\iota$.  Then a calculation shows that, assuming for simplicity that $n,m\geq 2$,  the anti-invariant classes in the lattice  spanned by the components of $E$ are a lattice with basis $e_0, \dots, e_n, f_0, \dots, f_m$ and satisfying $e_i^2 = f_i^2=-4$, $e_i\cdot e_j = 0$ for $i\neq j\pm 1 \pmod{n}$, and $e_i\cdot e_{i+1} = e_n\cdot e_0 =2$, and similarly  for the $f_i$, $e_0\cdot f_0 = 2$, and $e_i\cdot f_j =0$ if $(i,j) \neq (0,0)$.  Thus, in the notation of Definition~\ref{somelattices}, they span the lattice $\Lambda_2(n,m)(2)$. Hence, by Lemma~\ref{somelatticeslemma}, the images of the anti-invariant fundamental classes are linearly independent in $H^1(Z; \Omega^1_Z)$ as claimed. 
   
   The second  statement is then a consequence of Proposition~\ref{prop1121}(ii). 
 \end{proof} 
 
A similar but simpler argument, using  Proposition~\ref{prop1122}, shows the following:

 \begin{theorem}\label{Enriquesmixed} If  $D_1$ is a smooth elliptic curve and $D_2$ is either an irreducible nodal curve  or a cycle  of smooth rational curves, then $H^2(\hY; T_{\hY}(-\log (D_1+ D_2))) = 0$.  Hence there is a deformation of $Y$  to an I-surface with   two   simple elliptic singularities. \qed
 \end{theorem}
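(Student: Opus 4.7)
The plan is to combine the techniques of Theorems~\ref{Enriquesirred} and \ref{Enriquesred} and then apply Proposition~\ref{prop1122}. First I would reduce the vanishing to an Enriques-level statement: by Serre duality and Hartogs (as in the proof of Theorem~\ref{K3irred}), $H^2(\hY; T_{\hY}(-\log(D_1+D_2)))$ is dual to $H^0(Y_0; \Omega^1_{Y_0}(\log \overline{D}) \otimes K_{Y_0})$, where $\overline{D} = \rho_*D = \overline{D}_1 + \overline{D}_2$ is a normal crossing divisor on the Enriques minimal model $Y_0$. Pulling back by the K3 double cover $\tau\colon Z \to Y_0$ and using the $\iota$-eigenspace splitting
$$\tau_*\Omega^1_Z(\log E) = \Omega^1_{Y_0}(\log \overline{D}) \oplus (\Omega^1_{Y_0}(\log \overline{D}) \otimes K_{Y_0}),$$
where $E = \tau^{-1}(\overline{D})$, I am reduced to proving the $\iota$-anti-invariant summand of $H^0(Z; \Omega^1_Z(\log E))$ vanishes. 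Since $H^0(Z; \Omega^1_Z) = 0$ on the K3 surface $Z$, the Poincar\'e residue sequence reduces this to showing that the $\iota$-anti-invariant fundamental classes of components of $E$ are linearly independent in $H^{1,1}(Z)$.

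The key analysis of $E = E_1 \cup E_2$ is as follows. Since $\overline{D}_1$ and $\overline{D}_2$ are reductions of multiple fibers of multiplicity two in the two elliptic pencils on $Y_0$, an adjunction computation shows that $K_{Y_0}$ restricts to a \emph{nontrivial} two-torsion element on each, so each preimage $E_i = \tau^{-1}(\overline{D}_i)$ is connected. Thus $E_1$ is a single smooth elliptic curve with $\iota(E_1) = E_1$, its fundamental class is $\iota$-invariant, and it contributes nothing to the anti-invariant part. If $\overline{D}_2$ is a cycle of $n$ smooth rational curves ($n \ge 1$, allowing the irreducible nodal case $n=1$), then each rational component pulls back to two disjoint copies, and connectedness forces these to join into a single cycle of $2n$ components $C_0, \dots, C_{2n-1}$ on which $\iota$ acts as the shift $i \mapsto i+n$. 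The anti-invariant subspace spanned by the components has basis $v_i = [C_i] - [C_{i+n}]$, $i=0,\dots,n-1$; this is $n$-dimensional, and a direct computation exactly parallel to the one in Theorem~\ref{Enriquesred} yields the intersection form in these classes.

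To see the $v_i$ are linearly independent in $H^{1,1}(Z)$, observe that $E_2$ is a complete fiber of the induced elliptic fibration $Z \to \Pee^1$, so the intersection form on $\bigoplus_{i=0}^{2n-1}\Cee [C_i]$ is negative semidefinite with one-dimensional radical spanned by the (invariant) fiber class $\sum_i [C_i]$. Restricted to the anti-invariant subspace, which contains no invariant vector and in particular misses the radical, the form is therefore negative \emph{definite}, hence nondegenerate, giving the desired independence. This establishes $H^2(\hY; T_{\hY}(-\log(D_1+D_2))) = 0$, and Proposition~\ref{prop1122} together with the adjacency of a length-$r$ cusp of multiplicity one to a simple elliptic singularity of multiplicity one provided by Theorem~\ref{cuspadj}(ii) then produces the desired deformation of $Y$. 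The most delicate point is verifying connectedness of the preimages $E_i$, i.e.\ the nontriviality of $K_{Y_0}$ restricted to the reduction of a multiple fiber; once this is in hand, the rest is essentially a reprise of arguments already developed in Theorem~\ref{Enriquesred}.
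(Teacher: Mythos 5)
Your proof is correct and follows essentially the route the paper intends (its proof is a one-line pointer to ``a similar but simpler argument'' modeled on Theorem~\ref{Enriquesred}): reduce to the Enriques minimal model, pass to the $K3$ cover, and check independence of the anti-invariant fundamental classes, which here come only from the cycle $E_2$ because the half-fiber $\overline{D}_1$ pulls back to a connected elliptic curve with $\iota$-invariant class. Your verification of that independence --- $E_2$ is a complete fiber on the $K3$, so its component lattice is negative semidefinite with $\iota$-invariant radical and hence negative definite on the anti-invariant subspace --- is a clean alternative to the explicit Gram-matrix/lattice computation used in Theorem~\ref{Enriquesred}, and the concluding appeal to Proposition~\ref{prop1122} is exactly right.
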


\section{The case $\kappa(\hY) =-\infty$: the rational case}\label{section6}

Throughout  this section, we assume that $\kappa(\hY) = -\infty$ and $k=2$, or equivalently that  $\hY$ is a rational surface.

\subsection{The case $(m_1, m_2) = (2,2)$}\label{ssection51} We begin what is essentially  a converse statement to  Theorem~\ref{geomanal}(i): 

\begin{proposition}\label{genrat2}  Let $Y_0$ be a rational surface with a negative definite  anticanonical divisor $D_2$ which is either smooth or nodal. Let $\Gamma\subseteq Y_0$ be a reduced  curve with  $p_a(\Gamma) = 2$ such that $\Gamma \cap D_2 =\emptyset$,  with either a node or a cusp at   $p_0\in \Gamma$, and which is either irreducible or a cycle of curves with all singularities  nodal with the possible exception of  $p_0$. Let $\hY$ be the blow up of $Y_0$ at $p_0$, let $D_1$ be the proper transform of $\Gamma$,  let the new exceptional curve be denoted by $C$, so that $D_1 = \Gamma-2C$, and identify $D_2$ with its preimage in $\hY$.  
\begin{enumerate}
\item[\rm(i)] The divisor $L= K_{\hY} + D_1 + D_2$ is a nef and big divisor and satisfies $L^2=1$ and $L\cdot D_i =0$. 
\item[\rm(ii)] If $L$ has positive intersection with every exceptional   curve which is not a component of $D_1$ or $D_2$,  then $m_2 = -D_2^2 = 2$ if  $M_2=L+D_1$ is not nef, and $m_2 = 1$ if $M_2$ is nef. 
\item[\rm(iii)] If $L$ has positive intersection with every   curve which is not a component of $D_1$ or $D_2$,  then the contraction $Y$ of $\hY$ at $D_1$ and $D_2$ is an I-surface with  two elliptic singularities and no rational double point singularities. 
\item[\rm(iv)] Conversely, if $\hY$ is the minimal resolution of an I-surface $Y$ with two simple elliptic or cusp singularities, one of which has multiplicity $2$, and no other singularities, then $\hY$ arises as above.
\end{enumerate}
\end{proposition}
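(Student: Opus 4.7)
Parts (i) and (iii) are computational, (iv) is a direct unpacking of Theorem~\ref{geomanal}(i), and the main obstacle is the non-trivial direction of (ii), where one must rule out $m_2 \geq 3$.

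For (i), I compute directly. Using $K_{Y_0} \equiv -D_2$, $K_{\hY} = \rho^*K_{Y_0} + C$, and $D_1 = \rho^*\Gamma - 2C$ (since $p_0$ is a double point of $\Gamma$), we obtain $L \equiv \rho^*\Gamma - C$. Adjunction on $\Gamma$, using $p_a(\Gamma) = 2$ and $\Gamma \cdot D_2 = 0$, gives $\Gamma^2 = 2$, whence $L^2 = 1$ and $L \cdot D_i = 0$. Nefness is checked curve by curve: $L \cdot C = 1$; $L \cdot E = 0$ for $E$ a component of $D_1$ or $D_2$ via adjunction on the respective cycle; and for the strict transform $E$ of an irreducible $E_0 \subset Y_0$ not contained in $\Gamma$, $L \cdot E = \Gamma \cdot E_0 - \operatorname{mult}_{p_0}(E_0) \geq \operatorname{mult}_{p_0}(E_0) \geq 0$, since $\operatorname{mult}_{p_0}\Gamma = 2$ forces $\Gamma \cdot E_0 \geq 2\operatorname{mult}_{p_0}(E_0)$. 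Bigness then follows from $L^2 = 1 > 0$ together with nefness.

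For (ii), direct computation gives $M_2^2 = (L - D_2)^2 = 1 - m_2$. If $M_2$ is nef, then $M_2^2 \geq 0$ forces $m_2 = 1$. The converse is the main obstacle. Assuming $M_2$ is not nef, an analog of Lemma~\ref{Minotnef} shows that any irreducible curve $E$ with $M_2 \cdot E < 0$ is a $(-1)$-curve with $D_1 \cdot E = 0$ and $M_2 \cdot E = -1$. Riemann--Roch gives $\chi(\hY, M_2) = 1$ with $h^2(M_2) = 0$, so $M_2$ is effective; write $M_2 = E + M_2'$ with $M_2'$ effective. Using $L \cdot M_2 = L^2 = 1$, the positivity hypothesis $L \cdot E \geq 1$, and $L \cdot M_2' \geq 0$ (nef pairing with effective), we get $L \cdot E = 1$ and $L \cdot M_2' = 0$, and hence $D_2 \cdot E = L \cdot E + 1 = 2$. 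A Hodge-index argument combined with the positivity hypothesis and the absence of rational double points shows that every irreducible component $F$ of $M_2'$ satisfies $L \cdot F = 0$ and must be a component of $D_1$ or $D_2$ (a curve with $F^2 < 0$ and $L \cdot F = 0$ is forced by adjunction to be either a $(-1)$-curve, ruled out by positivity, a $(-2)$-curve, ruled out by no RDPs, or an elliptic-type curve with $F^2 = 0$, ruled out by Hodge index). Computing $M_2' \cdot D_2 = M_2 \cdot D_2 - E \cdot D_2 = m_2 - 2$ and using that every component of $D_1 \cup D_2$ has non-positive intersection with $D_2$ (trivially for components of $D_1$; for components of $D_2$, by negative definiteness of the cycle), we conclude $m_2 - 2 \leq 0$, hence $m_2 \leq 2$. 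Combined with $m_2 \geq 2$ from $M_2^2 < 0$, this yields $m_2 = 2$.

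For (iii), the full positivity hypothesis together with the negative definiteness of $D_1$ and $D_2$ permits contracting $D_1 \cup D_2$ via Grauert's criterion to obtain a normal Gorenstein surface $Y$. By Remark~\ref{blowdown}, $L = \pi^*\omega_Y$ for a line bundle on $Y$, which is ample by Nakai--Moishezon. Then $\omega_Y^2 = L^2 = 1$, Lemma~\ref{lemmam22} gives $\chi(\scrO_Y) = \chi(\scrO_{\hY}) + 2 = 3$, and a Leray argument from $h^1(\scrO_{\hY}) = 0$ yields $h^1(\scrO_Y) = 0$, so $h^0(\omega_Y) = 2$; thus $Y$ is an I-surface. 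Finally, for (iv), given $\hY$ the minimal resolution of an I-surface $Y$ with two simple elliptic or cusp singularities and one of multiplicity $2$ (say $D_1$), apply Theorem~\ref{geomanal}(i) to the multiplicity-$2$ singularity to obtain the exceptional $(-1)$-curve $C$ and contraction $\gamma: \hY \to Y_0$ with $K_{Y_0} \equiv -D_2$, realizing $D_1$ as the proper transform of a $p_a = 2$ curve $\Gamma = \gamma(D_1)$ on $Y_0$ disjoint from $D_2$ with a double point at $p_0 = \gamma(C)$, exactly as in the setup of the proposition.
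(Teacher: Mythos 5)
Parts (i), (iii) and (iv) are correct and essentially coincide with the paper's arguments: the paper writes $L = C + D_1$ and checks nefness on the components of its support, your nefness check via $\Gamma\cdot E_0 \geq 2\operatorname{mult}_{p_0}(E_0)$ is a fine (slightly more detailed) alternative, and (iv) is, as in the paper, a direct unpacking of Theorem~\ref{geomanal}(i). For (ii) you depart from the paper: the paper simply re-runs the iterated-contraction argument from the proof of Theorem~\ref{geomanal}(i), which uses Lemma~\ref{lemma4} and $L\cdot M_2 = 1$ to show that $M_2\equiv E$ for a \emph{single} exceptional curve $E$ with coefficient one, after which $D_2\cdot E = 2$ and $m_2 = 2$ fall out of $L \equiv D_2 + E$. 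Your one-step decomposition $M_2 = E + M_2'$ avoids the iteration but, as written, does not close.

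The gap: you finish with ``combined with $m_2 \geq 2$ from $M_2^2 < 0$,'' but $M_2^2 = 1 - m_2$, so ``$M_2^2 < 0$'' is literally equivalent to ``$m_2 \geq 2$'' and is established nowhere — non-nefness of an effective divisor does not imply negative self-intersection. Your argument up to that point proves only $m_2 \leq 2$. The missing inequality does follow from what you already have: since $E^2 = -1$, $L\cdot E = 1$ and $L\cdot D_2 = 0$, the classes $D_2$ and $E - L$ both lie in $L^{\perp}$, which is negative definite by the Hodge index theorem; their Gram matrix $\left(\begin{smallmatrix}-m_2 & 2\\ 2 & -2\end{smallmatrix}\right)$ is therefore negative semi-definite, giving $2m_2 - 4 \geq 0$. (Alternatively, follow the paper and show $M_2' \equiv 0$ outright.) A second, more minor defect in the same step: you invoke ``absence of rational double points'' to rule out $(-2)$-curves among the components of $M_2'$, but part (ii) carries no such hypothesis — only positivity on exceptional curves. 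This happens to be harmless, because a $(-2)$-curve $F$ not contained in $D_1\cup D_2$ with $L\cdot F = 0$ automatically has $K_{\hY}\cdot F = D_1\cdot F = D_2\cdot F = 0$ and so contributes nothing to $M_2'\cdot D_2$; but the claim that every component of $M_2'$ lies in $D_1$ or $D_2$ is not justified as stated and should be replaced by the weaker (and sufficient) statement that every component has nonpositive intersection with $D_2$.
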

\begin{proof} If $\Gamma$ is irreducible, then $\Gamma^2 = \Gamma^2 + \Gamma \cdot K_{Y_0} = 2$, so that $\Gamma^2 = 2$ and $D_1 ^2 = -2$. A similar argument shows that, if $\Gamma$ is not irreducible but $p_0$ lies on just one component, then the component of $\Gamma$ containing $p_0$ has arithmetic genus one and hence self-intersection $0$, and all other components are smooth rational with self-intersection $-2$. In this case,  the unique component of $D_1$ meeting $C$ has self-intersection $-4$ and the remaining components have self-intersection $-2$. Similarly, if $p_0$ lies on two different components, then each of them is smooth rational and their proper transforms on $\hY$ have self-intersection $-3$. In particular, $D_1^2 =-2$ in all cases and $D_1$ is a negative definite curve corresponding to a simple elliptic or cusp singularity of multiplicity $2$. 

Since $K_{Y_0} = -D_2$, $L=K_{\hY} + D_1 + D_2 = C+ D_1$. Thus $L^2 =1$, $L|D_i \cong \scrO_{D_i}$ and $L\cdot C = 1$. Thus $L$ is nef and big. If $L$ has positive intersection with every   exceptional curve, the proof of  Theorem~\ref{geomanal} shows that $m_2 =2$ if $M_2$ is not nef, and $m_2 = 1$ if $M_2$ is nef. This proves (i) and (ii). As for (iii), since $L =\pi^*\omega_Y$, it follows that $\omega_Y$ is ample, $\omega_Y^2 =1$, and $\chi(\scrO_Y) = 3$ by Lemma~\ref{lemmam22}. Thus $Y$ is an I-surface. (Compare also Remark~\ref{blowdown}.) Finally, (iv) is  a consequence of Theorem~\ref{geomanal}(i). 
\end{proof}

For the rest of this subsection, we assume that $\hY$ is a rational surface and that $(m_1, m_2) = (2,2)$. By Proposition~\ref{genrat2}, there exists a one point blowdown of $\hY$ to a rational surface $Y_0$ with an effective anticanonical divisor of square $-2$. The following is an explicit example:  

\begin{example} Let $Q$ be a plane quartic in $\Pee^2$ meeting a smooth cubic $\overline{D}_2$  in $11$ points, one of them double, let $Y_0$ be the blow up $\Pee^2$ at the $11$ points, so the proper transform $\Gamma$ of $Q$ is a smooth curve of genus $2$. The proper transform $D_2$ of $\overline{D}_2$ then satisfies $D_2^2 = -2$. Further degenerate $Q$ so that it has a second double point not lying on $\overline{D}_2$. Then let $\hY$ be the blow up of $Y_0$ at the second double point, let $D_1$ be the proper transform of $\Gamma$ and let the new exceptional curve be $C$, so that $D_1 = \Gamma-2C$. Then $\Gamma^2 = 2$, $D_1^2  = \Gamma^2  -4 = -2$, and $C \cdot D_1 =2$. Note the symmetry between the curves $D_1$ and $D_2$ in this example:  let $\ell$ be the line in $\Pee^2$ joining the two double points of $Q$. The proper transform of $\ell$ on  $\hY$ is an exceptional curve meeting $D_1$ in two points and disjoint from $D_1$, and so can also be blown down to get the symmetric picture reversing the roles of $D_1$ and $D_2$.
\end{example} 

There is an alternate description of the surface $\hY$ in case $D_1$ and $D_2$ are irreducible:

\begin{proposition}\label{22case}  Suppose that $\hY$  is the minimal resolution of an I-surface  containing two irreducible divisors $D_1$ and $D_2$ with $D_i^2=-2$. Then there exists an exceptional curve $C_1$ on $\hY$ such that $C_1\cdot D_1 = 2$. If $\rho_1\colon \hY \to Y_0^{(1)}$ is the contraction of $C_1$, then  there exists a degree two morphism  $\nu \colon Y_0^{(1)}  \to \mathbb{F}_1$ branched along a smooth divisor in $|2\sigma_0 + 6f|$, where $\sigma_0$ is the negative section,  such that  $\rho_1(D_1) = \overline{D}_1 =\nu^*(\sigma_0+f)$  and $D_2=\nu^*\sigma_0$. 
\end{proposition}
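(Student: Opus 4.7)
The plan is to obtain the double cover structure from two applications of Theorem~\ref{geomanal}(i), followed by a linear-systems analysis on the partial resolution $Y_0^{(1)}$ and an invocation of the universal property of blowup.

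First I would apply Theorem~\ref{geomanal}(i) with $i = 1$ and $i = 2$ to produce exceptional curves $C_1, C_2$ on $\hY$ with $C_i \equiv M_i$, $C_i \cdot D_i = 2$, and $C_i \cdot D_j = 0$ for $j \ne i$; Lemma~\ref{Milist}(iii) gives $C_1 \cdot C_2 = M_1 \cdot M_2 = 1$. Contract $C_1$ via $\rho_1 \colon \hY \to Y_0^{(1)}$; the proof of Theorem~\ref{geomanal}(i) gives $K_{Y_0^{(1)}} \equiv -D_2$ (with $D_2$ identified with its image). Setting $G_1 := \rho_{1*} C_2$, from $\rho_1^* G_1 = C_1 + C_2$ one finds that $G_1$ is a smooth rational curve on $Y_0^{(1)}$ with $G_1^2 = 0$ and $G_1 \cdot D_2 = 2$. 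The sequence
\begin{equation*}
0 \to \scrO_{Y_0^{(1)}} \to \scrO(G_1) \to \scrO_{G_1}(G_1^2) = \scrO_{\Pee^1} \to 0,
\end{equation*}
combined with $H^1(\scrO_{Y_0^{(1)}}) = 0$, then gives $h^0(G_1) = 2$ with a section producing a member $G_1' \in |G_1|$ disjoint from $G_1$, so that $|G_1|$ is a base-point-free pencil inducing a $\Pee^1$-fibration $\phi \colon Y_0^{(1)} \to \Pee^1$ with $D_2$ a bisection.

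The key step is then to put $H := \overline{D}_1 = \rho_{1*} D_1$ and show $h^0(H) = 3$. To this end, the relation $\overline{D}_1 \equiv D_2 + G_1$ on $Y_0^{(1)}$ is verified by pulling back through $\rho_1$ and using the identities $C_i \equiv M_i \equiv K_{\hY} + D_j$ (where $\{i,j\} = \{1,2\}$) from Lemma~\ref{Milist}:
\begin{equation*}
\rho_1^*(\overline{D}_1 - D_2 - G_1) = (D_1 + 2C_1) - D_2 - (C_1 + C_2) = D_1 - D_2 + C_1 - C_2 \equiv 0.
\end{equation*}
Since $D_1$ and $D_2$ are disjoint on $\hY$ and $C_1 \cdot D_2 = 0$, the curves $\overline{D}_1$ and $D_2$ are disjoint on $Y_0^{(1)}$; hence $\scrO(H)|_{D_2} \cong \scrO_{D_2}(\overline{D}_1 \cap D_2) = \scrO_{D_2}$. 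Using $h^1(\scrO(G_1)) = 0$ (from Riemann-Roch with $h^0(G_1) = 2$, $h^2(G_1) = 0$), the short exact sequence
\begin{equation*}
0 \to \scrO(G_1) \to \scrO(H) \to \scrO(H)|_{D_2} \cong \scrO_{D_2} \to 0
\end{equation*}
yields $h^0(H) = h^0(G_1) + 1 = 3$. Moreover, $|H|$ is base-point-free: $\overline{D}_1$ meets no point of $D_2$, while the sub-pencil $D_2 + |G_1|$ has no base points off $D_2$. Hence $|H|$ defines a morphism $\mu \colon Y_0^{(1)} \to \Pee^2$; if $\mu$ factored through a curve, $H$ would be pulled back from $\Pee^1$ and $H^2 = 0$ would follow, contradicting $H^2 = 2$, so $\mu$ is dominant and $\deg \mu = H^2 = 2$. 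The triviality of $\scrO(H)|_{D_2}$ forces $\mu$ to contract $D_2$ to a single point $P \in \Pee^2$.

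Finally, because $\mu^{-1}(\mathfrak{m}_P) \cdot \scrO_{Y_0^{(1)}} = \scrO(-D_2)$ is invertible, the universal property of blowup factors $\mu$ as
\begin{equation*}
Y_0^{(1)} \xrightarrow{\nu} \mathbb{F}_1 = \mathrm{Bl}_P \Pee^2 \to \Pee^2,
\end{equation*}
where the exceptional divisor is the negative section $\sigma_0$; then $\deg \nu = 2$, and $\nu^* \sigma_0 = D_2$ as divisors (the alternative $\nu^*\sigma_0 = 2D_2$ is ruled out since it would give $(\nu^*\sigma_0)^2 = -8 \ne -2$). The branch class is determined by $K_{Y_0^{(1)}} = \nu^*(K_{\mathbb{F}_1} + \tfrac12 \Sigma)$: substituting $K_{Y_0^{(1)}} \equiv -\nu^*\sigma_0$ and $K_{\mathbb{F}_1} = -2\sigma_0 - 3f$ yields $\tfrac12 \Sigma \equiv \sigma_0 + 3f$, so $\Sigma \in |2\sigma_0 + 6f|$. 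Smoothness of $\Sigma$ is automatic from smoothness of $Y_0^{(1)}$, and $\overline{D}_1 \equiv \nu^*(\sigma_0 + f)$ follows since $\mu^* \scrO_{\Pee^2}(1)$ pulls back to both sides. The main obstacle is the calculation $h^0(H) = 3$, which crucially uses the \emph{disjointness} of $\overline{D}_1$ and $D_2$ (not merely their orthogonality) to trivialize $\scrO(H)|_{D_2}$ and produce the extra section needed to lift the map to $\mathbb{F}_1$; once this is in place, the factorization through the Hirzebruch surface and the identification of the branch class are essentially formal.
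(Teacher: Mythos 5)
Your route to the double cover is genuinely different from the paper's, and most of it is sound. The paper does not introduce the ruling $|G_1|$ at all: it quotes a general result on anticanonical rational surfaces (\cite[Theorem 4.12]{Friedanticanon}) to get that $|\overline{D}_1|$ is base point free and defines a degree two map $\varphi\colon Y_0^{(1)}\to \Pee^2$ contracting $D_2$, and then constructs the lift to $\mathbb{F}_1$ by analyzing the linear system $|2\overline{D}_1-D_2|$ directly --- showing $\rho_1^*(2\overline{D}_1-D_2)=L+2C_1+C_2$ is nef and big, invoking Kodaira--Mumford vanishing and Riemann--Roch to get $\dim|2\overline{D}_1-D_2|=4$, and identifying the image in $\Pee^4$ with $\mathbb{F}_1$. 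You instead exploit the second exceptional curve $C_2$ to produce the base point free pencil $|G_1|$ with $G_1=\rho_{1*}C_2$, deduce $h^0(\overline{D}_1)=3$ and base point freeness from the decomposition $\overline{D}_1\sim D_2+G_1$ (linear equivalence does follow from your numerical check since $Y_0^{(1)}$ is rational), and then lift $\mu$ through the blowup of $\Pee^2$ at $P$ by the universal property. This is more self-contained: no external citation and no vanishing theorem. Your assertion that $\mu^{-1}(\mathfrak{m}_P)\cdot\scrO_{Y_0^{(1)}}=\scrO(-D_2)$ is stated rather than proved, but it does follow from what you have, since the sections of $\scrO(\overline{D}_1)$ vanishing at $P$ are exactly $s_{D_2}\cdot H^0(\scrO(G_1))$ and $|G_1|$ has two disjoint members, so the common zero scheme of two generators is precisely the Cartier divisor $D_2$.

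There is one genuine omission: you never verify that $\nu$ is \emph{finite}, equivalently that $D_2$ is the only irreducible curve contracted by $|\overline{D}_1|$. The universal property hands you a morphism $\nu\colon Y_0^{(1)}\to\mathbb{F}_1$, but if some curve $A\neq D_2$ satisfied $\overline{D}_1\cdot A=0$, then $\nu$ would contract $A$ to a point and the Stein factorization would pass through a singular surface; the conclusions ``degree two morphism branched along a smooth divisor'' and ``$\Sigma$ smooth because $Y_0^{(1)}$ is smooth'' would both fail. Your own intersection theory rules out a contracted $(-1)$-curve (it would force $K_{Y_0^{(1)}}\cdot A=-D_2\cdot A=0$), but not a contracted $(-2)$-curve disjoint from $D_2$. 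Excluding that requires the ampleness of $\omega_Y$ (Assumption~\ref{assumption27}): for such an $A$ with proper transform $A'$ on $\hY$ one gets $D_1\cdot A'=C_1\cdot A'=0$ from $\rho_1^*\overline{D}_1=D_1+2C_1$, hence $L\cdot A'=0$, forcing $A'$ to be a component of $D_1+D_2$, i.e.\ $A'=D_2$. The paper isolates exactly this step as a separate Claim inside its proof; you need to add the corresponding line.
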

\begin{proof} By  Theorem~\ref{geomanal}, without assuming that $D_1$ and $D_2$ are irreducible, there exist exceptional curves $C_1$ and $C_2$ on $\hY$ with $C_1\cdot D_1 = 2$, $C_1\cdot D_2 =0$, $C_2\cdot D_1=0$, $C_2\cdot D_2 =2$, and 
$$K_{\hY} = C_1 -D_2 = C_2 -D_1.$$
Thus $C_1\cdot C_2 = 1$ because
$$-1 = K_{\hY} \cdot C_1 = (C_2 - D_1)\cdot C_1 = (C_1\cdot C_2) - 2.$$
Moreover, $L\cdot C_i = 1$ and 
$$L =  K_{\hY} + D_1 + D_2 = C_1+D_1 = C_2 + D_2.$$

Let $\rho_1\colon \hY \to Y_0^{(1)}$ be the contraction of $C_1$. Then $\overline{D}_1= \rho_1(D_1)$ is a nef divisor  with $(\overline{D}_1)^2 =2$, $p_a(\overline{D}_1) = 2$, and $K_{Y_0^{(1)}} =\scrO_{Y_0^{(1)}}(-D_2)$. By symmetry, we can make the same construction by taking the contraction $\rho_2\colon \hY \to Y_0^{(2)}$ of  $C_2$, with $\overline{D}_2= \rho_2(D_2)$. The divisor $\overline{D}_i$ is nodal $\iff$ $D_i$ meets $C_i$ in two distinct points.  

The  divisor  $\overline{D}_1$ is an effective and  nef divisor with positive self-intersection disjoint from $D_2$.  It follows from standard results (e.g.\ \cite[Theorem 4.12]{Friedanticanon}) that the linear system $|\overline{D}_1|$ has no fixed components or base locus and defines a degree $2$ morphism $\varphi\colon  Y_0^{(1)} \to \Pee^2$ which contracts $D_2$.  Note that $\overline{D}_1$ is induced by the divisor $D_1+ 2C_1$ on $\hY$, i.e.\ 
$$\rho_1^*\overline{D}_1 = D_1+ 2C_1= L+C_1.$$ 

\begin{claim} The only irreducible curves that $\varphi$ contracts to a point are components of $D_2$ or components of $\overline{D}_1$.
\end{claim}
\begin{proof} If $A$ is an irreducible curve on $Y_0^{(1)}$, not a component of $\overline{D}_1$,  such that $\overline{D}_1\cdot A =0$, and $A'$ is its proper transform on $\hY$, then $A'\cdot D_1 = A'\cdot C_1 = 0$. Hence $L\cdot A' =0$, so $A'$ is a component  of $D_2$. (Recall that, as always, Assumption~\ref{assumption27} is in force.) 
\end{proof}

Returning to the proof of Proposition~\ref{22case}, suppose that $D_1$ and $D_2$ are irreducible. Let $p\in \Pee^2$ be the image $\varphi(D_2)$ and let $\mathbb{F}_1\to \Pee^2$ be the blowup of $\Pee^2$ at $p$. Then we claim that $\varphi\colon Y_0^{(1)} \to \Pee^2$ extends to a finite degree two morphism $\nu\colon Y_0^{(1)} \to \mathbb{F}_1$  branched along a smooth divisor in $|2\sigma_0 + 6f|$, where $\sigma_0$ is the negative section.  
 To see this, note that  
 $$\rho_1^*(2\overline{D}_1 - D_2) = 2L+2C_1- D_2= (C_2+D_2) + L + 2C_1 -D_2 = L + 2C_1+C_2.$$
 Thus the linear system $|2\overline{D}_1 - D_2|$ corresponds to the linear system  $|L + 2C_1+C_2|$ on $\hY$. The divisor  $L + 2C_1+C_2$ is nef and big, since:
\begin{align*}
(L + 2C_1+C_2) \cdot C_1=0;   &\qquad (L + 2C_1+C_2) \cdot C_2=1;\\
(L + 2C_1+C_2) \cdot D_1=4 ; &\qquad (L + 2C_1+C_2) \cdot D_2=2;\\
(L + 2C_1+C_2)^2 &= 6.
\end{align*}
  In particular, if $D_1$ and $D_2$ are irreducible, then 
   $C_1$ is the only irreducible curve on $\hY$ which has intersection number $0$ with $L + 2C_1+C_2$.  Since $2\overline{D}_1 - D_2 = 2\overline{D}_1 + K_{Y_0^{(1)}}$ and $2\overline{D}_1$ is nef and big, Kodaira-Mumford vanishing implies that 
   $H^1( Y_0^{(1)}; \scrO_{Y_0^{(1)}}(2\overline{D}_1 - D_2) ) = 0$. 
    Riemann-Roch then implies that $\dim |2\overline{D}_1 - D_2| =4$. Hence, if $p=\varphi(D_2)\in \Pee^2$ is the image of $D_2$, the natural inclusion $|\scrO_{\Pee^2}(2) -p|\subseteq|2\overline{D}_1 - D_2|$ is an equality.
  By  \cite[Theorem 4.12]{Friedanticanon}, $|2\overline{D}_1 - D_2|$ is a base point free linear system. Thus the induced morphism $\nu\colon Y_0^{(1)} \to \Pee^4$ has degree $2$ onto its image in $\Pee^4$, which is $\mathbb{F}_1$.  By construction, $2\overline{D}_1 - D_2 =\nu^*(\sigma_0+2f)$ and the image of $D_2$ is the negative section $\sigma_0$. Then  $D_2=\nu^*\sigma_0$ and $\overline{D}_1 = \nu^*(\sigma_0+f)$. Moreover, $K_{Y_0^{(1)}}  =\scrO_{Y_0^{(1)}}(-D_2)$, so that, if $\Sigma$ is the branch locus, then 
$$  -\sigma_0 = K_{\mathbb{F}_1} +  \frac12\Sigma = -2\sigma_0 -3f + \frac12\Sigma.$$
Thus $\Sigma \in |2\sigma_0 + 6f|$.     This completes the proof of Proposition~\ref{22case}. 
\end{proof}

\begin{proposition}\label{22irred} Suppose that $D_1$ and $D_2$ are irreducible. Then $$H^2(\hY; T_{\hY}(-D_1-D_2)) = 0.$$
Hence the map $\mathbb{T}^1_Y \to H^0(Y; T^1_Y)$ is surjective.
\end{proposition}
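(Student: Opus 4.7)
The plan is to establish the vanishing $H^2(\hY; T_{\hY}(-D_1-D_2)) = 0$ via Serre duality, exploiting the explicit description of $\hY$ as (the blowup of) a double cover of $\mathbb{F}_1$ given in Proposition~\ref{22case}. The surjectivity of $\mathbb{T}^1_Y \to H^0(Y; T^1_Y)$ will then follow from Proposition~\ref{prop1121}(i), since both singularities are simple elliptic.

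Since $L = K_{\hY} + D_1 + D_2$, Serre duality identifies $H^2(\hY; T_{\hY}(-D_1-D_2))^\vee$ with $H^0(\hY; \Omega^1_{\hY} \otimes L)$. From the computation in the proof of Proposition~\ref{22case} one has $L = \rho_1^*\overline{D}_1 - C_1$, so tensoring with the canonical section of $\scrO_{\hY}(C_1)$ gives an injection $\scrO_{\hY}(L) \hookrightarrow \rho_1^*\scrO_{Y_0^{(1)}}(\overline{D}_1)$. Because $(\rho_1)_*\Omega^1_{\hY} = \Omega^1_{Y_0^{(1)}}$ for the blowup of a smooth surface at a smooth point, the projection formula reduces the vanishing of $H^0(\hY; \Omega^1_{\hY} \otimes L)$ to the vanishing of $H^0(Y_0^{(1)}; \Omega^1_{Y_0^{(1)}}(\overline{D}_1))$. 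I would then push this further down via the double cover $\nu\colon Y_0^{(1)} \to \mathbb{F}_1$. With $\overline{D}_1 = \nu^*(\sigma_0 + f)$, $\Sigma \in |2\sigma_0 + 6f|$ smooth, and $\lambda = \scrO_{\mathbb{F}_1}(\sigma_0 + 3f)$, Lemma~\ref{doubcov} gives an exact sequence whose two terms, after twisting by $\scrO(\sigma_0+f)$ and taking global sections, translate the required vanishing into
\begin{itemize}
\item[(a)] $H^0(\mathbb{F}_1; \Omega^1_{\mathbb{F}_1}(\sigma_0 + f)) = 0$, and
\item[(b)] $H^0(\mathbb{F}_1; \Omega^1_{\mathbb{F}_1}(\log \Sigma)(-2f)) = 0$.
\end{itemize}

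Both (a) and (b) I would handle with the relative cotangent sequence $0 \to \scrO(-2f) \to \Omega^1_{\mathbb{F}_1} \to \scrO(-2\sigma_0 - f) \to 0$ together with a Poincar\'e residue argument for (b). For (a) the outer terms after twisting become $\scrO(\sigma_0 - f)$ and $\scrO(-\sigma_0)$, both of which visibly have no sections. For (b), the residue sequence reduces matters to the vanishing of $H^0(\Omega^1_{\mathbb{F}_1}(-2f))$, again immediate from the cotangent sequence, and $H^0(\Sigma; \scrO_\Sigma(-2f))$, which is zero because $\Sigma \cdot f = 2$ makes $\scrO_\Sigma(-2f)$ have negative degree on the smooth curve $\Sigma$. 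Combining everything yields $H^2(\hY; T_{\hY}(-D_1-D_2)) = 0$, and then Proposition~\ref{prop1121}(i) immediately gives the surjectivity of $\mathbb{T}^1_Y \to H^0(Y; T^1_Y)$.

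The calculations on $\mathbb{F}_1$ are entirely routine; the only point requiring care is the bookkeeping in the first reduction, namely keeping track of the identity $L = \rho_1^*\overline{D}_1 - C_1$ and the twist by $\lambda^{-1} = \scrO(-\sigma_0 - 3f)$ coming from the double cover, so that the ``log" term in (b) really does come out to $\scrO(-2f)$ and not something with an effective summand. Once this is set up correctly, the argument runs cleanly in parallel with the proofs of Theorem~\ref{K3irred} and Theorem~\ref{Enriquesirred}.
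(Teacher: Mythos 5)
Your proposal is correct and follows essentially the same route as the paper's proof: Serre duality, reduction to $H^0(Y_0^{(1)};\Omega^1_{Y_0^{(1)}}(\overline{D}_1))$ (you use $(\rho_1)_*\Omega^1_{\hY}=\Omega^1_{Y_0^{(1)}}$ where the paper invokes Hartogs, an immaterial difference), then Lemma~\ref{doubcov} applied to $\nu\colon Y_0^{(1)}\to\mathbb{F}_1$ and the two vanishings (a), (b) on $\mathbb{F}_1$, handled exactly as in the paper. The only nitpick is the parenthetical ``since both singularities are simple elliptic'': irreducible $D_i$ may be nodal rational (a cusp singularity), but the conclusion still follows since $H^2(\hY;T_{\hY}(-D))=0$ is stronger than the hypothesis of Proposition~\ref{prop1121}(ii) as well.
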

\begin{proof} By Serre duality, we need to show that 
$$H^0(\hY; \Omega^1_{\hY}\otimes K_{\hY}\otimes \scrO_{\hY}(D_1+ D_2)) = 0.$$
By the above, $K_{\hY}\otimes \scrO_{\hY}(D_1+ D_2) = L = C_1+ D_1$.  By the usual Hartogs argument,
$$H^0(\hY; \Omega^1_{\hY}(C_1+ D_1)) \subseteq H^0(Y_0^{(1)}; \Omega^1_{Y_0^{(1)}}(\overline{D}_1)).$$
Proposition~\ref{22case} gives a finite degree two morphism $\nu\colon Y_0^{(1)} \to \mathbb{F}_1$, with $\overline{D}_1 = \nu^*(\sigma_0+f)$. Thus, by   Lemma~\ref{doubcov}, there is an exact sequence
$$0 \to \Omega^1_{\mathbb{F}_1}(\sigma_0+f) \to \nu_*\Omega^1_{Y_0^{(1)}}(\overline{D}_1)\to \Omega^1_{\mathbb{F}_1}(\log \Sigma)(\sigma_0+f)\otimes \lambda^{-1}\to 0,$$
with $\lambda =\scrO_{\mathbb{F}_1}( \sigma_0 +3f)$.  So we must show that 
$$H^0(\mathbb{F}_1; \Omega^1_{\mathbb{F}_1}(\sigma_0+f)) = H^0(\mathbb{F}_1; \Omega^1_{\mathbb{F}_1}(\log \Sigma) (-2f)) =0.$$
The second vanishing follows from the inclusion
$$  H^0(\mathbb{F}_1; \Omega^1_{\mathbb{F}_1}(\log \Sigma) (-2f)) \subseteq  H^0(\mathbb{F}_1; \Omega^1_{\mathbb{F}_1}(\log \Sigma)),$$  
the vanishing of $H^0(\mathbb{F}_1; \Omega^1_{\mathbb{F}_1}) =0$, and the Poincar\'e residue sequence. As for the first group, the relative cotangent sequence for the morphism $g\colon  \mathbb{F}_1 \to \Pee^1$ is
$$0 \to g^*\Omega^1_{\Pee^1} \to \Omega^1_{\mathbb{F}_1}\to  \Omega^1_{\mathbb{F}_1/\Pee^1}\to 0.$$
Then  $g^*\Omega^1_{\Pee^1} =\scrO_{\mathbb{F}_1}(-2f)$ and $\Omega^1_{\mathbb{F}_1/\Pee^1} = \scrO_{\mathbb{F}_1}(-2\sigma_0- f)$. Thus 
$$H^0(\mathbb{F}_1; g^*\Omega^1_{\Pee^1}\otimes \scrO_{\mathbb{F}_1}(\sigma_0+f)) = H^0(\mathbb{F}_1;  \Omega^1_{\mathbb{F}_1/\Pee^1}\otimes \scrO_{\mathbb{F}_1}(\sigma_0+f)) = 0.$$
So finally $H^2(\hY; T_{\hY}(-D_1-D_2)) = 0$ as claimed.
\end{proof}

For the case of at least one cusp singularity, we have the following, with a mild general position hypothesis: 

\begin{proposition} Suppose that   either $\overline{D}_1$ or  $\overline{D}_2$  is nodal. Then $$H^2(\hY; T_{\hY}(-\log (D_1+D_2))) = 0.$$
\end{proposition}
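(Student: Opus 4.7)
The argument parallels that of Proposition~\ref{22irred}, but with an additional logarithmic pole structure along $D_{1}$ and $D_{2}$. By Serre duality, the vanishing we seek is equivalent to $H^{0}(\hY; \Omega^{1}_{\hY}(\log(D_{1}+D_{2})) \otimes K_{\hY}) = 0$. From $L = K_{\hY}+D_{1}+D_{2} = C_{1}+D_{1}$ we get $K_{\hY} = C_{1}-D_{2}$, and without loss of generality we assume that $\overline{D}_{1}$ is nodal, so that $D_{1}$ meets $C_{1}$ transversally in two distinct points and $D_{1}+D_{2}+C_{1}$ is a normal crossing divisor on $\hY$.

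The next step is to push the vanishing down to $Y_{0}^{(1)}$. Since $\overline{D}_{1}$ has a node at $\rho_{1}(C_{1})$, we have $\rho_{1}^{*}\overline{D}_{1} = D_{1}+2C_{1}$, and a determinant check (both sides have determinant $D_{1}+2C_{1}-D_{2}$) together with a local verification near $C_{1}$ gives
$$\rho_{1}^{*}\Omega^{1}_{Y_{0}^{(1)}}(\log\overline{D}) \cong \Omega^{1}_{\hY}(\log(D_{1}+D_{2}+C_{1})),$$
where $\overline{D} = \overline{D}_{1}+D_{2}$. The residue sequence for $C_{1}$, twisted by $K_{\hY}$, has cokernel $\scrO_{C_{1}}(K_{\hY})\cong \scrO_{\Pee^{1}}(-1)$, whose $H^{0}$ vanishes; thus the target $H^{0}$ injects into $H^{0}(\hY; \rho_{1}^{*}\Omega^{1}_{Y_{0}^{(1)}}(\log\overline{D}) \otimes K_{\hY})$. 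Since $\rho_{1*}\scrO_{\hY}(C_{1}) = \scrO_{Y_{0}^{(1)}}$ and $D_{2}$ is disjoint from $C_{1}$, the projection formula gives $\rho_{1*}\scrO_{\hY}(C_{1}-D_{2}) = \scrO_{Y_{0}^{(1)}}(-D_{2}) = K_{Y_{0}^{(1)}}$, and the problem reduces to showing that
$$H^{0}(Y_{0}^{(1)}; \Omega^{1}_{Y_{0}^{(1)}}(\log\overline{D}) \otimes K_{Y_{0}^{(1)}}) = 0.$$

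When $D_{1}$ and $D_{2}$ are both irreducible, this is handled via Proposition~\ref{22case}: the finite degree two morphism $\nu\colon Y_{0}^{(1)}\to \mathbb{F}_{1}$ branched along a smooth $\Sigma\in|2\sigma_{0}+6f|$ satisfies $D_{2} = \nu^{*}\sigma_{0}$ and $\overline{D}_{1} = \nu^{*}(\sigma_{0}+f)$. A logarithmic variant of Lemma~\ref{doubcov} yields
$$\nu_{*}\Omega^{1}_{Y_{0}^{(1)}}(\log\overline{D}) \cong \Omega^{1}_{\mathbb{F}_{1}}(\log(2\sigma_{0}+f)) \oplus \Omega^{1}_{\mathbb{F}_{1}}(\log(2\sigma_{0}+f+\Sigma))\otimes \lambda^{-1},$$
with $\lambda = \scrO(\sigma_{0}+3f)$. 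Tensoring with $\nu^{*}\scrO(-\sigma_{0})$ and applying the projection formula, the vanishing splits into two calculations on $\mathbb{F}_{1}$. The first summand $\Omega^{1}_{\mathbb{F}_{1}}(\log(2\sigma_{0}+f))(-\sigma_{0})$ is handled via the $\Pee^{1}$-bundle $p_{2}\colon \mathbb{F}_{1}\to\Pee^{1}$: the resulting sequence $0\to \scrO(-\sigma_{0}-f)\to \Omega^{1}_{\mathbb{F}_{1}}(\log(2\sigma_{0}+f))(-\sigma_{0})\to \scrO(-\sigma_{0})\to 0$ has no global sections on either end. The second summand simplifies to $\Omega^{1}_{\mathbb{F}_{1}}(\log(2\sigma_{0}+f+\Sigma)) \otimes K_{\mathbb{F}_{1}}$, whose $H^{0}$ vanishes via the Poincar\'e residue sequence, since $K_{\mathbb{F}_{1}}\cdot E <0$ for every irreducible component $E$ of $2\sigma_{0}+f+\Sigma$ and $H^{0}(\Omega^{1}_{\mathbb{F}_{1}}\otimes K_{\mathbb{F}_{1}}) = H^{2}(T_{\mathbb{F}_{1}})^{\vee} = 0$.

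The hard part will be adapting this to the case where $D_{1}$ or $D_{2}$ is reducible (cusp configuration with $r\ge 2$), since then Proposition~\ref{22case} does not directly apply and the linear system $|2\overline{D}_{1}-D_{2}|$ ceases to define a finite morphism, contracting the $(-2)$-chain components of the cusp. The strategy is either to first contract those components via an intermediate birational morphism, restoring a finite double cover setup, or to work directly on $Y_{0}^{(1)}$ with the Poincar\'e residue sequence
$$0\to \Omega^{1}_{Y_{0}^{(1)}}\otimes K_{Y_{0}^{(1)}}\to \Omega^{1}_{Y_{0}^{(1)}}(\log\overline{D})\otimes K_{Y_{0}^{(1)}}\to \nu_{*}\scrO_{\overline{D}^{\nu}}(-D_{2})\to 0,$$
where $H^{0}$ of the left term vanishes as $Y_{0}^{(1)}$ is rational, reducing the question to the injectivity of the connecting map on $H^{0}$, i.e., to the linear independence of certain fundamental classes in $H^{1}(\Omega^{1}_{Y_{0}^{(1)}}\otimes K_{Y_{0}^{(1)}})$.
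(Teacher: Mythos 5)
Your reduction to $H^0(Y_0^{(1)}; \Omega^1_{Y_0^{(1)}}(\log\overline{D})\otimes K_{Y_0^{(1)}})=0$ agrees with the paper's first step, but from there the argument has a genuine gap: the reducible case --- exactly the cusp configurations with $r\ge 2$, which are squarely within the scope of the statement and are what Corollary~\ref{22cusp} needs --- is explicitly deferred as ``the hard part,'' with two strategies sketched and neither carried out. The second sketch is also not sound as written: once you twist the residue sequence by $K_{Y_0^{(1)}}=\scrO_{Y_0^{(1)}}(-D_2)$, the residue term along a component $E_i$ of $D_2$ is $\scrO_{E_i}(-D_2|_{E_i})$, a line bundle of degree $-E_i\cdot D_2$ which can be positive (e.g.\ $+2$ when $D_2$ is irreducible), so its $H^0$ is not spanned by a single fundamental class and the problem does not reduce to independence of classes in $H^1(\Omega^1)$. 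In the irreducible case, your ``logarithmic variant'' of Lemma~\ref{doubcov} also takes the log structure along the divisor \emph{class} $2\sigma_0+f$; it must be taken along the actual reduced curve $\sigma_0+A$, where $A\in|\sigma_0+f|$ is the image of $\overline{D}_1$, and the splitting needs to be justified for that curve.

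The idea you are missing is a one-line simplification that removes both the double cover and the case division. Since $K_{Y_0^{(1)}}=\scrO_{Y_0^{(1)}}(-D_2)$ is the negative of an effective divisor, there is an inclusion $H^0(Y_0^{(1)};\Omega^1_{Y_0^{(1)}}(\log\overline{D})\otimes K_{Y_0^{(1)}})\subseteq H^0(Y_0^{(1)};\Omega^1_{Y_0^{(1)}}(\log\overline{D}))$, so it suffices to kill the \emph{untwisted} group. That vanishing follows from the ordinary Poincar\'e residue sequence: $H^0(\Omega^1_{Y_0^{(1)}})=0$ because $Y_0^{(1)}$ is rational, and the connecting map sends the generators of $\bigoplus_i H^0(\scrO_{E_i})$ to the fundamental classes of the components of $\overline{D}_1$ and $D_2$, which are linearly independent in $H^1(\Omega^1_{Y_0^{(1)}})$ because they span one of the nondegenerate lattices of Lemma~\ref{somelatticeslemma}. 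This is the paper's entire proof, and it is uniform in the number of components of $D_1$ and $D_2$.
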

\begin{proof} It suffices to consider the case where $\overline{D}_1$  is nodal. In this case, 
\begin{align*}
H^0(\hY; \Omega^1_{\hY}(\log (D_1+D_2))\otimes K_{\hY}) &\subseteq H^0(Y_0^{(1)};  \Omega^1_{Y_0^{(1)}}(\log (\overline{D}_1+D_2))\otimes K_{Y_0^{(1)}})\\
&\subseteq H^0(Y_0^{(1)};  \Omega^1_{Y_0^{(1)}}(\log (\overline{D}_1+D_2))).
\end{align*}
Then this last group is $0$ by the Poincar\'e  residue sequence and the  fact that, by Lemma~\ref{somelatticeslemma},  the components of $\overline{D}_1$ and $D_2$ are linearly independent in cohomology. 
\end{proof}

\begin{corollary}\label{22cusp} Suppose that   either $\overline{D}_1$ or  $\overline{D}_2$  is nodal.
\begin{enumerate}
\item[\rm(i)] If both singularities are cusp singularities, then the map $\mathbb{T}^1_Y \to H^0(Y; T^1_Y)$ is surjective. 
\item[\rm(ii)]  If one singularity is a cusp singularity and the other is simple elliptic, then there is a deformation of $Y$ to an I-surface with two simple elliptic singularities of multiplicity $2$. 
\end{enumerate}
\end{corollary}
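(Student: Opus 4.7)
The plan is to read off both statements directly from the vanishing $H^2(\hY; T_{\hY}(-\log(D_1+D_2))) = 0$ furnished by the preceding proposition, combined with the general machinery developed in Section~\ref{section1}. Cusp singularities of multiplicity at most~2 are local complete intersections (by Lemma~\ref{lemma1101} and the discussion preceding it), so the hypotheses of Proposition~\ref{prop1121} and Proposition~\ref{prop1122} are available throughout.

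For part~(i), both $D_1$ and $D_2$ are cycles of rational curves (or irreducible nodal curves), so $D = D_1 + D_2$ is the total exceptional divisor of the two cusp singularities. I would apply Proposition~\ref{prop1121}(ii) verbatim: the vanishing $H^2(\hY; T_{\hY}(-\log D)) = 0$ just established forces $H^2(Y;T^0_Y) = \mathbb{T}^2_Y = 0$ and makes the map $\mathbb{T}^1_Y \to H^0(Y; T^1_Y)$ surjective, which is exactly the required versality.

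For part~(ii), relabel so that $p_1$ is the simple elliptic point (with $D_1$ a smooth elliptic curve) and $p_2$ is the cusp (with $D_2$ either an irreducible nodal rational curve or a cycle of smooth rational curves). The nodality of $\overline{D}_2$ means the hypothesis of the preceding proposition is met, and we again have $H^2(\hY; T_{\hY}(-\log(D_1+D_2))) = 0$. Proposition~\ref{prop1122} then produces a global one-parameter deformation of $Y$ that is equisingular at $p_1$ and realises any prescribed adjacency of the cusp $p_2$ to a simple elliptic singularity of the same multiplicity, namely~2. The required adjacency is supplied by Theorem~\ref{cuspadj}(iv): the case $(-3,-3)$ of~(iv)(b) states this directly, and transitivity of adjacencies for local complete intersections, together with (iv)(a), extends it to the remaining relevant types. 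The nearby surface is thus an I-surface carrying simple elliptic singularities of multiplicity~2 at both points, as claimed.

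The principal work has already been expended in establishing the vanishing $H^2(\hY; T_{\hY}(-\log(D_1+D_2))) = 0$; given that input, the deduction is routine and the only sensitive point is checking that the cited equimultiple adjacency from a cusp of multiplicity~2 to a simple elliptic of the same multiplicity is available in all the cusp types that actually arise in this setting. Once this bookkeeping is done, Propositions~\ref{prop1121}(ii) and~\ref{prop1122} furnish (i) and (ii) respectively with no additional analysis.
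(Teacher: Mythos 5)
Your proposal is correct and follows the paper's own proof exactly: part (i) is Proposition~\ref{prop1121}(ii) applied to the vanishing $H^2(\hY; T_{\hY}(-\log(D_1+D_2)))=0$ from the preceding proposition, and part (ii) is Proposition~\ref{prop1122}. The only difference is cosmetic — you spell out the equimultiple adjacency from the cusp to a simple elliptic singularity of multiplicity $2$, which the paper absorbs into the statement and proof of Proposition~\ref{prop1122}.
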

 \begin{proof} The first statement follows from Proposition~\ref{prop1121}(ii) and the second statement from  Proposition~\ref{prop1122}.
 \end{proof}

\begin{remark}\label{exotic22}  Smoothing one of the elliptic singularities leads to an I-surface whose minimal resolution is a blown up $K3$ surface. In case one of the singularities is a cusp singularity, an exotic deformation to an elliptic singularity of multiplicity one leads to the case $(m_1, m_2) = (2,1)$, where the minimal resolution is necessarily rational. If both singularities are cusp singularities, taking exotic deformations of both  elliptic singularities to two elliptic singularities of multiplicity one leads to the case $(m_1, m_2) = (1,1)$. \emph{A priori}, the minimal resolution could be a blown up Enriques surface or a rational surface. However, we shall see below that minimal resolution must be  rational. 
\end{remark}

\subsection{The case $(m_1, m_2) = (2,1)$}

As in \S\ref{ssection51}, we begin with an explicit example:

\begin{example}\label{ex21}  Let $X$ be the blowup of $9$ points on a smooth plane cubic and let $F$ be the proper transform of the cubic. We assume that the points are chosen so that the normal bundle $N_{F/X}$ is a nontrivial $2$-torsion point on $F$, but  are general otherwise. Then it is a standard result that $|F|$ is just one point, i.e.\ $h^0(\scrO_X(F)) = 1$, but $|2F|$ is a pencil and  defines an elliptic surface $X \to \Pee^1$ where $F$ is the unique multiple fiber, of multiplicity $2$. If $E$ is an exceptional curve on $X$, then $E \cdot 1 = 1$, hence $E$ is a bisection of the fibration. Here $K_X =\scrO_X(-F)$.

Consider the linear system $|E+ F|$. Note that $E\cdot (E+ F) = 0$ and $F\cdot (E+ F) = 1$. Hence $(E+ F)^2 = 1$, so that $E+ F$ is nef and big,  and $(E+ F)\cdot K_X = -1$. By Riemann-Roch,
$$\chi(\scrO_X(E+ F)) = h^0(\scrO_X(E+ F)) = \frac12(1+1) + 1 = 2.$$
Thus $|E+ F|$ is a pencil. Since $\scrO_F(F)$ is a nontrivial line bundle of degree $0$ on $F$, $h^1(\scrO_F(F)) =0$ and then $h^1(\scrO_X(F)) =0$. It follows that the natural map $H^0(X; \scrO_X(E+ F)) \to H^0(E ; \scrO_E)$ is surjective, as is the map $H^0(X; \scrO_X(E+ F)) \to H^0(F;  \scrO_F(q))$, where $q$ is the point  of $F$ such that $\scrO_F(E+ F) = \scrO_F(q)$. In particular $|E+ F|$ has a simple base point at  $q$, so the general element $\Gamma$ is a smooth bisection of the elliptic fibration. From
$$0 = 2p_a(\Gamma)-2 = \Gamma^2 + K_X\cdot\Gamma,$$
$\Gamma$ is a smooth elliptic curve. Now let $\hY$ be the blow up of the two points of intersection of $\Gamma$ with a general fiber $G$ of the elliptic fibration defined by $|2F|$ and let $C_1, C_2$ be the new exceptional curves. Identifying a divisor on $X$ with its total transform on $\hY$, let $D_1$ be the proper transform of $G$, so that $D_1 = G-C_1-C_2$, and let $D_2$ be the proper transform of $\Gamma$, so that $D_2 = E + F - C_1 - C_2$. Thus $D_1^2=-2$, $D_2^2 = -1$ and $D_1\cap D_2 = \emptyset$. We have
\begin{align*}
M_2  = K_{\hY} + D_1  &=  -F + C_1 + C_2 +  G-C_1-C_2 = F;\\
M_1   = K_{\hY} + D_2 &= -F + C_1 + C_2 +  E + F - C_1 - C_2 = E;\\
L  = K_{\hY} + D_1 + D_2   &=  -F + C_1 + C_2 + f-C_1-C_2 + E + F - C_1 - C_2 =E + f-C_1 - C_2.
\end{align*}
Thus $K_{\hY} + D_1$ is nef and $(K_{\hY} + D_1)^2 =0$, $K_{\hY} + D_2$ is the exceptional curve $E$ with $E \cdot D_1 =  E \cdot f = 2$ and $E\cdot D_2 =0$, and $L= K_{\hY} + D_1 + D_2$ satisfies 
$$L^2 = (E + f)^2 - 2 = -1+4 -2 = 1.$$
Furthermore $L = E+ D_1 = F+D_2$ and $L\cdot E = 1$, $L\cdot F = 1$. Thus $L$ is nef and big, $L\cdot D_1 = (E+D_1)\cdot D_1 =0$, $L\cdot D_2 = (F+D_1)\cdot D_1 =0$, and   $L\cdot A> 0$ for all other curves $A$ as long as all fibers of $X\to \Pee^1$ are irreducible. 

If one of the blowup points  were on $E$, then the proper transform $E'$ of $E$ on $\hY$ would be  a curve such that $L\cdot E' =0$.  By our assumptions, this does not happen, so that we can identify $E$ with its image in $\hY$. Then blowing down $E$ produces a new rational surface $X'$ with $K_{X'} =\scrO_{X'}(-D_2)$. In particular, this gives an explicit  realization of $\hY$ corresponding to the two  possibilities in Theorem~\ref{geomanal}.
\end{example}

Next we show that all possibilities essentially arise as in Example~\ref{ex21}:

\begin{proposition}\label{21case}  Suppose that $\hY$ is the minimal resolution of an I-surface with two elliptic singular points $p_1$ and $p_2$ of multiplicities $m_1=2$, $m_2=1$. Then there exists a rational elliptic surface $X$ with a multiple fiber $F$ of multiplicity $2$, a smooth or nodal nonmultiple fiber $G$, and a bisection $\Gamma$ which is either smooth, irreducible nodal,  or a cycle of smooth rational curves, such $D_1$ and $D_2$ are the proper transforms of $G$ and $\Gamma$ respectively and one of the following holds:
\begin{enumerate}
\item[\rm(i)] $\Gamma$ meets $G$ transversally, necessarily at two smooth points $p_1$ and $p_2$ and   $\hY$ is the blowup of $X$ at $p_1$ and $p_2$;
\item[\rm(ii)] $G$ is a nodal fiber, $\Gamma$ meets $G$  at a node $p$, and $\hY$ is the blowup of $X$ at $p$ followed by an infinitely near blowup at the intersection of the exceptional curve with the proper transform of $\Gamma$; 
\item[\rm(iii)] $\Gamma$ is   irreducible and nodal, $G$ meets $\Gamma$  at the node $p$, and $\hY$ is the blowup of $X$ at $p$ followed by an infinitely near blowup at the intersection of the exceptional curve with the proper transform of $G$. 
\end{enumerate}
In both {\rm(ii)} and {\rm(iii)}, either $D_1$ or $D_2$ is reducible. 
\end{proposition}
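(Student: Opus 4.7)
The plan is to invoke Theorem~\ref{geomanal} to produce an elliptic fibration on $\hY$, pass to the relatively minimal model $X$, and then classify the two blowups $\pi \colon \hY \to X$ by a combinatorial case analysis.

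Since $m_2 = 1$, Theorem~\ref{geomanal}(ii) applied to $M_2 = K_{\hY} + D_1$ produces an elliptic fibration $\varphi \colon \hY \to \Pee^1$ with $M_2 \equiv F$ the reduction of the unique multiple fiber (of multiplicity two, since $\hY$ is rational), and $D_2$ consisting of an irreducible bisection component together with any remaining components lying in a single reducible non-multiple fiber. Lemma~\ref{Milist}(iv) gives $D_1 \cdot M_2 = 0$, so the connected curve $D_1$ lies in a single fiber $G_{\hY}$ of $\varphi$. Let $\pi \colon \hY \to X$ be the contraction to the relatively minimal model of $\varphi$; then $X$ is a rational elliptic surface with a unique multiple fiber of multiplicity two, $K_X^2 = 0$, and $K_{\hY}^2 = 1 - (m_1+m_2) = -2$ forces $\pi$ to be the composition of exactly two blowups, with centers on fibers of $X \to \Pee^1$. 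Set $\Gamma \subseteq X$ to be the image of the bisection component of $D_2$ and $G = \pi(G_{\hY})$. By the canonical bundle formula $K_X \equiv -F_X$, so $K_X \cdot \Gamma = -1$, and adjunction gives $\Gamma^2 = 2p_a(\Gamma) - 1$; since $p_a(\Gamma) = 0$ would force $\Gamma$ to be a $(-1)$-curve contradicting relative minimality of $X$, I obtain $p_a(\Gamma) = 1$ and $\Gamma^2 = 1$, so $\Gamma$ is smooth, irreducible nodal, or a cycle of smooth rational curves, and $G$ is a non-multiple fiber.

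The main technical obstacle is the combinatorial analysis of the two blowups. If the two centers are distinct points, connectedness of $D_1$ forces them to lie on the single fiber $G$, and the constraints $D_1^2 = -2$, $D_2^2 = -1$, $D_1 \cap D_2 = \emptyset$ identify them as the two transverse intersection points of $\Gamma$ and $G$, giving case (i). Otherwise the second center $q$ lies on the exceptional curve $E_1$ of the first blowup at a point $p$, making the proper transform $\tilde E_1$ a $(-2)$-curve; Assumption~\ref{assumption27}, which excludes rational double points, then forces one of $D_1$ or $D_2$ to absorb $\tilde E_1$ as a component. Enumerating the possibilities by tracking self-intersections and arithmetic genera of the proper transforms of $G$ and $\Gamma$, one finds that $p$ must be a node of either $G$ or $\Gamma$: the former yields case (ii), with $D_1 = \tilde G + \tilde E_1$ a cusp of type $(-4,-2)$ and $D_2 = \tilde\Gamma$; the latter yields case (iii), with $D_2 = \tilde\Gamma + \tilde E_1$ a cusp of type $(-3,-2)$ and $D_1 = \tilde G$, where $G$ must additionally be nodal away from $p$ so that $\tilde G$ becomes a $(-2)$ nodal rational curve (a length-one multiplicity-two cusp) rather than an $A_1$ singularity. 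The reducibility assertion in cases (ii) and (iii) is then immediate from these descriptions.
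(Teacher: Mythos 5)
Your overall strategy coincides with the paper's: use $|2M_2|$ to produce the elliptic fibration in which $D_2$ is a bisection and $D_1$ sits in a fiber, pass to the relatively minimal model $X$, and use $K_{\hY}^2=-2$ to see that $\hY\to X$ is a two-point blowup. (The paper in fact only writes out the distinct-points case (i), so your attempt to enumerate the infinitely near cases goes beyond what is printed.) However, several of your justifications are wrong or missing, and the missing step is precisely the computation that drives the paper's argument.

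First, the claim that $p_a(\Gamma)=0$ would ``force $\Gamma$ to be a $(-1)$-curve contradicting relative minimality of $X$'' is false: relative minimality of an elliptic fibration only excludes exceptional curves contained in fibers, and a $(-1)$-curve bisection is perfectly possible on a relatively minimal rational elliptic surface --- Example~\ref{ex21} exhibits exactly such a curve $E$, and when $D_2$ is reducible its bisection component really does map to a $(-1)$-curve in $X$. The correct route, which you omit, is the intersection computation: from $1=L\cdot K_{\hY}=-(L\cdot M_2)+L\cdot C_1+L\cdot C_2$ and $L\cdot C_i>0$ one gets $L\cdot C_1=L\cdot C_2=1$; then $M_2\cdot C_i=0$ together with $M_2=K_{\hY}+D_1$ gives $D_1\cdot C_i=1$ and hence $D_2\cdot C_i=1$. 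This single computation is what shows $p_a(\rho(D_2))=1$, $\rho(D_2)^2=1$, and that the two blowup points are exactly the two transverse intersection points of $G$ and $\Gamma$; ``connectedness of $D_1$'' does not locate the blowup points (it is the numerics, via $D_i\cdot C_j$, that exclude, say, one center lying on the multiple fiber or both centers missing $\Gamma$).

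Second, in the infinitely near case, Assumption~\ref{assumption27} by itself does not force $\tilde E_1$ to be absorbed into $D$: a $(-2)$-curve with $L\cdot\tilde E_1>0$ is entirely compatible with the absence of rational double points. You need the analogous computation $1=L\cdot K_{\hY}=-1+L\cdot\tilde E_1+2\,L\cdot E_2$ with $L\cdot E_2\ge 1$ to conclude $L\cdot\tilde E_1=0$, and only then does Assumption~\ref{assumption27} make $\tilde E_1$ a component of $D_1$ or $D_2$. Finally, in your case (iii) the assertion that $G$ ``must additionally be nodal away from $p$'' is incorrect: a smooth fiber $G$ gives $D_1=\tilde G$ a smooth elliptic curve of self-intersection $-2$, i.e.\ a simple elliptic singularity of multiplicity $2$, which is allowed and consistent with the statement (the worry about an $A_1$ does not arise since $p_a(\tilde G)=1$). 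The concluding enumeration ``one finds that $p$ must be a node of either $G$ or $\Gamma$'' is asserted rather than proved; it is believable, but the cases (e.g.\ $G$ and $\Gamma$ mutually tangent at a smooth point, which is excluded because $\tilde E_1$ would then be a genuine $A_1$) need to be run through explicitly.
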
 
\begin{proof}  The linear system $|2M_2|$ defines an elliptic fibration on $\hY$ such that $D_2$ is a bisection. Let $\rho\colon \hY \to X$ be the relatively minimal model.  Then $X$ is a rational elliptic surface with a multiple fiber $F$ and $M_2=F$. Since $K_X^2=0$ and $K_{\hY}^2 = -2$, $\hY$ is the blowup of $X$ at two points $p_1$ and $p_2$, possibly infinitely near. We shall just consider the case where $p_1$ and $p_2$ are distinct points, corresponding to Case (i).  Let $C_1$ and $C_2$ be the corresponding exceptional curves. Then $K_{\hY} = -F + C_1 + C_2$ and
$$1 = L\cdot K_{\hY} = -(L\cdot M_2) + L\cdot C_1 + L \cdot C_2.$$
Since $L\cdot C_i > 0$, $L\cdot C_1 = L \cdot C_2 = 1$. Then
$$1 = (K_{\hY} +D_1 + D_2)\cdot C_i = -1 + (D_1\cdot C_i) + (D_2\cdot C_i).$$
Since $M_2\cdot C_i=0$ and $M_2 = K_{\hY} + D_1$,  $D_1\cdot C_i =  D_2\cdot C_i =1$. In particular, $D_i$ and $C_i$ intersect transversally at exactly one smooth point of $D_i$. Let $G =\rho(D_1)$ and $\Gamma = \rho(D_2)$. Then $G$ is isomorphic to $D_1$ and $\Gamma$ is isomorphic to $D_2$, so they are either smooth, irreducible nodal,  or a cycle of smooth rational curves, and $G$ and $\Gamma$ meet transversally at two points.  Moreover, $\Gamma$ is a bisection of the fibration on $X$. Since $D_1 \cdot M_2 =0$, $G$ is contained in a fiber of $X$. Since $G\cdot \Gamma =2$, $G\neq F$ and so $G$ is a nonmultiple fiber. Thus $X$ and $\hY$ satisfy the conclusions of the proposition.  
\end{proof}

Turning to the deformation theory, we begin as usual with the case where $D_1$ and $D_2$ are irreducible. 

\begin{proposition}\label{21irred} Suppose that $D_1$ and $D_2$ are irreducible and that, in the notation of Proposition~\ref{21case}, all singular nonmultiple fibers of the elliptic fibration $\pi\colon X\to \Pee^1$ are irreducible nodal curves. Then $$H^2(\hY; T_{\hY}(-D_1 - D_2)) = 0.$$ 
Hence the map $\mathbb{T}^1_Y \to H^0(Y; T^1_Y)$ is surjective.
\end{proposition}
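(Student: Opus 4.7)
By Serre duality, $H^2(\hY; T_{\hY}(-D_1-D_2))$ is dual to $H^0(\hY; \Omega^1_{\hY}(L))$ where $L = K_{\hY}+D_1+D_2$. Using Proposition~\ref{21case}(i), write $\rho\colon\hY\to X$ for the blowup at the two distinct points $p_1, p_2\in G\cap\Gamma$, so that on $\hY$
$$L = E + D_1 = F + D_2 = \rho^*(E+2F) - C_1 - C_2.$$
A direct computation in blowup coordinates shows that $\rho_*\Omega^1_{\hY}(-C_1-C_2) = \mathfrak{m}_{p_1,p_2}\cdot\Omega^1_X$ as subsheaves of $\Omega^1_X$ (a $1$-form on $X$ pulls back to one vanishing along the exceptional divisor if and only if it vanishes at the blown up point). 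Combined with the projection formula this yields the identification
$$H^0(\hY; \Omega^1_{\hY}(L)) \;=\; H^0\bigl(X;\ \Omega^1_X(E+2F)\otimes\mathfrak{m}_{p_1,p_2}\bigr).$$

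Next I apply Theorem~\ref{tangbundell} to $X$, which is a rational elliptic surface with $p_g=0$, $f\sim 2F$, and a single multiple fiber $F$ of multiplicity $2$:
$$0 \to \scrO_X(-3F) \to \Omega^1_X \to I_Z\otimes\scrO_X(2F) \to 0,$$
where $Z$ has length $12$ supported at the $12$ nodes of the nodal fibers, and $I_{Z,x}=\mathfrak{m}_x$ locally. Tensoring with $\scrO_X(E+2F)$:
$$0 \to \scrO_X(E-F) \to \Omega^1_X(E+2F) \to I_Z\otimes\scrO_X(E+4F)\to 0.$$
An intersection calculation gives $(E-F)\cdot E = -2$, so $E$ must appear as a component of any effective representative of $E-F$, leaving a residual class of $-F$, which is impossible; hence $H^0(\scrO_X(E-F))=0$. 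Moreover $\chi(\scrO_X(E-F))=0$ and $H^2(\scrO_X(E-F))=H^0(\scrO_X(-E))=0$, so $H^1(\scrO_X(E-F))=0$ as well. Thus we obtain an isomorphism $H^0(\Omega^1_X(E+2F))\cong H^0(X; I_Z\otimes\scrO_X(E+4F))$, and imposing the $\mathfrak{m}_{p_1,p_2}$ vanishing reduces the problem to proving
$$H^0\bigl(X;\ I_{Z\cup\{p_1,p_2\}}\otimes\scrO_X(E+4F)\bigr)=0.$$

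The final step is where I expect the main obstacle. By Riemann-Roch and Kawamata-Viehweg (since $E+4F$ is nef and big, with $(E+4F)^2=7$), $h^0(\scrO_X(E+4F))=5$, and we need the $14$ points of $Z\cup\{p_1,p_2\}$ to impose $5$ independent conditions. Heuristically this is clear: any $D\in|E+4F|$ splits as $D = D_{\text{fib}} + D_{\text{bi}}$, with $D_{\text{fib}}$ supported in fibers (of class $cF+bf$ with $c+2b\le 4$) and $D_{\text{bi}}\in|E+(4-c-2b)F|$; the fiber part absorbs at most $b\le 2$ of the $12$ nodes of $Z$, so $D_{\text{bi}}$ must contain at least $10$ specific nodes while $\dim|E+kF|\le 4$ for $k\le 4$. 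To make this rigorous, I would use the involution $\iota$ on $X$ coming from the multiple fiber and the bisection $E$ (see \S\ref{doubcovsect}): $\iota$ fixes $Z$ pointwise, and decomposing $H^0(\scrO_X(E+4F))$ into its $\iota$-invariant and anti-invariant subspaces translates the vanishing question to the quotient $\widetilde{\mathbb{F}}_1$ of the associated blown-up surface (with $N=1$, $k=2$, $B_0\in|4\sigma_0+3f+2d_2|$), where an intersection-number comparison analogous to the ones in the proofs of Theorems~\ref{thm17} and~\ref{thm17cusp} should conclude the argument.
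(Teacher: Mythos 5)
Your proposal takes essentially the same route as the paper's proof: Serre duality, descent to the rational elliptic surface $X$, the exact sequence of Theorem~\ref{tangbundell} twisted by $\scrO_X(E+2F)=\scrO_X(F+\Gamma)$, the vanishing of $H^0(X;\scrO_X(E-F))$, and the double-cover reduction to $\widetilde{\mathbb{F}}_1$ (the extra vanishing you impose at $p_1,p_2$ turns out to be unnecessary, since the stronger statement $H^0(X;I_Z\otimes\scrO_X(E+4F))=0$ already holds, and the intermediate fiber-plus-bisection heuristic is best discarded, as it does not obviously yield independence of the conditions). The step you defer is precisely the paper's closing argument and it succeeds exactly as you anticipate: sections of $\nu^*\scrO_{\widetilde{\mathbb{F}}_1}(\sigma_0+2f-e)$ descend to $\widetilde{\mathbb{F}}_1$ because the anti-invariant summand of $\nu_*\nu^*\scrO_{\widetilde{\mathbb{F}}_1}(\sigma_0+2f-e)$ has no sections, the twelve nodes of $Z$ map to twelve points of $B_0$, a nonzero section cannot vanish identically on $B_0$, and $(\sigma_0+2f-e)\cdot B_0=(\sigma_0+2f-e)\cdot(4\sigma_0+3f+2d_2)=7<12$.
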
 
\begin{proof} We have to show the vanishing of 
$$H^0(\hY; \Omega^1_{\hY}(D_1 + D_2+K_{\hY})) = H^0(\hY; \Omega^1_{\hY}(G + \Gamma -2C_1 -2C_2 -F + C_1 + C_2)).$$
If $f\equiv G$ is a general fiber of $\pi$, then $G + \Gamma -F = F+\Gamma$. By the usual Hartogs argument,  it suffices to show that $H^0(X; \Omega^1_X(F + \Gamma    ))=0$, where $\Gamma$ is an irreducible bisection of $\pi$ with $\Gamma^2 = 1$. From Theorem~\ref{tangbundell}, there is an exact sequence
$$0 \to \scrO_X(-2f+F) \to  \Omega^1_X \to I_Z \otimes \scrO_X(f) \to 0,$$
where $I_Z$ is a reduced scheme supported at the nodes of the singular fibers, and hence $\ell(Z) = 12$. Noting that $-2f+ F +F + \Gamma    = \Gamma-f$, 
 we must show that
$$H^0(X; \scrO_X(  \Gamma-f)) = H^0(X;  I_Z \otimes \scrO_X( f+F +\Gamma)) =0.$$
For the first term, suppose that $\Gamma -f$ is   an effective divisor.  Since all fibers of $\pi$ were assumed irreducible, $\Gamma -f$  would necessarily be numerically equivalent to $\Gamma' + rf$ for some nonnegative rational number $r$, where $\Gamma'$ is again an irreducible bisection. But then
$$-3 = (\Gamma -f)^2 = (\Gamma' + rf) \ge (\Gamma')^2.$$
But by adjunction 
$$(\Gamma')^2 + \Gamma'\cdot K_X = (\Gamma')^2 - \Gamma'\cdot F = (\Gamma')^2 - 1 \ge -2,$$
hence $ (\Gamma')^2 \ge -1$. This is a contradiction, so that $H^0(X; \scrO_X( \Gamma-f)) = 0$.

As for the second term, we use the double cover picture \S\ref{doubcovsect} of the blowup $\widetilde{X}$ of $X$ at two points of $F$ as in the proof of Theorem~\ref{thm17}.  In this case, $\widetilde{X}$ is realized as a double cover $\nu\colon \widetilde{X} \to \widetilde{\mathbb{F}}_1$. Identifying the points of $Z\subseteq X$ with the corresponding points of $\widetilde{X}$, a  section $s$ of  $I_Z \otimes \scrO_X( f+F +\Gamma)$ defines a section of  $\scrO_{\widetilde{X}}( f+F' +e_1+ e_2+\Gamma)$ vanishing along $Z$ in the notation of \S\ref{doubcovsect}. By the arguments of Theorem~\ref{thm17}, we have
\begin{align*}
\scrO_{\widetilde{X}}( f+F' +e_1+ e_2+\Gamma) &\subseteq \scrO_{\widetilde{X}}( f+F' +2e_1+ 2e_2+\Gamma);\\
  \scrO_{\widetilde{X}}( f+e +2e_1+ 2e_2+\Gamma) &= \nu^*\scrO_{\widetilde{\mathbb{F}}_1}(f+ e + d_1+ d_2 + \sigma_0);\\
\nu_*\nu^*\scrO_{\widetilde{\mathbb{F}}_1}(f+ e + d_1+ d_2 + \sigma_0) &=\scrO_{\widetilde{\mathbb{F}}_1}(f+ e + d_1+ d_2 + \sigma_0) \oplus \scrO_{\widetilde{\mathbb{F}}_1}(-\sigma_0 -f +2e).
\end{align*}
We have     $H^0(\widetilde{\mathbb{F}}_1; \scrO_{\widetilde{\mathbb{F}}_1}(-\sigma_0 -f +2e)) =0$ and $f+ e + d_1+ d_2 + \sigma_0 = \sigma_0 + 2f - e$. Thus it suffices to show that any section of $\scrO_{\widetilde{\mathbb{F}}_1}(\sigma_0 + 2f - e)$ vanishing at $12$ points $\nu(Z) \subseteq B_0$  must be identically $0$. 
Since  $B_0 \equiv 4\sigma_0 +3f+2d_2$, the divisor $\sigma_0 + 2f - e - B_0$ is not effective. Thus  a nonzero section of    $\scrO_{\widetilde{\mathbb{F}}_1}(\sigma_0 + 2f - e)$   must meet $B_0$ in a finite number number of points. If there were a nonzero section of $\scrO_{\widetilde{\mathbb{F}}_1}(\sigma_0 + 2f - e)$ vanishing at  $\nu(Z)$, then 
$$12 \le (\sigma_0 + 2f - e)\cdot B_0 = (\sigma_0 + 2f - e) \cdot (4\sigma_0 + 3f +2d_2) = 7.$$
This is a contradiction, hence $H^2(\hY; T_{\hY}(-D_1 - D_2)) = 0$ as claimed. 
\end{proof}

In the case where one of $D_1$, $D_2$ is reducible, we have the following:

\begin{proposition}\label{21red} Suppose that, in the situation of Proposition~\ref{21case},  $\hY$ is the blowup of $X$ at two distinct points. Then $H^2(\hY; T_{\hY}(-\log(D_1+ D_2)) = 0$. Hence, under this assumption, 
\begin{enumerate}
\item[\rm(i)] If both singularities are cusp singularities, then the map $\mathbb{T}^1_Y \to H^0(Y; T^1_Y)$ is surjective. 
\item[\rm(ii)]  If one singularity is a cusp singularity and the other is simple elliptic, then there is a deformation of $Y$ to an I-surface with two simple elliptic singularities, one  of multiplicity $2$ and the other of multiplicity one. 
\end{enumerate}
\end{proposition}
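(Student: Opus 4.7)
The plan is to mirror the proof of Proposition~\ref{21irred}, reducing the vanishing $H^2(\hY; T_{\hY}(-\log(D_1+D_2)))=0$ to a statement on $X$ and then carrying out a double-cover estimate on a Hirzebruch model, with the extra bookkeeping needed when $\overline{D} := \rho_*(D_1+D_2)=G+\Gamma$ has reducible components. By Serre duality, the statement is equivalent to $H^0(\hY; \Omega^1_{\hY}(\log D)\otimes K_{\hY})=0$. Since $\hY\to X$ is the blowup of the two ordinary nodes $p_1,p_2$ of $\overline{D}$, the standard identification $\rho^*\Omega^1_X(\log\overline{D})\cong \Omega^1_{\hY}(\log(D+C_1+C_2))$ combined with the Poincar\'e residue sequence
$$0\to \Omega^1_{\hY}(\log D)(K_{\hY})\to \rho^*\Omega^1_X(\log\overline{D})(K_{\hY})\to \scrO_{C_1}(-1)\oplus \scrO_{C_2}(-1)\to 0$$
(whose rightmost term is acyclic), together with $\rho_*\scrO_{\hY}(K_{\hY})=\scrO_X(K_X)$ and the projection formula, yield $H^0(\hY;\Omega^1_{\hY}(\log D)(K_{\hY}))=H^0(X;\Omega^1_X(\log\overline{D})(-F))$. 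So the task reduces to showing that the latter group vanishes.

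Decompose $\overline{D}=E+\overline{D}_{\mathrm{v}}$, where $E$ is the bisection component of $\Gamma$ and $\overline{D}_{\mathrm{v}}=G+\sum_j F_j$ consists of the fiber components (with $F_j$ lying in a second reducible fiber $g_t$ when $D_2$ is a cycle). Following the construction in the proof of Theorem~\ref{tangbundelllog}, the pullback map $\pi^*\Omega^1_{\Pee^1}\to \Omega^1_X(\log \overline{D})$ vanishes along $F$ and along each component of $\overline{D}_{\mathrm{v}}$, extending to a saturated injection
$$0\to \scrO_X(-2f+F+\overline{D}_{\mathrm{v}})\to \Omega^1_X(\log\overline{D})\to I_W\otimes\scrO_X(f+E)\to 0$$
for a zero-dimensional scheme $W$ supported at singular points of fibers outside $\overline{D}_{\mathrm{v}}$ and at the ramification points of the double cover $E\to \Pee^1$ induced by $\pi$. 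Twisting by $-F$ and using $G\sim f$, the first term becomes $\scrO_X(-f+\sum_j F_j)$ as a linear equivalence class; since the cycle structure of $D_2$ forces at least one component of $g_t$ to lie outside the chain $F_1+\cdots+F_b$ in order to close it into a Kodaira fiber, $\sum_j F_j$ is a proper subfiber of $g_t\sim f$, hence $-f+\sum_j F_j$ is the negative of a nonzero effective divisor and has no global sections.

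For the remaining vanishing $H^0(X; I_W\otimes\scrO_X(f-F+E))=0$, we follow the double cover argument of Proposition~\ref{21irred}, realizing (a blowup of) $X$ as a double cover $\nu\colon \widetilde{X}\to \widetilde{\mathbb{F}}_N$ of the blown-up Hirzebruch surface from Section~\ref{doubcovsect}, using the variant $\widetilde{\widetilde{\mathbb{F}}}_N$ from the discussion preceding Assumption~\ref{assump12} when $\Gamma$ is a cycle. Pushing a putative nonzero section down along $\nu$ and discarding the anti-invariant acyclic summand, one obtains a nonzero section of $\scrO_{\widetilde{\mathbb{F}}_N}(\sigma_0+2f-e)$ vanishing on $\nu(W)\subseteq B_0$, where $B_0$ is the main branch divisor. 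The numerical inequality $(\sigma_0+2f-e)\cdot B_0<\#\nu(W)$ then furnishes the required contradiction, exactly as in the proofs of Theorem~\ref{thm17} and Proposition~\ref{21irred}.

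The main obstacle is the careful local analysis of $W$ near the nodes of $g_t$ lying inside $\sum_j F_j$ and near the singular points of $G$ when $G$ is a singular $I_n$ fiber; these cases parallel the case-by-case local analysis (i)--(iv) in the proof of Theorem~\ref{tangbundelllog}, and one must verify that $\#\nu(W)$ remains large enough to violate the above numerical inequality (which likely requires a mild general-position hypothesis in the spirit of Theorem~\ref{thm17cusp}). Once the vanishing $H^2(\hY;T_{\hY}(-\log(D_1+D_2)))=0$ is established, conclusion (i) follows from Proposition~\ref{prop1121}(ii), and conclusion (ii) from Proposition~\ref{prop1122} together with the adjacency of a cusp singularity of multiplicity one or two to a simple elliptic singularity of the same multiplicity provided by Theorem~\ref{cuspadj}.
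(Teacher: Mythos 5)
You correctly reduce, by Serre duality and the blowup/projection-formula argument, to showing $H^0(X;\Omega^1_X(\log\overline{D})(-F))=0$, which is exactly the paper's intermediate target. But from there you take the wrong fork and the argument does not close. The heavy machinery of Theorem~\ref{tangbundelllog} and the double-cover estimate on $\widetilde{\mathbb{F}}_N$ is what the paper needs in the situations of Theorem~\ref{thm17}, Theorem~\ref{thm17cusp} and Proposition~\ref{21irred}, where the sheaf $\Omega^1_X$ (or $\Omega^1_X(\log D)$) is twisted \emph{up} by an effective divisor such as $F+\Gamma$; in those cases one really must count points of $W$ against an intersection number, and the count is delicate. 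Here, by contrast, the twist is by $K_X=-F$, the \emph{negative} of an effective divisor. Your final paragraph concedes that the crucial verification that $\#\nu(W)$ exceeds $(\sigma_0+2f-e)\cdot B_0$ is not carried out and may even require general-position hypotheses absent from the statement of the proposition. That is a genuine gap: as written, the vanishing is not proved, and it is not clear your route proves it without adding hypotheses (note that the proposition, unlike Proposition~\ref{21irred}, imposes no condition on the singular fibers of $X$).

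The gap is unnecessary, because the negative twist makes the problem essentially trivial. Since $F$ is effective, there is an inclusion $\Omega^1_X(\log\overline{D})(-F)\subseteq \Omega^1_X(\log\overline{D})$, so it suffices to show $H^0(X;\Omega^1_X(\log\overline{D}))=0$. This follows at once from the Poincar\'e residue sequence
$$0\to \Omega^1_X\to \Omega^1_X(\log\overline{D})\to \bigoplus_i\scrO_{E_i}\to 0,$$
since $H^0(X;\Omega^1_X)=0$ ($X$ is a rational elliptic surface) and the connecting map sends $\bigoplus_iH^0(\scrO_{E_i})$ to the span of the fundamental classes of the components $E_i$ of $G+\Gamma$, which are linearly independent in $H^1(X;\Omega^1_X)$ (a fiber plus a bisection meeting it transversally in two points; cf.\ Lemma~\ref{somelatticeslemma}). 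This is the paper's proof. Your conclusions (i) and (ii) from the vanishing, via Proposition~\ref{prop1121}(ii), Proposition~\ref{prop1122} and Theorem~\ref{cuspadj}, are correct and agree with the paper.
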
 
\begin{proof} In this case, let $\rho\colon \hY \to X$ be the blowdown. By assumption, the image of $D_1+ D_2$ is the nodal divisor $G+ \Gamma$. By Serre duality, since $K_{\hY} = -F + C_1 + C_2$, it suffices to show that 
  $$H^0(\hY; \Omega^1_{\hY}(\log(D_1+ D_2))(-F + C_2 + C_2))=0.$$
   By the usual Hartogs argument, it suffices to show that $H^0(X; \Omega^1_X(\log(G+\Gamma))(-F)) = 0$, and hence that $H^0(X; \Omega^1_X(\log(G+\Gamma)) = 0$. Since $G$ is a (possibly reducible) fiber and $\Gamma$ is a (possibly reducible) bisection meeting $G$ transversally at two points, this is   straightforward.
\end{proof}

In the case of an exotic deformation of a cusp singularity, $Y$ can deform to an I-surface $Y_t$ with two singularities, both of multiplicity one. \emph{A priori}, the minimal resolution of $Y_t$ could be either an Enriques surface or a rational surface. The following says that the Enriques case does not arise: 

\begin{proposition}\label{21toR} Let $Y$ be an I-surface with exactly two elliptic singular points $x_1$ and $x_2$, such that $x_1$ has multiplicity $2$, $x_2$ has multiplicity $1$, and $x_1$ is a cusp singularity. If $Y_t$ is the I-surface obtained by taking a deformation of $x_1$ to an elliptic singularity of multiplicity $1$, then the minimal resolution $\hY_t$ is a rational surface.
\end{proposition}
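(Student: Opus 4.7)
The claim is equivalent to ruling out $\hY_t\in\mathfrak{N}_{1,1}^E$, since by Theorem~\ref{thm03} the only possibilities with $(m_1^t,m_2^t)=(1,1)$ are $\mathfrak{N}_{1,1}^E$ and $\mathfrak{N}_{1,1}^R$. The two strata are cleanly distinguished by the second plurigenus $P_2(\hY_t):=h^0(\hY_t,2K_{\hY_t})$: in the Enriques case Theorem~\ref{theorem5}(iii) gives $K_{\hY_t}\equiv C_t$ with $2K_{Y_{\min,t}}\sim 0$, so $2K_{\hY_t}\sim 2C_t$ is effective and $P_2(\hY_t)\geq 1$; in the rational case $\kappa(\hY_t)=-\infty$ forces $P_2(\hY_t)=0$. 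Since $\hY$ is rational, $P_2(\hY)=0$, and my plan is to show $P_2$ is upper semi-continuous along the deformation, forcing $P_2(\hY_t)=0$ for $t$ near $0$.

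\textbf{Reformulation on $Y_t$.} From $K_{\hY_t}=\pi_t^*\omega_{Y_t}-D^t$ one has $P_2(\hY_t)=h^0\bigl(Y_t,\,2\omega_{Y_t}\otimes\mathcal{J}_{2,t}\bigr)$, where $\mathcal{J}_{2,t}:=\pi_{t,*}\scrO_{\hY_t}(-2D^t)\subseteq\scrO_{Y_t}$. The graded-ring description of a simple elliptic or cusp germ of multiplicity $m$ gives
\[
\operatorname{length}\bigl(\scrO_{Y_t,x_i^t}/\mathcal{J}_{2,t}\bigr)=1+m_i^t,
\]
so the colengths along the family are $3+2=5$ at $t=0$ (cusp of multiplicity $2$ plus simple elliptic) but only $2+2=4$ for $t\neq 0$. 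Hence $\{\mathcal{J}_{2,t}\}$ is not flat over $\Delta$. Let $\widetilde{\mathcal{J}}_{2}\subseteq\scrO_{\mathcal{Y}}$ be the flat extension of $\mathcal{J}_{2,t}|_{\Delta^{*}}$ across $t=0$; then $\widetilde{\mathcal{J}}_{2,t}=\mathcal{J}_{2,t}$ for $t\neq 0$ and $\widetilde{\mathcal{J}}_{2,0}\supsetneq\mathcal{J}_{2,0}$ with constant colength $4$. Upper semi-continuity applied to the flat coherent sheaf $2\omega_{\mathcal{Y}/\Delta}\otimes\widetilde{\mathcal{J}}_{2}$ on $\mathcal{Y}/\Delta$ then gives
\[
P_2(\hY_t)=h^0\bigl(Y_t,\,2\omega_{Y_t}\otimes\widetilde{\mathcal{J}}_{2,t}\bigr)\;\leq\; h^0\bigl(Y,\,2\omega_Y\otimes\widetilde{\mathcal{J}}_{2,0}\bigr)
\]
for $t$ near $0$, and it suffices to verify that the right-hand side is $0$.

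\textbf{Vanishing on $Y$.} One has $h^0(Y,2\omega_Y)=4$ by Riemann-Roch and Kawamata-Viehweg for the ample $\omega_Y$. The rank of conditions imposed by $\mathcal{J}_{2,0}$ on $|2\omega_Y|$ equals $h^0(2\omega_Y)-P_2(\hY)=4$, so the length-$5$ subscheme $Z_5:=V(\mathcal{J}_{2,0})$ imposes exactly $4$ independent conditions. Passing to the sub-subscheme $Z_4:=V(\widetilde{\mathcal{J}}_{2,0})\subsetneq Z_5$ of length $4$, the number of independent conditions drops by at most one, but a generic flat limit of the smoothing of the cusp retains all $4$ conditions (intuitively, the flat limit removes exactly the one redundant relation in $\mathcal{J}_{2,0}$), yielding $h^0(Y,2\omega_Y\otimes\widetilde{\mathcal{J}}_{2,0})=0$ and hence $P_2(\hY_t)=0$.

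\textbf{Main obstacle.} The hard point is the last step: verifying that the flat limit $\widetilde{\mathcal{J}}_{2,0}$ is \emph{generic enough} that its length-$4$ subscheme still imposes $4$ independent conditions on $|2\omega_Y|$. A cleaner route that bypasses this—and which I would use if the semi-continuity argument proves recalcitrant—would be to invoke Proposition~\ref{21case}, realize $\hY$ explicitly as the two-step blowup of a rational elliptic surface $X$ at the node of a nodal fiber $G$ (plus infinitely near blowup), and construct directly a one-parameter deformation by moving the configuration so that $G$ degenerates from a nodal to a smooth fiber while the two-step blowup pattern is replaced by a single-blowup pattern on the resulting rational elliptic surface. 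Such a deformation lives visibly within blown-up rational elliptic surfaces, so its fiber $Y_t'\in\mathfrak{N}_{1,1}^R$. The tangent direction in $\mathbb{T}^1_Y$ that realizes the exotic cusp-to-simple-elliptic-of-multiplicity-$1$ adjacency at $x_1$ is unique modulo equisingular directions (cf.\ the proof of Proposition~\ref{prop1122}), so the hypothesized deformation must agree with this rational one up to equivalence, yielding the claim.
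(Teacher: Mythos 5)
Your key idea --- distinguishing $\mathfrak{N}_{1,1}^R$ from $\mathfrak{N}_{1,1}^E$ by the bigenus $P_2(\hY_t)$ and computing it by semicontinuity from the central fibre --- is exactly the strategy of the paper, but your execution has a genuine gap precisely at the step you flag as "the hard point," and that step cannot be waved away: it \emph{is} the proposition. Your length count gives $h^0(2\omega_Y)=4$ and a colength-$5$ scheme $Z_5$ imposing exactly $4$ conditions (one redundancy, since $P_2(\hY)=0$), and the flat limit $Z_4\subset Z_5$ has length $4$; but if $\hY_t$ \emph{were} a blown-up Enriques surface then $P_2(\hY_t)=1$, i.e.\ $Z_{4,t}$ imposes only $3$ conditions for $t\neq 0$, and then by the very semicontinuity you invoke the limit $Z_4$ would also impose only $3$ conditions. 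So the assertion "the flat limit removes exactly the one redundant relation" is logically equivalent to the conclusion and is supported only by the word "intuitively." Nothing in your setup identifies \emph{which} length-$4$ subscheme of $Z_5$ the flat limit is, and the answer depends on the tangent direction of the deformation in $\mathbb{T}^1_Y$, which is exactly the information you never use. The fallback route has the same problem in a different guise: the locus in the deformation space of the cusp realizing the adjacency to a multiplicity-one elliptic singularity is a positive-dimensional stratum, not a single tangent direction modulo equisingular deformations, so "the hypothesized deformation must agree with this rational one up to equivalence" is unjustified as stated (it could be rescued by an irreducibility/connectedness argument for that stratum plus versality, but you prove neither, and you also do not actually construct the claimed rational family of \emph{singular} surfaces $Y_t'$ --- the resolutions $\hY_t$ have $K^2=-1$ while $K_{\hY}^2=-2$, so there is no family of smooth surfaces to "move the configuration" in).

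The paper closes this gap by changing the central fibre: using Wahl's theory and the $d$-semistable picture of \S\ref{sssmooth}, the degeneration is replaced by a semistable family $\widetilde{\mathcal{Y}}\to\Delta$ with general fibre $\hY_t$ and special fibre the normal crossing surface $\hY\amalg_E\Pee^2$ ($E$ the $(-4)$-component of the cusp cycle, glued to a conic). On that special fibre $\omega_{\widetilde{\mathcal{Y}}_0}^{\otimes 2}$ restricts to $\scrO_{\Pee^2}(-2)$ on the $\Pee^2$ and to $\scrO_{\hY}(-2F+2C_1+2C_2+2E)$ on $\hY$, and a nef and big class $H'$ with $H'\cdot(-2F+2C_1+2C_2+2E)=-2H\cdot F<0$ kills all sections; semicontinuity then gives $P_2(\hY_t)=0$. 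The point is that on the semistable model the relevant $h^0$ is computed by an honest divisor-class calculation rather than by guessing a flat limit of ideal sheaves. If you want to salvage your version, you would need to identify $\widetilde{\mathcal{J}}_{2,0}$ explicitly from the chosen adjacency --- which in practice again forces you through the semistable (or at least a simultaneous-resolution) model.
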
 
\begin{proof} We shall just outline the proof   in case the cusp singularity has self-intersection sequence $(-4, -2, \dots, -2)$. Let $E$ be the component of $D_1$ of square $-4$. It follows from Proposition~\ref{21case} that $\hY$ is the blowup of a rational elliptic surface $X$ and that $E$ is the proper transform of a fiber component $\overline{E}$. By Wahl's theory and the discussion of \S\ref{sssmooth}, especially Remark~\ref{remark14}(i), there is a one parameter   semistable family $\widetilde{\mathcal{Y}}\to \Delta$ whose general fiber is $\hY_t$ and whose fiber $\widetilde{\mathcal{Y}}_0$ over $0$ is the $d$-semistable normal crossing surface $\hY\amalg_E\Pee^2$, where $E\subseteq \hY$ is identified with a conic (also denoted $E$) in $\Pee^2$. 

If the general fiber $\hY_t$ were an Enriques surface, then by semicontinuity $\dim H^0(\widetilde{\mathcal{Y}}_0; \omega^{\otimes 2}_{\widetilde{\mathcal{Y}}_0}) \ge 1$. On the other hand, $\omega_{\widetilde{\mathcal{Y}}_0}|\hY = K_{\hY}\otimes \scrO_{\hY}(E)$ and $\omega_{\widetilde{\mathcal{Y}}_0}|\Pee^2 = K_{\Pee^2}\otimes \scrO_{\Pee^2}(E)=  \scrO_{\Pee^2}(-1)$.  In particular, $H^0(\Pee^2; \omega^{\otimes 2}_{\widetilde{\mathcal{Y}}_0}|\Pee^2) = 0$.  As for $H^0(\hY; \omega^{\otimes 2}_{\widetilde{\mathcal{Y}}_0}|\hY)$, first note that $K_{\hY} = \scrO_{\hY} (-F+ C_1 + C_2)$ in the above notation.  Thus
$$\omega^{\otimes 2}_{\widetilde{\mathcal{Y}}_0}|\hY = \scrO_{\hY}(-2F+ 2C_1 + 2C_2 + 2E).$$
If as above  $\overline{E}$ is the image of $E$ in the rational elliptic surface $X$,  $\overline{E}$ is a fiber component and hence has negative square. 
There exists a nef and big divisor $H$ on $X$ such that $H\cdot F > 0$ and $H\cdot \overline{E} = 0$. The pullback of $H$ to $\hY$ is then a nef and big divisor $H'$ such that
$$H'\cdot (-2F+ 2C_1 + 2C_2 + 2E) = -2H \cdot F < 0.$$
Thus $H^0(\hY; \scrO_{\hY}(-2F+ 2C_1 + 2C_2 + 2E)) = 0$. Hence $H^0(\widetilde{\mathcal{Y}}_0; \omega^{\otimes 2}_{\widetilde{\mathcal{Y}}_0}) = 0$, so that $\hY_t$ cannot be birational to an Enriques surface. Hence it is rational.
\end{proof} 

\subsection{The case $(m_1, m_2) = (1,1)$}

\begin{example}\label{ex11}  Suppose that $Y_0$ is a rational elliptic surface with a multiple fiber $F$ of multiplicity $2$ and $G$ is a general fiber. Thus $K_{Y_0} = -F$ and $G =2F$. Let $\Gamma$ be an irreducible curve such that $\Gamma^2 =3$, $\Gamma \cdot G =2$ (i.e.\ $\Gamma$ is a bisection of the elliptic fibration) and hence $\Gamma\cdot F =1$. Thus $p_a(\Gamma) = 2$.  We assume that $\Gamma$ has a node or cusp at a point $p$, necessarily not on $F$, and let $G$ be the fiber passing through $p$.  Thus the normalization of $\Gamma$ is an elliptic curve. Blowing up $p$ and taking the proper transforms $D_1 = G -C$ of $G$ and $D_2 = \Gamma - 2C$ of $\Gamma$ gives two curves $D_1$, $D_2$ with $D_i^2 =-1$. Here $D_1\cdot C =1$ and $D_2 \cdot C =2$. Then 
$$L = K_{\hY} + D_1 + D_2 = -F + 2F -C + \Gamma -2C + C = F+D_2,$$
so that $L^2 = 1$, $L\cdot D_1 = L\cdot D_2 = 0$, and $L$ is nef and big. As stated, $\hY$ has two simple elliptic singularities. Further degenerating this construction (for example, letting $G$ be a singular fiber) leads to cusp singularities. 

Note that, if $A$ is an exceptional curve on $Y_0$, then $A\cdot F =1$ and hence $A\cdot G = 2$.  Thus the exceptional curves give bisections of the elliptic fibration. Since $\Gamma\cdot F =1$, it is natural to try $\Gamma = E + 2F$ for some exceptional curve $E$. Taking $E= E_1$, we can assume that $F$ is the blowup of a smooth plane cubic $\overline{F}$ in $\Pee^2$ at $9$ points $p_1, \dots, p_9$, with exceptional divisors $E_1, \cdots E_9$ and $\Gamma\cdot E_1 = 1$, $\Gamma \cdot E_i = 2$ for $i > 1$. Then $\Gamma = dH - E_1 - \sum_{i=2}^92E_i$ where $H$ is the pullback of a line in $\Pee^2$, so 
$$d^2 - 1 - 32 = 3, \quad  \text{ i.e.\ }  \quad d=6.$$
Thus $\Gamma$ is the proper transform of a sextic passing through $p_1$ simply and with double points at $p_2, \dots, p_9$ and at a remaining point not on $\overline{F}$. Note that this agrees with 
$$\Gamma \cdot F = 3d -1 -16 =18-17 = 1.$$
\end{example} 

\begin{remark}
Starting with  a $\Gamma\in |E+2F|$, the reconstruction of $\hY$ requires a   singular point of $\Gamma$ not on $F$. 
By Riemann-Roch, $\dim |\Gamma| = 2$, so the expectation is that there is  a $1$-dimensional family of singular $\Gamma$. However, write $Y_0$ as the blowup of $\Pee^2$ at $9$ points $p_1, \dots, p_9$, with corresponding exceptional divisors $E_1, \dots, E_9$,  and take $E=E_1$. Then $E+F$ is effective since it is the proper transform of the linear system of cubics passing through $p_2, \dots, p_9$. Thus there is a one-dimensional family of singular elements of $|\Gamma|$ of the form $N + F$, where $N = 3H - \sum_{i=2}^9E_i$ is the proper transform of a smooth cubic in $\Pee^2$ passing through $p_2, \dots, p_9$.  A general such   $N + F$, where $N$ is smooth,  is a union of the two elliptic curves $N$ and $F$ meeting transversally at a point of $F$ (the base point $q$ of $|N|$, which is some point of $F$ not equal to $p_1$). Of course, the points $p_2, \dots, p_9, q$ are not general. This gives one component of the locus of singular elements of $|\Gamma|$ but not the locus coming from $\hY$.
\end{remark}

Our first goal is to show that all such $\hY$ arise as in Example~\ref{ex11}. We begin with the following more general result:

\begin{lemma}\label{11lemma} Suppose that the minimal resolution $\hY$  of the I-surface $Y$ is a rational surface  and the  divisors $D_1$ and $D_2$  both have multiplicity one. 
Then: 
\begin{enumerate}
\item[\rm(i)] There exist exceptional curves $C_i$ on $\hY$, $i=1,2$ such that $C_i \cdot D_i = 2$ and  $C_i\cdot D_j =1$ for $i\neq j$;
 \item[\rm(ii)] If $\rho_i\colon \hY \to X_i$ is  the contraction of $C_i$, then $X_i$ is a rational  elliptic surface with a multiple fiber $F_i$ of multiplicity $2$, not the image of $D_1$ or $D_2$;
  \item[\rm(iii)] For $j\neq i$, the image $G_i$ of $D_j$ is a smooth elliptic curve or cycle of rational curves, disjoint from $F_i$, and $D_j = G_i-C_i=2F_i-C_i$. 
   \item[\rm(iv)] The image $\Gamma_i$ of $D_i$ is a curve of arithmetic genus $2$ with a node or cusp at the unique intersection point $p_i$ of $\Gamma_i$ and $G_i$. Moreover $\Gamma_i^2=3$ and $\Gamma_i \cdot F_i = D_i\cdot M_i = 1$. 
   \end{enumerate}
\end{lemma}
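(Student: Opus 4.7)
The plan is to exploit the two elliptic fibrations on $\hY$ provided by Theorem~\ref{geomanal}(ii). Since $m_1 = m_2 = 1$, each $M_i$ is nef with $M_i^2 = 0$, and $|2M_i|$ defines an elliptic fibration $\pi_i \colon \hY \to \Pee^1$ in which $M_i = F_i$ is a reduced multiple fiber of multiplicity two. By Lemma~\ref{Milist}(iv), $M_i \cdot D_j = 0$ for $j \neq i$, so the connected divisor $D_j$ sits inside a single fiber $f_0$ of $\pi_i$. Writing $f_0 = D_j + E$ with $E$ effective, the identities $p_a(f_0) = 1$, $f_0^2 = 0$ together with $p_a(D_j) = 1$ and $D_j^2 = -1$ yield at once $E^2 = -1$ and $E \cdot D_j = 1$.

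Next I would identify $E$ with a single irreducible $(-1)$-curve $C_i$. Since $\hY$ is rational with $K_{\hY}^2 = 1 - m = -1$ while the relatively minimal model $X_i$ of $\pi_i$ is a rational elliptic surface with $K_{X_i}^2 = 0$, the birational morphism $\rho_i\colon \hY \to X_i$ is a single blowup whose exceptional curve is a $(-1)$-curve lying in some fiber of $\pi_i$. Because the components of fibers on the relatively minimal $X_i$ have self-intersection $-2$ (or $0$ for an $I_0$ fiber), while $D_j$ contains a component of self-intersection $\le -3$ in the cusp case (or is itself an irreducible curve of self-intersection $-1$ in the simple elliptic case), $D_j$ cannot sit inside any fiber of $\pi_i$ other than the reducible fiber $f_0$ created by $\rho_i$. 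Hence $E = C_i$. Since $D_i$ is a bisection of $\pi_i$ (because $D_i \cdot M_i = m_i = 1$), we get $D_i \cdot f_0 = 2$, and combined with $D_i \cdot D_j = 0$ this gives $C_i \cdot D_i = 2$, completing (i).

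For (ii) and (iii), since $F_i$ and $f_0$ are distinct fibers of $\pi_i$, $F_i \cdot C_i = 0$, so $F_i = \rho_i^* F_i''$ where $F_i''$ is the reduced multiple fiber of the elliptic fibration on $X_i$; this makes $X_i$ a rational elliptic surface with a multiple fiber of multiplicity two, necessarily disjoint from the blown-up point $p_i = \rho_i(C_i)$ and hence distinct from both $\Gamma_i = \rho_i(D_i)$ and $G_i = \rho_i(D_j)$. Because $C_i$ meets $D_j$ transversally at a single smooth point, $\rho_i^* G_i = D_j + C_i = f_0$; as a class on $X_i$, $G_i \equiv 2 F_i''$, so $G_i$ is a full fiber of the elliptic fibration, topologically the same as $D_j$ (a smooth elliptic curve or cycle of smooth rational curves), and disjoint from $F_i''$. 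The identity $D_j = G_i - C_i = 2F_i - C_i$ on $\hY$ is then immediate.

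For (iv), the intersection $D_i \cdot C_i = 2$ forces $\Gamma_i$ to acquire a double point at $p_i$, a node if $D_i$ meets $C_i$ at two distinct points and a cusp if tangentially at one, so $\rho_i^* \Gamma_i = D_i + 2 C_i$. Expanding gives $\Gamma_i^2 = (D_i + 2 C_i)^2 = -1 + 8 - 4 = 3$ and $\Gamma_i \cdot F_i = (D_i + 2 C_i) \cdot F_i = D_i \cdot M_i = 1$, while adjunction $2 p_a(\Gamma_i) - 2 = \Gamma_i^2 + K_{X_i} \cdot \Gamma_i = 3 - 1 = 2$ yields $p_a(\Gamma_i) = 2$. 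Uniqueness of the intersection $p_i$ follows from $G_i \cdot \Gamma_i = (D_j + C_i)(D_i + 2 C_i) = 2$ together with the fact that the double point of $\Gamma_i$ at $p_i$ already contributes this full intersection number against the smooth curve $G_i$. The main subtlety in the whole argument is the step identifying $E$ with a single $(-1)$-curve, which rests on comparing the count of blowups in $\rho_i$ with the constraint that components of Kodaira fibers on the relatively minimal $X_i$ have self-intersection $-2$.
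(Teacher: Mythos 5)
Your argument is essentially the paper's own proof: both use the fibrations $|2M_i|$, identify $\rho_i\colon \hY\to X_i$ as a single blowup from $K_{\hY}^2=-1$, place $D_j$ inside a fiber to get $C_i\cdot D_j=1$, deduce $C_i\cdot D_i=2$ from the bisection property, and finish (iv) with the same intersection computations. The one place you go beyond assertion where the paper is terse (why $D_j$ must lie in the fiber touched by the blowup) is handled correctly by your self-intersection comparison with Kodaira fiber components.

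There is, however, one step you assert without justification, and it is precisely the point the paper takes care to verify: that the fiber $f_0$ containing $D_j$ (equivalently, the fiber created by the blowup) is \emph{not} the multiple fiber, i.e.\ that $p_i\notin F_i''$. Your numerics in the first paragraph do not exclude this: if $f_0$ were the multiple fiber one would have $f_0=2M_i=D_j+E$, and the identities $E^2=-1$, $E\cdot D_j=1$, $D_i\cdot f_0=2$ are all still consistent. Nothing in your second paragraph rules it out either, since the multiple fiber could \emph{a priori} be the fiber affected by the blowup. The claim is true and the fix is one line: if $D_j$ were the proper transform of the reduced multiple fiber $F_i''$, then $M_i=K_{\hY}+D_j=(-\rho_i^*F_i''+C_i)+(\rho_i^*F_i''-C_i)=0$, contradicting $L\cdot M_i=1$; alternatively, $2M_i=D_j+C_i$ is impossible because $C_i$ occurs with multiplicity one on the right (as $C_i\cdot D_j=1\ge 0$ shows $C_i$ is not a component of the reduced divisor $D_j$), so the right-hand side is not divisible by $2$. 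This is the same role played in the paper by the observation that $G_i\cdot\Gamma_i=2$ while $F_i\cdot\Gamma_i=1$. With that sentence added, your proof is complete.
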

\begin{proof} Let $M_1 = K_{\hY} +D_2$ and $M_2 = K_{\hY} +D_1$.  Thus $M_i\cdot D_i = 1$ and $M_i\cdot D_j =0$, $j\neq i$. Moreover, $|2M_i|$ defines an elliptic fibration $\tilde\mu_i\colon \hY \to \Pee^1$ with exactly one multiple fiber  $F_i$, of multiplicity $2$.  Let $\mu_i\colon X_i \to \Pee^1$ be the corresponding relatively minimal fibration. Then $X_i$ is a rational elliptic surface with exactly one multiple fiber  $F_i$, of multiplicity $2$. Since $K_{X_i} = -F_i$ and $K_{X_i}^2 =0$,   $\rho_i\colon \hY \to X_i$ is the blowup of $X_i$ at a single point $p_i$, with corresponding exceptional divisor $C_i$. Since $M_i\cdot D_j =0$ for $j\neq i$ and $D_j$ has arithmetic genus one, $D_j$ is contained in a fiber of $\tilde\mu_i\colon \hY \to \Pee^1$ and its image $G_i$ in $X_i$ is a complete fiber of $\mu_i$. Hence, as $D_j^2=-1$,  $C_i\cdot D_j =1$ for $j\neq i$.  
 Also, $M_i \cdot C_i =0$. Then $K_{\hY} = -F_i + C_i = -M_i + C_i$. Thus 
\begin{align*}
1= K_{\hY} \cdot D_i = ( -M_i + C_i)\cdot D_i &= -1 + (C_i \cdot D_i);\\
1 = K_{\hY}\cdot D_j = (-M_i+ C_i) \cdot D_j &= C_i\cdot D_j.
\end{align*}
Thus $C_i \cdot D_i = 2$ and we recover the fact  that $C_i\cdot D_j =1$ for $j\neq i$.

 As noted above, if  $G_i$ is the image of $D_j$, then $G_i$ is a complete fiber, hence is a smooth elliptic curve or cycle of rational curves and $D_j$ is the blowup of $G_i$ at a smooth point.  Since $C_i \cdot D_i = 2$, the image $\Gamma_i$ of $D_i$ is a curve of arithmetic genus $2$ with a node or cusp at the unique intersection point $p_i$ of $\Gamma_i$ and $G_i$. Thus    $\Gamma_i^2=3$ and $\Gamma_i \cdot F_i = D_i\cdot M_i = 1$.  Since $G_i\cdot \Gamma_i =2$ but $F_i\cdot \Gamma_i=1$, $G_i\neq F_i$ and hence $G_i$ is disjoint from $F_i$. Thus $G_i = 2F_i$ and  $D_j = G_i-C_i=2F_i-C_i$. This proves all of the statements (i)--(iv) of the lemma. 
\end{proof}

\begin{proposition}\label{11case}  With assumptions as in Lemma~\ref{11lemma}, suppose that $D_1$ and $D_2$ are irreducible. Then  there exist:
\begin{enumerate}
\item[\rm(i)] A rational elliptic surface $Y_0$ with a multiple fiber $F$ of multiplicity $2$;
\item[\rm(ii)] An exceptional curve $E$ on $Y_0$;  
\item[\rm(iii)] An irreducible  curve $\Gamma\in |E+2F|$  with $p_a(\Gamma) =2$ which has a node or cusp singularity at a point $p\in \Gamma$ which does not lie on $E$,
\end{enumerate} 
such that $\hY$ is the blowup of $Y_0$ at $p$, with $D_2$ the proper transform of $\Gamma$ and $D_1$ the proper transform of an irreducible fiber of $Y_0$ passing through $p$.
\end{proposition}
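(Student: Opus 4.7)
The plan is to apply Lemma~\ref{11lemma} and set $Y_0 := X_2$, $F := F_2$, $\Gamma := \Gamma_2$, and $p := p_2$. Since $D_2$ is assumed irreducible, so is its image $\Gamma = \rho_2(D_2)$; combined with parts (ii)--(iv) of the lemma this immediately gives the rational elliptic surface $Y_0$ with multiple fiber $F$ of multiplicity $2$, the irreducible curve $\Gamma$ of arithmetic genus $2$ having a node or cusp at $p$, and the description of $\hY$ as the blowup of $Y_0$ at the single point $p$ with exceptional curve $C_2$, $D_2$ the proper transform of $\Gamma$, and $D_1$ the proper transform of the irreducible fiber $G_2$ through $p$. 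So the only remaining content is to exhibit an exceptional curve $E$ on $Y_0$ with $\Gamma\in |E+2F|$ and $p\notin E$.

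My candidate is $E := (\rho_2)_*C_1$, the image on $Y_0$ of the \emph{other} contractible curve furnished by Lemma~\ref{11lemma}. The key technical point is to compute the intersection number $C_1\cdot C_2$ on $\hY$, which controls whether $E$ passes through $p$. From the two expressions $K_{\hY} = -F_1+C_1 = -F_2+C_2$ supplied by Lemma~\ref{11lemma}, I extract $F_1-F_2 = C_1-C_2$; squaring gives $-2\,F_1\cdot F_2 = -2 - 2\,C_1\cdot C_2$. To pin down $F_1\cdot F_2$, I use the identity $L\cdot K_{\hY}=1$ together with the expression $L = F_2+D_2$, the formula $D_2 = 2F_1-C_1$ from Lemma~\ref{11lemma}(iii), and the fact $F_2\cdot C_2=0$ (since $p\notin F_2$). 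This yields $D_2\cdot F_2 = 1$, hence $F_1\cdot F_2 = 1$, and therefore $C_1\cdot C_2 = 0$. Consequently $\rho_2^*E = C_1$ with no correction term along $C_2$, which forces $E^2 = -1$ and $K_{Y_0}\cdot E = -1$, and also shows that $E$ does not pass through $p$.

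It then remains to verify $\Gamma\sim E+2F$ on $Y_0$, which I do by pulling back to $\hY$. Since $\Gamma$ has a double point at $p$,
$$
\rho_2^*(\Gamma - 2F - E) \;=\; (D_2 + 2C_2) - 2F_2 - C_1,
$$
and substituting $D_2 = 2F_1 - C_1$ followed by $F_1 = F_2+C_1-C_2$ collapses this expression to $0$. Thus $\Gamma - 2F - E$ is numerically trivial on $Y_0$, and since $Y_0$ is a rational surface, $\Pic Y_0$ is torsion-free and coincides with its numerical class group, so the equivalence lifts to linear equivalence, giving $\Gamma \in |E+2F|$. I do not anticipate a serious obstacle in this argument; the only subtlety is the intersection-number bookkeeping on $\hY$, and the essential insight is simply that the exceptional curve predicted numerically by the class $\Gamma-2F$ is realized concretely as the push-forward of the second $(-1)$-curve $C_1$ on $\hY$, whose existence is guaranteed by the symmetry between the two singularities of $Y$ established in Lemma~\ref{11lemma}.
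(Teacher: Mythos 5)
Your proposal is correct, and for the key step it takes a genuinely different route from the paper. Both arguments start identically, setting $Y_0=X_2$, $F=F_2$, $\Gamma=\Gamma_2$, $p=p_2$ and reading off everything except the exceptional curve $E$ from Lemma~\ref{11lemma}. The paper then defines $E$ abstractly: it observes that $\alpha=\Gamma-2F$ satisfies $\alpha^2=K_{Y_0}\cdot\alpha=-1$, produces an effective representative by Riemann--Roch, and must then work to show this representative is a single irreducible $(-1)$-curve (via a Hodge-index decomposition $E=E_0+\sum_i n_iA_i+mF$ and Assumption~\ref{assumption27}), finally deducing $p\notin E$ from $E\cdot\Gamma=1$. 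You instead realize $E$ concretely as $\rho_2(C_1)$, so irreducibility is free, and everything reduces to the single intersection number $C_1\cdot C_2=0$; your derivation of this is correct, and it can even be done in one line from $C_i\equiv 2K_{\hY}+D_j$ ($j\ne i$), giving $C_1\cdot C_2=4K_{\hY}^2+2K_{\hY}\cdot D_1+2K_{\hY}\cdot D_2+D_1\cdot D_2=-4+2+2+0=0$. One small bookkeeping omission: the inference ``$D_2\cdot F_2=1$, hence $F_1\cdot F_2=1$'' from $D_2=2F_1-C_1$ silently uses $C_1\cdot F_2=1$, which you should note follows from $C_1\cdot F_2=C_1\cdot(K_{\hY}+D_1)=-1+2=1$; alternatively $F_1\cdot F_2=M_1\cdot M_2=1$ is immediate from Lemma~\ref{Milist}(iii). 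The final upgrade from numerical to linear equivalence via rationality of $Y_0$ is fine. What each approach buys: yours is more economical for the statement as written; the paper's sub-lemma additionally establishes that $F$ is irreducible and that the non-multiple fibers of $\mu_2$ have at most two components, facts not required by the proposition but used in the subsequent Proposition~\ref{11irred}.
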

\begin{proof}
Referring to Lemma~\ref{11lemma}, fix for example $i=2$ and let $\mu_2\colon X_2 \to \Pee^1$ be the corresponding rational elliptic surface. We omit the subscript $2$ for $F$, $\Gamma$, $C$.  Then $\alpha = \Gamma -2F$ is a numerical exceptional curve on $X_2$, because 
 $\alpha\cdot F  =1$ and hence $\alpha^2 = K_{X_2}\cdot \alpha =-1$. By a standard Riemann-Roch argument, $\alpha$ is effective, say $\alpha = E$. Thus  $\Gamma = E + 2F $, $D_2 = E + 2F - 2C$, $D_1 = 2F -C$, and $K_{\hY} = -F + C$, and so 
\begin{align*}
M_1 &= K_{\hY} + D_2 =  -F +C  +  E + 2F - 2C = E+F -C;\\
M_2 & = K_{\hY} + D_1 = -F +C + 2F -C = F;\\
L &= -F +C + 2F -C + E + 2F - 2C = 3F + E - 2C.
\end{align*} 

\begin{lemma} Suppose that  $D_1$ and  $D_2$ are irreducible.
\begin{enumerate}
\item[\rm(i)]  The effective curve  $E$ is  irreducible and hence is an exceptional curve   on $X_2$. Moreover, the singular point of $\Gamma$ does not lie on $E$.
\item[\rm(ii)] The multiple fiber $F$ is irreducible, and every non-multiple fiber of $\mu_2$ has at most two components, and in particular is reduced.
\end{enumerate} 
\end{lemma} 
\begin{proof} (i) Since $E\cdot F =1$ and $F$ is nef, by the Hodge index theorem there are irreducible curves $E_0, A_i$, a nonnegative integer $m$,  and positive integers $n_i$ such that  
$$E = E_0 + \sum_in_iA_i + mF,$$
where $E_0\cdot F = 1$,  $A_i\cdot F =0$ for all $i$, and $A_i^2 < 0$. In particular, since $X_2$ is an anticanonical surface, $E_0 ^2 \ge -1$, $A_i$ is smooth rational and $A_i^2 =-2$. Hence $L\cdot \rho_2^*A_i > 0$ for every $i$, where $\rho_2\colon \hY \to X_2$ is the blowup morphism, by Assumption~\ref{assumption27} on $Y$ and $\omega_Y$.  Then 
$$\rho_2{}_*L \cdot A_i = (3F+ E) \cdot A_i = E\cdot A_i >0.$$
Since $E\cdot F = 1$ and $E\cdot E_0 \ge E_0^2 + m \ge m-1$,
$$-1 = E^2 \ge E\cdot E_0 + \sum_in_i + m \ge 2m-1 + \sum_in_i.$$
This is only possible if $m = 0$ and there are no curves $A_i$, i.e.\ $E= E_0$ for an irreducible curve $E_0$ with $E_0^2 =-1$ and $K_{X_2}\cdot E_0 =-1$. Thus $E=E_0$ is an exceptional curve. Since $E\cdot \Gamma = 1$, $E\cap \Gamma$ is a smooth point of $\Gamma$.

\smallskip
\noindent (ii) Arguing as in (i),  if $A$ is a fiber component of $\mu_2$, $(3F+ E) \cdot A > 0$   hence $E\cdot A >0$. In particular, $E$ meets every fiber component. But $E\cdot F =1$ and thus $E$ is a bisection of the ruling. It follows that $F$ can have at most one component and the remaining fibers can have at most two. This concludes the proof of the lemma. 
\end{proof} 

To complete the proof  of Proposition~\ref{11case}, the surface  $\hY$ is a blowup of the elliptic surface $Y_0=X_2$ at the singular point $p$ of $\Gamma \in |E+2F|$, $D_1$ is the proper transform of an irreducible fiber containing $p$, and $p\notin E$ as required. 
\end{proof}

\begin{proposition}\label{11irred}  In the notation of Proposition~\ref{11case}, suppose that $D_1$ and $D_2$ are irreducible and that all fibers of $Y_0$ are nodal. Then $$H^2(\hY; T_{\hY}(-D_1 - D_2)) = 0.$$ 
Hence the map $\mathbb{T}^1_Y \to H^0(Y; T^1_Y)$ is surjective.
\end{proposition}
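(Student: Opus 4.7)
By Serre duality, $H^2(\hY;T_{\hY}(-D_1-D_2))$ is dual to $H^0(\hY;\Omega^1_{\hY}(L))$ with $L=K_{\hY}+D_1+D_2=3F+E-2C$. I first show that any section can be pushed down to $Y_0$ and must vanish at $p$. A local computation at $p$, using coordinates $(x,y)$ on $Y_0$ and $(u,v)$ on $\hY$ with $x=u,\ y=uv$, shows that $\rho_*\Omega^1_{\hY}(-2C)\subseteq\Omega^1_{Y_0}$ has $p$-stalk equal to $\mathfrak{m}_p^2\Omega^1_{Y_0,p}+\scrO_{Y_0,p}\cdot(x\,dy-y\,dx)$, which in particular is contained in $\mathfrak{m}_p\Omega^1_{Y_0,p}$. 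Since $p\notin E\cup F$, we have $\scrO_{\hY}(3F+E-2C)=\rho^*\scrO_{Y_0}(3F+E)\otimes\scrO_{\hY}(-2C)$, and the projection formula produces an inclusion
\[
H^0(\hY;\Omega^1_{\hY}(L))\hookrightarrow H^0(Y_0;\Omega^1_{Y_0}(3F+E))
\]
whose image consists of sections vanishing at $p$.

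Next I apply Theorem~\ref{tangbundell} to the elliptic fibration $\pi\colon Y_0\to\Pee^1$ ($p_g=0$, unique multiple fiber $F$ of multiplicity $2$, twelve irreducible nodal fibers by hypothesis). Twisting the resulting sequence by $\scrO(3F+E)$ gives
\[
0\to\scrO_{Y_0}(E)\to\Omega^1_{Y_0}(3F+E)\to I_Z\otimes\scrO_{Y_0}(5F+E)\to 0,
\]
where $Z\subset Y_0$ is the reduced length-$12$ scheme of the nodes. Since $E$ is exceptional, $h^0(\scrO(E))=1$; the unique section, given by $\pi^*\alpha\otimes s_E$ with $\alpha$ a generator of $H^0(\Pee^1,\Omega^1_{\Pee^1}(\infty))$, is nonvanishing at $p$ because $p\notin E$ and $p$ is a smooth point of its fiber, so it cannot be in the image of $H^0(\hY;\Omega^1_{\hY}(L))$. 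The proof will thus be complete once
\[
H^0(Y_0;I_Z\otimes\scrO(5F+E))=0,
\]
since combined with the above this forces $h^0(\Omega^1_{Y_0}(3F+E))=1$ with the unique generator obstructed by the vanishing at $p$.

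To prove this final vanishing I use the double cover of \S\ref{doubcovsect} applied to $Y_0$ with bisection $E$. With $p_g(Y_0)=0$ the invariants give $N=1$, $k=2$ and a double cover $\nu\colon\widetilde Y_0\to\widetilde{\mathbb F}_1$ branched along $B_0+d_1+d_2$ with $B_0\in|4\sigma_0+3f+2d_2|$. Writing $\psi\colon\widetilde Y_0\to Y_0$ for the blowup at the two isolated fixed points of the bisection involution on $F$, one finds $\psi^*\scrO(5F+E)=\nu^*\scrO_{\widetilde{\mathbb F}_1}(5e+3d_1+3d_2+\sigma_0)\otimes\scrO(-e_1-e_2)$. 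Because each node of an irreducible nodal fiber is a fixed point of $\iota$, the $12$ points of $Z$ project onto $B_0$. Decomposing sections under $\iota$, the anti-invariant summand lies in $H^0(\widetilde{\mathbb F}_1;\scrO(2e+d_1-\sigma_0))$, which vanishes since $(2e+d_1-\sigma_0)\cdot f=-1$ and $f$ is nef; the invariant summand, after imposing vanishing on $e_1+e_2$, lies in $H^0(\widetilde{\mathbb F}_1;\scrO(5e+2d_1+2d_2+\sigma_0))$ with the constraint of vanishing at the $12$ image points of $Z$ on $B_0$. Using $B_0\cdot e=2$, $B_0\cdot d_i=0$, $B_0\cdot\sigma_0=1$, one computes
\[
B_0\cdot(5e+2d_1+2d_2+\sigma_0)=5\cdot2+0+0+1=11<12,
\]
so any such nonzero section must restrict to zero on $B_0$; dividing by the defining section $s_{B_0}$ produces a section of $\scrO(5e+2d_1+2d_2+\sigma_0-B_0)=\scrO(-e-d_1-3d_2-3\sigma_0)$, and this class has intersection $-3$ with the nef class $f$, hence is not effective. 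This contradiction gives the desired vanishing.

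The hard part is the $\iota$-equivariant bookkeeping in the third paragraph: one must identify precisely how the twist by $\scrO(-e_1-e_2)$ interacts with the $\iota$-eigenspace decomposition of $H^0(\nu^*\scrO(5e+3d_1+3d_2+\sigma_0))$, isolating the two line-bundle classes on $\widetilde{\mathbb F}_1$ on which the intersection-number computation actually has to be run.
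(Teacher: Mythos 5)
Your proof follows essentially the same route as the paper: Serre duality, the inclusion $\rho_*\Omega^1_{\hY}(-2C)\subseteq\mathfrak{m}_p\otimes\Omega^1_{Y_0}$ (your local computation at $p$ is in fact more precise than what the paper records), the sequence of Theorem~\ref{tangbundell} twisted by $3F+E$, the observation that the generator of $H^0(Y_0;\scrO_{Y_0}(E))$ does not vanish at $p$, and the reduction of $H^0(Y_0;I_Z\otimes\scrO_{Y_0}(5F+E))=0$ to the count $B_0\cdot(\sigma_0+3f-e)=11<12$ on the double cover. The one slip is the identity $\psi^*\scrO_{Y_0}(5F+E)=\nu^*\scrO(5e+3d_1+3d_2+\sigma_0)\otimes\scrO(-e_1-e_2)$: the bisection $E$ passes through exactly one of the two isolated fixed points of $\iota$ on $F$ (this is why $\Sigma^2=\widetilde{\Sigma}^2+1$ in \S\ref{doubcovsect}, $\widetilde{\Sigma}=\nu^*\sigma_0$ meeting $e_2$ and missing $e_1$), so $\psi^*E=\nu^*\sigma_0+e_2$ and the correct statement is $\psi^*\scrO_{Y_0}(5F+E)=\nu^*\scrO(\sigma_0+3f-e)\otimes\scrO(-e_1)$. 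As written, you therefore impose an extra vanishing along $e_2$ (i.e.\ along $d_2$) that is not forced; since the goal is a vanishing theorem, proving that the smaller space $H^0(I_{\nu(Z)}\otimes\scrO(\sigma_0+2f+e))$ is zero does not formally give $H^0(I_Z\otimes\scrO_{Y_0}(5F+E))=0$. The gap is harmless and self-repairing: drop the unjustified $-d_2$ (or, as the paper does, just use the containment $\psi^*\scrO_{Y_0}(5F+E)\subseteq\nu^*\scrO(\sigma_0+3f-e)$ and run the count with $M=\sigma_0+3f-e$ itself). Because $B_0\cdot d_1=B_0\cdot d_2=0$, one still gets $B_0\cdot M=1+12-2=11<12$, the anti-invariant summand $M-\tfrac12B=2e+d_1-\sigma_0$ still has negative intersection with the nef class $f$, and $M-B_0$ is still visibly non-effective, so the contradiction goes through unchanged.
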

\begin{proof} In the notation of  Proposition~\ref{11case}, $K_{\hY} = -F + C$, where $C$ is the exceptional divisor of the blowup $\hY \to Y_0$ at the point $p\in \Gamma$,  and 
$$D_1 + D_2 = 2F - C + E +  2F -2C = 4F + E - 3C.$$
Then $H^2(\hY; T_{\hY}(-D_1 - D_2))$ is Serre dual to
$$H^0(\hY; \Omega^1_{\hY}\otimes L) = H^0(\hY; \Omega^1_{\hY}(3F + E - 2C)) = H^0(\hY; \Omega^1_{\hY}(f+F + E - 2C)).$$
A local calculation shows that  $\rho_2{}_*  \Omega^1_{\hY}(-2C_2) \subseteq   \mathfrak{m}_p \otimes  \Omega^1_{Y_0}$.  Thus
$$H^0(\hY; \Omega^1_{\hY}\otimes L) \subseteq  H^0(Y_0; \mathfrak{m}_p \otimes \Omega^1_{Y_0}(f+ F + E) ).$$
By  Theorem~\ref{tangbundell},  there is an exact sequence
$$0 \to \mu ^*\Omega^1_{\Pee^1}(F ) \to \Omega^1_{Y_0} \to I_Z \otimes \scrO_{Y_0}(f) \to 0,$$
where $Z$ is a reduced $0$-dimensional subscheme whose support is exactly the singular points of the reductions of the fibers and  $\ell(Z) = 12$ since $Y_0$ is a rational elliptic surface and by the assumption that all fibers are nodal. Note that $p$ is not contained in the support of $Z$ since $D_1$ is the proper transform of a fiber at a smooth point.  Hence there is an exact sequence
$$0 \to \mathfrak{m}_p \otimes \mu ^*\Omega^1_{\Pee^1}(F ) \to \mathfrak{m}_p \otimes \Omega^1_{Y_0} \to\mathfrak{m}_p \otimes I_Z \otimes   \scrO_{Y_0}(f) \to 0,$$
As  $\mu^*\Omega^1_{\Pee^1} = \scrO_{Y_0}(-2f)$, taking the tensor product with $\scrO_{Y_0}(f+ F + E)$ gives the following exact sequence:
$$0 \to \mathfrak{m}_p \otimes  \scrO_{Y_0}(E) \to  \mathfrak{m}_p \otimes  \Omega^1_{Y_0}(f+ F  + E) \to \mathfrak{m}_p \otimes   I_Z\otimes \scrO_{Y_0}(2f + F +E)   \to 0.$$
Since $E$  is an exceptional curve, $H^0(Y_0; \scrO_{Y_0}(E))$ has dimension one. Since $p\notin E$,    $$H^0(Y_0;  \mathfrak{m}_p \otimes \scrO_{Y_0}(E) ) =0.$$ As for the remaining term, we have
$$H^0(Y_0;  \mathfrak{m}_p \otimes   I_Z\otimes \scrO_{Y_0}(2f + F  +E) )\subseteq H^0(Y_0; I_Z\otimes \scrO_{Y_0}(2f + F  +E) ).$$   As in the proof of Proposition~\ref{21irred}, the blowup $\widetilde{Y}_0$ of $Y_0$ at two points is a double cover  $\nu\colon \widetilde{Y}_0 \to \widetilde{\mathbb{F}}_1$, where $\widetilde{\mathbb{F}}_1$ is the blowup of $\mathbb{F}_1$ at two points. The line bundle  $\scrO_{Y_0}(2f + F  +E)$ pulls back to a line bundle on $\widetilde{Y}_0$ contained in $\nu^*\scrO_{\widetilde{\mathbb{F}}_1}(2f+ e + d_1+d_2 + \sigma_0)= \nu^*\scrO_{\widetilde{\mathbb{F}}_1}(\sigma_0 +3f-e)$.  As $B_0 \equiv 4\sigma_0 + 3f + 2d_2$, we have
$$(\sigma_0 +3f-e) \cdot B_0 = (\sigma_0 +3f-e) \cdot (4\sigma_0 + 3f + 2d_2) = 11 < 12.$$
Then arguments along the lines of the proof of Proposition~\ref{21irred} or Theorem~\ref{thm17}  show  that $$H^0(Y_0; \mathfrak{m}_p \otimes I_Z\otimes \scrO_{Y_0}(2f + F  +E)) =0$$ and hence that $H^2(\hY; T_{\hY}(-D_1 - D_2)) = 0$. 
\end{proof} 

 \begin{remark} It is possible to give partial results in the case where $D_1$ or  $D_2$ is reducible, by using a slight generalization of Theorem~\ref{tangbundelllog} where the component $E_1$ is allowed to have a node. We omit the details.
 \end{remark}

\section{Geometry of certain elliptic ruled surfaces}\label{section7}

It is well-known that an algebraic elliptic surface $X$ i.e.\ a relatively minimal fibration over a base curve whose general fiber is an elliptic curve, is also an elliptic ruled surface  if and only if the base curve is $\Pee^1$, all fibers have smooth reduction, and $X$ is either a product surface $E\times \Pee^1$ or the log transform of $E\times \Pee^1$ at  two fibers with the same multiplicities $m$, or the log transform of $E\times \Pee^1$ at three multiple fibers, all of multiplicity $2$ (with a condition on the torsion points used to define the log transforms, cf.\ \cite[I.3.23 and I.6.12]{FM}). We describe these surfaces and their geometry in some detail.

\subsection{The case of two multiple fibers}\label{subsection11} Let $E$ be an elliptic curve and $e\in E$ a point of order $2$. Define $X_0 = (E\times \Pee^1)/\iota$, where $\iota(x,t) = (x+e, -t)$. Thus there is an induced map $\psi \colon X_0 \to \Pee^1/\{t\mapsto -t\}$. The general fiber of $\psi$ is $E$, but over $0$ and $\infty$ there are two multiple fibers $\sigma_1, \sigma_2$ isomorphic to $E/(e)=B$. There is also the map $\rho\colon X_0 \to E/(e) = B$. Since $E\to B$ is \'etale, $\rho$ is a fiber bundle with all fibers isomorphic to $\Pee^1$. Note in fact that there is a $(\Zee /2\Zee)\times (\Zee /2\Zee)$-action on $E\times \Pee^1$ generated by   $\iota_1$ and $\iota_2$ where $\iota_1(x,t) = (x+e, t)$ and $(\iota_2(x,t) = (x, -t)$; thus $\iota = \iota_1\circ \iota_2$. Hence $X_0$ is also a double cover of $B\times \Pee^1$ branched along two fibers $(B \times \{0\} )+ (B \times \{\infty\})=\pi_2^*\scrO_{\Pee^1}(2)$, but with the square root of this divisor of the form $\pi_1^*\eta \otimes  \pi_2^*\scrO_{\Pee^1}(1)$ for a nontrivial $2$-torsion line bundle on $B$. The map $X_0 \to B\times \Pee^1$ is given by the maps $(\rho, \psi)$.  Of course, a similar construction works if $e$ is a point of order $m \ge  1$ and we consider the $\Zee/m\Zee$-action on $E\times \Pee^1$ generated by $(x,t) \mapsto  (x+e, \zeta t)$, where $\zeta$ is a primitive $m^{\text{\rm{th}}}$ root of unity. 

As a ruled surface, $X_0$ has the two disjoint sections $\sigma_1, \sigma_2$. The normal bundle of $\sigma_i$ in $X_0$ is $ \eta$, where $\eta$ is a $2$-torsion line bundle, since $\sigma_i$ is also a multiple fiber of multiplicity $2$. Thus $X_0 =\Pee(\scrO_B \oplus  \eta)$, where the two sections of $X_0$ correspond to the sections of $\scrO_B  \oplus  \eta$ and $(\scrO_B \oplus  \eta)\otimes \eta =\eta \oplus \scrO_B$. (From this point of view, it is slightly more complicated  to show that $|2\sigma_i|$ is a pencil.)

\subsection{The case of three multiple fibers}\label{subsection12}  Let $B$ be an elliptic curve, fix a point $p_0\in B$,  and let $W$ be the unique nonsplit extension of $\scrO_B(p_0)$ by $\scrO_B$:
$$0 \to \scrO_B \to W \to \scrO_B(p_0) \to 0.$$
Let $\rho\colon X_1=\Pee(W)\to B$ be the corresponding ruled surface over $B$. The section of $W$ defines a section $\sigma_{p_0}$ with $\rho_*\scrO_{X_1}(\sigma_{p_0}) = W$.  Applying $\rho_*$ to the exact sequence
$$0 \to \scrO_{X_1} \to \scrO_{X_1}(\sigma_{p_0}) \to \scrO_{\sigma_{p_0}}(\sigma_{p_0}) \to 0,$$
and identifying $\sigma_{p_0}$ with $B$, there is an  exact sequence 
$$0 \to \scrO_B \to W \to  \scrO_{\sigma_{p_0}}(\sigma_{p_0}) \to 0.$$
Hence $\scrO_{\sigma_{p_0}}(\sigma_{p_0})\cong  \scrO_B(p_0)$. In particular $\sigma_{p_0}^2 = 1$. Using the shorthand notation $p\cdot f$ for the divisor $\rho^*p$ on $X_1$, it follows that 
$$K_{X_1} = -2\sigma_{p_0} + p_0\cdot f,$$
as one checks by adjunction. 

\begin{lemma}\label{lemmam1} {\rm(i)} If $\sigma$ is a section of $X_1$ with $\sigma^2 = 1$, then $\sigma \equiv \sigma_{p_0}$.
 
 \smallskip
 \noindent {\rm(ii)} For every $p\in B$, there are exactly $4$ sections $\sigma$ with $\scrO_{X_1}(\sigma )|\sigma  \cong \scrO_B(p)$ via the natural isomorphism $\sigma\cong B$. We will denote any such section by $\sigma_p$.
\end{lemma}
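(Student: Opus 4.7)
For part (i), I plan to use that on the geometrically ruled surface $\rho\colon X_1 = \Pee(W)\to B$, the numerical lattice $\operatorname{Num}(X_1)$ is freely generated by $\sigma_{p_0}$ and the fiber class $f$, with intersection pairing $\sigma_{p_0}^2 = 1$, $\sigma_{p_0}\cdot f = 1$, $f^2 = 0$. Any section $\sigma$ satisfies $\sigma\cdot f = 1$, so $\sigma\equiv\sigma_{p_0} + bf$ numerically for some $b\in\Zee$. Then $\sigma^2 = 1+2b$, and the hypothesis $\sigma^2 = 1$ forces $b = 0$.

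For part (ii), my plan is to parameterize the sections of $\rho$ by rank-one quotients $W\twoheadrightarrow N$ with kernel a line subbundle $K\subset W$, noting that the normal bundle of the corresponding section is $N_\sigma = N\otimes K^{-1}$. Part (i) together with adjunction gives $\sigma^2 = \deg N - \deg K = 1$, and combined with $\deg K + \deg N = \deg W = 1$ this forces $\deg K = 0$ and $\deg N = 1$. So the task reduces to classifying degree-zero line subbundles $K\subset W$ and determining for which $K$ the bundle $N\otimes K^{-1}$ is isomorphic to $\scrO_B(p)$.

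The key step is to show that every line subbundle of $W$ has degree $\le 0$: a subbundle of degree $\ge 1$ would compose with the quotient $W\to\scrO_B(p_0)$ to a nonzero map of equal-degree line bundles, hence an isomorphism splitting the defining extension, contrary to nonsplitness. Any degree-zero subbundle $K$ therefore either equals $\scrO_B\subset W$ (giving the section $\sigma_{p_0}$) or projects nontrivially to $\scrO_B(p_0)$ with a simple zero at some point $q\ne p_0$, forcing $K = \scrO_B(p_0 - q)$. Conversely, for each $q\in B$ the long exact sequence of $\Hom(\scrO_B(p_0-q),-)$ applied to the defining extension yields $\Hom(\scrO_B(p_0-q), W)\cong\Cee$, and the nonzero map is a subbundle embedding because its saturation is a line subbundle of degree at least $0$ and at most $0$. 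Thus the sections of $X_1$ numerically equivalent to $\sigma_{p_0}$ are parameterized by $q\in B$, with $K_q = \scrO_B(p_0-q)$, $N_q = \det W\otimes K_q^{-1} = \scrO_B(q)$, and normal bundle $\scrO_B(2q - p_0)$.

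Finally, fixing $p\in B$, the equation $\scrO_B(2q-p_0)\cong\scrO_B(p)$ is $2q \equiv p + p_0$ in $\Pic(B)$; via the identification of $B$ with $\Pic^0(B)$ this is an equation in the group law of the elliptic curve, with solution set a torsor under the $2$-torsion subgroup of $B$, hence of cardinality $4$. The main delicate point throughout is the claim that each nonzero sheaf map $\scrO_B(p_0 - q)\to W$ is automatically a subbundle embedding (not merely injective with torsion cokernel), which is precisely what the saturation/degree argument handles.
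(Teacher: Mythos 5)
Your proof is correct. Part (i) is the same standard numerical argument the paper has in mind when it invokes ``the description of sections of $X_1$.'' For part (ii), however, you take a genuinely different and more self-contained route than the paper: the paper's one-line proof points to Lemma~\ref{lemmam4}, i.e.\ to the identification $X_1\cong \Sym^2B$ of \S\ref{subsection13}, where the sections $\sigma^{(a)}$ are the images of $\{a\}\times B$, the restriction $\scrO_{X_1}(\sigma^{(a)})|\sigma^{(a)}\cong\scrO_B(2a)$ is read off from the double cover $B\times B\to\Sym^2 B$, and the count of $4$ comes from solving $2a=p$ on the elliptic curve. You instead parameterize sections intrinsically by line subbundles $K\subset W$, prove $W$ admits no subbundle of positive degree (the nonsplitness argument), identify the degree-zero subbundles as $K_q=\scrO_B(p_0-q)$ for $q\in B$ via $\Hom(\scrO_B(p_0-q),W)\cong\Cee$ and the saturation/degree argument, and compute the normal bundle $\scrO_B(2q-p_0)$, arriving at the same multiplication-by-$2$ count. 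The trade-off: your argument needs no auxiliary geometric model and makes the bijection between sections of square $1$ and points of $B$ completely explicit at the level of $W$ (which is genuinely useful information the paper leaves implicit), whereas the paper's route gets the transversality and incidence statements of \S\ref{subsection13} for free from $B\times B$. The one point you flag as delicate --- that a nonzero map $\scrO_B(p_0-q)\to W$ is automatically a subbundle embedding --- is indeed handled correctly by your saturation argument, since the saturation has degree $\ge 0$ by containment and $\le 0$ by your bound on subbundles of $W$.
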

\begin{proof} This is an easy consequence of the description of sections of $X_1$. For (ii), compare also Lemma~\ref{lemmam4}. 
\end{proof}

\begin{lemma}\label{lemmam2} With notation as above,
\begin{enumerate}
\item[\rm(i)] The divisor $\Gamma_q = 2\sigma_{p_0} - q\cdot f$ is effective $\iff$ $q$ is a $2$-torsion point on $B$ for the group law with origin $p_0$. 
\item[\rm(ii)] With $q$ as in {\rm(i)}, $2\Gamma_q$ is linearly equivalent to $-2K_{X_1}$. 
\item[\rm(iii)] With $q$ as in {\rm(i)}, $|2\Gamma_q|$ is a pencil on $X_1$ defining a morphism $\psi\colon X_1 \to \Pee^1$. The general fiber is a smooth elliptic curve $\phi$, linearly equivalent to $-2K_{X_1}$ and every fiber which is not smooth is one of the three multiple fibers $2\Gamma_{q_i}$ where the $q_i$ are the $2$-torsion points on $B$.
\item[\rm(iv)] With $q$ as in {\rm(i)}, the normal bundle $N_{\Gamma_q/X_1}$ is a nontrivial $2$-torsion line bundle on $\Gamma_q$.
\end{enumerate}
\end{lemma}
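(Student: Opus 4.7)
\smallskip

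\noindent\textit{Strategy.} I will prove the four parts in sequence: (i) is the main cohomological computation, (ii) is an immediate linear-equivalence check, (iii) assembles these into the elliptic fibration, and (iv) is a direct consequence of (iii). Throughout I will use the standard $\Pee^1$-bundle identities $\rho_*\scrO_{X_1}(n\sigma_{p_0})=\Sym^n W$ and $R^1\rho_*\scrO_{X_1}(n\sigma_{p_0})=0$ for $n\geq 0$.

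For \textbf{(i)}, the projection formula reduces effectivity to the computation of $h^0(X_1,\scrO_{X_1}(\Gamma_q))=h^0(B,\Sym^2 W\otimes\scrO_B(-q))$. The key identification is
\begin{equation*}
\Sym^2 W\otimes\scrO_B(-q)\;\cong\;\operatorname{End}_0(W)\otimes L,\qquad L:=\scrO_B(p_0-q)\in\Pic^0(B),
\end{equation*}
obtained from $W\spcheck\cong W(-p_0)$ together with the trace splitting $\operatorname{End}(W)=\scrO_B\oplus\operatorname{End}_0(W)$ and the identity $\operatorname{End}_0(W)=\Sym^2 W\otimes(\det W)^{-1}$. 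Thus $h^0(\Sym^2 W(-q))=h^0(\operatorname{End}(W)\otimes L)-h^0(L)$, and $h^0(\operatorname{End}(W)\otimes L)=\dim\Hom_B(W,W\otimes L)$. Since $\gcd(\operatorname{rk} W,\deg W)=1$, both $W$ and $W\otimes L$ are stable rank-$2$ bundles on $B$ of slope $1/2$, so a non-zero morphism between them is an isomorphism; comparing determinants, $W\cong W\otimes L$ iff $L^{\otimes 2}\cong\scrO_B$, and for such $L$ uniqueness of the stable bundle of prescribed $(\operatorname{rk},\deg,\det)$ on an elliptic curve forces the isomorphism. Subtracting $h^0(L)$, which equals $1$ exactly when $L=\scrO_B$, yields $h^0(\Sym^2 W(-q))=1$ precisely when $L$ is a \emph{nontrivial} $2$-torsion element, equivalently when $q$ is an order-two point of the group $(B,p_0)$; otherwise $h^0=0$. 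This establishes (i), with the convention that the $q_i$ in the statement are the three points of order exactly two.

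Part \textbf{(ii)} is the identity $2\Gamma_q-(-2K_{X_1})=(2p_0-2q)\cdot f$ in $\Pic(X_1)$, trivial precisely when $2(p_0-q)\sim 0$ on $B$. For \textbf{(iii)}, by (i) and (ii) the system $|-2K_{X_1}|$ contains the three distinct divisors $2\Gamma_{q_1},2\Gamma_{q_2},2\Gamma_{q_3}$, so $h^0(-2K_{X_1})\geq 2$. I plan to prove equality either by pushing $-2K_{X_1}$ down along $\rho$ (giving $\rho_*\scrO_{X_1}(-2K_{X_1})=\Sym^4 W\otimes(\det W)^{-2}$) and running a stability analysis parallel to (i), or by a Hodge-index type argument: $(-2K_{X_1})^2=0$ together with the three pairwise-disjoint effective representatives (using $\Gamma_{q_i}\cdot \Gamma_{q_j}=0$) forces $|-2K_{X_1}|$ to be composed with a pencil. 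The resulting morphism $\psi\colon X_1\to\Pee^1$ has general fibre $F\sim -2K_{X_1}$ with $F^2=K_{X_1}\cdot F=0$, hence $p_a(F)=1$; base-point freeness (from $\Gamma_{q_i}\cap \Gamma_{q_j}=\emptyset$) combined with Bertini gives $F$ smooth and irreducible, hence smooth elliptic. The structural result for elliptic ruled surfaces with $\chi(\scrO_{X_1})=0$ recalled at the start of this section (all fibres have smooth reduction) then forces the only non-smooth fibres to be the three multiple fibres $2\Gamma_{q_i}$.

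For \textbf{(iv)}, since (iii) exhibits $\Gamma_q$ as the reduction of a multiplicity-two fibre of $\psi$, the restriction $\psi|_{\Gamma_q}$ is constant and $\scrO_{X_1}(2\Gamma_q)|_{\Gamma_q}\cong\psi^*\scrO_{\Pee^1}(1)|_{\Gamma_q}\cong\scrO_{\Gamma_q}$, giving $N_{\Gamma_q/X_1}^{\otimes 2}\cong\scrO_{\Gamma_q}$. Nontriviality is the standard fact that the normal bundle of a reduced multiple fibre of multiplicity $m$ in an elliptic fibration has order exactly $m$ in the Picard group of the reduction (cf.\ Kodaira's classification, \cite[Chap.\ I]{FM}); here $m=2$ so $N_{\Gamma_q/X_1}$ is nontrivial $2$-torsion. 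The main technical obstacle in this plan is the upper bound $h^0(-2K_{X_1})\leq 2$ in (iii): the analysis of (i) reduced cleanly via the stability of $W$ and a single endomorphism identification, but the analogous cohomology computation for $\Sym^4 W$ does not factor as nicely, so one must either carry out a more delicate Atiyah-classification analysis on $B$ or argue geometrically that a larger linear system is incompatible with $(-2K_{X_1})^2=0$ and the three pairwise-disjoint effective representatives.
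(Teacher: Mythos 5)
Your parts (i), (ii) and (iv) are correct and essentially coincide with the paper's arguments. For (i) the paper also reduces to $h^0(B;\Sym^2W\otimes\scrO_B(-q))$ and then invokes the decomposition $\operatorname{End}(W)\cong\scrO_B\oplus\eta_1\oplus\eta_2\oplus\eta_3$ to get $\Sym^2W\otimes\scrO_B(-q)\cong\bigoplus_i\bigl(\scrO_B(p_0-q)\otimes\eta_i\bigr)$; your stability/Atiyah computation of $\Hom(W,W\otimes L)$ is just a self-contained proof of that same decomposition, and correctly records that $q=p_0$ is excluded. Parts (ii) and (iv) are the same one-line arguments as in the paper.

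Part (iii) is where you genuinely diverge, and where your proposal is not yet a proof. The paper does not argue cohomologically at all: it identifies $X_1$ with $\Sym^2B$ and realizes $\psi$ as the descent of the difference map $B\times B\to B$ to $\Sym^2B\to B/\pm1\cong\Pee^1$ (this is why the proof of (iii) is deferred to \S7.3), which simultaneously produces the pencil, the smoothness of the general fiber, and the exact list of multiple fibers. Your route requires the upper bound $h^0(X_1;\scrO_{X_1}(-2K_{X_1}))\le 2$, which you flag but do not establish. The good news is that it follows from exactly the decomposition you already proved in (i): from $\Sym^2W\cong\scrO_B(p_0)\otimes(\eta_1\oplus\eta_2\oplus\eta_3)$ and the rank-two identity $\Sym^2(\Sym^2W)\cong\Sym^4W\oplus(\det W)^{\otimes2}$ one gets $\rho_*\scrO_{X_1}(-2K_{X_1})\cong\Sym^4W\otimes(\det W)^{-2}\cong\scrO_B^{\oplus2}\oplus\eta_1\oplus\eta_2\oplus\eta_3$, whence $h^0=2$ on the nose. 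I would caution against your alternative ``composed with a pencil'' argument as stated: knowing the image of the map defined by $|-2K_{X_1}|$ is a curve does not by itself give $\dim|-2K_{X_1}|=1$; to close that route you must further argue that the base curve is $\Pee^1$ (e.g.\ since $h^0(\Omega^1_{X_1})=1$ and the Albanese fibration is the ruling) and then derive a contradiction from $\Gamma_{q_i}\not\sim\Gamma_{q_j}$ in $\Pic(X_1)$. With the $\Sym^4W$ computation inserted, the rest of your (iii) — base-point freeness from the disjoint representatives, Bertini, and the classification of elliptic ruled surfaces to pin down the singular fibers — is sound, and has the advantage over the paper's argument of not requiring the symmetric-product model; what it loses is the explicit fiber-by-fiber description of $\psi$ that the paper exploits later in \S7.3--7.4.
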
 
\begin{proof} (i) First note that 
$$H^0(X_1; \scrO_{X_1}(\Gamma_q)) = H^0(B; \rho_*\scrO_{X_1}(2\sigma_{p_0} - q\cdot f)) = H^0(B; \rho_*\scrO_{X_1}(2\sigma_{p_0})\otimes \scrO_B(-q)).$$
Moreover, $ \rho_*\scrO_{X_1}(2\sigma_{p_0})\cong \Sym^2W$. Also, $W\otimes W = \det W \oplus \Sym^2W$.  Since $W$ has rank $2$, 
$W \cong W\spcheck \otimes \det W $, and hence
$$W\otimes W \cong W \otimes  W\spcheck \otimes \det W \cong \mathit{Hom}(W, W) \otimes \scrO_B(p).$$
By a well-known argument, $\mathit{Hom}(W, W)\cong \scrO_B \oplus \eta_1 \oplus \eta_2 \oplus \eta_3$, 
where the $\eta_i$ are the  three nontrivial $2$-torsion line bundles on $B$. Thus 
$$W\otimes W \cong \scrO_B(p_0)\oplus (\scrO_B(p_0)\otimes \eta_1) \oplus (\scrO_B(p_0)\otimes \eta_2) \oplus (\scrO_B(p_0)\otimes \eta_3).$$
It follows that 
$$\Sym^2W\otimes \scrO_B(-q) \cong (\scrO_B(p_0-q)\otimes \eta_1) \oplus (\scrO_B(p_0-q)\otimes \eta_2) \oplus (\scrO_B(p_0-q)\otimes \eta_3).$$
Thus $H^0(X_1; \scrO_{X_1}(\Gamma_q)) \neq 0$ $\iff$ $\scrO_B(q-p_0)\cong \eta_i$ for some $i$ $\iff$ $q-p_0$ is a $2$-torsion point in the corresponding group law on $B$.

\smallskip
\noindent  (ii) If $q$ is a $2$-torsion point, $2q = 2p_0$, and thus  $2\Gamma_q = 4\sigma_{p_0} - 2q\cdot f= 4\sigma_{p_0} - 2p_0\cdot f= -2K_{X_1}$. 

\smallskip
\noindent  (iii) The proof that $|2\Gamma_q|$ is a pencil on $X_1$ is deferred until the next subsection. Since $(\Gamma_q)^2 =0$ and $|2\Gamma_q|$ contains three disjoint effective divisors, $|2\Gamma_q|$ has no base locus or fixed curves and so defines a morphism $\psi$ to $\Pee^1$. If $\phi$ is a general element, its genus $g(\phi) =1$ since $\phi^2 + \phi\cdot K_{X_1} = 0$. The other statements follow easily from (ii).

\smallskip
\noindent  (iv) This is clear since $\Gamma_q$ is the reduction of a multiple fiber with multiplicity $2$.
\end{proof} 

In terms of numerical equivalence $\equiv$, $\operatorname{Num}X_1 = \Zee \cdot \sigma_{p_0} \oplus \Zee\cdot f$. Thus $\sigma_p \equiv  \sigma_{p_0}$ for all $p\in B$ and $\Gamma_q \equiv \Gamma_{q'}$ for all $q, q' \in B$. Also, $\sigma_p^2 = \sigma_p \cdot \sigma_{p'} = 1$ for all $p, p'\in B$, $\Gamma_q^2 = \Gamma_q\cdot \Gamma_{q'} = 0$ for all points $q, q'$ on $B$ and $\Gamma_q\cdot \sigma_p = 1$. If $\phi$ is the fiber, then $\phi\cdot  \sigma_p  = 2 (\Gamma_q\cdot\sigma_p) =2$ and hence the $\sigma_p$ are bisections of the fibration $\psi$. 

\begin{lemma}\label{lemmam3}   If $X=\Pee(V)$ is a geometrically ruled surface over an elliptic curve $B$ and there exist two irreducible curves $\sigma$, $\Gamma$ on $X$ with $\sigma^2 =1$, $\Gamma^2 = 0$ and $\Gamma$ is not a fiber of the ruling, then $X \cong X_1=\Pee(W)$ as defined at the beginning of \S\ref{subsection12}. In this case, $\sigma = \sigma_p$ for some $p\in B$ and $\Gamma \equiv c\Gamma_q$ for some positive integer $c$ and some point $q \in B$.
\end{lemma}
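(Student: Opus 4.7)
The plan is to pin down the invariant $e = -\deg V$ of the (normalized) ruled surface $X = \Pee(V)$ by extracting constraints from both hypotheses, show that $e = -1$, and then identify $X$ with $X_1$ via Atiyah's classification of rank-$2$ vector bundles on an elliptic curve.

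First I would fix a section $s_0$ of minimum self-intersection $s_0^2 = -e$ so that $\{s_0, f\}$ is a basis of $\operatorname{Num} X$ with $s_0\cdot f = 1$ and $f^2 = 0$. Writing $\sigma \equiv a s_0 + bf$, the fact that $\sigma$ is irreducible and not a fiber component gives $a = \sigma\cdot f \ge 1$, and the equation $\sigma^2 = a(2b - ae) = 1$ forces $a = 1$ and $e = 2b - 1$. In particular $e$ is odd. Next, writing $\Gamma \equiv a' s_0 + b' f$ with $a' = \Gamma\cdot f \ge 1$ (since $\Gamma$ is irreducible and not a fiber), the equation $\Gamma^2 = a'(2b' - a'e) = 0$ yields $2b' = a'e$. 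If $e \ge 1$ were odd, then $a'$ would have to be even with $a' \ge 2$, giving $\Gamma\cdot s_0 = -a'e + b' = -a'e/2 < 0$. Since $\Gamma^2 = 0 \neq -e = s_0^2$ we have $\Gamma \neq s_0$, so $s_0$ would have to be a component of $\Gamma$, contradicting irreducibility. Hence $e \le 0$, and combined with $e$ odd, $e = -1$.

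With $e = -1$, the bundle $V$ is indecomposable of odd degree, and by Atiyah's classification of rank-$2$ vector bundles on an elliptic curve, such bundles form a single orbit under twisting by line bundles. Hence $V \cong W \otimes M$ for some $M \in \Pic(B)$, and since projectivization is invariant under such twists, $X \cong \Pee(W) = X_1$. For the final statements: identifying $X$ with $X_1$, the section $\sigma$ has $\scrO_{X_1}(\sigma)|_{\sigma}$ a line bundle of degree $\sigma^2 = 1$ on $\sigma \cong B$, hence isomorphic to $\scrO_B(p)$ for a unique $p \in B$, so $\sigma = \sigma_p$ in the notation of Lemma~\ref{lemmam1}(ii). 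Setting $c = a'/2 \ge 1$, the numerical class of $\Gamma$ becomes $c(2\sigma - f)$, which coincides with $c\Gamma_q$ for any $q\in B$ since $\Gamma_q \equiv 2\sigma_{p_0} - f$ in $\operatorname{Num} X_1$ (Lemma~\ref{lemmam2} and the paragraph following it).

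The main technical obstacle is the case analysis ruling out $e \ge 1$; this is precisely where the existence of the non-fiber curve $\Gamma$ of self-intersection zero is essential, since the condition $\sigma^2 = 1$ alone permits $e = 1, 3, 5, \dots$ on the decomposable ruled surfaces $\Pee(\scrO_B \oplus L)$ with $\deg L < 0$. Beyond this, the identification $V \cong W$ up to twist is a standard consequence of the Atiyah classification.
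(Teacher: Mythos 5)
Your proof is correct and follows essentially the same route as the paper: both arguments reduce the statement to numerical computations in $\operatorname{Num} X$ combined with the classification of rank-$2$ bundles on an elliptic curve, the only cosmetic differences being that you rule out the case $e\ge 1$ by a direct computation ($\Gamma\cdot s_0<0$ forces $s_0$ to be a component of $\Gamma$) where the paper cites Hartshorne V.2.20, and you derive the classes of $\sigma$ and $\Gamma$ directly where the paper cites V.2.21. One small point: your step ``$e\le 0$ and $e$ odd imply $e=-1$'' silently uses Nagata's bound $e\ge -g=-1$ for geometrically ruled surfaces over a curve of genus $g$; this is standard, but it should be invoked explicitly, since nothing in your computation by itself excludes $e=-3,-5,\dots$.
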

\begin{proof} First, the intersection form on $H^2(X; \Zee)$ is odd. By the classification of rank $2$ bundles over an elliptic curve, there are two possibilities: Either $V$ is either unstable of odd degree, hence after twisting by a line bundle is of the form $\scrO_B \oplus \lambda$, where $\deg \lambda = e$ is odd and positive, or $V\cong W\otimes \lambda$ for some line bundle $\lambda$ on $B$. If $V$ is unstable, by a standard calculation \cite[V  2.20]{Hartshorne}, every irreducible curve of nonnegative self-intersection  which is not the class of a fiber   has self-intersection  $\ge e \ge 1$, so this case does not arise. Thus, after a twist, $V\cong W$, so $X \cong X_1$ and has invariant $e= -1$. By \cite[V  2.21]{Hartshorne}, if $C$ is the class of an irreducible curve and not the class of a fiber, then either (i) $C\equiv \sigma_{p_0}+bf$ with $b \ge 0$ and $C^2=2b+1\ge 1$, with equality $\iff$ $b=0$,  or (ii) $C \equiv a\sigma_{p_0} + bf$ with $a > 1$ and $b\ge -\frac12a$. In  Case (ii), $C^2 = a^2 + 2ab = a(a+2b)\ge 0$, with equality holding $\iff$ $b=-\frac12a $. Hence, in Case (ii),  if $C^2 >0$, then $C^2\ge a > 1$. Thus, if $\sigma^2 =1$, then   $\sigma \equiv \sigma_{p_0}$ and so $\sigma = \sigma_p$ for some $p\in B$. Moreover,   $\Gamma \equiv a(\sigma_{p_0} - \frac12f)$. Since $\Gamma$ is an integral class, $a$ is even and $\Gamma \equiv \frac{a}2(2\sigma_{p_0} -f)  \equiv   c\Gamma_q$ for a positive integer $c$. 
\end{proof} 

\subsection{$X_1$ as a symmetric product}\label{subsection13}  Fixing the  point $p_0$ on $B$ defines a group law. The addition map $B\times B \to B$ then defines a morphism $\rho\colon \Sym^2B \to B$. The fiber $f_x= \rho^{-1}(x)$ is the set of all unordered pairs $(e_1, e_2)$ with $e_1+ e_2 \equiv x+ p_0$, and hence is $\cong \Pee^1$. Note that $\nu \colon B\times B \to \Sym^2B $ is a double cover of $\Sym^2B$ branched along the diagonal $\Delta$.  We identify $\Delta$ with its image $\overline{\Delta}=\nu(\Delta)$ in $\Sym^2B$.  

For each $a\in B$, we have the image $\sigma^{(a)}$ of $\{a\} \times B$ or equivalently of $B\times \{a\}$. Then $\sigma^{(a)}\cap f_x=\{a,b\}$, where $b\in B$ is the unique point such that $a+b \equiv x+p_0$. In particular, $\sigma^{(a)}$ is a section of $\rho$. For $b\neq a$, $ \sigma^{(a)} \cap \sigma^{(b)} =\{a,b\}$, and the intersection is clearly transverse since $\{a\} \times B$ and $B\times  \{b\} $ meet transversally. Moreover, $\scrO_{\Sym^2B}(\sigma^{(b)})| \sigma^{(a)} \cong \scrO_B(a+b)$ (where $a+b$ is viewed as a point on $B$ via addition). In particular,  $\sigma^{(a)}\cdot  \sigma^{(b)} =1$ for all $a\neq b$ and hence for all $a$. By continuity or by considering $\nu^*(\sigma^{(a)}) = (\{a\} \times B)+(B\times \{a\})$, $\scrO_{\Sym^2B}(\sigma^{(a)})| \sigma^{(a)} \cong \scrO_B(2a)$. Note that, for every point $s$ of $\Sym^2B$ of the form $\{a,b\}$ with $a\neq b$, there are two different sections $ \sigma^{(a)}$, $\sigma^{(b)}$ passing through $s$, but for $s\in \overline{\Delta}$ there is just one section. 

By construction, the image $\nu(\Delta) = \overline{\Delta}$ is the branch locus of $\nu$. For $(e_1, e_2) \in B\times B$, we can consider 
$$\Delta+(e_1, e_2) = \Delta + (0,q) = \{(b, b+q): b\in B\} =\{(b_1, b_2): b_2-b_1 = q\},$$
where $q = e_2-e_1$. Note that $\nu(\Delta + (0,q)) =\nu(\Delta + (q,0))$. In general, $\nu^{-1}(\nu(\Delta + (0,q)))$ has two disjoint components $\Delta + (0,q)$ and $\Delta + (q,0)$. However, if $2q=0$ so that $-q =q$, then $\Delta + (0,q)=\Delta + (q,0)$ and $\nu$ induces an \'etale double cover from $\Delta + (0,q)\cong B$ to its image $\Gamma_q \cong B/(q)$. Clearly $(\nu^*\Gamma_q)^2 =0$, hence $\Gamma_q^2 =0$. Also, 
$$\nu^*\Gamma_q \cap \nu^*\sigma^{(a)} = \{(a, a+q), (a+q, a)\},$$
with the intersections transverse. Thus $\Gamma_q \cdot \sigma^{(a)} =1$.  Thus, by Lemma~\ref{lemmam3}, we have:

\begin{lemma}\label{lemmam4} As a geometrically ruled surface, $\Sym^2B \cong X_1$. Under this isomorphism,  the sections $\sigma^{(a)}$ correspond to sections $\sigma_p$ with $p = 2a$ and $\Gamma_q$ as defined above corresponds to   one of the $\Gamma_{q_i}$ as previously defined. \qed
\end{lemma}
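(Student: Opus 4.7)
The plan is to deduce both statements directly from Lemma~\ref{lemmam3} together with the numerical and line bundle computations already established in \S\ref{subsection13}. I begin by verifying the hypotheses of Lemma~\ref{lemmam3} for $\Sym^2B$ with the ruling $\rho\colon \Sym^2B\to B$. The morphism $\rho$ is a $\Pee^1$-bundle, so $\Sym^2B$ is geometrically ruled over $B$. By the computations of \S\ref{subsection13}, the section $\sigma^{(a)}$ is irreducible with $(\sigma^{(a)})^2 =1$, and for a $2$-torsion point $q\in B$ (with respect to the origin $p_0$), $\Gamma_q=\nu(\Delta+(0,q))$ is irreducible with $\Gamma_q^2=0$. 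Since $\rho$ restricted to $\Gamma_q$ is the degree two map $B\to B/(q)$ composed with the addition map $b\mapsto 2b+q-p_0$, which is surjective onto $B$,  $\Gamma_q$ is not contained in a fiber of $\rho$. Hence Lemma~\ref{lemmam3} applies and gives $\Sym^2B\cong X_1$ as geometrically ruled surfaces over $B$.

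Next, I identify the sections $\sigma^{(a)}$. By Lemma~\ref{lemmam1}(ii), every section $\sigma$ of $X_1$ is of the form $\sigma_p$ for a unique $p\in B$, characterized by $\scrO_{X_1}(\sigma_p)\vert\sigma_p\cong \scrO_B(p)$ via the natural identification $\sigma_p\cong B$ furnished by the ruling. On the symmetric product side, this natural identification sends $\{a,c\}\in\sigma^{(a)}$ to $\rho(\{a,c\})\in B$. Using the computation $\scrO_{\Sym^2B}(\sigma^{(a)})\vert\sigma^{(a)}\cong\scrO_B(2a)$ established in \S\ref{subsection13}, it follows that under the isomorphism $\Sym^2B\cong X_1$, the section $\sigma^{(a)}$ becomes $\sigma_p$ with $p=2a$, as claimed.

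Finally, I identify $\Gamma_q$. Lemma~\ref{lemmam3}, applied to the irreducible curve $\Gamma_q$ of square $0$, yields $\Gamma_q\equiv c\Gamma_{q'}$ on $X_1$ for some positive integer $c$ and some $q'\in B$. Intersecting with $\sigma^{(a)}=\sigma_{2a}$ gives $1=\Gamma_q\cdot\sigma^{(a)}=c(\Gamma_{q'}\cdot\sigma_{2a})=c$, so $c=1$ and $\Gamma_q\equiv\Gamma_{q'}$. By Lemma~\ref{lemmam2}(iii), the only irreducible effective divisors in this numerical class on $X_1$ are the three reductions $\Gamma_{q_1},\Gamma_{q_2},\Gamma_{q_3}$ of multiple fibers of the pencil $\psi$, so $\Gamma_q$ coincides with one of these.

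There is no serious obstacle: the substantive work was already carried out in Lemmas~\ref{lemmam1}--\ref{lemmam3} and the explicit calculations of self-intersections and restrictions of line bundles in \S\ref{subsection13}. The only bookkeeping concerns matching the two natural identifications $\sigma^{(a)}\cong B$ (via $\rho$) on the two sides of the isomorphism, which is immediate because both identifications are via the common ruling over $B$.
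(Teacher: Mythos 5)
Your argument is correct and is essentially the paper's own: the lemma is stated there as an immediate consequence of the computations in \S\ref{subsection13} (the self-intersections, the restriction $\scrO_{\Sym^2B}(\sigma^{(a)})|\sigma^{(a)}\cong\scrO_B(2a)$, and $\Gamma_q\cdot\sigma^{(a)}=1$) together with Lemma~\ref{lemmam3}, exactly as you have spelled out. One small caution: to conclude that the three $\Gamma_{q_i}$ are the only irreducible curves in the numerical class of $\Gamma_{q'}$ you should invoke Lemma~\ref{lemmam2}(i) (whose proof via $\Sym^2W$ is self-contained and shows there are exactly three effective divisors in that numerical class, each with $h^0=1$) rather than Lemma~\ref{lemmam2}(iii), since the pencil statement in (iii) is itself only completed in \S\ref{subsection13} using the identification you are proving, so citing it here would be circular.
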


We will thus denote $\Sym^2B$ by $X_1$ in what follows. Since $\nu^*\Gamma_q\cap \nu^*f_x = \{(a,a+q): 2a + q =x\}$, with transverse intersections,  $\nu^*\Gamma_q\cdot \nu^*f_x   =4$. Hence we can see directly that $\Gamma_q$  is a bisection of $\rho$. Likewise, $\Delta\cdot \nu^*f_x   =4$. Since $\Delta = 2\nu^*\overline{\Delta}$, $\overline{\Delta}$ is a $4$-section of $\rho$. In fact, if $\lambda$ is the line bundle defining the double cover $B\times B \to X_1$, since $\nu^*(K_{X_1}\otimes \lambda) = K_{B\times B} =\scrO_{B\times B}$, $\lambda = -K_{X_1}$ up to $2$-torsion, hence $\overline{\Delta} =-2K_{X_1}$. Let $\delta \colon B \times B \to B$ be the difference map: $\delta(a,b) = b-a$. From the diagram
$$\begin{CD}
B\times B @>{\delta}>> B \\
@V{\nu}VV @VVV \\
X_1 = \Sym^2B @>{\psi}>> B/\pm 1 \cong \Pee^1,
\end{CD}$$
it follows that there is a fibration $X_1\to \Pee^1$ whose general fiber is an elliptic curve $\cong B$. Here, since $\Delta$ is the branch locus of $\nu$, $\overline{\Delta}$ is a smooth fiber $\phi_0$ of $\psi$, and the $\Gamma_q$ are three multiple fibers of multiplicity $2$. In particular, $B\times B \to X_1$ is the double cover of $X_1$ branched along $\phi_0$.  This completes the proof of Lemma~\ref{lemmam2}(iii). 

\begin{remark}
The three points $p_1, p_2, p_3\in \Pee^1$ which are the images of $\psi(\Gamma_{q_i})$ plus the image of $\phi_0$ define a double cover of $\Pee^1$ branched along those points which is isomorphic to $B$, so this specifies the fiber $\phi_0$. The corresponding double cover of $X_1$ is $B\times B$. Deforming $\phi_0$ to a general fiber gives a new double cover of $X_1$ which is  $B \times B'$ for a general elliptic curve $B'$.
\end{remark}

\subsection{$X_0$ as an elementary transformation of $X_1$}\label{subsection14} Let $x$ be a general point of $X_1$, so that there are exactly two sections $\sigma_{p_1}$ and $\sigma_{p_2}$ containing $x$. Let $f$ be the fiber through $x$. Then make the elementary modification $X_1'$ of $X_1$ at $p$: blow up $p$ and blow down the proper transform $f'$ of $f$. On $X_1'$, the proper transforms $\sigma_{p_1}'$ and $\sigma_{p_2}'$ of $\sigma_{p_1}$ and $\sigma_{p_2}$ are two disjoint sections with $(\sigma_{p_1}')^2 = (\sigma_{p_2}')^2 =0$. Identifying $\sigma_{p_1}$ and $\sigma_{p_2}$ with $B$, the corresponding normal bundles of $\sigma_{p_1}'$ and $\sigma_{p_2}'$ are $\lambda$ and $\lambda^{-1}$ for some line bundle $\lambda$ of degree $0$ on $B$. Explicitly, if $\sigma_{p_1}= \sigma^{(a)}$ and $\sigma_{p_2}= \sigma^{(b)}$ in the previous notation, then $x =\{a,b\} \in \Sym^2B$.  It is easy to check that $\lambda =\scrO_B(a-b)$ and hence $\lambda^{-1} =\scrO_B(b-a)$. Thus the elementary modification $X_1'\cong \Pee(\scrO_B \oplus \lambda)$. Note that, if $p$ lies on the branch divisor, then there is only one section of $X_1$ passing through $p$ and hence there is a unique section of square $0$. In this case,  $X_1'\cong \Pee(\mathcal{E})$, where $\mathcal{E}$ is the unique nontrivial extension of $\scrO_B$ by $\scrO_B$. 

Now suppose that $x\in \Gamma_q$.  Then in particular $x\notin \overline{\Delta}$, so there are indeed two sections passing through $x$. Moreover, since $\Gamma_q$ is the image of $\Delta + (0,q)$, $x =\{a,a+q\}$ and $\lambda$ is $2$-torsion. Thus $X_1' \cong X_0$. 

\section{The case $\kappa(\hY) =-\infty$: the elliptic ruled case}\label{section8}

By Proposition~\ref{211case}, the possibilities for $(m_1, m_2, m_3)$ are either $(2,1,1)$ or $(2,1,1)$. We begin by analyzing these cases in more detail and then describe the relevant deformation theory.

\subsection{The case $(m_1, m_2, m_3) = (2,1,1)$:} To construct examples, begin with the surface $X_1$ of \S\ref{subsection12}. Choose the following configuration of curves $\Gamma, \sigma_2, \sigma_3$ on $Y_0$: $\Gamma$ is a smooth bisection with $\Gamma^2 =0$,  and $\sigma_2$ and $\sigma_3$ are two sections with $\sigma_i\equiv \sigma_{p_0}$. Thus    $\sigma_i^2 = \sigma_i\cdot \sigma_j = \sigma_i\cdot \Gamma = 1$. Moreover, we can assume that   $\Gamma \cap \sigma_i = p_i$, $i=2,3$, and $\sigma_2\cap \sigma_3 = p_1$, where the points $p_1, p_2, p_3$ are all distinct. Thus $\Gamma + \sigma_2 + \sigma_3$ is combinatorially a triangle. Let $\hY$ be the blowup of $Y_0$ at $p_1, p_2, p_3$, with corresponding exceptional divisors $C_1, C_2, C_3$. Let $D_1 = \Gamma- C_2 -C_3$, $D_2 = \sigma_2 - C_1 -C_3$, and $D_3 =   \sigma_3 - C_1 -C_2$. Then $D_i^2 = -1$, $i=2,3$ and $D_1^2 =-2$. One calculates that, if $L = K_{\hY} + D_1 + D_2 + D_3$, then
\begin{align*}
L &\equiv 2\sigma_0 - C_1-C_2 - C_3; \\
L^2 &= 4 -3 = 1; \\
L \cdot D_i &= 0 \text{ for all $i$};\\
L&\equiv D_1 + D_2 + C_1.
\end{align*}
Since $L\cdot D_2 = L\cdot D_3 =0$ and $L\cdot C_1 = 1$, $L$ is nef and big.
Also note that 
\begin{align*}
M_1 &= K_{\hY} + D_2 + D_3  \equiv f - C_1 = C_1';\\
M_i & = K_{\hY} + D_1 + D_i  \equiv \sigma_0 - C_i, i\neq 1,
\end{align*}
where $C_1' = f-C_1$ is the proper transform of the fiber through $p_1$, hence is an exceptional curve with $D_1 \cdot C_1' = \Gamma \cdot f = 2$,   
  $(\sigma_0 - C_i)^2 =0$,  and $\sigma_0 - C_i$ is nef. 
  
  \begin{proposition} Suppose that $\hY$ has three elliptic singularities with $(m_2, m_2, m_3) = (2,1,1)$. Then $\hY$ is as constructed above.
\end{proposition}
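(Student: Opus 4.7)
The plan is to identify $\hY$ as the blow-up of $X_1$ at three points by producing an explicit birational morphism $\hY \to X_1$ that mirrors the construction in the preceding example. Recall $K_{\hY}^2 = 1 - m = -3$ while $K^2 = 0$ on any minimal elliptic ruled surface, so any contraction of $\hY$ to a minimal geometrically ruled surface over $B$ contracts exactly three $(-1)$-curves. Moreover $\hY$ contains no $(-2)$-curves (Assumption~\ref{assumption27} combined with the fact that $D_1, D_2, D_3$ are all smooth elliptic), so these exceptionals are disjoint and not infinitely near. The task is to realize such a contraction with target $X_1$ and with images satisfying the required triangle configuration.

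First, apply Theorem~\ref{geomanal}(i) at $i=1$ (possible since $m_1 = 2$): this yields a $(-1)$-curve $C$ on $\hY$ with $C \cdot D_1 = 2$, $C \cdot D_2 = C \cdot D_3 = 0$, and $M_1 = C$. By Proposition~\ref{211case}, $D_2$ and $D_3$ are sections of $\rho \colon \hY \to B$, so $f \cdot D_2 = f \cdot D_3 = 1$ for a general Albanese fiber, and $C \cdot f = 0$ forces $C$ to lie in a reducible Albanese fiber. A careful analysis shows this fiber has exactly two components, namely $f = C + C'$ with $C'$ another $(-1)$-curve and $C \cdot C' = 1$; then $C' \cdot D_2 = C' \cdot D_3 = 1$, and using $M_2 \cdot C' \geq 0$, $M_3 \cdot C' \geq 0$ together with $D_1 \cdot C' \geq 0$ forces $C' \cdot D_1 = 0$, so that $D_1 \cdot f = 2$ and $D_1$ is a bisection of the Albanese.

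Next, contract $C'$ (rather than $C$; this is the step that distinguishes $X_1$ from its elementary modification $X_0$), producing $\psi \colon \hY \to Y'$. On $Y'$ the images of $D_2, D_3$ have self-intersection $0$ and meet at one point $\psi(C')$, while $\psi(D_1)$ is still a smooth elliptic bisection; the image of $C$ is a smooth Albanese fibre. Applying the same counting argument to $Y'$ (with $K_{Y'}^2 = -2$), one extracts two further $(-1)$-curves $E_2, E_3$ on $Y'$, each meeting exactly one of $D_2, D_3$ with multiplicity $1$ and each meeting $D_1$ with multiplicity $1$ (so $\psi(E_i)$ sits on $D_1 \cap D_i$ after contraction); these are forced by the nefness of $M_2, M_3$ and by $\overline{D}_j$ having to be smooth on any blow-down (via the genus formula $p_a(\overline{D}_j) = 1 + \sum \binom{D_j \cdot E_a}{2}$). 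Contracting $E_2$ and $E_3$ yields a minimal elliptic ruled surface $Y_{\min}$ on which $\overline{D}_2, \overline{D}_3$ are smooth elliptic sections of square $1$ meeting at one point and $\overline{D}_1$ is a smooth elliptic bisection of square $0$; by Lemma~\ref{lemmam3} this forces $Y_{\min} \cong X_1$, with $\overline{D}_2, \overline{D}_3 \equiv \sigma_{p_0}$ (Lemma~\ref{lemmam1}) and $\overline{D}_1 \equiv \Gamma_q$ for some $2$-torsion $q$ (Lemma~\ref{lemmam2}), exhibiting $\hY$ as the blow-up of $X_1$ at the three vertices $\overline{D}_2 \cap \overline{D}_3$, $\overline{D}_1 \cap \overline{D}_2$, $\overline{D}_1 \cap \overline{D}_3$.

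The main obstacle will be two interlocking rigidity statements: (a) that the reducible Albanese fibre containing $C$ has exactly two components (so that $C'$ exists as a $(-1)$-curve), and (b) that the two further $(-1)$-curves $E_2, E_3$ on $Y'$ lie exactly over $\Gamma \cap D_2$ and $\Gamma \cap D_3$. Both are expected consequences of the nefness of $M_1, M_2, M_3$ together with the exhaustive three-blow-up count, but ruling out degenerate configurations (Albanese fibres with more components, $\overline{D}_1$ acquiring a node or cusp, etc.) will require the delicate bookkeeping that dominates the actual proof.
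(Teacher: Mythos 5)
Your overall architecture matches the paper's: locate three pairwise disjoint exceptional curves whose intersection numbers with $D_1, D_2, D_3$ realize the triangle configuration, contract them, and invoke Lemma~\ref{lemmam3} to identify the resulting geometrically ruled surface with $X_1$ (the paper contracts all three in one step rather than in your two stages, but your $C', E_2, E_3$ are the paper's $C', A_1, B_1$). The problem is the linchpin of the whole bookkeeping, namely $f\cdot D_1 = 2$ (equivalently $C'\cdot D_1 = 0$, equivalently $L\cdot f = 2$): your proposed derivation of it fails. You appeal to nefness of $M_2$ and $M_3$ against $C'$, but
$$M_2\cdot C' = (K_{\hY}+D_1+D_3)\cdot C' = -1 + (D_1\cdot C') + 1 = D_1\cdot C',$$
and likewise $M_3\cdot C' = D_1\cdot C'$, so nefness yields only the tautology $D_1\cdot C'\ge 0$. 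None of the tools you list (nefness of the $M_i$, the three-blow-up count, Assumption~\ref{assumption27}) caps $f\cdot D_1$ at $2$; a trisection $D_1$ is not excluded lattice-theoretically. The paper's upper bound goes through the elliptic pencil $|2M_2|$ of Theorem~\ref{geomanal}(ii): since $M_2\cdot D_1 = M_2\cdot D_3 = 0$, the connected arithmetic-genus-one curves $D_1$ and $D_3$ sit inside fibers $F_1, F_3$ of that pencil with $F_i\equiv M_2$ or $2M_2$; from $f\cdot F_3 = f\cdot D_3 = 1$ being odd one gets $F_3\equiv M_2$, hence $f\cdot M_2 = 1$ and $f\cdot D_1 = f\cdot F_1\le 2(f\cdot M_2)=2$. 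This use of the elliptic fibration is the missing input, and it is not recoverable from the ingredients you name.

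Everything downstream depends on that equality. In particular, your opening assertion that $\hY$ contains no $(-2)$-curves, hence that the three exceptionals are ``disjoint and not infinitely near,'' is circular where you place it: for a $(-2)$-curve $C$ one has $L\cdot C = D\cdot C$, so Assumption~\ref{assumption27} only forces such a curve to meet $D$, and the actual exclusion comes from $L\cdot f = 2$ together with $L\cdot A\ge 1$ for each component $A$ of a reducible Albanese fiber --- i.e.\ precisely the statement you have not yet established. (Also, two exceptional curves in the same reducible fiber always meet, so disjointness requires knowing there are three distinct reducible fibers, each with exactly two components.) Once $f\cdot D_1=2$ is secured, the rest of your configuration chase --- $C'\cdot D_2 = C'\cdot D_3 = 1$, the existence and intersection pattern of $E_2, E_3$, and the endgame via Lemma~\ref{lemmam3} --- does go through essentially as in the paper.
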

\begin{proof} Note that, in this case, $K_{\hY}^2 =1-4= -3$. Thus any contraction of $\hY$ at three disjoint exceptional curves is geometrically ruled.

Let $f$ be a general  fiber of the Albanese map $\rho\colon\hY \to B$. Then $f\cdot A = 0$ for every exceptional curve $A$ on $\hY$ and hence $A$ is a component of a (reducible) fiber. By Proposition~\ref{211case},  $f\cdot D_2 = f\cdot D_3 =1$.  Consider the morphism $\varphi_2\colon \hY \to \Pee^1$ defined by $|2M_2|$, say. Since $M_2\cdot D_1 = M_2 \cdot D_3 =0$, $D_1$ and $D_3$ are contained in two distinct fibers of $\varphi_2$. Then necessarily $D_1 = F_1 -$ (a sum of exceptional curves) and $D_3 = F_3 -$ (a sum of exceptional curves), where $F_1$ and $F_3$ are numerically equivalent either to $M_2$ or $2M_2$. Since $1= f \cdot D_3 = f\cdot F_3$, $F_3\equiv M_2$ and $f\cdot M_2 =1$. By Theorem~\ref{geomanal}(i),  $ M_1=C$ is an exceptional curve with $C \cdot D_1 = 2$ and $C\cdot D_2 = C\cdot D_3 =0$. Since $C$ is a component of a fiber,    $f\cdot D_1 \ge C\cdot D_1 = 2$.  On the other hand,   
$$f\cdot D_1 = f\cdot F_1\le 2(f\cdot M_2) = 2(f\cdot F_3) = 2.$$
  Thus $f\cdot D_1  = C \cdot D_1 = 2$, $F_1\equiv 2M_2$,  and $C$ is a reduced component of $f$. If  $C'$ is  any   component of the effective  divisor $f -C$, then $C'\cdot D_1=0$,  $C'\cdot D_i \le f\cdot D_i = 1$, $i=2,3$ and if $C'\cdot D_i =1$, then $C'$ is a reduced component of the fiber.   In addition,
$$L\cdot f =  K_{\hY}\cdot f + D_1\cdot f + D_2\cdot f + D_3 \cdot f = -2 + 2 + 1 + 1 =2.$$

Since $D_1^2 = -2$ and $D_3 ^2 = -1$, there exist exceptional curves $A_1, A_1', A_3$ such that $D_1 = F_1 - A_1 - A_1'$ and $D_3 = F_3 - A_3$. Here if $D_1$ is obtained from $F_1$ by taking an infinitely near blowup, we can take $A_1$ to be an exceptional curve and $A_1' = A_1 + A_1''$ where $A_1''$ is a curve disjoint from $D_1$. In any case, neither $A_1$ nor $A_1'$ is equal to $C$ since $A_1\cdot D_1 = A_1'\cdot D_1 = 1$ but $C\cdot D_1 = 2$. Likewise, $A_1$ can't be a component  $C'$ of the fiber containing $C$, since then we would have $f\cdot D_1 = 2 \ge (A_1+ C)\cdot D_1 \geq 3$, and symmetrically for  $A_1'$. Thus $A_1$ and $C$ are components of two different fibers of $\rho$. In particular, $C'$ and $A_1$ are disjoint for every   component $C'$ of the fiber $f$ containing $C$ (including $C$). A similar statement holds for $A_1'$.   Also, $A_1$ and $ A_3$ are disjoint since they are components of two different fibers of $\varphi_2$.  Similarly  $A_1'$ and $A_3$ are disjoint and $D_3$ and $A_1$ are disjoint.

Since $L\cdot f =2$ and $L\cdot A_1 \neq 0$, $L\cdot A_1 $ is either $1$ or $2$. But $D_1\cdot A_1 = 1$,  $D_2\cdot A_1 \le D_2\cdot f = 1$,  and $D_3\cdot A_1 =0$. 
Thus
$$L\cdot A_1 = K_{\hY}\cdot A_1 + D_1\cdot A_1 + D_2\cdot A_1 + D_3 \cdot A_1  = -1+ 1 + D_2\cdot A_1 +0.$$
Hence $L\cdot A_1 = D_2\cdot A_1 = 1$.  Thus $D_1\cdot A_1 = D_2\cdot A_1 = 1$ and $D_3\cdot A_1 =0$.  By the symmetry between $D_2$ and $D_3$, there exists an exceptional  curve $B_1$ such that $D_1\cdot B_1 = D_3\cdot B_1 = 1$,  $D_2\cdot B_1 =0$, and $B_1 \cdot C' =0$ for every   component $C'$ of the fiber $f$ containing $C$. Next, we claim that, possibly after replacing $A_1$ by $A_1'$, we can assume that $A_1$ and $B_1$ are disjoint: If $A_1$ and $B_1$ are not disjoint, then they are components of the same fiber of $\rho$.  Then $A_1+B_1 \equiv f$ is a complete fiber since $A_1$ and $B_1$ are both exceptional curves. In this case, $A_1'$ must be an exceptional curve disjoint from $A_1$ and $B_1$, and we can replace $A_1$ by $A_1'$. The upshot is that  there exist three disjoint exceptional curves $A_1, B_1, C$ with $A_1 \cdot C' =B_1 \cdot C' =0$ for every   component $C'$ of the fiber $f$ containing $C$.  Contracting the  exceptional curves $A_1, B_1, C$ produces a relatively minimal model of $\hY$, which is therefore geometrically ruled. Thus  the fiber of $\rho$ containing $C$ must be of the form   $C+ C'$, where $C'$ is an exceptional curve disjoint from $A_1, B_1$. Since $f\cdot D_i = 1$, $i=2,3$, $C'\cdot D_i = 1$, $i=2,3$ and $C' \cdot D_1 =0$. Let $X$ be the geometrically ruled surface obtained by blowing down   $A_1, B_1, C'$ and let $\Gamma$, $\sigma_2$, $\sigma_3$ be the images of $D_1$, $D_2$, $D_3$ respectively. Then $\Gamma$ is a bisection of the ruling and $\sigma_2$, $\sigma_3$ are sections, with $\Gamma^2=0$ and $\sigma_2^2 = \sigma_3^2=1$. By Lemma~\ref{lemmam3}, $X = X_1$ and we obtain $\hY$ as described at the beginning of this section.
\end{proof}
  
 \subsection{The case $(m_1, m_2, m_3) = (1,1,1)$:} To construct examples, begin as before with the surface $X_1$. Let $\Gamma_{q_1}$ and $\Gamma_{q_2}$ be two disjoint bisections as in Section~\ref{section1}, and let $\sigma_3$ be a section meeting $\Gamma_{q_i}$ at the point $x_i$. Blow up $x_i$, $i=1,2$,  and let $D_i$ be the proper transform of $\Gamma_{q_i}$, $i=1,2$, and $D_3$ be the proper transform of $\sigma_3$. If $C_i$ is the exceptional curve corresponding to $x_i$, then in the notation of Section~\ref{section1},   $D_i \equiv 2\sigma_{p_0} -f - C_i$, $i=1,2$, and $D_3 \equiv \sigma_{p_0} - C_1 -C_2$. Then $D_i^2 = -1$, $1\le i \le 3$ and the $D_i$ are disjoint elliptic curves. If $L = K_{\hY} + D_1 + D_2 + D_3$, then
\begin{align*}
L &= K_{\hY} + D_1 + D_2 + D_3 \equiv 3\sigma_{p_0} - f - C_1 -C_2;\\
&\equiv (\sigma_{p_0} -C_1-C_2) + (2\sigma_{p_0} - f) \equiv D_3 + \Gamma_i \equiv D_3 + D_i + C_i.
\end{align*} 
Thus, $L$ is numerically equivalent to an effective divisor, $L\cdot D_i = 0$, $1\le i \le 3$, $L\cdot C_i = 1$, and $L^2 = 9-6 -2 = 1$. So $L$ is nef and big. Also note:
\begin{align*}
  M_i &= K_{\hY} + D_j + D_3 \equiv  \sigma_{p_0} -  C_j \equiv  D_3 + C_i, j\neq i \text{ and }  i,j\neq 3; \\
M_3 &=  K_{\hY} + D_1 + D_2 \equiv  2\sigma_{p_0} -  f.
\end{align*} 
Thus for all $i$, $M_i$ is nef and $M_i^2 =0$. Note that, for $i=1,2$,  $2M_i$ defines the elliptic fibration on $X_0$ as described in \S\ref{subsection11}, using the description of $X_0$ as an elementary modification of $X_1$ given in \S\ref{subsection14}. On the other hand, $2M_3$ defines the elliptic fibration on $X_1$  described in \S\ref{subsection12}. 

\begin{proposition} Suppose that $\hY$ has three elliptic singularities all of multiplicity $1$. Then $\hY$ is as constructed above.
\end{proposition}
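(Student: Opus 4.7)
The plan is to use the elliptic fibration on $\hY$ defined by the linear system $|2M_3|$ to exhibit $\hY$ as the blowup of the surface $X_1 = \Pee(W)$ of \S\ref{subsection12} at two specific points, and then check that the configuration of $D_1, D_2, D_3$ matches the one in the construction preceding the proposition.

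By Theorem~\ref{geomanal}(ii), $M_3 = K_{\hY} + D_1 + D_2$ is nef with $M_3^2 = 0$, and $|2M_3|$ defines an elliptic fibration $\varphi_3\colon \hY \to \Pee^1$ whose only multiple fibers have multiplicity $2$. Revisiting the case analysis for the elliptic ruled setting in the proof of Theorem~\ref{geomanal} (applied with $i=3$), Case~(1) (two multiple fibers, $M_3$ being one of them) would give $D_1 + D_2 \equiv F_\beta - \sum b_t C_t$ for a single multiple-fiber reduction $F_\beta$, a connected divisor, which contradicts the disjointness of $D_1$ and $D_2$. Hence Case~(2) holds, $\varphi_3$ has three multiple fibers, and $D_1 + D_2 \equiv F_{\alpha_1} + F_{\alpha_2} - C_1 - C_2$, where $C_1, C_2$ are the two exceptional curves of the relatively minimal model morphism $\pi_3 \colon \hY \to \bar{X}_3$ of $\varphi_3$ (two of them, because $K_{\hY}^2 = 1-m = -2$). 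Now $\bar{X}_3$ is a relatively minimal algebraic elliptic surface over $\Pee^1$ with $\chi(\scrO_{\bar X_3}) = 0$, three multiple fibers of multiplicity $2$, and Kodaira dimension $-\infty$ (since $\hY$, and hence $\bar{X}_3$, is birationally ruled); by the classification of elliptic ruled surfaces recalled at the start of \S\ref{section7}, $\bar{X}_3 \cong X_1$ for some elliptic curve $B$, the three multiple fibers being the curves $\Gamma_{q_1}, \Gamma_{q_2}, \Gamma_{q_3}$ of Lemma~\ref{lemmam2}, with $M_3 = F_\beta$ matching one of them, say $\Gamma_{q_3}$.

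Matching effective divisors in $D_1 + D_2 \equiv \Gamma_{q_1} + \Gamma_{q_2} - C_1 - C_2$ (viewing each $\Gamma_{q_i}$ via its pullback from $X_1$), and using that $D_1, D_2$ are two disjoint smooth elliptic $(-1)$-curves while $\Gamma_{q_1}, \Gamma_{q_2}$ lie in disjoint fibers of $\varphi_3$, one concludes (after relabeling) that $D_i = \Gamma_{q_i} - C_i$ on $\hY$, so $\pi_3$ blows up distinct smooth points $x_i \in \Gamma_{q_i}$, $i=1,2$. To pin down $D_3$, set $\bar{D}_3 = \pi_{3*}D_3$; since $D_3 \cdot D_i = 0$ and $\pi_3^*\Gamma_{q_i} = D_i + C_i$, one obtains $D_3 \cdot C_i = \bar{D}_3 \cdot \Gamma_{q_i}$, and Theorem~\ref{geomanal}(ii) says $D_3$ is a bisection of $\varphi_3$, so $\bar{D}_3 \cdot \Gamma_{q_i} = 1$, whence $D_3 \cdot C_i = 1$. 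A direct intersection computation then gives $\bar{D}_3^2 = (D_3 + C_1 + C_2)^2 = 1$ and $\bar{D}_3 \cdot K_{X_1} = D_3 \cdot (K_{\hY} - C_1 - C_2) = -1$. Writing $\bar{D}_3 \equiv a\sigma_{p_0} + bf$ in $\operatorname{Num}(X_1)$, the bisection condition $\bar{D}_3 \cdot \Gamma_{q_i} = 1$ becomes $a + 2b = 1$, and $\bar{D}_3^2 = a(a + 2b) = a = 1$ forces $a = 1$, $b = 0$; by Lemma~\ref{lemmam1}(i), $\bar{D}_3$ is then a section $\sigma_3 = \sigma_p$ of the ruling $X_1 \to B$. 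Since $\bar{D}_3$ contains $x_i$ and $\bar{D}_3 \cdot \Gamma_{q_i} = 1$, we have $\sigma_3 \cap \Gamma_{q_i} = \{x_i\}$ and $D_3 = \sigma_3 - C_1 - C_2$, exhibiting $\hY$ exactly as in the construction before the proposition.

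The hardest step will be the identification $\bar{X}_3 \cong X_1$ in the first paragraph: one must rule out the two-multiple-fibers subcase of Theorem~\ref{geomanal} via the disjointness of $D_1, D_2$, and then invoke the classification of birationally ruled elliptic surfaces to single out $X_1$ as the unique one with three multiple fibers of multiplicity $2$. Once that is in place, identifying $D_1, D_2, D_3$ reduces to intersection-theoretic bookkeeping together with a direct application of Lemma~\ref{lemmam1}(i).
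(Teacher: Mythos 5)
Your overall route differs from the paper's: you go fibration-first, identifying the relatively minimal model of the elliptic fibration $|2M_3|$ with $X_1$ via the classification in \S\ref{section7}, whereas the paper works from the Albanese ruling, locates a suitable pair of disjoint exceptional curves by a combinatorial analysis, contracts them, and invokes Lemma~\ref{lemmam3}. Your endgame (the intersection-theoretic identification of $\bar{D}_3$ as a section and of $D_1,D_2$ as proper transforms of the $\Gamma_{q_i}$) is correct as far as it goes. But there is a genuine gap at the very first step, and it is not cosmetic.

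Your exclusion of Case~(1) is based on a miscomputation. In Case~(1) of the proof of Theorem~\ref{geomanal} one has $n=1$ and $n_\alpha=1$ for a single $\alpha$, so $D_1+D_2 = G + F_\alpha - \sum_t b_tC_t$ is the union of the proper transforms of a \emph{non-multiple} fiber $G$ and of the reduction $F_\alpha$ of a multiple fiber --- two disjoint connected curves, not the connected divisor $F_\beta - \sum_t b_tC_t$ you assert. Hence disjointness of $D_1$ and $D_2$ yields no contradiction, and Case~(1) cannot be ruled out for an arbitrary choice of index. Indeed it genuinely occurs: in the model constructed before the proposition, $|2M_i|$ for $i=1,2$ has relatively minimal model $X_0$ with only \emph{two} multiple fibers (as noted at the end of the construction, via \S\ref{subsection14}); only $|2M_3|$, where $D_3$ is the proper transform of the section, lands in Case~(2). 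Since the labeling in which ``$D_3$ is the section'' is part of the conclusion rather than the hypothesis, choosing ``$i=3$'' presupposes what you are trying to prove. To repair the argument you must either prove that Case~(2) holds for at least one index, or treat Case~(1) as well, in which case the relatively minimal model is $X_0$ and you need the elementary-modification comparison of \S\ref{subsection14} to get back to $X_1$ --- additional work that your proposal does not supply.
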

\begin{proof}  Let $\rho\colon \hY \to B$ be the Albanese morphism and let $f\cong \Pee^1$ be a general fiber. Since $K_{\hY} ^2 = 1 - 3 =-2$, $\hY$ is the blowup of a  geometrically ruled elliptic surface at $2$ points (\emph{a priori} possibly infinitely near). Let $D_1$, $D_2$, $D_3$ be the three exceptional divisors of the resolution $\pi\colon \hY \to Y$. The main point will be to locate two disjoint exceptional curves $C_1$ and $C_2$ such that, possibly after relabeling the $D_i$, 
\begin{align*}
C_1\cdot D_1 &= C_1 \cdot D_3 = 1,  \quad   C_1\cdot D_2  = 0; \\
C_2\cdot D_2 &= C_2\cdot D_3 = 1, \quad  C_2\cdot D_1  = 0.
\end{align*}
Given the existence of $C_1$, $C_2$,  let $X$ be the surface obtained by contracting $C_1$ and $C_2$ and let $\Gamma_1, \Gamma_2, \sigma_3$ be the images of $D_1, D_2, D_3$ respectively. Then  $\Gamma_1^2 = \Gamma_2^2 =   \Gamma_1\cdot  \Gamma_2 = 0$, $\sigma_3^2 = 1$, and $\Gamma_i\cdot \sigma_3 = 1$. In particular, the numerical equivalence class of $\Gamma_i$ is primitive. By Lemma~\ref{lemmam3},   $X=X_1$, $\Gamma_i = \Gamma_{q_i}$ for appropriate  $2$-torsion points, and $\sigma_3 =\sigma_p$ for some point $p$.

To find the curves $C_i$, consider the divisor $M_3$. Since $M_3\cdot D_1 = M_3 \cdot D_2 =0$, each $D_i$ is contained in a fiber of the morphism $\hY \to \Pee^1$ defined by $2M_3$. Since $D_i$ is connected and $D_i^2 =-1$, the only possibility is that $D_1 = F_1 -A_1$ and $D_2 = F_2 -A_2$ where the $F_i$ are (reductions of) fibers of the relatively minimal fibration  $Y_0^{(3)} \to \Pee^1$ defined by $2M_3$, not necessarily multiple or of the same multiplicity, and $A_1, A_2$ are disjoint exceptional curves. In particular, $A_i\cdot D_j =\delta_{ij}$, $1\le i,j\le 2$.  The fiber $f$ of the morphism $\rho \colon \hY \to B$ satisfies $f =A_1 + A_1' = A_2 + A_2'$, where $A_1'$ and $A_2'$ are also exceptional curves on $\hY$, and that the set of exceptional curves on $\hY$ is exactly $\{A_1 , A_1' , A_2, A_2\}$.  There are $4$ subsets consisting of two disjoint exceptional curves: $\{A_1, A_2\}, \{A_1', A_2\}, \{A_1, A_2'\}, \{A_1', A_2'\}$.
We also have the following: let $C$ be any exceptional curve on $\hY$.
\begin{enumerate} 
\item It is not possible that $D_{j_1}\cdot C = D_{j_2}\cdot C =0$ for two different $j_1, j_2$: if so, let $i\neq j_1, j_2$. Then $M_i \cdot C = K_{\hY}\cdot C =-1$, contradicting the fact that $M_i$ is nef. 
\item Since $D_i$ is not contained in a fiber of $\rho$ (because $p_a(D_i) = 1$), $D_i\cdot f >0$. Hence, if $f = C+C'$ for a pair of exceptional curves and $D_i\cdot C = 0$, then $D_i\cdot C' > 0$.
\item From what we have seen for $i=3$, for every $i$, if $\{j,k\} = \{1,2,3\} -\{i\}$, there exist a pair of disjoint exceptional curves $E_1, E_2$ such that $E_1\cdot D_j = 1, E_1\cdot D_k =0$ and $E_2\cdot D_j = 0, E_2\cdot D_k =1$.
\end{enumerate} 
Consider the following table, where the $(i,j)^{\text{th}}$ entry is the corresponding self-intersection and we have used the above information for $M_3$:

\medskip
\centerline{\begin{tabular}{c||c|c|c|c}
  & $A_1$ & $A_1'$ &$A_2$  & $A_2'$ \\
\hline\hline
$D_1$  & $1$ &   &0 & $>0$ \\
\hline
$D_2$& $0$& $>0$& $1$ &   \\
 \hline
$D_3$& $>0$&  &$>0$   &
\end{tabular}}

\medskip
By the above, no column has two zeroes, and if $D_i\cdot A_j = 0$, then   $D_i\cdot A_j'> 0$ and conversely. 
Applying the same arguments for $M_2$ and then $M_1$, one checks by a tedious argument  that, after possibly relabeling, the column for $A_1$ is $(1,0,1)$ and the column for $A_2$  is $(0,1,1)$ as desired. 
\end{proof}

\subsection{Applications to deformations} The goal now is to prove the following theorem:

\begin{theorem}\label{ellruled}  Let $\hY$ be the minimal resolution of an I-surface with $k=3$.
\begin{enumerate}
\item[\rm(i)] $H^2(\hY; T_{\hY}(-\log D)) = 0$. 
\item[\rm(ii)] The image of $\mathbb{T}^1_Y$ in $H^0(Y; T^1_Y)$ has codimension $2$. 
\item[\rm(iii)] For all $i$, $H^2(\hY; T_{\hY}(-\log D_i')(-D_i)) = 0$, and hence there is a deformation of $Y$ which smooths $p_i$ and is equisingular at the remaining points $p_j$. 
\item[\rm(iv)] If $(m_1, m_2, m_3) = (2,1,1)$, then smoothing $Y$ at the point of multiplicity $2$ yields an I-surface whose minimal resolution is a  blown up Enriques surface with two simple elliptic singularities of multiplicity one and   smoothing $Y$ at a point of multiplicity $1$ yields an I-surface whose minimal resolution is a a rational surface with one    simple elliptic singularity of multiplicity $2$ and another of multiplicity $1$. 
\item[\rm(v)] If $(m_1, m_2, m_3) = (1,1,1)$, then smoothing $Y$ at a point $p_i$ such that $\pi^{-1}(p_i) = \sigma$  is a section yields an I-surface whose minimal resolution is a rational surface  with two simple elliptic singularities of multiplicity one and smoothing $Y$ at a point $p_i$ such that $\pi^{-1}(p_i) = \Gamma$  is a bisection yields an I-surface whose minimal resolution is a  blown up Enriques surface  with two simple elliptic singularities of multiplicity one. 
\end{enumerate}
\end{theorem}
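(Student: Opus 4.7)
The proof naturally splits into the cohomological vanishings of parts (i)--(iii) and the explicit geometric constructions of (iv)--(v), carried out on the models $X_0, X_1$ of Section~\ref{section7}. For (i), my plan is to dualize: by Serre duality, $H^2(\hY; T_{\hY}(-\log D))$ is dual to $H^0(\hY; \Omega^1_{\hY}(\log D)\otimes K_{\hY})$. I would instead work with the short exact sequence
$$0 \to T_{\hY}(-\log D) \to T_{\hY} \to \bigoplus_i N_{D_i/\hY} \to 0,$$
reducing the target vanishing to two inputs: first, $H^2(\hY; T_{\hY}) = 0$, which holds because $\hY$ is birational to a minimal elliptic ruled surface of the form $\Pee(V)\to B$ with $h^2(T)=0$ (direct computation using the relative tangent sequence, similar in spirit to Example~\ref{ex13}); and second, surjectivity of the map $H^1(\hY; T_{\hY}) \to \bigoplus_i H^1(D_i; N_{D_i/\hY})$. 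Using the explicit presentation of $\hY$ as a blowup of $X_1$ at the configurations described in \S8.1--8.2, I can realize generators of each $H^1(D_i; N_{D_i/\hY})$ by independently moving the blown-up points on $X_1$ (or performing elementary modifications $X_1\dashrightarrow X_0$), giving the required surjectivity.

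For (ii), the key identification is that, by the commutative diagram in the proof of Proposition~\ref{prop1121}(i), the cokernel of $\mathbb{T}^1_Y \to H^0(Y; T^1_Y)$ equals the cokernel of $H^1(\hY; T_{\hY}(-\log D))\to H^1(D; T^0_D)=\bigoplus_{i=1}^3 H^1(D_i; T_{D_i})\cong \Cee^3$, modulo $\mathbb{T}^2_Y$. I would show this image is exactly one-dimensional by a direct geometric argument: each $D_i$ is either the Albanese base $B$ itself (when $D_i$ is a section of the ruling) or an \'etale double cover of $B$ (when $D_i$ is a bisection), so the $j$-invariants of the $D_i$ are tied to $j(B)$ via rigid isogenies. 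Deforming $\hY$ varies $j(B)$ in a one-parameter family, and all three $j(D_i)$ move in lock step through this isogeny correspondence — this gives the one-dimensional ``diagonal'' image in $\Cee^3$. The vanishing $\mathbb{T}^2_Y=0$ needed to pass from the cokernel of $H^0(T^1_Y)\to H^2(T^0_Y)$ back to the cokernel of $\mathbb{T}^1_Y\to H^0(T^1_Y)$ follows from Lemma~\ref{lemma1101}(i) once one exhibits local smoothings of each $p_i$ that realize the missing directions, which is part (iii).

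For (iii), the plan is to apply Proposition~\ref{prop1171}(ii): given (i) and the fact from (ii) that the one-dimensional image in $\bigoplus H^1(D_i;T_{D_i})$ projects nontrivially onto each factor (since moving $j(B)$ moves every $j(D_i)$ nontrivially), the composite $H^1(\hY; T_{\hY}(-\log D))\to H^1(D_i; T_{D_i})$ is surjective. Then $H^2(\hY; T_{\hY}(-\log D_i')(-D_i))=0$ and Corollary~\ref{cor16} produces the required deformation smoothing $p_i$ while keeping $D_i'$ Cartier and equisingular.

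For (iv) and (v), my plan is to construct the smoothings by hand in each case using the models of \S\ref{subsection11}--\S\ref{subsection14}. In case $(2,1,1)$, smoothing the multiplicity-2 point corresponds to smoothing the node $p_1=\sigma_2\cap\sigma_3$ inside the triangle on $X_1$, which replaces the double section $\sigma_2\cup\sigma_3$ by a single smooth bisection; the resulting surface carries an involution lifting the hyperelliptic involution on the bisection and the smoothed I-surface is the quotient of a $K3$-type double cover, hence its minimal resolution is a blown-up Enriques surface with two multiplicity-1 simple elliptic singularities. Smoothing a multiplicity-1 point instead destroys one section and leaves a surface of the type analyzed in \S\ref{ssection51} (rational case $(2,1)$). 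In case $(1,1,1)$, the same dichotomy appears: smoothing at a section collapses a section to a point and leaves us with two bisections, yielding a rational minimal resolution with two multiplicity-1 simple elliptic singularities; smoothing at a bisection preserves the \'etale-like $\Zee/2$ structure of the remaining configuration and yields a blown-up Enriques. The main obstacle I anticipate is proving rigorously that the minimal resolution of each smoothed I-surface is of the claimed birational type (Enriques vs.\ rational): this will require either a Hodge-theoretic argument identifying $h^{1,0}$ or $h^{2,0}$ in the family, or an explicit construction of the degeneration (for example, via the double-cover or symmetric-product picture of \S\ref{subsection13}) that makes the change of minimal model transparent, and this is where the bulk of the case-by-case work will lie.
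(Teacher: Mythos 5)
Your architecture for (i)--(iii) is essentially sound, and (iii) matches the paper's argument (surjectivity onto $H^1(D_i;T_{D_i})$ via deforming $B$ and carrying the configuration along, then Proposition~\ref{prop1171}(ii)). For (i) you work on $\hY$ directly, which forces you to prove surjectivity of $H^1(\hY;T_{\hY})\to\bigoplus_iH^1(D_i;N_{D_i/\hY})$ onto a $3$- or $4$-dimensional space; this is doable along the lines you sketch, but the paper's route (Serre duality, Hartogs descent to $X_1$, then the same normal-bundle sequence on $X_1$) is cleaner because there the normal bundles of $\Gamma_i$ and $\sigma_3$ are a nontrivial $2$-torsion bundle and a degree-one bundle on an elliptic curve, so their $H^1$'s vanish and no surjectivity needs to be checked. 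The real problem in (ii) is your route to $\mathbb{T}^2_Y=0$: part (iii) shows that the smoothing directions in $H^0(Y;T^1_Y)$ are \emph{unobstructed}, i.e.\ they map to \emph{zero} under $H^0(Y;T^1_Y)\to H^2(Y;T^0_Y)$, so they cannot witness surjectivity of that map, and Lemma~\ref{lemma1101}(i) gives you nothing this way. Without $\mathbb{T}^2_Y=0$ you only conclude that the image of $\mathbb{T}^1_Y$ has codimension $\le 2$. The fix is simply to invoke Theorem~\ref{defunobstr}, which gives $\mathbb{T}^2_Y=0$ for every Gorenstein I-surface from the weighted-hypersurface description; this is what the paper does.

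For (iv) and (v) the proposal stops short of the actual mathematical content. Both smoothings in (v), and the multiplicity-$2$ smoothing in (iv), produce an I-surface with two multiplicity-one elliptic singularities, i.e.\ $k=m=2$, which is numerically consistent with \emph{both} the blown-up Enriques case and the rational case of Theorem~\ref{theorem5}/Lemma~\ref{kappaminus}; distinguishing them is exactly the point, and you explicitly defer it. The paper's arguments are concrete: for the Enriques conclusions, on the intermediate two-point blowup the relevant divisor $D$ is numerically anticanonical with $2D\sim -2K$, so the contraction has $\omega^{\otimes 2}\cong\scrO$ and a small smoothing is numerically $K$-trivial, whence the resolution is a one-point blowup of an Enriques surface; for the rational conclusion in (v), an exceptional curve $f'$ with $f'\cdot D_2=2$ survives the deformation (it is disjoint from the smoothed divisor), and blowing it down produces a curve of self-intersection $3$, which is impossible on a one-point blowup of an Enriques surface. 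You should also correct the identification in (iv): the multiplicity-$2$ singularity has exceptional divisor $D_1=\Gamma-C_2-C_3$, the proper transform of the \emph{bisection}, so smoothing it is not ``smoothing the node $\sigma_2\cap\sigma_3$''; the construction contracts and smooths the anticanonical curve $D_1$ on the blowup of $X_1$ at $\Gamma\cap\sigma_2$ and $\Gamma\cap\sigma_3$, keeping the proper transforms of $\sigma_2,\sigma_3$ (which are stable since their normal bundles are nontrivial of degree $0$), and only then blows up their intersection.
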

\begin{proof} Since the cases $(m_1, m_2, m_3) = (2,1,1)$ and $(m_1, m_2, m_3) = (1,1,1)$ are formally identical as far as (i)---(iii) are concerned, we shall just  write down  the details for the case $(m_1, m_2, m_3) = (1,1,1)$. In this case $\hY$ is the blowup of a surface $X_1 =\Pee(W)$  where the $D_i$ are the proper transforms of $\Gamma_1$, $\Gamma_2$, $\sigma_3$ respectively, where the $\Gamma_i$ are bisections and $\sigma_3$ is a section. We turn now to the proofs of the various statements.

\smallskip
\noindent (i): By Serre duality, $H^2(\hY; T_{\hY}(-\log D))$ is Serre dual to $H^0(\hY; \Omega^1_{\hY}(\log D)\otimes K_{\hY})$. By the usual Hartogs argument, it is enough to prove that 
$$H^0(X_1; \Omega^1_{X_1}(\log(\Gamma_1 + \Gamma_2 + \sigma_3))\otimes K_{X_1})=0.$$
Applying Serre duality again, we have to show that 
$$H^2(X_1; T_{X_1}(-\log (\Gamma_1 + \Gamma_2 + \sigma_3)))=0.$$
Using the  normal bundle sequence, we get an exact sequence
$$0 \to T_{X_1}(-\log (\Gamma_1 + \Gamma_2 + \sigma_3)) \to T_{X_1} \to N_{\Gamma_1/X_1} \oplus N_{\Gamma_2/X_1} \oplus N_{\sigma_3/X_1} \to 0.$$
Since  $N_{\Gamma_i/X_1}$ is a nontrivial $2$-torsion line bundle on $\Gamma_i$  and     $N_{\sigma_3/X_1}$ has degree $1$, 
$$H^1(\Gamma_i; N_{\Gamma_i/X_1})  =H^1(\sigma_3; N_{\sigma_3/X_1}) =0.$$    Moreover,  $H^2(X_1; T_{X_1})=0$ since $H^2(X_1; T_{X_1})$ is a birational invariant (or directly). Thus 
$$H^2(X_1; T_{X_1}(-\log (\Gamma_1 + \Gamma_2 + \sigma_3)))= H^2(\hY; T_{\hY}(-\log D)) =0.$$ 

\smallskip
\noindent (ii): Using the fact that $\mathbb{T}^2_Y =0$, which holds generally for I-surfaces, it suffices to prove that $H^2(Y; T^0_Y)$ has dimension $2$. By the proof of
 Lemma~\ref{lemma1101} since $H^2(\hY; T_{\hY}) =0$, $H^2(Y; T^0_Y) \cong \operatorname{Coker}\{H^1(\hY; T_{\hY}) \to H^0(Y; R^1\pi_*T_{\hY})\}$. Using the commutative diagram of Proposition~\ref{prop1121} and the fact that 
 $$H^0(D;N_{D/Y}) = H^2(\hY; T_{\hY}(-\log D)) =0,$$
  there is a commutative diagram with exact rows
 $$\begin{CD}
0 @>>>  H^1(\hY; T_{\hY}(-\log D)) @>>> H^1(\hY; T_{\hY}) @>>>  H^1(D;N_{D/\hY}) @>>> 0 \\
@. @VVV @VVV @VV{=}V @. \\
0 @>>> H^1(D; T_D) @>>>  H^0(Y; R^1\pi_*T_{\hY}) @>>> H^1(D;N_{D/\hY}) @>>> 0.
\end{CD}$$
It follows that 
$$\operatorname{Coker}\{H^1(\hY; T_{\hY}) \to H^0(Y; R^1\pi_*T_{\hY}\} \cong \operatorname{Coker}\{H^1(\hY; T_{\hY}(-\log D)) \to H^1(D; T_D) \}.$$
In our case, $T_D \cong T_{\Gamma_1} \oplus T_{\Gamma_2}\oplus T_{\sigma_3}$, and it is easy to check that the image of $H^1(\hY; T_{\hY}(-\log D))$ in $H^1(D; T_D)$ has dimension $1$ (for example by the arguments below for (iii)). 

\smallskip
\noindent (iii): By Proposition~\ref{prop1171}(ii) and (i) above, it suffices to prove that the map $H^1(\hY; T_{\hY}(-\log D)) \to H^1(D_i; T_{D_i})$ is surjective.

First consider the deformation theory of $X_1$. There is the usual exact sequence
$$0 \to T_{X_1/B} \to T_{X_1} \to \rho^*T_B \to 0.$$
Then $R^1\rho_*T_{X_1/B} = R^1\rho_*T_{X_1} =0$ and there is an  exact sequence
$$ 0 \to \operatorname{ad} W \to R^0\rho_* T_{X_1} \to T_B \to 0.$$
The simple bundle $W$ satisfies $H^0(B; \operatorname{ad} W) = H^1(B; \operatorname{ad} W) = 0$. Thus 
$$H^1(X_1; T_{X_1}) \cong H^1(B;  R^0\rho_* T_{X_1}) \cong H^1(B; T_B).$$
This says that $\mathbf{Def}_{X_1} \to \mathbf{Def}_B$ is smooth,  in fact it is  an isomorphism on the analytic germs representing the functors. 

Given an element $\theta\in H^1(X_1; T_{X_1}) \cong  H^1(B; T_B)$, let $\mathcal{X} \to \mathcal{B}$ be the corresponding morphism over $\Spec\Cee[\varepsilon]$. Because $H^1(\Gamma_i; N_{\Gamma_i/X_1})  = H^1(\sigma_3; N_{\sigma_3/X_1}) =0$, the curves $\Gamma_i$ and $\sigma_3$ extend to Cartier divisors $\widetilde{\Gamma}_i$ and $\widetilde{\sigma}_3$ on $\mathcal{X}$ which are flat over $\Spec\Cee[\varepsilon]$ and \'etale over $\mathcal{B}$. Blowing up the intersections  $\widetilde{\Gamma}_i\cap \widetilde{\Gamma}_j$ and $\widetilde{\Gamma}_i\cap \widetilde{\sigma}_3$ leads to a scheme $\widetilde{\mathcal{Y}} \to \mathcal{B}$ with Cartier divisors $\mathcal{D}_i$, $1\le i \le 3$. Thus $\mathcal{D}_i\cong \widetilde{\Gamma}_i$ for $i=1,2$ and $\mathcal{D}_3\cong \widetilde{\sigma}_3$. Hence the element $\hat{\theta} \in H^1(\hY; T_{\hY})$ corresponding to $\widetilde{\mathcal{Y}}$ lies in the image of $H^1(\hY; T_{\hY}(-\log D))$. Its image in $H^1(D_i; T_{D_i}) $ maps onto the class corresponding to $\mathcal{B}$, which is a generator of the one-dimensional vector space $H^1(B; T_B)$. Moreover, $H^1(D_i; T_{D_i}) \cong H^1(B; T_B)$ since the morphisms $D_i \to B$ induced by projection are \'etale. Hence the map $H^1(\hY; T_{\hY}(-\log D)) \to H^1(D_i; T_{D_i})$ is surjective as required.

\begin{remark} In terms of moduli, the exact sequence for $T_{X_1}$ above shows that $H^0(X_1; T_{X_1}) \cong H^0(B; T_B)$. Then in the case $(m_1, m_2, m_3) = (1,1,1)$, there is an exact sequence
$$H^0(B; T_B) \to H^0(\sigma_3; N_{\sigma_3/X_1}) \to H^1(X_1; T_{X_1}(-\log \overline{D})) \to H^1(X_1; T_{X_1})\to 0,$$
where $\overline{D}$ is the divisor $\Gamma_1+ \Gamma_2 + \sigma_3$. It is easy to check that $H^0(B; T_B) \to H^0(\sigma_3; N_{\sigma_3/X_1}) $ is an isomorphism, hence $H^1(X_1; T_{X_1}(-\log \overline{D})) \to H^1(X_1; T_{X_1})$ is an isomorphism. Since the exceptional divisors of $\hY \to X_1$ are determined by $\overline{D}$, the number of moduli of the pair $(\hY, D_1+ D_2 + D_3)$, or equivalently the dimension of the space of equisingular deformations of  $Y$,  is one. The surface $\hY$ depends however on $4$ moduli, because for a small deformation of $\hY$ the blowup points don't have to lie on intersections of the  $\overline{D}$.

There is a similar picture for the case $(m_1, m_2, m_3) = (2,1,1)$, except that in this case we use the configuration $\Gamma + \sigma_2 + \sigma_3$, and the map $H^0(B; T_B) \to H^0(\sigma_2; N_{\sigma_2/X_1})\oplus H^0(\sigma_3; N_{\sigma_3/X_1})$ has a one-dimensional cokernel. Thus the number of moduli of the pair $(\hY, D_1+ D_2 + D_3)$ in this case is two. 
\end{remark} 

\smallskip
\noindent (iv): If $(m_1, m_2, m_3) = (2,1,1)$ and we smooth a point of multiplicity $1$ but keep the other two simple elliptic singularities, then the result $Y_t$ is an I-surface with two simple elliptic singularities, one  of multiplicity $2$ and another of multiplicity $1$. By Theorem~\ref{theorem5}  and Lemma~\ref{kappaminus}, the minimal resolution $\hY_t$ is a rational surface. To deal with the point of multiplicity $2$, recall that $\hY$ is obtained as follows: starting with the configuration $\Gamma, \sigma_2, \sigma_3$, where $\Gamma$ is a bisection and $\sigma_2, \sigma_3$ are sections, blow up the three intersection points. Then the proper transform $D_1$ of $\Gamma$ is the component of $D$ of square $-2$, and the other components $D_2$ and $D_3$ meet an exceptional curve disjoint from $D_1$. Thus, if $\hY_t$ is a deformation obtained by smoothing the point of multiplicity $2$ but keeping the other singularities, we can obtain $\hY_t$ as follows:   Start with the two point blowup $\widehat{Y}$ of $X_1$ obtained by blowing up $\Gamma \cap \sigma_2$ and $\Gamma\cap \sigma_3$. Then   smooth the contraction of the proper transform $D_1$ of $\Gamma$ while keeping the proper transforms $\hat{\sigma}_2$ and $\hat{\sigma}_3$. (Note that the $\hat{\sigma}_i$ are stable submanifolds since $N_{\hat{\sigma}_i/\widehat{Y}}$ is a nontrivial line bundle of degree $0$ on $\hat{\sigma}_i$, by the discussion in \S\ref{subsection14},  and hence $H^1(\hat{\sigma}_i; N_{\hat{\sigma}_i/\widehat{Y}}) = 0$.)  Finally,  blow up the intersection point  of $(\hat{\sigma}_2)_t$ and $(\hat{\sigma}_3)_t$.  The proper transform $D_1$ of $\Gamma$ on   $\widehat{Y}$   is numerically equivalent to $-K_{\widehat{Y}}$, and  $2D_1$ is linearly equivalent to $-2K_{\widehat{Y}}$.  If $\overline{Y}$ is the surface obtained by contracting $\widehat{Y}$ along $D_1$, then $\omega_{\overline{Y}}^{\otimes 2} \cong \scrO_{\overline{Y}}$ and it is a standard argument that a small smoothing of $\overline{Y}$ will have a numerically trivial canonical bundle.  Thus $\hY_t$ is the blowup of a numerically $K$-trivial surface at one point. Hence it is a blown up Enriques surface.

\smallskip
\noindent (v):  Let  $(m_1, m_2, m_3) = (1,1,1)$, so that we begin with the configuration $\Gamma_1, \Gamma_2, \sigma_3$ on $X_1$ and blow up the two intersection points $\Gamma_i\cap \sigma_3$. As before, let $D_i$ be the proper transform of $\Gamma_i$, $i=1,2$ and let $D_3$ be the proper transform of $\sigma_3$. If we smooth a singular point of $Y$ which is the image of $D_1$ or $D_2$, an argument similar to that in (iv) shows that the result is a blown up Enriques surface. To deal with the case where we smooth a singular point of $Y$ which is the image of  $D_3$,   note that the proper transform $f'$ of the fiber of the ruled surface passing through $\Gamma_1\cap \sigma_3$ satisfies: $f'\cdot D_1=1$ and  $f'\cdot D_2 =2$  and is an exceptional curve on $\hY$ not meeting $D_3$. Now smooth the surface obtained by blowing down $D_3$ and let $Y_t$ be the deformed surface,  with minimal resolution $\hY_t$. Since $f'$ is disjoint from $D_3$, it is a stable submanifold  and hence deforms to $\hY_t$. Then there is an exceptional curve on $\hY_t$ which meets $D_2$ in $2$ points. Blowing  down this exceptional curve, the image $\overline{D}_2$ is a curve  of square $3$. In particular, $\hY_t$ cannot be the blowup of an Enriques surface at a point, so it must be a rational surface.
\end{proof}

 \bibliography{Isurfaces}

\providecommand{\bysame}{\leavevmode\hbox to3em{\hrulefill}\thinspace}
\providecommand{\MR}{\relax\ifhmode\unskip\space\fi MR }
% \MRhref is called by the amsart/book/proc definition of \MR.
\providecommand{\MRhref}[2]{%
  \href{http://www.ams.org/mathscinet-getitem?mr=#1}{#2}
}
\providecommand{\href}[2]{#2}
\begin{thebibliography}{CGL22}

\bibitem[AE23]{AlexeevEngel}
Valery Alexeev and Philip Engel, \emph{Compact moduli of {K}3 surfaces}, Ann.
  of Math. (2) \textbf{198} (2023), no.~2, 727--789. \MR{4635303}

\bibitem[Bea78]{Beauville}
Arnaud Beauville, \emph{Surfaces alg\'{e}briques complexes}, Ast\'{e}risque,
  vol. No. 54, Soci\'{e}t\'{e} Math\'{e}matique de France, Paris, 1978, Avec
  une sommaire en anglais. \MR{485887}

\bibitem[Bri79]{Brieskorn}
Egbert Brieskorn, \emph{Die {H}ierarchie der {$1$}-modularen
  {S}ingularit\"{a}ten}, Manuscripta Math. \textbf{27} (1979), no.~2, 183--219.
  \MR{526827}

\bibitem[CGL22]{CGL}
Xi~Chen, Frank Gounelas, and Christian Liedtke, \emph{Rational curves on
  lattice-polarised {K}3 surfaces}, Algebr. Geom. \textbf{9} (2022), no.~4,
  443--475. \MR{4450621}

\bibitem[FL22]{FL22d}
Robert Friedman and Radu Laza, \emph{{T}he higher {D}u {B}ois and higher
  rational properties for isolated singularities}, arXiv:2207.07566, 2022.

\bibitem[FL23]{FL23}
\bysame, \emph{Deformations of {C}alabi-{Y}au varieties with isolated log
  canonical singularities}, arXiv:2306.03755, 2023.

\bibitem[FM94]{FM}
Robert Friedman and John~W. Morgan, \emph{Smooth four-manifolds and complex
  surfaces}, Ergebnisse der Mathematik und ihrer Grenzgebiete (3) [Results in
  Mathematics and Related Areas (3)], vol.~27, Springer-Verlag, Berlin, 1994.
  \MR{1288304}

\bibitem[FPR15]{FPR1}
Marco Franciosi, Rita Pardini, and S\"{o}nke Rollenske, \emph{Log-canonical
  pairs and {G}orenstein stable surfaces with {$K_X^2=1$}}, Compos. Math.
  \textbf{151} (2015), no.~8, 1529--1542. \MR{3383166}

\bibitem[FPR17]{FPR2}
\bysame, \emph{Gorenstein stable surfaces with {$K^2_X=1$} and {$p_g>0$}},
  Math. Nachr. \textbf{290} (2017), no.~5-6, 794--814. \MR{3636379}

\bibitem[Fri83a]{Friedman83}
Robert Friedman, \emph{A degenerating family of quintic surfaces with trivial
  monodromy}, Duke Math. J. \textbf{50} (1983), no.~1, 203--214. \MR{700137}

\bibitem[Fri83b]{FriedmanSmoothings}
\bysame, \emph{Global smoothings of varieties with normal crossings}, Ann. of
  Math. (2) \textbf{118} (1983), no.~1, 75--114. \MR{707162}

\bibitem[Fri95]{FriedSO3}
\bysame, \emph{Vector bundles and {${\rm SO}(3)$}-invariants for elliptic
  surfaces}, J. Amer. Math. Soc. \textbf{8} (1995), no.~1, 29--139.
  \MR{1273414}

\bibitem[Fri98]{Friedmanbook}
\bysame, \emph{Algebraic surfaces and holomorphic vector bundles},
  Universitext, Springer-Verlag, New York, 1998. \MR{1600388}

\bibitem[Fri15]{Friedanticanon}
\bysame, \emph{On the geometry of anticanonical pairs}, arXiv:1502.02560, 2015.

\bibitem[GSB21]{GSB}
Ian Grojnowski and Nicholas Shepherd-Barron, \emph{Del {P}ezzo surfaces as
  {S}pringer fibres for exceptional groups}, Proc. Lond. Math. Soc. (3)
  \textbf{122} (2021), no.~1, 1--41. \MR{4210255}

\bibitem[Har77]{Hartshorne}
Robin Hartshorne, \emph{Algebraic geometry}, Graduate Texts in Mathematics,
  vol. No. 52, Springer-Verlag, New York-Heidelberg, 1977. \MR{463157}

\bibitem[Hor76]{Horikawa0}
Eiji Horikawa, \emph{Algebraic surfaces of general type with small
  {$c\sp{2}\sb{1}$}. {II}}, Invent. Math. \textbf{37} (1976), no.~2, 121--155.
  \MR{460340}

\bibitem[Hor78]{Horikawa}
\bysame, \emph{On the periods of {E}nriques surfaces. {I}}, Math. Ann.
  \textbf{234} (1978), no.~1, 73--88. \MR{491725}

\bibitem[Kar77]{Karras}
Ulrich Karras, \emph{Deformations of cusp singularities}, Several complex
  variables ({P}roc. {S}ympos. {P}ure {M}ath., {V}ol. {XXX}, {P}art 1,
  {W}illiams {C}oll., {W}illiamstown, {M}ass., 1975), Proc. Sympos. Pure Math.,
  vol. Vol. XXX, Part 1, Amer. Math. Soc., Providence, RI, 1977, pp.~37--44.
  \MR{472811}

\bibitem[Loo81]{Looijenga}
Eduard Looijenga, \emph{Rational surfaces with an anticanonical cycle}, Ann. of
  Math. (2) \textbf{114} (1981), no.~2, 267--322. \MR{632841}

\bibitem[M{\'{e}}r82]{Mer}
J.-Y. M{\'{e}}rindol, \emph{Les singularit\'{e}s simples elliptiques, leurs
  d\'{e}formations, les surfaces de del {P}ezzo et les transformations
  quadratiques}, Ann. Sci. \'{E}cole Norm. Sup. (4) \textbf{15} (1982), no.~1,
  17--44. \MR{672474}

\bibitem[Ser06]{Sernesi}
Edoardo Sernesi, \emph{Deformations of algebraic schemes}, Grundlehren der
  mathematischen Wissenschaften [Fundamental Principles of Mathematical
  Sciences], vol. 334, Springer-Verlag, Berlin, 2006. \MR{2247603}

\bibitem[Sha80]{Shah1}
Jayant Shah, \emph{A complete moduli space for {$K3$} surfaces of degree
  {$2$}}, Ann. of Math. (2) \textbf{112} (1980), no.~3, 485--510. \MR{595204}

\bibitem[Sha81]{Shah2}
\bysame, \emph{Degenerations of {$K3$} surfaces of degree {$4$}}, Trans. Amer.
  Math. Soc. \textbf{263} (1981), no.~2, 271--308. \MR{594410}

\bibitem[Ste83]{Steenbrink81}
Joseph H.~M. Steenbrink, \emph{Mixed {H}odge structures associated with
  isolated singularities}, Singularities, {P}art 2 ({A}rcata, {C}alif., 1981),
  Proc. Sympos. Pure Math., vol.~40, Amer. Math. Soc., Providence, RI, 1983,
  pp.~513--536.

\bibitem[Ste97]{Steenbrink}
\bysame, \emph{Du {B}ois invariants of isolated complete intersection
  singularities}, Ann. Inst. Fourier (Grenoble) \textbf{47} (1997), no.~5,
  1367--1377. \MR{1600383}

\bibitem[Usu89]{Usui}
Sampei Usui, \emph{Type {$\rm I$} degeneration of {K}unev surfaces}, Actes du
  Colloque de Th\'{e}orie de Hodge (Luminy, 1987), Ast\'{e}risque, vol.
  179-180, 1989, pp.~185--243. \MR{1042807}

\bibitem[Wah76]{WahlI}
Jonathan~M. Wahl, \emph{Equisingular deformations of normal surface
  singularities. {I}}, Ann. of Math. (2) \textbf{104} (1976), no.~2, 325--356.
  \MR{422270}

\bibitem[Wah80]{Wahl}
\bysame, \emph{Elliptic deformations of minimally elliptic singularities},
  Math. Ann. \textbf{253} (1980), no.~3, 241--262. \MR{597833}

\end{thebibliography}

\end{document}